\newcommand{\R}{\mathbf{R}}
\newcommand{\sig}{\sigma}
\newcommand{{\ba}}{\bf a}
\newcommand{\ve}{\varepsilon}
\newcommand{\la}{\lambda}
\newcommand{\La}{\Lambda}
\newcommand{\ga}{\gamma}
\newcommand{\pa}{\partial}
\newcommand{\ra}{\rightarrow}
\newcommand{\del}{\delta}
\newcommand{\vp}{\varphi}
\newcommand{\cd}{\cdot}
\newcommand{\De}{\Delta}
\newcommand{\al}{\alpha}
\newcommand{\be}{\begin{equation}}
\newcommand{\ee}{\end{equation}}
\newcommand{\nn}{\nonumber}
\newtheorem{lem}{Lemma}{\bf}{\it}
\newtheorem{rem}{Remark}{\it}{\rm}
\newtheorem{theorem}{Theorem}
\newtheorem{proposition}{Proposition}
\newtheorem{corollary}{Corollary}
\numberwithin{theorem}{section}
\numberwithin{lem}{section}
\numberwithin{rem}{section}
\numberwithin{equation}{section}
\numberwithin{proposition}{section}
\numberwithin{corollary}{section}
\title[Stochastic Variational Formulas ]{Stochastic Variational Formulas for Solutions to Linear Diffusion Equations}
\author{Joseph G. Conlon and Mohar Guha}
\address{University of Michigan\\ Department of Mathematics\\ Ann Arbor,
  MI 48109-1109}
\email{conlon@umich.edu, mguha@umich.edu}
\keywords{Hamilton-Jacobi pde, stochastic control}
\subjclass{35K55, 60J60, 93E20,}
\begin{document}

\maketitle

\begin{abstract}
This paper is concerned with   solutions to a one dimensional linear diffusion equation and their relation to some problems in stochastic control theory.  A stochastic variational formula is obtained for the logarithm of the solution to the diffusion equation, with terminal data which is the characteristic function of a set. In this case the terminal data for the control problem is singular, and hence standard theory does not apply. The variational formula is used to prove convergence in the zero noise limit of the cost function for the stochastic control problem and its first derivatives,  to the corresponding quantities for a classical control problem.
\end{abstract}

\section{Introduction.}
In this paper we shall be concerned with solutions to a linear diffusion equation and their relation to some problems in stochastic control theory.  Let $T > 0$ and $b(y,t), \ y \in \R$, $t \le T$, be a function differentiable in $y$ with derivative continuous in $(y,t)$ which satisfies the uniform bound
\be \label{A1}
\sup \Big\{ |\pa b(y,t)/\pa y| : y \in \R,  \ t \le T \} \le A,
\ee
for some constant $A \ge 0$.  We shall be interested in solutions $u_\ve(x,y,t)$ to the equation
\be \label{B1}
\frac{\pa u_\ve}{\pa t} + b(y,t) \; \frac{\pa u_\ve}{\pa y} + \frac \ve 2 \frac{\pa^2 u_\ve}{\pa y^2} = 0, \quad y \in \R, \; t < T,
\ee
with terminal condition
\be \label{C1}
\lim_{t \ra T} u_\ve (x,y,t) = 0 \ \ {\rm for} \ \ y < x,
\ee
\[  \lim_{t \ra T} u_\ve(x,y,t) = 1 \ \ {\rm for} \ \ y > x.  \]
It follows from standard methods \cite{fried} that $u_\ve(x,y,t)$ is a continuous function of $(x,y,t)$ for $x,y \in \R, \; t < T$, and that also the first derivative $u_\ve(x,y,t)$ in $t$ and second derivatives in $(x,y)$ exist and are continuous in $(x,y,t)$.  Evidently $u_\ve(x,y,t)$ is given in terms of the fundamental solution $G_\ve(y,y',t,T)$ for (\ref{B1}) by the formula
\be \label{D1}
u_\ve(x,y,t) = \int^\infty_x \ G_\ve(y,y',t,T)dy'.
\ee
It is well known \cite{kar} that if $b(\cd,\cd)$ satisfies (\ref{A1}) then the stochastic differential equation
\be \label{E1}
dY_\ve(s) = b(Y_\ve(s), s)ds + \sqrt{\ve} \; dW(s),
\ee
where $W(\cdot)$ is Brownian motion, is uniquely solvable in the interval $t \le s \le T$ with given initial condition $Y_\ve(t) = y$.  Furthermore, $u_\ve(x,y,t)$ is related to solutions of (\ref{E1}) by the identity,
\be \label{F1}
u_\ve(x,y,t) = P \left( Y_\ve(T) > x \;|\; Y_\ve(t) = y \right), \quad  t < T.
\ee

The connection between solutions of (\ref{B1}), (\ref{C1}) and control theory comes via the function $q_\ve(x,y,t)$ defined by
\be \label{G1}
u_\ve(x,y,t) = \exp[-q_\ve(x,y,t)/ \ve ].
\ee
In view of (\ref{F1}) the function $q_\ve$ is positive, and by virtue of (\ref{B1}), (\ref{C1}) it satisfies the PDE
\be \label{H1}
\frac{\pa q_\ve}{\pa t} + b(y,t) \; \frac{\pa q_\ve}{\pa y} - \frac 1 2 \left(\frac{\pa q_\ve}{\pa y}\right)^2 +  \frac \ve 2 \;  \frac{\pa^2 q_\ve}{\pa y^2} = 0, \quad y \in \R, \; t < T,
\ee
with terminal condition
\be \label{I1}
\lim_{t \ra T} q_\ve(x,y,t) = \infty \ \ {\rm for} \ \ y < x,
\ee
\[	\lim_{t \ra T} q_\ve(x,y,t) = 0 \ \ {\rm for} \ \ y > x.  \]
If we let $\ve \ra 0$ in (\ref{H1}) we obtain a Hamilton-Jacobi equation, and therefore should expect that the limit of $q_\ve(x,y,t)$ as $\ve \ra 0$ is given by the solution of a variational problem.  This turns out to be the case.  Let $q(x,y,t)$ be defined by 
\be \label{J1}
q(x,y,t) = \min \left\{ \frac 1 2 \int^T_t \left[ \frac{dy(s)}{ds} - b( y(s), s) \right]^2 \; ds \  \Big|  \ y(t) = y, \  y(T) > x\right\}.
\ee
Thus the functional in (\ref{J1}) is minimized over all paths $y(s), \ t \le s \le T$, with initial point $y(t) = y$ and terminal point $y(T) > x$.  Define the function $F(x,t), \; x \in \R, \; t \le T$, by $F(x,t) = y(t)$ where $y(\cd)$ is the solution to the terminal value problem,
\be \label{K1}
\frac{dy(s)}{ds} = b( y(s), s), \quad  s \le T, \ y(T) = x.
\ee
Then one easily sees that $q(x,y,t) = 0 $ if $y \ge F(x,t)$, whence the function $q(x,y,t)$ is nontrivial only for sufficiently large negative values of $y$.  In $\S 3$ we prove the following theorem showing that $q_\ve$ converges to $q$ as $\ve \ra 0$:
\begin{theorem} Assume $b(\cd,\cd)$ satisfies (\ref{A1}).  Then for $x,y \in \R, \; t < T, \; 0 < \ve < 1$, there is a constant $C$ depending only on $x,y,t,T,A$ such that
\be \label{L1}
|q_\ve(x,y,t) - q(x,y,t)| \le C\sqrt{\ve}.
\ee
\end{theorem}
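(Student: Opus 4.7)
My plan is to prove the two inequalities $q_\ve \le q + C\sqrt\ve$ and $q_\ve \ge q$ separately, working with the exponential form $u_\ve = e^{-q_\ve/\ve}$ throughout. In the region $y \ge F(x,t)$, we have $q = 0$, and the deterministic flow $\zeta(\cdot)$ from $(y,t)$ satisfies $\zeta(T) \ge x$; a Gronwall estimate on $Y_\ve - \zeta$ gives $u_\ve \ge c > 0$ uniformly in $\ve$, so $q_\ve = O(\ve) \le C\sqrt\ve$, and (\ref{L1}) holds trivially since also $q_\ve \ge 0 = q$. The substantive work is in the regime $y < F(x,t)$, where the minimizer $\bar y(\cdot)$ of (\ref{J1}) exists uniquely with $\bar y(t) = y$ and $\bar y(T) = x$, and the drift perturbation $v(s) := \dot{\bar y}(s) - b(\bar y(s), s)$ satisfies $\tfrac{1}{2}\int_t^T v(s)^2 \, ds = q$.

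For the upper bound $q_\ve \le q + C\sqrt\ve$, the idea is Girsanov's theorem with the deterministic perturbation $v$. Define $P^v$ by $dP^v/dP = \exp\bigl(\ve^{-1/2}\int_t^T v \, dW - (2\ve)^{-1}\int_t^T v^2 \, ds\bigr)$, under which $Y_\ve$ has drift $b + v$ and (by Gronwall using (\ref{A1})) is a $\sqrt\ve$-perturbation of $\bar y$ on $[t,T]$. A short computation, using $\tfrac{1}{2}\int v^2 \, ds = q$, yields
\begin{equation*}
u_\ve(x,y,t) = e^{-q/\ve} \, E^v\!\Bigl[\mathbf{1}_{\{Y_\ve(T)>x\}} \exp\bigl(-\ve^{-1/2}\textstyle\int_t^T v \, d\tilde W\bigr)\Bigr],
\end{equation*}
where $\tilde W$ is a $P^v$-Brownian motion. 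Under $P^v$ the random variable $\int v \, d\tilde W$ is Gaussian with variance $2q$, so on $\{|\!\int v \, d\tilde W| \le K\}$ the exponential is at least $e^{-K/\sqrt\ve}$, while Chebyshev controls the complementary $P^v$-probability by $2q/K^2$. A CLT argument for $Y_\ve(T) - \bar y(T)$ under $P^v$ gives $P^v(Y_\ve(T)>x) \to \tfrac{1}{2}$; combining this with the Chebyshev bound (e.g.\ $K = \sqrt{8q}$) and inclusion-exclusion, the $P^v$-probability of the intersection is bounded below by a positive absolute constant for $\ve$ small. This gives $u_\ve \ge c \, e^{-q/\ve - K/\sqrt\ve}$, i.e., $q_\ve \le q + C\sqrt\ve$.

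For the lower bound $q_\ve \ge q$ (which is sharper than $q_\ve \ge q - C\sqrt\ve$), I would invoke PDE comparison. The function $q$ solves the Hamilton-Jacobi equation $\pa_t q + b \pa_y q - \tfrac{1}{2}(\pa_y q)^2 = 0$ on $\{y < F(x,t)\}$ and vanishes identically on $\{y \ge F(x,t)\}$. Using the convexity of $q$ in $y$, namely $\pa_{yy} q \ge 0$ (interpreted in the viscosity/distributional sense across the free boundary $y = F(x,t)$), one obtains
\begin{equation*}
\pa_t q + b \pa_y q - \tfrac{1}{2}(\pa_y q)^2 + \tfrac{\ve}{2}\pa_{yy} q \ge 0,
\end{equation*}
so $q$ is a viscosity sub-solution of (\ref{H1}). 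Since $q$ and $q_\ve$ share the terminal condition (\ref{I1}), the parabolic comparison principle yields $q \le q_\ve$ on $\{t < T\}$.

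The main anticipated obstacles are: (i) rigorously verifying $\pa_{yy} q \ge 0$ for general $b$ satisfying (\ref{A1}), likely via a convexity argument on the Euler-Lagrange system for (\ref{J1}); (ii) applying the comparison principle across the free boundary $y = F(x,t)$, where $\pa_{yy} q$ jumps discontinuously; and (iii) establishing the uniform-in-$\ve$ CLT $P^v(Y_\ve(T)>x) \to \tfrac{1}{2}$ required for the Girsanov step, through Gronwall tracking of $Y_\ve - \bar y$ under $P^v$. The $O(\sqrt\ve)$ rate in (\ref{L1}) arises naturally from the standard deviation $\sqrt{2q}$ of the stochastic integral $\int v \, d\tilde W$ being rescaled by $\ve^{-1/2}$ inside the exponent.
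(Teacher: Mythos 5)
Your upper bound $q_\ve \le q + C\sqrt\ve$ via Cameron--Martin/Girsanov is a genuinely different route from the paper's. The paper (Lemmas 3.2 and 3.3) uses a PDE verification argument: a maximum-principle lemma shows $q_\ve$ is dominated by the expected cost of any Lipschitz controller, and the deterministic minimizer $\lambda(s)=\dot{\bar y}(s)$ is then substituted. You instead estimate $u_\ve=P(Y_\ve(T)>x)$ directly by shifting the drift along the optimal perturbation $v$ via a change of measure, which is more elementary and sidesteps the control-verification lemma. One step is more delicate than you present: under $P^v$ the error $Z=Y_\ve-\bar y$ solves $dZ = a(s)Z\,ds + \sqrt\ve\,d\tilde W$ with $a$ depending on the random path $Y_\ve$, so $Z(T)$ is not Gaussian; you only need $P^v(Z(T)>0)\ge c>0$, which does follow from $|a|\le A$ and, e.g., a Dambis--Dubins--Schwarz time change or the Gronwall estimate (\ref{K3}), but it is not a routine CLT and should be argued.

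Your lower bound, however, has a gap that breaks that half of the proof. Theorem 1.1 assumes only the Lipschitz condition (\ref{A1}), while the convexity $\partial^2_{yy}q\ge 0$ you need is established in this paper (Proposition 2.3) only under the \emph{additional} hypothesis that $b(y,t)$ is concave in $y$, and the concavity is used essentially: the second variation of $\mathcal{F}$ along a segment contains $-\int(\dot y-b)\,\partial^2_{yy}b\,\varphi^2\,ds$, whose sign is controlled precisely when $\partial^2_{yy}b\le 0$. For non-concave $b$ satisfying (\ref{A1}) there is no reason for $q(x,\cdot,t)$ to be convex, so what you label ``obstacle (i)'' is not a verification to carry out but a claim that fails in the generality required. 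Even granting it, a parabolic comparison principle across the singular terminal data (\ref{I1}), where $q\to+\infty$ on $\{y<x,\,t=T\}$, is nonstandard and would need separate justification. (That you are claiming the sharper $q_\ve\ge q$ rather than the one-sided estimate $q_\ve\ge q-C\sqrt\ve$ the theorem actually requires is itself a sign the strategy is off.) The paper's Lemma 3.5 avoids both problems by using the optimal stochastic controller $\lambda_\ve = b - \partial_y q_\ve$, truncating at $T-\sqrt\ve$, and comparing the controlled stochastic path against an explicitly constructed classical competitor satisfying $y_{\ve,c}(T)\ge x$; that is the argument to use when $b$ satisfies only (\ref{A1}).
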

Inequalities of the type (\ref{L1})  for terminal data which is not singular- unlike in the case of (\ref{I1})-  have been known for many years \cite{cl, flem}.  A short elegant proof of this has recently been given in \cite{evans}.  The inequality (\ref{L1}) implies via (\ref{F1}) the large deviation result for solutions to the stochastic equation (\ref{E1}),
\be \label{M1}
\lim_{\ve \ra 0} \; \ve \; \log \Big[ P \left( Y_\ve(T) > x \; |\; Y_\ve(t) = y \right)\Big] = -q(x,y,t),
\ee
a result which also follows from  Theorem 1.1 of Chapter 4 of \cite{fw}.

In proving Theorem 1.1 we take the approach of showing that in some sense $q_\ve(x,y,t)$ is the cost function of a stochastic control problem.  The formal limit as $\ve \ra 0$ of this stochastic control problem is a classical control problem with cost function $q(x,y,t)$ given by (\ref{J1}).  The stochastic control problem can be described as follows:  Let $y_\ve(\cd)$ be the solution to the stochastic differential equation,
\be \label{N1}
dy_\ve(s) = \la_\ve(\cd,s)ds + \sqrt{\ve} \; dW(s),
\ee
where $\la_\ve(\cd,s)$ is a non-anticipating function.  The cost function for the problem is given by the formula,
\be \label{O1}
q_\ve(x,y,t) = \min_{\la_\ve} E \left[ \frac 1 2 \int^T_t \left[ \la_\ve(\cd,s) - b( y_\ve(s), s) \right]^2 \; ds \  \Big| \  y_\ve(t) = y,\; y_\ve(T) > x\right].
\ee
Thus the minimum in (\ref{O1}) is to be taken over all non-anticipating $\la_\ve(\cd,s)$, $t \le s < T$, which have the property that the solutions of (\ref{N1}) with initial condition $y_\ve(t) = y$ satisfy the terminal condition $y_\ve(T) > x$ with probability 1.  One expects that the function $q_\ve(x,y,t) $ of (\ref{O1}) is identical to the function $q_\ve(x,y,t) $ of (\ref{G1}), but this is not so easy to prove.  An immediate question that arises is how to define a suitable space of non-anticipating functions $\la_\ve(\cd,s), \; t \le s < T$, which have the property that solutions of (\ref{N1}) with initial condition $y_\ve(t) = y$ satisfy $y_\ve(T) > x$ with probability 1.

Instead of attempting to establish the formula (\ref{O1}) with $q_\ve(x,y,t)$ given by (\ref{G1}), we shall confine ourselves to the simpler problem of showing that the expectation on the RHS of (\ref{O1}) is greater than or equal to $q_\ve(x,y,t)$ for certain non-anticipating functions $\la_\ve(\cd,s),  \ t \le s < T$, and that there is equality when $\la_\ve(\cd,s)$ is given by the formula
\be \label{P1}
\la_\ve(\cd,s) = \la^*_\ve  ( x, y_\ve(s), s) =  b( y_\ve(s), s) - \frac{\pa q_\ve}{\pa y} \; ( x, y_\ve(s), s).
\ee
In order to prove Theorem 1.1 it is actually only necessary to prove equality in (\ref{O1}) in the approximate sense
\be \label{Q1}
q_\ve(x,y,t) =  E \left[ \frac 1 2 \int^{T-\sqrt{\ve}}_t \left[ \la^*_\ve ( x,y_\ve(s), s) - b( y_\ve(s), s) \right]^2 \; ds \; \Big| \; y_\ve(t) = y\right] + O (\sqrt{\ve} ).
\ee
The identity (\ref{Q1}) turns out to be much easier to establish than the equality in (\ref{O1}) when $\la_\ve(\cd,s), \  t \le s < T$, is given by (\ref{P1}).

We turn to the proof of this equality in $\S 4$ and $\S 6$.  In $\S 4$ we show that the solution $y_\ve(s), \  t \le s < T$, of (\ref{N1}) with initial condition $y_\ve(t) = y$ and $\la_\ve(\cdot,s)$ given by the optimal controller (\ref{P1}), has the property that
\be \label{R1}
\lim\inf_{t \ra T} y_\ve(t) > x \ \ {\rm with \ probability} \ \ 1 .
\ee
The proof of (\ref{R1}) depends crucially on obtaining a lower bound on the derivative of the function $q_\ve$ of (\ref{G1}),
\be \label{S1}
-\frac{ \pa q_\ve}{\pa y} \; (x,y,t) \ge \frac{x-y}{T-t} \; [1-\eta(\del)], \quad  0 < T-t< \del, \; x-y < \ga,
\ee
where $\ga$ is independent of $\del$ and $\lim_{\del \ra 0} \eta(\del) = 0$.  Observe that the inequality (\ref{S1}) is only non-trivial for $y < x$ since $-\pa q_\ve(x,y,t) / \pa y \ge 0$, $y \in \R$, by the maximum principle.  The proof of (\ref{S1}) relies on the use of the Cameron-Martin formula \cite{simon} applied to the diffusion $Y_\ve(\cd)$ of (\ref{E1}).  One can see from (\ref{D1}) that the inequality (\ref{S1}) gives some information about the short time asymptotics of fundamental solutions to diffusion equations.  There has been much research over several decades \cite{kan, min, mol, var} devoted to this subject.  In particular, Molchanov \cite{mol} has obtained short time asymptotic formulas for diffusions with bounded drift.  These results have been used by Fleming and Sheu \cite{fs} to prove a representation formula analogous to (\ref{O1}) for the logarithm of the fundamental solution.

In order to establish that the expectation on the RHS of (\ref{O1}) with $\la_\ve(\cd,s), \  t \le s < T$, given by (\ref{P1}) is equal to the LHS, one needs to prove that the inequality (\ref{S1}) holds uniformly for $y \in \R$ i.e. $\ga = \infty$.  This turns out to be a considerably more difficult task than proving (\ref{S1}) for some $\ga > 0$.  It is not possible to obtain estimates by means of the Cameron-Martin formula, and instead one uses an induction argument.  The problem of obtaining a uniform lower bound (\ref{S1}) is closely related to the problem of estimating probabilities for the diffusion $Y_\ve(\cd)$ of (\ref{E1}) tied at 2 different times.  In $\S 5$ we prove the following :

\begin{theorem}
Suppose $b(\cd, s), \; 0 \le s \le T$, satisfies (\ref{A1}) and in addition $b(0,s) = 0, \; 0 \le s \le T$.  Then there exist positive universal constants, $\eta, C_1, C_2, \ga_1, \ga_2$ such that
\begin{multline} \label{T1}
P \left( Y_\ve(t) < \frac{C_1(T-t)y} T \ \Big| \ Y_\ve(0) = y, \ Y_\ve(T) = 0 \right) \\
\le \exp \left[ - \ \frac{\ga_1(T-t)y^2}{\ve T^2} \right], \quad y < -T\sqrt{\ve/(T-t)},  
\end{multline}
\begin{multline} \label{U1}
P \left( Y_\ve(t) > \frac{C_2(T-t)y} T \ \Big| \ Y_\ve(0) = y, \ Y_\ve(T) = 0 \right)
\\
\le \exp \left[ - \ \frac{\ga_2(T-t)y^2}{\ve T^2} \right], \quad  y < -T\sqrt{\ve/(T-t)},  
\end{multline}
provided $AT < \eta, \ T-t < T/2$.
\end{theorem}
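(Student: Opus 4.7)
The strategy is to reduce (\ref{T1}) and (\ref{U1}) to standard Gaussian tail estimates for the Brownian bridge, using the hypothesis $AT < \eta$ to show that the drift $b$ perturbs the underlying Gaussian structure only by bounded multiplicative factors. By the Markov property of $Y_\ve$ and the definition of $G_\ve$ in (\ref{D1}), the conditional density of $Y_\ve(t)$ at the point $z$ given $Y_\ve(0) = y$ and $Y_\ve(T) = 0$ is
\[
p(z) = \frac{G_\ve(y,z,0,t)\, G_\ve(z,0,t,T)}{G_\ve(y,0,0,T)}.
\]
The key input from the extra assumption $b(0,s) = 0$ is the pointwise linear bound $|b(y,s)| \le A|y|$, obtained by integrating (\ref{A1}).

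The first technical step is to prove two-sided bounds of the form $c_{-}(AT)\, G^{(0)}_\ve \le G_\ve \le c_{+}(AT)\, G^{(0)}_\ve$, with $c_\pm(AT) \to 1$ as $AT \to 0$, where $G^{(0)}_\ve(y,y',s,t) = [2\pi\ve(t-s)]^{-1/2}\exp(-(y-y')^2/(2\ve(t-s)))$ is the free heat kernel. I would obtain this by applying the Cameron--Martin formula to the diffusion (\ref{E1}), expressing $G_\ve/G^{(0)}_\ve$ as the expectation of a Girsanov exponential $\exp(\ve^{-1/2}\int b\,dW - (2\ve)^{-1}\int b^2\,dr)$ taken over a Brownian bridge $Z_r$ of variance $\ve$ joining $y$ at time $s$ to $y'$ at time $t$. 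Using $|b(Z_r,r)| \le A|Z_r|$ and applying It\^o's formula to an antiderivative $B(\cdot,r)$ of $b(\cdot,r)$, the dangerous stochastic integral is rewritten as the deterministic boundary term $\ve^{-1}[B(y',t) - B(y,s)]$ plus correction integrals that are of size $O(AT)$ on typical bridge paths, controlled by the quadratic term $A^2 \ve^{-1}\int Z_r^2\,dr$ and standard bridge moment bounds.

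Substituting these bounds into the formula for $p(z)$, the boundary terms depending on the three endpoint pairs $(y,z)$, $(z,0)$, $(y,0)$ algebraically sum to zero and cancel exactly in the ratio. The three factors $G^{(0)}_\ve$ combine to give the classical Brownian bridge density $\tilde p(z) = [2\pi\ve t(T-t)/T]^{-1/2}\exp(-T(z - y(T-t)/T)^2/(2\ve t(T-t)))$, and the remaining correction is a multiplicative factor $1 + O(AT)$. Integrating $\tilde p$ over the tail events $\{z < C_1 y(T-t)/T\}$ with $C_1 > 1$ and $\{z > C_2 y(T-t)/T\}$ with $0 < C_2 < 1$ is then routine: the deviation from the bridge mean $y(T-t)/T$ is $|C_i - 1||y|(T-t)/T$ in absolute value, and in units of the bridge standard deviation $\sqrt{\ve t(T-t)/T}$ this is at least a constant multiple of $|y|\sqrt{(T-t)/(\ve T)}$ once $T-t < T/2$; squaring and halving yields the claimed exponent $\gamma_i (T-t)y^2/(\ve T^2)$. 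The hypothesis $y < -T\sqrt{\ve/(T-t)}$ ensures the deviation dominates the standard deviation so the Gaussian tail is non-vacuous.

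The principal obstacle is the first step. Since $b$ is only controlled linearly, a naive Novikov bound on the Girsanov exponent fails; it is essential that the $\ve^{-1}$ boundary term exhibits exact cancellation in the ratio defining $p(z)$, leaving only the more manageable remainder. Making the corresponding corrections uniformly small across the range of $(y, y', s, t)$ used in the three factors -- in particular for $|y|$ as large as the conditioning endpoint in (\ref{T1}), (\ref{U1}) -- is where the smallness condition $AT < \eta$ gets spent, and likely requires combining a dyadic decomposition over the size of $\sup_{[0,T]}|Z_r|$ with standard sub-Gaussian concentration for the bridge.
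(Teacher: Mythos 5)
Your proposal takes exactly the approach the authors explicitly rule out. In the introduction, when discussing the uniform estimates for the tied diffusion (which is what Theorem 1.2 provides), they state: ``It is not possible to obtain estimates by means of the Cameron-Martin formula, and instead one uses an induction argument.'' Your plan is a Cameron--Martin / Girsanov argument, and although the boundary-term cancellation you identify is a correct and useful observation, the remaining quadratic term does \emph{not} behave as you claim, and this is fatal.

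Concretely, you assert that after the boundary terms cancel, the correction integrals are ``of size $O(AT)$ on typical bridge paths.'' The dominant remaining term is $\tfrac{1}{2\ve}\int_0^{T} b(Z_r,r)^2\,dr \le \tfrac{A^2}{2\ve}\int_0^{T} Z_r^2\,dr$. For the Brownian bridge from $y$ to $0$ on $[0,T]$, the mean path is $y(1-r/T)$, so $E\int_0^{T}Z_r^2\,dr \approx y^2T/3$, and the term is of size $(AT)^2\,y^2/(\ve T)$, not $O(AT)$. The theorem requires uniformity over $y < -T\sqrt{\ve/(T-t)}$, which permits $|y|\to\infty$; there the correction grows without bound. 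Comparing with the target exponent $\ga(T-t)y^2/(\ve T^2)$, the ratio of Girsanov correction to tail exponent is $\sim (AT)^2\,T/(T-t)$, so your bound is vacuous once $T-t \lesssim (AT)^2\,T$ --- precisely a sub-range the theorem must cover. Moreover, your Itô-formula rewriting of the stochastic integral in terms of an antiderivative $B(\cdot,r)$ of $b(\cdot,r)$ produces the remainder $\ve^{-1}\int_0^T \pa_s B(Z_r,r)\,dr$, which requires control of $\pa b/\pa t$; assumption (\ref{A1}) bounds only $\pa b/\pa y$, so this term is uncontrolled under the hypotheses of the theorem. Finally, even granting the factorization, $p(z)/\tilde p(z)$ is a \emph{ratio of bridge expectations} $E_1(z)E_2(z)/E_3$ with $E_3 = \int E_1E_2\,\tilde p\,dz$; this ratio integrates to one against $\tilde p$ and therefore cannot be uniformly $1+O(AT)$, so the ``multiplicative factor $1+O(AT)$'' you write down does not exist. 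Your own closing paragraph correctly diagnoses that the uniform control is the hard part but defers its resolution to unspecified dyadic arguments; it is exactly this step that the paper resolves by a different route.

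The paper's actual proof replaces the Girsanov step with a dyadic induction on the interval length $T/2^N$ for sharp two-sided Green's function estimates (Lemma 5.2 for $G(y,0,0,T)$, Lemma 5.3 and Corollary 5.2 for general $\xi$, Lemma 5.5 for ratios $G_\la/G_0$ after a change of gauge, Lemma 5.6 for $G(y,\xi,0,T)/G(y,0,0,T)$), combined with parametrix/perturbation expansions of the Dirichlet Green's function on small intervals (\ref{V3})--(\ref{AL3}) and a sub-Gaussian control of $\sup_{[0,T]}|Y_\ve(s)|$ under the conditioned law (Lemma 5.4, Corollary 5.3). The theorem is then deduced by the decomposition (\ref{FC4})--(\ref{FD4}) together with Gaussian integral estimates. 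The key technical point that the induction captures, and that your argument does not, is that the relative error in the quadratic exponent can be forced to be $1\pm CAT$ (rather than a growing additive error $\sim(AT)^2 y^2/(\ve T)$) by propagating the sharp bound from each dyadic sub-interval to the next; this is exactly what makes the estimate uniform in $y$ and valid down to $T-t$ arbitrarily small.
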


In $\S 6$ we not only show that the expectation on the RHS of (\ref{O1}) with $\la_\ve(\cd,s)$ given by (\ref{P1}) equals the LHS.  We also obtain corresponding formulas for the first derivatives of $q_\ve(x,y,t)$ in $x$ and $y$.  An immediate consequence of this-Corollary 6.1- is that the fundamental solution $G_\ve$ for (\ref{B1}) satisfies the inequality
\be \label{V1}
G_\ve(y,x,t,T) \le [1+(T-t)A]u_\ve(x,y,t) \left[ \frac{-2\log u_\ve(x,y,t)}{\ve(T-t)} \right]^{1/2},
\ee
where $A$ is the constant in (\ref{A1}) and $u_\ve(x,y,t)$ is given by (\ref{D1}).  The inequality (\ref{V1}) appears to be nontrivial even in the case $b \equiv 0$, where it states that the cumulative distribution function $N(\cdot)$  for the standard normal variable,
\be \label{W1}
N(z) = \frac 1 {\sqrt{2\pi}} \ \int^z_{-\infty} \ \exp(-\rho^2/2) \ d\rho=
\frac{1}{2}+\frac{1}{2}  {\rm sign}(z){\rm  \ erf}\left(\frac{|z|}{\sqrt{2}}\right),
\ee
satisfies the inequality
\be \label{X1}
\exp(-z^2/2) \le 2\sqrt{\pi} N(z) \left[ -\log N(z) \right]^{1/2}, \ z \in \R.
\ee

Let us assume now that the function $b(y,t)$, in addition to satisying (\ref{A1}), is also concave in $y$ for each $t \le T$.  In $\S 2$ we show that in this case the function $q(x,y,t)$ of (\ref{J1}) is $C^1$ in $(x,y,t)$ and is a classical solution of the $\ve = 0$ Hamilton-Jacobi equation (\ref{H1}).  Furthermore, for any $t < T$ the function $q(x,y,t)$ is convex in $(x,y)$ and its second derivatives in $(x,y)$ exist and are continuous on the set $\{ (x,y,t) : x,y \in \R,  \ t < T,  \ y \not= F(x,t)\}$, where $F(x,t)$ is the function defined by (\ref{K1}).  In the Appendix we prove using the method of Korevaar \cite{gk, gp, kor} that the function $q_\ve(x,y,t)$ defined by (\ref{G1}) is also convex in $(x,y)$ for any $t < T$.  Although Korevaar's method is simple in concept, considerable difficulty arises here in its implementation due to the fact that we need to approximate  solutions of the {\it linear} equation (\ref{B1}) by solutions of a {\it quasi-linear } equation 
(\ref{V6}). Hence we need regularity theory-Proposition  A2- for solutions to quasi-linear equations \cite{fried, lieb}. Alternative approaches to Korevaar's method \cite{all, lm} seem to also give rise to comparable technical difficulties in the implementation.

The proof that for fixed $(x,t)$ the function $q_\ve(x,y,t)$  is convex in $y$ -Theorem A1-is much easier to establish than the joint convexity in $(x,y)$. Using this fact and the representation theorem of $\S 6$ we prove in $\S 7$ convergence of first derivatives of $q_\ve(x,y,t)$ in $(x,y)$ to first derivatives of $q(x,y,t)$ as $\ve \ra 0$.

\begin{theorem}  Assume $b(\cd,\cd)$ satisfies (\ref{A1}) and in addition that $b(y,t)$ is concave in $y$ for each $t \le T$.  Then $q(x,y,t)$ is $C^1$ in $(x,y,t)$ for $t < T$ and
\be \label{Y1}
\lim_{\ve \ra 0} \ \frac{\pa q_\ve}{\pa x} (x,y,t) = \frac{\pa q}{\pa x} (x,y,t), \ \ x,y \in \R, \ t < T,
\ee
\[ \lim_{\ve \ra 0} \ \frac{\pa q_\ve}{\pa y} (x,y,t) = \frac{\pa q}{\pa y} (x,y,t), \ \ x,y \in \R, \ t < T. \]
\end{theorem}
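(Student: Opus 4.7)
The plan is to combine the pointwise estimate $|q_\ve-q|\le C\sqrt\ve$ of Theorem~1.1 with two structural ingredients available under the concavity hypothesis on $b$. From $\S 2$, the limit $q(\cdot,\cdot,t)$ is $C^1$ on $\{t<T\}$; and from Theorem~A1 of the Appendix, the approximant $q_\ve(x,\cdot,t)$ is convex in $y$ for each fixed $(x,t)$ with $t<T$. Convergence of $\partial_y q_\ve$ then drops out of standard convex analysis, while convergence of $\partial_x q_\ve$ requires the representation formulas for the first derivatives of $q_\ve$ obtained in $\S 6$.

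For the $y$-derivative, fix $(x,t)$ with $t<T$ and view $\{q_\ve(x,\cdot,t)\}_{\ve>0}$ as a family of smooth convex functions on $\R$ converging pointwise to the finite convex function $q(x,\cdot,t)$. Such convergence is automatically locally uniform, and every accumulation point of the classical derivatives $\pa q_\ve/\pa y\,(x,y,t)$ lies in the subdifferential of $q(x,\cdot,t)$ at $y$. Since $q$ is $C^1$, this subdifferential is the singleton $\{\pa q/\pa y\,(x,y,t)\}$, which yields the second line of (\ref{Y1}).

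For the $x$-derivative, I would invoke the $\S 6$ representation of $\pa q_\ve/\pa x$ as an expectation along the controlled diffusion (\ref{N1})--(\ref{P1}), in tandem with the a priori bound (\ref{V1}) of Corollary~6.1. The identity $\pa q_\ve/\pa x = \ve G_\ve/u_\ve$, obtained by differentiating (\ref{D1}) and using (\ref{G1}), converts (\ref{V1}) into the uniform inequality $\pa q_\ve/\pa x\,(x,y,t) \le [1+(T-t)A]\sqrt{2q_\ve(x,y,t)/(T-t)}$. With this bound in hand one passes to the limit in the $\S 6$ formula using the already established $y$-derivative convergence, the convergence $q_\ve\to q$ of Theorem~1.1, and the tied-diffusion estimates of Theorem~1.2, identifying $\lim_{\ve\to 0}\pa q_\ve/\pa x\,(x,y,t)$ with the corresponding derivative of (\ref{J1}). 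The hard part is precisely this last limit: because the terminal data (\ref{I1}) is singular, $\pa q_\ve/\pa x\,(x,y,s)$ is not uniformly bounded as $s\uparrow T$, and the exponential localization of the controlled diffusion $y_\ve(\cdot)$ near its deterministic counterpart supplied by Theorem~1.2 is what justifies the interchange of limit and expectation near the terminal time.
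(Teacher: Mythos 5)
Your convex-analysis argument for the $y$-derivative is correct and genuinely simpler than the paper's route. Theorem~A1 gives convexity of $q_\ve(x,\cdot,t)$ in $y$, Theorem~1.1 gives pointwise convergence $q_\ve\to q$, and Proposition~2.2 gives $q(x,\cdot,t)\in C^1$; the Rockafellar-type theorem that pointwise convergence of finite convex functions to a differentiable convex limit forces convergence of gradients then does the rest. The paper, by contrast, proves the $y$-derivative limit the hard way, by subtracting the representation formulas \eqref{AC2} and \eqref{FP4} and controlling the difference via Lemmas~7.1--7.3 (a quantitative $L^2$ comparison of the stochastic optimal controller $\la_\ve(y_\ve(\cdot),\cdot)$ with the classical optimal controller $\la_c(\cdot)$). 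The paper's longer route is not wasted effort, however: it is exactly what produces the observation, stated after Theorem~1.3, that a H\"older assumption on $\pa b/\pa y$ would yield a \emph{rate} $\ve^{\rm power}$, which your soft argument cannot give.

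Where your proposal has a real gap is the $x$-derivative. Two separate criticisms apply. First, you overlooked the fact that you could run the identical convex-analysis argument jointly in $(x,y)$: the Appendix proves joint convexity of $q_\ve(\cdot,\cdot,t)$ on $\R^2$ (Theorem~A3, which the introduction flags explicitly), $q(\cdot,\cdot,t)$ is $C^1$ on $\R^2$ (Proposition~2.2), and Theorem~1.1 again gives pointwise convergence; so $\nabla_{(x,y)}q_\ve\to\nabla_{(x,y)}q$ follows at a stroke, with no need for the \S6 machinery at all. Second, the route you actually sketch---pass to the limit in the \S6 formula \eqref{FQ4} ``using the already established $y$-derivative convergence, the convergence $q_\ve\to q$, and the tied-diffusion estimates of Theorem~1.2''---misidentifies the crucial ingredient. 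Theorem~1.2 lives upstream: it is used in \S6 (via Lemma~6.1) to \emph{establish} the representation formulas, not to pass to the limit in them. What actually controls the limit in \eqref{FQ4} is Lemma~7.3, the bound $E\int_0^T[\la_\ve(y_\ve(s),s)-b(y_\ve(s),s)-\la_c(s)+b(y_c(s),s)]^2ds\le C\ve^{1/4}$, together with the localization of the controlled path in Lemma~7.2. Without an estimate of this kind, knowing $\pa q_\ve/\pa y\to\pa q/\pa y$ and $q_\ve\to q$ is not enough to identify the limit of the conditional expectation in \eqref{FQ4}, because the integrand couples $\la_\ve$ evaluated along the random path $y_\ve$ to $b$ and $\pa b/\pa y$ along the same path, and the $O(1)$ contribution from $s\uparrow T$ must be matched term by term against the classical representation \eqref{AI2}.
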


Theorem 1.3 gives no rate of convergence as $\ve \ra 0$ like in Theorem 1.1, but if one assumes some H\"{o}lder continuity of $\pa b(y,t)/\pa y$ in $y$,  then the proof of the theorem yields a rate of convergence which is a power of $\ve$.  It is of some interest to compare Theorem 1.3 to the results of Kifer \cite{kif} on the asymptotics of the fundamental solution $G_\ve(y,x,t,T)$ defined by (\ref{D1}) as $\ve \ra 0$.  In that paper asymptotic formulas are established by using the fact that $G_\ve(y,\cdot,t,T)$ is the probability density function for the random variable $Y_\ve(T)$ conditioned on $Y_\ve(t) = y$.  Estimates on the probability density are then obtained by using large deviation techniques \cite{fw}.  Emphasis in the paper is placed on the local nature of the result.  Thus the behavior of the drift $b(\cd,\cd)$ far away from the minimizing trajectory in (\ref{J1}) is shown to be largely irrelevant.

\section{A Classical Control Problem}
 Let $b(y,s), \ y \in \R, \ s \le T$, satisfy (\ref{A1}) and consider the control dynamics
\be\label{A2}
\frac{dy}{ds} = \la(s), \quad  t \le s \le T, \ \ \ y(t) = y,
\ee
where the controller $\la(s), \ t \le s \le T$, is assumed to be piece-wise continuous.  We shall be interested in the optimal control problem with cost function $q(x,y,t), \ x,y \in \R, \ t < T$,  defined by
\be\label{B2}
q(x,y,t) = \min_{\la(\cd)} \left\{ \frac 1 2 \int^T_t \left[\la(s) - b(y(s),s)\right]^2ds \ \Big| \ y(t) = y, \ y(T) > x \right\}.
\ee
Formally the function $q(x,y,t)$ of (\ref{B2}) satisfies the Hamilton-Jacobi equation,
\be\label{C2}
\frac{\pa q}{\pa t} + b(y,t) \ \frac{\pa q}{\pa y} - \frac 1 2 \; \left( \frac{\pa q}{\pa y}\right)^2 = 0.
\ee
Since the minimum in (\ref{B2}) is over paths $y(s), \ t \le s\le T$, satisfying $y(T)>x$, the terminal condition on the PDE (\ref{C2}) is given by
\begin{eqnarray}\label{D2}
\lim_{t\ra T} \ q(x,y,t) &=& \infty, \quad y < x, \\
\lim_{t\ra T} \ q(x,y,t) &=& 0, \quad y > x, \nn
\end{eqnarray}
The optimal controller $\la(\cd)$ for (\ref{B2}) is given by the formula
\be\label{Z2}
\la(s) =\la^*(x,y(s),s)= b(y( s),s) - \pa q(x,y(s),s)/\pa y, \quad  t \le s \le T,
\ee
and the Euler-Lagrange equation for the minimizing trajectory  by
\be \label{H2}
\frac d{ds} \left[ \frac{dy}{ds} - b(y(s),s) \right] + \frac {\pa b}{\pa y}(y(s),s) \left[ \frac{dy}{ds} - b(y(s),s) \right] = 0,  \quad t \le s \le T.
\ee

Our first goal is to prove that there exists a minimizer for the variational problem.  We have already observed  that if $F(\cdot,\cdot)$ is the function defined by (\ref{K1}), then $q(x,y,t)=0$ if $y\ge F(x,t)$. Evidently in this case there is a unique minimizer  $y(\cdot)$ for (\ref{B2}), which is the solution to the differential equation (\ref{K1}) with initial condition $y(t)=y$.
 For $y < F(x,t)$ we need to define a space of functions $y(s), \ t \le s \le T$, over which to minimize the expression in (\ref{B2}).  For any $f \in L^2[t,T]$ let $y(\cd)$ be determined from $f$ by
\be \label{G2}
y(s) = \ \ y \ \ + \int^s_t \ f(s')ds'.
\ee
Thus $y(\cd)$ is Holder continuous of order 1/2 on $[t, T]$ and $y(t) = y$.  We define $E_{x,y,t}$ to be the space of all such functions $y(\cd)$ with $f \in L^2[t,T]$ and $y(T) \ge x$.  The distance between 2 functions $y_1, y_2 \in E_{x,y,t}$ is given by the norm $\|y_1 - y_2\| = \|f_1 -f_2\|_2$, where $y_1$ corresponds to $f_1$ and $y_2$ to $f_2$ in (\ref{G2}).  Evidently the space $E_{x,y,t}$ is complete under this distance function. Now (\ref{H2}) indicates that on a minimizer $y(s), \ t \le s \le T$, for (\ref{B2}) the expression $y'(s) - b(y(s), s)$ does not change sign for $s$ in the interval  $[t,T]$.  We shall show that if $y < F(x,t)$ the sign is in fact positive.
\begin{proposition} Assume the function $b(\cd, \cd)$ satisfies (\ref{A1}).  Then there exists a minimizer $y(\cd) \in E_{x,y,t}$ of the variational problem (\ref{B2}).  Any minimizer $y(\cd)$ has the property that $y(\cd)$ is $C^1$ in $[t,T]$.  If $y < F(x,t)$ then $y'(s) > b(y(s),s), \ t \le s \le T$, and $y(T) = x$.  The function $q(x,y,t)$ of (\ref{B2}) is continuous for $(x,y) \in \R^2, \ t < T$.
\end{proposition}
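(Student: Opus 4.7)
My plan is to establish the four assertions in turn: existence of a minimizer by the direct method, $C^1$ regularity from the Euler--Lagrange equation, the endpoint condition $y(T)=x$ and the strict sign $y'>b(y,\cd)$ from transversality combined with monotonicity of the flow of $b$, and finally continuity of $q$ by separate upper and lower semicontinuity arguments. The main obstacle I anticipate is the last step, specifically continuity across the curve $y=F(x,t)$ that separates the trivial and nontrivial regimes of $q$.

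For existence I take a minimizing sequence $y_n(\cd) \in E_{x,y,t}$ and set $f_n(s) = y_n'(s) - b(y_n(s),s)$, so that $\|f_n\|_{L^2}^2 = 2J(y_n)$ is uniformly bounded, where $J$ denotes the functional in (\ref{B2}). From the integral equation $y_n(s) = y + \int_t^s [b(y_n(r),r)+f_n(r)]\,dr$ and the bound $|b(y,s)| \le |b(0,s)| + A|y|$ coming from (\ref{A1}), Gr\"onwall's inequality yields a uniform sup bound on $y_n$ depending only on $y$, $A$, $T-t$ and $\|f_n\|_{L^2}$, and hence a uniform bound on $\|y_n'\|_{L^2}$. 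Extracting a subsequence, $y_n \to y^*$ uniformly on $[t,T]$ and $y_n' \rightharpoonup (y^*)'$ weakly in $L^2$. Uniform convergence gives $b(y_n(\cd),\cd) \to b(y^*(\cd),\cd)$ uniformly, so $f_n \rightharpoonup (y^*)' - b(y^*(\cd),\cd)$ weakly in $L^2$; lower semicontinuity of the $L^2$ norm then yields $J(y^*) \le \liminf J(y_n)$, and since $y_n(T) \to y^*(T) \ge x$, $y^*$ lies in $E_{x,y,t}$ and is a minimizer.

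For regularity I would test the first variation of $J$ at $y^*$ against $\phi \in C_c^\infty(t,T)$, obtaining distributionally
\begin{equation*}
g'(s) + \frac{\pa b}{\pa y}(y^*(s),s)\,g(s) = 0, \qquad g(s) \DEF (y^*)'(s) - b(y^*(s),s).
\end{equation*}
Since $g \in L^2$ and $\pa b/\pa y$ is bounded, this forces $g \in H^1[t,T] \subset C[t,T]$, so $(y^*)' = b(y^*(\cd),\cd) + g$ is continuous and $y^*$ is $C^1$. Now suppose $y < F(x,t)$. Define the flow $\Phi^{t,T}(y_0) = z(T)$, where $z$ solves (\ref{K1}) with $z(t)=y_0$; this map is strictly increasing by ODE uniqueness, and $\Phi^{t,T}(F(x,t))=x$, so $y<F(x,t)$ is equivalent to $\Phi^{t,T}(y)<x$. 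If $y^*(T)>x$, then variations with $\phi(t)=0$ and $\phi(T)$ arbitrary are admissible, producing a boundary term $g(T)\phi(T)=0$, hence $g(T)=0$; the ODE for $g$ then yields $g\equiv 0$, so $y^*$ solves (\ref{K1}) with $y^*(t)=y$, giving $y^*(T)=\Phi^{t,T}(y)<x$, a contradiction. Thus $y^*(T)=x$, the constraint is active, admissible variations at $T$ satisfy $\phi(T)\ge 0$, so $g(T)\ge 0$; but $g(T)=0$ yields the same contradiction, hence $g(T)>0$, and by the linear ODE $g(s)>0$ throughout $[t,T]$.

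Finally for continuity I argue upper and lower semicontinuity at each $(x,y,t)$ separately. For upper semicontinuity: given $\eta>0$, fix $\bar y(\cd) \in E_{x,y,t}$ with $J_t(\bar y) \le q(x,y,t)+\eta$, and for $(x_n,y_n,t_n) \to (x,y,t)$ construct $\bar y_n(\cd) \in E_{x_n,y_n,t_n}$ by prepending or truncating a short linear segment to adjust the initial time and position and by adding a small linear perturbation to arrange $\bar y_n(T) \ge x_n$; a direct estimate shows $J_{t_n}(\bar y_n) = J_t(\bar y) + o(1)$, hence $\limsup q(x_n,y_n,t_n) \le q(x,y,t)+\eta$. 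For lower semicontinuity: the costs $q(x_n,y_n,t_n)$ are a priori bounded by upper semicontinuity, so the corresponding minimizers have uniformly bounded $\|f_n\|_{L^2}$; the compactness argument of the existence step produces a subsequential weak $H^1$ limit in $E_{x,y,t}$, and lower semicontinuity of $J$ yields $q(x,y,t) \le \liminf q(x_n,y_n,t_n)$. At a boundary point where $y=F(x,t)$ one has $q(x,y,t)=0$, so upper semicontinuity combined with the trivial bound $q\ge 0$ gives continuity there.
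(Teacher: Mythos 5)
Your proof is correct and structurally parallels the paper's, but the argument for the endpoint and sign conditions is genuinely different and, in my view, a bit cleaner. Where the paper deduces from the Euler--Lagrange identity that $(y'-b(y(\cd),\cd))V(\cd)$ is constant and then establishes $y(T)=x$ by an explicit path-surgery argument (if $y(T)>x$, follow the minimizer up to some $t_1<T$ and then switch to the zero-cost flow, producing a strictly cheaper admissible path), you instead invoke the transversality (natural boundary) condition: the boundary term $g(T)\phi(T)$ in the first variation forces $g(T)=0$ when the terminal constraint is inactive, and $g(T)\ge 0$ when it is active; the case $g(T)=0$ is excluded by the strict monotonicity of the flow $\Phi^{t,T}$ together with $y<F(x,t)$, and then the linear ODE for $g$ propagates positivity. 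This is the more standard optimal-control formulation of the same facts, and it makes the logic of ``zero cost $\Rightarrow$ follows the flow $\Rightarrow$ can't reach $x$'' explicit rather than implicit. The existence and regularity steps are essentially identical to the paper's (direct method plus bootstrapping the distributional Euler--Lagrange ODE to conclude $g\in H^1\subset C$), just organized as a minimizing-sequence argument rather than via abstract weak lower semicontinuity. Your continuity argument (upper semicontinuity by perturbing a near-optimal path, lower semicontinuity by compactness) is more detailed than the paper's terse ``one can argue in a similar way,'' and it is sound, though you should be a little more careful in the lower-semicontinuity step: the competitor minimizers live on varying domains $[t_n,T]$ with varying initial data, so one needs to reparametrize (or extend) them to a common domain before extracting a weak $H^1$ limit, and verify that the limit lands in $E_{x,y,t}$; the uniform convergence $\bar y_n(T)\ge x_n\to x$ handles admissibility, as you note.
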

\begin{proof} We define a functional ${\mathcal F}[y(\cd)]$ on $E_{x,y,t}$ by
\be \label{R2}
{\mathcal F}[y(\cd)] = \frac 1 2 \ \int^T_t \left[ \frac{dy}{ds} - b(y(s),s) \right]^2 \;ds.
\ee
Following the standard method \cite{rs} we show that ${\mathcal F}[\cd]$ is weakly lower semi-continuous on $E_{x,y,t}$.  Thus let $y_N(\cd), N \ge 1$, be a sequence in $E_{x,y,t}$ converging weakly to $y_\infty(\cd) \in E_{x,y,t}$.  Hence if $f_N, \; N \ge 1, \ f_\infty$ in $L^2[t,T]$ are associated with $y_N(\cd), N\ge 1$, and $y_\infty(\cd)$ respectively, we have that
\be \label{I2}
\lim_{N\ra \infty} \left< f,f_N \right> = \left< f,f_\infty \right>, \quad  f \in L^2[t,T].
\ee
From the uniform boundedness principle \cite{rs} it follows that $\displaystyle{\sup_{N\ge 1}}\|f_N\|_2 < \infty$.  It also follows from (\ref{I2}) that $\displaystyle{\lim_{N\ra \infty}} y_N(s) = y_\infty(s)$, $t \le s \le T$, and $\sup\{|y_N(s)| \ : \ N\ge 1, \; t\le s \le T\} < \infty$.  Hence by the dominated convergence theorem one has that
 \be \label{J2}
\lim_{N\ra \infty} \ \int^T_t \ b( y_N(s),s)^2\; ds = \int^T_t \ b( y_\infty(s),s)^2\; ds.
\ee
Using the uniform boundedness of the $f_N, \ N \ge 1$, we also have that
\[ \lim_{N \ra \infty} \int^T_t \ \left[b( y_N(s),s) - b(y_\infty (s),s)\right] f_N(s)ds = 0.  \]
Hence using (\ref{I2}) again we conclude that
\be \label{K2}
\lim_{N \ra \infty} \int^T_t \ b( y_N(s),s)f_N(s)ds = \int^T_t \ b( y_\infty(s),s)f_\infty(s)ds.
\ee
Now (\ref{J2}), (\ref{K2}) imply that
\begin{multline*}
\liminf_{N \ra \infty} {\mathcal F}[y_N(\cd)] = \frac 1 2 \ \liminf_{N \ra \infty} \int^T_t \left[ \frac{dy_N(s)}{ds}\right]^2ds \\
-\int^T_t b(y_\infty(s),s) \frac{dy_\infty(s)}{ds} \ ds + \frac 1 2 \; \int^T_t b(y_\infty(s),s)^2 \; ds.
\end{multline*}
The lower semi-continuity of ${\mathcal F}[\cd]$ on $E_{x,y,t}$ follows from the inequality,
\[ \frac 1 2 \  \int^T_t \left[ \frac{dy_\infty(s)}{ds}\right]^2 ds \le \frac 1 2 \ \liminf_{N \ra \infty} \int^T_t \left[ \frac{dy_N(s)}{ds}\right]^2  ds,   \]
which is a consequence of the convexity of the Dirichlet form \cite{rs}.  One easily concludes from the lower semi-continuity of ${\mathcal F}[\cd]$ the existence of a minimizer $y(\cd) \in E_{x,y,t}$.

Suppose now $y(\cd) \in E_{x,y,t}$ is a minimizer for ${\mathcal F}[\cd]$.  Then the first variation of ${\mathcal F}[\cd]$ about $y(\cd)$ must be 0, whence
\be \label{L2}
\int^T_t \left[ \frac{d\varphi(s)}{ds} - \frac{\pa b}{\pa y}(y(s),s) \ \varphi(s)\right] 
\left[ \frac{dy(s)}{ds} - b(y(s),s)\right]ds = 0,
\ee
provided $\vp(\cd)$ is a $C^1$ function satisfying $\vp(t) = 0$, $\vp(T) = 0$.  Setting
\[    \vp(s) = \psi(s) \exp \left[ \int^s_t \frac{\pa b}{\pa y} \big( y(s'), s'\big) ds' \right] = \psi(s) V(s),  \]
it follows from (\ref{L2}) that 
\be \label{M2}
\int^T_t \ \frac{d\psi}{ds} \left[ \frac{dy}{ds} - b( y(s), s) \right] V(s)ds = 0,
\ee
for all $C^1$ functions $\psi : [t, T] \ra \R$ with $ \psi(t) = \psi(T) = 0$.  Equation (\ref{M2}) implies that
\be \label{X2}
\left[ \frac{dy}{ds} - b(y(s), s)\right] V(s) = \ {\rm constant}, \ \ t \le s \le T,
\ee
from which we may conclude that if $y < F(x,t)$ then $y'(s) >  b(y(s), s) $ for all $ s, \; t \le s \le T,$ and $y(\cd) $ is $C^1$.  It also follows that $y(T) = x $, for if $y(T) > x $ then there exists $t_1 < T$ such that if $y_1(s), \; t_1 \le s \le T $, satisfies $y_1(t_1) = y(t_1), y'_1(s)  = b(y_1(s), s)$, $t_1 \le s \le T$, then $y_1(T) > x$.  Evidently the function $y^*(s), \; t \le s \le T$, defined by $y^*(s) = y(s), \; t \le s \le t_1$, $y^*(s) = y_1(s),\ t_1 \le s \le T$, is in $E_{x,y,t}$ and satisfies ${\mathcal F} \Big[ y^*(\cd)\Big] < {\mathcal F}\Big[ y(\cd) \Big]$, yielding a contradiction.  One can argue in a similar way to prove the continuity of the function $q(x,y,t), (x,y) \in \R^2, \ t < T$. 
\end{proof}
 We have already observed that for $y \ge F(x,t)$ there is a unique minimizer $y(\cd)  \in E_{x,y,t}$ for the variational expression (\ref{B2}) and it is given by the solution $y(\cd)$ of equation (\ref{K1}) with initial condition $y(t) = y$.  For $y < F(x,t) $ we need to impose some condition on the function $b(\cd, \cd)$ beyond (\ref{A1}) to guarantee a unique minimizer.  To see what such a condition should be let us suppose that $y(s), \ t \le s \le T $, is a solution of the Euler-Lagrange equation (\ref{H2}) with initial conditions satisfying 
\be \label{N2}
y(t) = y, \ \ y'(t) > b(y,t).
\ee
Hence (\ref{H2}) implies that $y'(s) > b(y(s), s), \; t \le s \le T$.  Suppose now that $y(s) + \vp(s), \ t \le s \le T$, is also a solution to (\ref{H2}) with $\vp(t) = 0, \ \vp'(t) = \ve $.  Then to first order in $\ve $ the function $\vp(s), \; t \le s \le T $, satisfies the linear equation
\begin{multline} \label{O2}
\frac{d^2\vp}{ds^2} - \frac d{ds} \left[ \frac{\pa b}{\pa y}(y(s), s) \vp(s) \right] + \frac{\pa b}{\pa y} (y(s), s)\frac{d\vp(s)}{ds} \\
- \left[ \frac{\pa b}{\pa y} (y(s), s)\right]^2\; \vp(s) + \frac{\pa^2 b}{\pa y^2} (y(s), s) 
\left[ \frac{dy}{ds} - b(y(s), s)\right] \vp(s) = 0. 
\end{multline}
Suppose now that $\vp(\tau) = 0 $ for some $\tau, \; t < \tau \le T$.  Then on multiplying (\ref{O2}) by $\vp(s) $ and integrating over the interval $t \le s \le \tau $ we get
\begin{multline} \label{P2}
- \int^\tau_t \left[ \frac{d\vp(s)}{ds}\right]^2 \; ds + 2 \int^\tau_t \frac{\pa b}{\pa y} (y(s), s) \vp(s) \frac{d\vp(s)}{ds} ds \\
- \int^\tau_t \left[ \frac{\pa b}{\pa y}( y(s), s)\right]^2 \vp(s)^2 ds - \int^\tau_t V(s) \vp(s)^2 \;ds, 
\end{multline}
where $V(s) $ is given by the formula
\be \label{Q2}
V(s) = - \ \frac{\pa^2 b}{\pa y^2}( y(s), s) \left[ \frac{dy}{ds} - b(y(s), s)\right]. 
\ee 
Observe that by the Schwarz inequality we have 
\[  2 \int^\tau_t \frac{\pa b}{\pa y}( y(s), s)\vp(s) \frac{d\vp(s)}{ds}\; ds \le \int^\tau_t \left(\frac{d\vp(s)}{ds}\right)^2 
+ \int^\tau_t \left[ \frac{\pa b}{\pa y}( y(s), s) \right]^2 \; \vp(s)^2\; ds,   \]
with strict inequality in general.  Thus if $V(\cd)$ in (\ref{Q2}) is non-negative the expression (\ref{P2}) is strictly negative in general.  Since $V(\cd) $ is non-negative if the function $b(y,s) $ is concave in $y $, it appears that one gets a contradiction to the fact that (\ref{P2}) is zero when one assumes that $b(y,s)$ is concave in $y, \; t \le s \le T$.  We conclude therefore that the trajectories $y(\cd)$ of the Euler-Lagrange equation (\ref{H2}) which satisfy (\ref{N2}) are non intersecting.  In particular, for $y < F(x,t) $ there is exactly one which has the property that $y(t)=y, \ y(T) = x $ .  We make this argument rigorous in the following:

\begin{proposition}  Assume the function $b(\cd, \cd) $ satisfies (\ref{A1}) and that $b(y,s) $ is concave in $y$ for $y \in \R, \ s \in [t, T] $.  Then the minimizer $y(\cd) \in E_{x,y,t}$ of the variational problem (\ref{B2}) is unique for all $(x,y) \in \R^2$.  Furthermore the function $q(x,y,t)$ of (\ref{B2}) is $C^1$ for $(x,y) \in \R^2$, $t < T$.
\end{proposition}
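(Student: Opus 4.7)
The plan is to establish uniqueness of the minimizer by making the non-intersection heuristic surrounding (\ref{O2})--(\ref{Q2}) rigorous, and then to deduce $C^1$-regularity of $q(x,y,t)$ from envelope-type formulas for its first derivatives combined with continuous dependence of the minimizer on boundary data. For uniqueness only $y<F(x,t)$ requires argument, since for $y\ge F(x,t)$ the minimizer was already shown to be the characteristic.

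Suppose there are two distinct minimizers $y_1(\cd), y_2(\cd)\in E_{x,y,t}$. By Proposition 2.1 both are $C^1$ solutions of (\ref{H2}) with $y_i(t)=y$, $y_i(T)=x$, and $y_i'(s)>b(y_i(s),s)$ on $[t,T]$. By uniqueness of the initial-value problem for (\ref{H2}) one must have $y_1'(t)\ne y_2'(t)$. I would parameterise solutions of (\ref{H2}) issued from $y$ at time $t$ by initial slope, writing $y_\la(\cd)$ with $y_\la'(t)=\la$; Rolle's theorem applied to the $C^1$ map $g(\la)=y_\la(T)$, which takes the value $x$ at both $\la=y_1'(t)$ and $\la=y_2'(t)$, yields an intermediate $\al_0$ with $g'(\al_0)=0$. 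The field $\vp(s)=\pa y_\la(s)/\pa\la|_{\la=\al_0}$ is then a Jacobi field obeying (\ref{O2}) with $\vp(t)=0$, $\vp'(t)=1$, and $\vp(T)=0$. Multiplying (\ref{O2}) by $\vp$, integrating on $[t,T]$, and completing the square via $-(\vp')^2+2(\pa b/\pa y)\vp\vp'-(\pa b/\pa y)^2\vp^2=-(\vp'-(\pa b/\pa y)\vp)^2$ converts (\ref{P2}) into
\[
\int_t^T\left[\vp'(s)-\frac{\pa b}{\pa y}(y_{\al_0}(s),s)\vp(s)\right]^2 ds + \int_t^T\left[-\frac{\pa^2 b}{\pa y^2}(y_{\al_0}(s),s)\right]\left[y_{\al_0}'(s)-b(y_{\al_0}(s),s)\right]\vp(s)^2\,ds = 0.
\]
Concavity of $b$ in $y$ together with $y_{\al_0}'-b>0$ makes both integrands non-negative, so each must vanish; vanishing of the first term forces $\vp'=(\pa b/\pa y)\vp$ on $[t,T]$, and with $\vp(t)=0$ this gives $\vp\equiv 0$, contradicting $\vp'(t)=1$. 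This proves uniqueness.

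For the $C^1$ claim, denote the unique minimizer by $y_*(\cd)=y_*(\cd;x,y,t)$. Perturbing the boundary conditions, differentiating under the integral, and integrating by parts while exploiting (\ref{H2}) to kill interior terms produces the envelope formulas
\[
\frac{\pa q}{\pa y}(x,y,t)=-\left[y_*'(t)-b(y,t)\right],\qquad\frac{\pa q}{\pa x}(x,y,t)=y_*'(T)-b(x,T)
\]
for $y<F(x,t)$; both derivatives equal $0$ for $y\ge F(x,t)$, either because $q\equiv 0$ (when $y>F(x,t)$) or because the minimizer is the characteristic on which $y_*'-b\equiv 0$ (when $y=F(x,t)$). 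Continuity on the open region $\{y<F(x,t)\}$ follows from a compactness-and-uniqueness argument: any weak limit in $E_{x,y,t}$ of minimizers for $(x_n,y_n)\to(x,y)$ is itself a minimizer and hence equals $y_*(\cd;x,y,t)$, and since each minimizer solves the ODE (\ref{H2}) the convergence upgrades to $C^1([t,T])$, giving continuity of $y_*'(t)$ and $y_*'(T)$ in $(x,y)$.

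The main obstacle is continuity across the free boundary $\{y=F(x,t)\}$. I would argue that if $y_n<F(x_n,t)$ and $(x_n,y_n)\to(x_\infty,F(x_\infty,t))$, then by continuity of $q$ (Proposition 2.1) one has $q(x_n,y_n,t)\to 0$ and so $(y_*^{(n)})'-b(y_*^{(n)},\cd)\to 0$ in $L^2([t,T])$; together with the monotonicity of the shooting map $\la\mapsto y_\la(T)$ --- itself a byproduct of the uniqueness argument, since the Jacobi field with $\vp(t)=0$, $\vp'(t)=1$ is strictly positive on $(t,T]$ --- this forces $y_*^{(n)}$ to converge in $C^1([t,T])$ to the characteristic through $(x_\infty,T)$. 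Consequently both first derivatives of $q$ tend to $0$ as $(x,y)$ approaches the free boundary from below, matching the identically-zero values on the other side and completing the proof of $C^1$ regularity.
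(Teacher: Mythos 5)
Your uniqueness argument makes rigorous the heuristic surrounding (\ref{O2})--(\ref{Q2}), but it requires $b(y,t)$ to be twice differentiable in $y$, and Proposition 2.2 only assumes (\ref{A1}), i.e.\ a \emph{continuous first} derivative $\pa b/\pa y$, plus concavity. The Jacobi equation (\ref{O2}) involves $\pa^2 b/\pa y^2$; the potential $V(s)=-\pa^2 b/\pa y^2 \cdot (y'-b)$ appearing in your completed square is undefined for a merely $C^1$ concave $b$. The same issue infects the shooting map: to assert that $g(\la)=y_\la(T)$ is $C^1$ in $\la$, and even to invoke ``uniqueness of the initial-value problem for (\ref{H2})'' (so that two minimizers have distinct initial slopes), you need the first-order system obtained from (\ref{AQ2}) to have a locally Lipschitz right-hand side, which fails when $\pa b/\pa y$ is only continuous. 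The paper deliberately makes the concavity/twice-differentiability assumption only in Proposition 2.3, and proves Proposition 2.2 by a different route that avoids the Jacobi field entirely: it establishes the convexity inequality (\ref{S2}) for the functional $\mathcal{F}$ on the convex set $E=\{y(\cd)\in C^1: y'\ge b(y,\cd)\}$ using only the pointwise concavity of $b(\cd,s)$, then deduces strict convexity and hence uniqueness directly. That argument is algebraic rather than infinitesimal and needs nothing beyond (\ref{A1}) and concavity.

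Granting $C^2$ regularity, your Rolle/Jacobi-field derivation of a contradiction is correct, and the envelope formulas you quote for $\pa q/\pa y$ and $\pa q/\pa x$ agree with the paper's (\ref{AF2}) and (\ref{AJ2}) after unwinding (\ref{Y2}). Your sketch of continuity across the free boundary is plausible but compressed; the paper instead establishes the continuous-dependence estimate (\ref{U2}) via (\ref{S2}) and uses it together with the one-sided difference-quotient bounds (\ref{AC2}), (\ref{AE2}) to get both existence and continuity of $\pa q/\pa y$ (and similarly for $\pa q/\pa x$) in one pass, including at $y=F(x,t)$. If you wish to salvage your route under the stated hypotheses, you would need an approximation argument (mollify $b$ in $y$ by concave $C^2$ approximants, prove uniqueness there, pass to the limit), which is more work than the paper's direct convexity proof.
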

\begin{proof} Since the minimizer is clearly unique for $y \ge F(x,t)$ we assume  $y < F(x,t)$.  We show that the functional ${\mathcal F}[\cd] $ of (\ref{R2}) has a convexity property provided $b(y,s)$ is concave in $y,\; t \le s \le T$. Let $E$ be the set of $C^1$functions $y(\cd)$ on $[t,T]$ which satisfy $y'(s) \ge b(y(s), s)$, $t \le s \le T$.  It is evident that $E$ is convex in $y$ for $t \le s \le T$, in the following sense:
\be \label{S2}
y_1(\cd), \  y_2(\cd), \  \la y_1(\cd) + ( 1 - \la) y_2(\cd) \in E, \quad  0 \le \la \le 1,
\ee
implies
\[   {\mathcal F} \left[ \la y_1(\cd) + ( 1 - \la) y_2(\cd)\right] \le \la {\mathcal F}  \left[ y_1(\cd)\right] + ( 1 - \la){\mathcal F}\left[ y_2(\cd) \right].  \]
To prove (\ref{S2}) we write
\begin{multline*}
{\mathcal F} \left[ \la y_1(\cd) + ( 1 - \la) y_2(\cd)\right]  
= \frac 1 2 \int^T_t \bigg[ \la \left\{ \frac{dy_1}{ds} - b(y_1(s), s)\right\}  + (1-\la) \left\{ \frac{dy_2}{ds} - b(y_2(s), s)\right\}   \\
- \left\{ b( \la y_1(s) + ( 1 - \la) y_2(s),s ) - \la  b(y_1(s), s) - (1-\la)  b(y_2(s), s) \right\} \bigg]^2 ds.
\end{multline*}
Since $y_1(\cd) \ y_2(\cd) \in E$ and $b(y,s)$ is concave in $y$, $t \le s \le T$, each term in the last expression inside curly braces is non-negative.  Assuming also that $\la y_1(\cd) + ( 1 - \la) y_2(\cd) \in E$ we have that
\begin{multline*}\
0 \le b( \la y_1(s) + ( 1 - \la) y_2(s),s) - \la  b(y_1(s), s) - (1-\la)  b(y_2(s), s) \\
\le 2\left[ \la \left\{ \frac{dy_1}{ds} - b(y_1(s), s)\right\}  + (1-\la) \left\{ \frac{dy_2}{ds} - b(y_2(s), s)\right\}\right], \quad t \le s \le T.
\end{multline*}
We conclude therefore that 
\begin{multline*}
{\mathcal F} \left[ \la y_1(\cd) + ( 1 - \la) y_2(\cd)\right] 
\le \frac 1 2 \int^T_t \left[ \la \left\{ \frac{dy_1}{ds} - b(y(s), s)\right\}  + (1-\la) \left\{ \frac{dy_2}{ds} - b(y(s), s)\right\} \right]^2 ds  \\
\le \la \;  {\mathcal F} [ y_1(\cd)] + ( 1 - \la) {\mathcal F}[y_2(\cd)] ,
\end{multline*}
and hence (\ref{S2}) holds.

The uniqueness of the minimizer $y(\cd) \in E_{x,y,t}$ follows from the strict convexity of ${\mathcal F}[\cd]$ in the sense of (\ref{S2}).  Let us assume $y_1(\cd) \ y_2(\cd) \in E_{x,y,t}$ are two minimizers where $y < F(x,t)$.  Then by Proposition 2.1 the functions $y_1(\cd), \  y_2(\cd)$ are in the set $E$ and for sufficiently small $\la > 0$ the function $\la y_1(\cd) + (1-\la) y_2(\cd)$ is also in $E$, whence  (\ref{S2}) implies that $\la y_1(\cd) + (1-\la) y_2(\cd)$ is a minimizer.  From the strict convexity of ${\mathcal F}[\cd]$ we have then  that
\[	\frac{dy_1}{ds} - b(y_1(s), s) = \frac{dy_2}{ds} - b(y_2(s), s), \quad t \le s \le T.   \]
Since $y_1(t) = y_2(t) = y$ we conclude from this last identity that $y_1(s) = y_2(s), \ t \le s \le T$, and so the uniqueness of the minimizer.  

To show that the function $q(x,y,t)$ is $C^1$ we consider the optimal control $\la^*(x,y,t) = y'(t)$ where $y(\cd) \in E_{x,y,t}$ is the unique minimizer for the variational problem (\ref{B2}).  Evidently $\la^*(x,y,t) = b(y,t)$ if $y \ge F(x,t)$.  We first prove that $\la^*(x,y,t)$ is continuous in $(x,y,t)$ for $(x,y) \in \R^2, \ t < T$.  To do this let $D_{x,y}(\del) \subset \R^2$ be the disc of radius $\del > 0$ centered at $(x,y)$.  Then there exists a constant $K(\del) > 0$ depending only on $\del$ such that
\be \label{T2}
\int^T_t \left[ \frac{dz(s)}{ds} \right]^2 \le K(\del), \quad  z(\cd) \in E_{x',y',t} \ , \ (x',y') \in D_{x,y}(\del),	
\ee	
where $z(\cd)$ is the minimizer of the variational problem.  To see (\ref{T2}) observe that
\[	{\mathcal F}[z(\cd)] \ge \frac 1 4 \int^T_t \left[ \frac{dz(s)}{ds} \right]^2 ds - \frac 1 2 \int^T_t b( z(s), s)^2 ds.  \]
Now from (\ref{A1}) one has that 
\[    |b( z(s), s)| \le | b( z(t), s)| + A \ \int^T_t \left| \frac{dz}{ds'}\right| ds', \quad  t \le s \le T.	\]
Hence from the Schwarz inequality we have that 
\[	{\mathcal F}[z(\cd)] \ge \frac 1 8 \int^T_t \left[ \frac{dz(s)}{ds} \right]^2 ds - K'(\del),	\]
where $K'(\del)$ is a constant depending on $\del$.  Now (\ref{T2}) follows from this last inequality and the continuity of the function $q(\cd, \cd, t)$ on $D_{x,y}(\del)$.

Next we show that for any $\ve > 0$ there exists $\del > 0$ such that
\be \label{U2}
\int^T_t \left[ \frac{dy}{ds} - \frac{dz}{ds} \right]^2 ds < \ve, \quad z(\cd) \in E_{x',y'}, \ (x',y') \in D_{x,y}(\del),
\ee
where $y(\cd) \in E_{x,y,t}$ is the minimizer for (\ref{B2}) and $z(\cd) \in E_{x',y',t}$ is also the minimizer.  The inequality (\ref{U2}) follows from the convexity (\ref{S2}) of the functional ${\mathcal F}[\cd]$.  We first consider the situation $y \ge F(x,t)$, where the minimizer $y(\cd) \in E_{x,y,t}$ satisfies $y'(s)=b( y(s), s)$ and $q(x,y,t) = 0$.  Thus for $\ve_1 >0$ there exists $\del_1 > 0$ and
\be \label{V2}
{\mathcal F}[z(\cd)] < \ve_1, \quad z(\cd) \in E_{x',y',t}, \ \ (x',y') \in D_{x,y}(\del_1).
\ee
We can restate (\ref{V2}) as $z(\cd)$ satisfies the initial value problem
\[	\frac{dz}{ds} = b( z(s), s) + f(s), \quad  t \le s \le T, \ \ z(t) = y'\; ,\]
where $\| f\|_2 < \sqrt{2\ve_1}$.  Putting now $\vp(s) = z(s) - y(s)$ it follows from (\ref{A1})  that $\vp(s)$ satisfies the initial value problem 
\be \label{WW2}
	\frac{d\vp}{ds} = a(s) \vp(s) + f(s), \quad  t \le s \le T, \  \vp(t) = y' - y\; , 
\ee
where $\displaystyle{\sup_{t\le s \le T}} |a(s)| \le A$.  It follows that there are constants $C_1,C_2 > 0$ such that  
\be \label{W2}
\sup_{t\le s \le T} |z(s) - y(s)| \le C_1 |y' - y| + C_2\sqrt{\ve_1}.
\ee 
We write the LHS of (\ref{U2}) as
\[\int^T_t \left\{ \left[ b( y(s), s) - b( z(s), s) \right] + \left[ b( z(s), s) - \frac{dz}{ds}\right]\right\}^2 ds\]
\[  \le 2 \int^T_t  \left[ b( y(s), s) - b( z(s), s) \right]^2 + 4 {\mathcal F}[z(\cd)].  \]
The inequality (\ref{U2}) follows from this last inequality and (\ref{V2}), (\ref{W2}).

We prove (\ref{U2}) for $y < F(x,t)$.  First let $\del_1 > 0$ be such that closure of $D_{x,y}(\del_1)$ lies in the set $\{(x',y') \in \R^2 : y' < F(x',t)\}$.  Then it follows from (\ref{T2}) that there exists $\la_0, \ 0 < \la_0 < 1$, such that
\[	\la_0 z(\cd) + (1 - \la_0)y(\cd) \in E, \quad z(\cd) \in E_{x',y',t},  \ (x',y') \in D_{x,y}(\del_1), \]
where $y(\cd) \in E_{x,y,t} $ and $z(\cd) \in E_{x',y',t} $ are the minimizers for (\ref{B2}).  Since $z(\cd)$ and $y(\cd)$ are also in $E$ we may use the convexity (\ref{S2}) of the functional ${\mathcal F}[\cd]$.  In particular we have that
\begin{multline*}
 {\mathcal F} [\la_0 z(\cd) + (1 - \la_0)y(\cd)] \le \la_0 {\mathcal F}[z(\cd)] + (1 - \la_0) {\mathcal F}[y(\cd)]	\\\
\frac{-\la_0(1-\la_0)}{2} \int^T_t \left\{ \frac{dy}{ds} - \frac{dz}{ds} + b( z(s), s) - b( y(s), s) \right\}^2 ds.
\end{multline*}
Using the continuity of the function $q(\cd, \cd, t)$ at $(x,y)$ we conclude from the last inequality that there exists $\del_2, \ 0 < \del_2 < \del_1$ such that
\be \label{VV2}
\frac 1 2 \int^T_t \left\{ \frac{dy}{ds} - \frac{dz}{ds} + b( z(s), s) - b( y(s), s) \right\}^2 ds < \ve_2, \quad z(\cd) \in E_{x',y',t} \ , \ (x',y') \in D_{x,y}(\del_2),
\ee
where again $y(\cd) \in E_{x,y,t} $ and $z(\cd) \in E_{x',y',t} $ are the minimizers for (\ref{B2}).  Here $\ve_2 > 0$ can be chosen arbitrarily and $\del_2$ depends on $\ve_2$.  Now we may argue as for the case when $y \ge F(x,t)$.  Thus letting $\vp(s) = z(s) - y(s)$ we have that $\vp(s)$ satisfies the equation (\ref{WW2}) with $\|f\|_2 < \sqrt{2\ve_2}$.  Hence we obtain an inequality analogous to (\ref{W2}), which together with (\ref{VV2}) implies (\ref{U2}).

The continuity of $\la^*(x,y,t)$ in $(x,y)$ follows easily from (\ref{U2}) upon using (\ref{X2}).  Thus for a minimizer of (\ref{B2}), $z(\cd) \in E_{x',y',t}$ one has
\be \label{Y2}
\frac{dz}{ds} - b(z(s),s) = A(x',y',t) \exp \left[ -\int^s_t \frac{\pa b}{\pa y}( z(s'), s')ds' \right], \ t \le s \le T.
\ee
where $\la^*(x',y',t) = b(y',t) + A(x',y',t)$.  Evidently (\ref{U2}) implies that the function $A(\cd, \cd, t)$ is continuous at $(x,y)$.  Finally we observe that the continuity of $\la^*(x,y,t)$ as a function of $(x,y,t)$ for $(x,y) \in \R^2, \ t < T$, follows from (\ref{Y2}).  In fact if $y(\cd) \in E_{x,y,t}$ is the minimizer for (\ref{B2}) then (\ref{Y2}) implies that for fixed $x$ the function $s \ra \la^*(x,y(s), s)$ is continuous, $t \le s < T$.  Hence if we combine this with the previous argument on the continuity of $\la^*(\cd, \cd, t)$ for fixed $t$ we obtain the continuity of $\la^*(\cd,\cd,\cd)$ in all three variables.

We prove the $C^1$ property of the function $q(x,y,t), (x,y) \in \R^2, \ t < T$.  First we observe that there is differentiability of the function $q$ in a least one direction.  Thus
\be \label{AA2}
- \frac d{ds} \; q( x,y(s), s)\Big|_{s=t}  = \frac 1 2 \left[ \la^*(x,y,t) - b(y,t) \right]^2,
\ee
where $y(\cd) \in E_{x,y,t}$ is the minimizer for (\ref{B2}).  We use the continuity of the function $\la^*(\cd, \cd, \cd)$ to show differentiability in other directions.  Let us assume that $y < F(x,t)$ and $\De y$ small enough so that $|\De y| < F(x,t) - y$.  Then
\begin{multline} \label{AB2}
 q(x,y + \De y, t) - q(x,y,t)  \le \ \  - \frac 1 2 \int^T_t \left[ \la^*(s) - b( y(s), s) \right]^2 ds \\ 
+  \frac 1 2 \int^T_t\left[ \la^*(s) - \De y/(T-t) - b( y(s) + (T-s)\De y/(T-t), s) \right]^2 ds,
\end{multline}
where $y(\cd) \in E_{x,y,t}$ is the minimizer for (\ref{B2}) and $\la^*(s) = y'(s), \ t \le s \le T$.  Letting $\De y \ra 0$ in (\ref{AB2}) we conclude that 
\begin{multline}  \label{AC2}
\limsup_{\De y \ra 0} \left[ q(x,y + \De y, t) - q(x,y,t)\right] \Big/  \De y \le \\
- \frac 1{T-t} \int^T_t \left[ 1 + (T-s) \frac{\pa b}{\pa y}( y(s), s ) \right] \left[ \la^*(s) - b( y(s), s)\right]ds.  
\end{multline}
Alternatively let $y_\De(\cd) \in E_{x,y +\De y,t}$ be the minimizer for (\ref{B2}) and $\la^*_\De(s) = y'_\De(s), \ t \le s \le T$.  Then one also has
\begin{multline}  \label{AD2}
q(x,y + \De y, t) - q(x,y,t) \ge \frac 1 2 \int^T_t \left[ \la^*_\De(s) - b( y_\De(s), s)\right]^2\;ds \\
- \frac1 2 \int^T_t \left[ \la^*_\De(s) + \De y/ (T-t) - b( y_\De(s) - (T-s)\De y/ (T-t), s )\right]^2\;ds.  
\end{multline}
It follows from (\ref{AD2}) by using (\ref{U2}), (\ref{Y2}) and the continuity of the function $\la^*(\cd, \cd, \cd)$ that 
\begin{multline}  \label{AE2}
\liminf_{\De y \ra 0} \left[ q(x,y + \De y, t) - q(x,y,t)\right] \Big/ \De y \ge \\
- \frac 1{T-t} \int^T_t \left[ 1 + (T-s) \frac{\pa b}{\pa y}( y(s), s) \right] \left[ \la^*(s) - b( y(s), s)\right]ds.  
\end{multline}
The differentiability of $q(x,y,t)$ w.r. to $y$ follows from (\ref{AC2}), (\ref{AE2}).  Using (\ref{U2}), (\ref{Y2}) again we also see from the formula on the RHS of (\ref{AC2}) that $\pa q(x,y,t)/\pa y$ is continuous in $(x,y,t)$ for $y < F(x,t), \ t<T$.  It is easy to extend this argument to show that $\pa q(x,y,t)/\pa y$ exists for all $y \in \R$ and the derivative is continuous in $(x,y,t)$ for $(x,y) \in \R^2, \; t < T$.  This follows from the fact that the formula on the RHS of (\ref{AC2})  is zero if $y = F(x,t).$

One can see by a similar argument that $q(x,y,t)$ is differentiable w.r. to $x$ and that $\pa q(x,y,t)/\pa x$ is continuous for $(x,y) \in \R^2, \; t < T$.  Finally (\ref{AA2}) and the fact that $\pa q(x,y,t)/\pa y$ is continuous shows that $q(x,y,t)$ is differentiable w.r. to $t$ and $\pa q(x,y,t)/\pa t$  is continuous in $(x,y,t)$, $(x,y) \in \R^2, \; t < T$.  We have shown that the function $q(x,y,t)$ is $C^1$ for $(x,y) \in \R^2, \; t < T$. 
\end{proof}
\begin{corollary}  Assume $b(\cd, \cd)$ satisfies the conditions of Proposition 2.2, $q(x,y,t)$ is the function defined by (\ref{B2}), and $\la^*(x,y,t)$ is the corresponding optimal control, $(x,y) \in \R^2$, $t < T$.  Then there are the identities,
\begin{eqnarray} \label{AF2}
\pa q(x,y,t)/\pa y &=& b(y,t) - \la^*(x,y,t), \\
\pa q(x,y,t)/\pa t &=& \frac 1 2 \left[ \la^*(x,y,t)^2 - b(y,t)^2 \right] .\nn
\end{eqnarray}
Furthermore, for $y < F(x,t)$ there are the inequalities 
\be \label{AG2}
\frac {\pa q(x,y,t)}{\pa y} < 0, \quad  \frac {\pa q(x,y,t)}{\pa x} > 0.
\ee
\end{corollary}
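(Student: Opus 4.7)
The plan is to read the two identities directly off the variational formulas established in the proof of Proposition 2.2, and then to obtain the sign statements via a short perturbation argument in the variable $x$.

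For $\pa q/\pa y$, combining (\ref{AC2}) with (\ref{AE2}) yields the integral representation
\[
\frac{\pa q(x,y,t)}{\pa y} = -\frac 1{T-t}\int^T_t\left[1+(T-s)\frac{\pa b}{\pa y}(y(s),s)\right]\left[\la^*(s)-b(y(s),s)\right]ds,
\]
where $y(\cd)\in E_{x,y,t}$ is the minimizer of (\ref{B2}) and $\la^*(s)=y'(s)$. By (\ref{Y2}) one has $\la^*(s)-b(y(s),s)=A(x,y,t)g(s)$ with $g(s):=\exp[-\int^s_t(\pa b/\pa y)(y(s'),s')ds']$, so that $g(t)=1$, $(\pa b/\pa y)(y(s),s)g(s)=-g'(s)$, and $A(x,y,t)=\la^*(x,y,t)-b(y,t)$. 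A single integration by parts on $\int^T_t(T-s)g'(s)ds$ then collapses the displayed integral to $A(x,y,t)(T-t)$, producing $\pa q/\pa y=b(y,t)-\la^*(x,y,t)$. The second identity drops out of (\ref{AA2}), which states $\pa q/\pa t+\la^*\,\pa q/\pa y=-\tfrac 1 2(\la^*-b)^2$: substituting the first identity and simplifying gives $\pa q/\pa t=\tfrac 1 2[\la^*(x,y,t)^2-b(y,t)^2]$.

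For the inequalities suppose $y<F(x,t)$. Proposition 2.1 guarantees $y'(s)>b(y(s),s)$ along the minimizer, so $A(x,y,t)>0$, and the first identity forces $\pa q/\pa y<0$. For $\pa q/\pa x>0$ the plan is to repeat the perturbation procedure of (\ref{AB2})--(\ref{AE2}), but now perturbing the terminal constraint $x\mapsto x+\De x$: for the upper bound compare $y(\cd)$ with the shifted path $y(s)+(s-t)\De x/(T-t)\in E_{x+\De x,y,t}$, and for the lower bound compare $y_\De(\cd)\in E_{x+\De x,y,t}$ with the path $y_\De(s)-(s-t)\De x/(T-t)\in E_{x,y,t}$. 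Passing to the limit using (\ref{U2}) and the continuity of $\la^*(\cd,\cd,\cd)$ from Proposition 2.2 produces an integral formula for $\pa q/\pa x$ analogous to the one above, which a further integration by parts reduces to $A(x,y,t)g(T)>0$.

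The only genuine obstacle is the limiting step in the $\pa q/\pa x$ argument, which requires control of the discrepancy $y_\De-y$ as $\De x\to 0$. This control is already encoded in (\ref{U2}) together with (\ref{Y2}) and the continuity of $\la^*$, all established in Proposition 2.2, so the difficulty is cosmetic rather than substantial, and everything else reduces to routine bookkeeping.
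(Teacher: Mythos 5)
Your proposal is correct, and for the first identity of (\ref{AF2}) it takes a genuinely different route from the paper. The paper derives $\pa q/\pa y = b - \la^*$ from a dynamic programming principle: it writes $q(x,y,t) \le \tfrac 12[\la - b(y,t)]^2\De t + q(x,y+\la\De t, t+\De t) + O((\De t)^2)$ for all $\la$, lets $\De t \to 0$ to obtain the family of inequalities (\ref{AH2}), and then reads off the identity by optimizing in $\la$ at $\la = \la^*$. You instead read $\pa q/\pa y$ off the integral representation in (\ref{AC2})/(\ref{AE2}), substitute (\ref{Y2}) in the form $\la^*(s)-b(y(s),s)=A(x,y,t)g(s)$ with $g' = -(\pa b/\pa y)g$, and observe that $g(s)-(T-s)g'(s) = -\tfrac{d}{ds}[(T-s)g(s)]$, so the integral telescopes to $A(x,y,t)(T-t)$. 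That computation is clean and avoids the Bellman-type argument entirely; the trade-off is that it depends on the regularity and uniqueness of the minimizer (already supplied by Proposition 2.2), whereas the paper's DPP argument needs only that $q$ is $C^1$ and that (\ref{AA2}) holds. Your treatment of the second identity of (\ref{AF2}) (substitute the first into (\ref{AA2}) and simplify), of $\pa q/\pa y<0$ (positivity of $A(x,y,t)$ from Proposition 2.1), and of $\pa q/\pa x>0$ (an analogous integration by parts yielding $A(x,y,t)\exp[-\int_t^T(\pa b/\pa y)(y(s),s)\,ds]$, i.e. (\ref{AJ2})) matches the paper up to presentation; the paper gets to (\ref{AJ2}) by adding (\ref{AC2}) and (\ref{AI2}) to form (\ref{AW2}) and then using (\ref{Y2}), whereas you integrate by parts in (\ref{AI2}) directly, which is the same algebra.
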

\begin{proof} We first show the identity (\ref{AF2}) for $\pa q(x,y,t)/\pa y$.  We assume $y < F(x,t)$ since it is obvious otherwise.  Using the fact that $q(x,y,t)$ is the minimizer for the variational problem (\ref{B2}) we have that for $\la \in \R$,
\[	q(x,y,t) \le \frac 1 2 \left[ \la - b(y,t)\right]^2 \De t + q(x,y + \la \De t, t + \De t) + O\left[ (\De t)^2\right].  \]
Since $q$ is $C^1$ this implies that
\be \label{AH2}
\frac 1 2 \left[ \la^*(x,y,t)^2 - b(y,t)\right]^2 \le \frac 1 2 \left[ \la - b(y,t) \right]^2 + \left[ \la - \la^*(x,y,t)\right] \frac{\pa q}{\pa y} (x,y,t), \ \la \in \R,
\ee
where we have used (\ref{AA2}).  The inequality (\ref{AH2}) implies the first identity of (\ref{AF2}).  The second identity follows from the first identity and (\ref{AA2}).

The first inequality of (\ref{AG2}) follows from Proposition 2.1.  To show that $\pa q(x,y,t)/\pa x > 0$ we derive a formula for $\pa q(x,y,t)/\pa x$ similar to the formula (\ref{AF2}) for $\pa q(x,y,t)/\pa y$.  We have already seen that $\pa q(x,y,t)/\pa x$ is given by an expression similar to the RHS of (\ref{AC2}),
\be \label{AI2}
\frac {\pa q}{\pa x}(x,y,t) = \frac 1{T-t} \int^T_t \left[ 1 - (s-t) \frac{\pa b}{\pa y}( y(s), s) \right] \left[ \la^*(s) - b( y(s), s)\right]ds. 
\ee
Adding (\ref{AC2}) and (\ref{AI2}) we conclude that
\be \label{AW2}
\frac {\pa q}{\pa y}(x,y,t) + \frac {\pa q}{\pa x}(x,y,t)= - \int^T_t \frac {\pa b}{\pa y}(y(s)s) \left[ \la^*(s) - b( y(s), s)\right]ds. 
\ee
If  we use now the identity (\ref{Y2}) we conclude from the previous expression that
\be \label{AJ2}
\frac {\pa q}{\pa x}(x,y,t)=\left[\la^*(x,y,t) -b(y,t)\right] \exp \left[ - \int^T_t \frac {\pa b}{\pa y}(y(s)s) ds \right],
\ee
where $y(\cd) \in E_{x,y,t}$ is the minimizer for (\ref{B2}).  Proposition 2.1 and (\ref{AJ2}) now imply 
$\pa q(x,y,t)/\pa x > 0$. 
\end{proof}
\begin{rem} Observe that Proposition 2.2 and Corollary 2.1 imply that $q(x,y,t)$ is a classical solution to the $\ve=0$ Hamilton-Jacobi equation (\ref{H1}).
\end{rem}
Next we show that $q(x,y,t)$ is twice differentiable in $(x,y)$.  Since this is obvious for $y > F(x,t)$ we consider $y < F(x,t)$.  Let $\vp(s)$, $t \le s \le T$, be the solution of the first variation equation (\ref{O2}) with terminal data $\vp(T) = 0$, $\vp '(T) = -1$.  Then one should have the identity
\be \label{AK2}
\pa\la^*(x,y,t) /\pa y \ \ = \ \ \vp'(t)/\vp(t).
\ee
We have already given an argument to show $\vp(s) > 0$, $t\le s \le T$, if we assume $b(\cd, s)$ is concave for $t\le s \le T$.  Hence in this case the RHS of (\ref{AK2}) makes sense.   Note also that we may write (\ref{O2}) in the form
\be \label{AL2}
\left[ \frac d{ds} + \frac{\pa b}{\pa y}(y(s),s) \right] \left[ \frac {d\vp}{ds} - \frac{\pa b}{\pa y}(y(s),s)\vp(s) \right] - V(s) \vp(s) = 0,
\ee
where $V(s) \ge 0$ if $b(\cd,s)$ is concave for all $s, \; t\le s \le T$.  Hence it follows from (\ref{AL2}) that if we assume the concavity of $b(\cd,s) \; t\le s \le T$, then $\vp'(t) - \pa b/\pa y(y(t,), t)\vp(t) < 0$.  Thus from (\ref{AF2}), (\ref{AK2}) we conclude that $\pa^2q(x,y,t)/\pa y^2 > 0$.  We make this argument rigorous in the following:

\begin{proposition} Assume the function $b(\cd, \cd)$ satisfies (\ref{A1}) and that $b(y,s)$ is concave in $y$ for $y \in \R, s \le T$.  Then the function $q(x,y,t)$ of (\ref{B2}) is convex in $(x,y)$ for $(x,y) \in \R^2, \ t<T$.  Suppose in addition that  $b(y,s)$ is twice differentiable in $y$ for $y \in \R, s \le T$, and $\pa^2b(y,s)/\pa y^2$ is continuous in $(y,s)$.  Then $q(x,y,t)$ is twice differentiable in $(x,y)$ for $(x,y,t) \in U_T =$ $\{(x,y,t) : (x,y) \in \R^2, t < T, \; y < F(x,t)\}$.  The second derivatives of $q(x,y,t)$ w.r. to $(x,y)$ are continuous in $U_{T}$ and satisfy $\pa^2 q(x,y,t)/\pa x^2 > 0, \pa^2 q(x,y,t)/\pa y^2 > 0$, $\pa^2 q(x,y,t)/\pa x \pa y < 0$.  Furthermore, if $(x_0, y_0, t_0) \in \pa U_{T}$ and $t_0 <T$ then
\be \label{AM2}\
\lim_{(x,y,t) \ra (x_0, y_0, t_0)} \pa^2q(x,y,t)/\pa x^2 > 0, \quad \lim_{(x,y,t) \ra (x_0, y_0, t_0)} \pa^2q(x,y,t)/\pa y^2 > 0.
\ee
\end{proposition}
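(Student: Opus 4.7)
The plan is to first obtain joint convexity of $q(\cdot,\cdot,t)$ as a direct consequence of the convexity property (\ref{S2}) of $\mathcal{F}[\cdot]$, and then to obtain the $C^2$ statement, signs, and boundary limits by analyzing Jacobi fields of the Euler--Lagrange equation (\ref{H2}) along the minimizing trajectory. For convexity, given $(x_i,y_i)\in\R^2$, $i=1,2$, let $y_i(\cdot)\in E_{x_i,y_i,t}$ be the minimizers supplied by Proposition~2.2. In either case $y_i<F(x_i,t)$ (Proposition~2.1) or $y_i\ge F(x_i,t)$ (where $y_i(\cdot)$ solves (\ref{K1})), each $y_i(\cdot)$ lies in $E$. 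Concavity of $b(\cdot,s)$ together with $y_i'\ge b(y_i,s)$ gives $\lambda y_1'(s)+(1-\lambda)y_2'(s)\ge b(\lambda y_1(s)+(1-\lambda)y_2(s),s)$, so $\lambda y_1+(1-\lambda)y_2$ is in $E$ and in $E_{\lambda x_1+(1-\lambda)x_2,\,\lambda y_1+(1-\lambda)y_2,\,t}$. Applying (\ref{S2}) then yields $q(\lambda x_1+(1-\lambda)x_2,\,\lambda y_1+(1-\lambda)y_2,\,t)\le\lambda q(x_1,y_1,t)+(1-\lambda)q(x_2,y_2,t)$, i.e.\ joint convexity.

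Now add the $C^2$ hypothesis on $b$. Fix $(x,y,t)\in U_T$ and let $y(\cdot)$ be the unique minimizer, a $C^2$ solution of (\ref{H2}) with $y(t)=y$, $y(T)=x$, and $y'(s)>b(y(s),s)$. Along $y(\cdot)$ introduce the Jacobi fields $\varphi_1,\varphi_2$ of the linearization (\ref{O2}) with $\varphi_1(t)=0,\ \varphi_1'(t)=1$ and $\varphi_2(t)=1,\ \varphi_2'(t)=0$, together with the backward field $\varphi$ satisfying $\varphi(T)=0,\ \varphi'(T)=-1$. The potential $V(s)=-(\pa^2 b/\pa y^2)(y(s),s)[y'(s)-b(y(s),s)]$ is nonnegative since $b(\cdot,s)$ is concave and $y'>b$. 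Writing (\ref{O2}) in the form (\ref{AL2}) and multiplying by $\varphi$, integration by parts over any subinterval $[\tau_1,\tau_2]\subset[t,T]$ on which $\varphi(\tau_1)=\varphi(\tau_2)=0$ yields $\int_{\tau_1}^{\tau_2}\psi^2\,ds+\int_{\tau_1}^{\tau_2}V\varphi^2\,ds=0$ with $\psi:=\varphi'-(\pa b/\pa y)(y(s),s)\varphi$, forcing $\varphi\equiv 0$ there. This rules out additional zeros, so $\varphi_1(s)>0$ on $(t,T]$ and $\varphi(s)>0$ on $[t,T)$; in particular $\varphi_1(T)>0$ and $\varphi(t)>0$.

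Now consider the shooting map $\Phi(y_0,v_0):=y(T)$ obtained by solving (\ref{H2}) with $y(t)=y_0,\ y'(t)=v_0$, so that $\Phi(y,\lambda^*(x,y,t))=x$ and $\pa_{v_0}\Phi=\varphi_1(T)>0$. The implicit function theorem yields $\pa\lambda^*/\pa x=1/\varphi_1(T)>0$ and $\pa\lambda^*/\pa y=-\varphi_2(T)/\varphi_1(T)$; decomposing $\varphi=\varphi'(t)\varphi_1+\varphi(t)\varphi_2$ and imposing $\varphi(T)=0$ gives $\pa\lambda^*/\pa y=\varphi'(t)/\varphi(t)$. Substituting into (\ref{AF2}) produces $\pa^2 q/\pa x\pa y=-1/\varphi_1(T)<0$ and $\pa^2 q/\pa y^2=-\psi(t)/\varphi(t)$, while differentiating (\ref{AJ2}) in $x$ gives $\pa^2 q/\pa x^2=K/\varphi_1(T)+(\lambda^*-b)\,\pa K/\pa x$ with $K=\exp(-\int_t^T(\pa b/\pa y)\,ds)>0$. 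The first-order equation $\psi'+(\pa b/\pa y)\psi=V\varphi$ derived from (\ref{AL2}), together with $\psi(T)=-1$ and $V\varphi\ge 0$, shows that $\exp(\int_t^s(\pa b/\pa y)du)\,\psi(s)$ is nondecreasing and strictly negative throughout $[t,T]$, so $\psi(t)<0$ and $\pa^2 q/\pa y^2>0$. Computing $\pa K/\pa x$ from the trajectory perturbation $\xi(s)=\varphi_1(s)/\varphi_1(T)\ge 0$ and using $\pa^2 b/\pa y^2\le 0$ gives $\pa K/\pa x\ge 0$, hence $\pa^2 q/\pa x^2\ge K/\varphi_1(T)>0$. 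Continuity of the second derivatives on $U_T$ follows from continuous dependence of $\varphi_1,\varphi,\psi,K$ on $(x,y,t)$ via (\ref{U2}) and (\ref{Y2}).

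For the boundary statement, any $(x_0,y_0,t_0)\in\pa U_T$ with $t_0<T$ forces the limiting minimizing trajectory to coincide with the integral curve of $y'=b(y,s)$, along which $V\equiv 0$; the Jacobi equations then reduce to a first-order equation for $\psi$ with explicit solution $\psi(s)=-\exp(\int_s^T(\pa b/\pa y)du)$, and $\varphi$ is recovered by a further quadrature. The resulting limits $\varphi_1(T)>0$, $\varphi(t_0)>0$, and $\psi(t_0)<0$ persist and yield the required strict positivity of the boundary limits of $\pa^2 q/\pa x^2$ and $\pa^2 q/\pa y^2$. The chief technical obstacle I anticipate is making the shooting-map and implicit-function step uniform in a neighborhood of a boundary point, where $\lambda^*-b\to 0$ and the minimizer degenerates; because the nonvanishing of $\varphi_1(T)$ is established by the same energy identity in both the interior and the boundary cases, the linearization of $\Phi$ remains invertible uniformly, and the implicit-function argument extends continuously up to $\pa U_T$.
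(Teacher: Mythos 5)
Your convexity argument contains a genuine gap. You claim that if $y_1(\cdot),y_2(\cdot)\in E$ (so $y_i'\ge b(y_i,s)$) and $b(\cdot,s)$ is concave, then $\la y_1'+(1-\la)y_2'\ge b(\la y_1+(1-\la)y_2,s)$. Concavity in fact gives the inequality the \emph{wrong} way for this purpose: $b(\la y_1+(1-\la)y_2,s)\ge \la b(y_1,s)+(1-\la)b(y_2,s)$, whereas the constraint inequalities only give $\la y_1'+(1-\la)y_2'\ge \la b(y_1,s)+(1-\la)b(y_2,s)$. These two do not combine to put the convex combination in $E$; for concave $b$ the combination need not be admissible, which is exactly why the statement of (\ref{S2}) carries the hypothesis that $\la y_1+(1-\la)y_2\in E$, and why the proof of Proposition~2.2 applies (\ref{S2}) only ``for sufficiently small $\la$.'' The paper's proof of convexity goes by a different route: from (\ref{AN2}) the zero set $V_t=\{(x,y):y\ge F(x,t)\}$ of $q(\cd,\cd,t)$ is convex, and the argument of Proposition~2.2 shows $q(\cd,\cd,t)$ is locally convex on $\R^2\setminus V_t$; combined with the $C^1$ regularity from Proposition~2.2 (so that $Dq$ vanishes on $\pa V_t$), this yields global convexity. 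Your direct application of (\ref{S2}) across two different base points skips the admissibility check and does not work as written.

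The remainder of your proof, while differently normalized, is essentially the paper's argument. Your Jacobi fields $\vp_1$ (with $\vp_1(t)=0$, $\vp_1'(t)=1$) and the backward field $\vp$ (with $\vp(T)=0$, $\vp'(T)=-1$), together with your momentum $\psi:=\vp'-(\pa b/\pa y)\vp$, correspond to the paper's system (\ref{AO2}) after a sign flip on $\psi$. Your non-oscillation identity $\int\psi^2\,ds+\int V\vp^2\,ds=0$ is precisely (\ref{AP2}). Your implicit-function/shooting-map derivation of $\pa\la^*/\pa x=1/\vp_1(T)$ and $\pa\la^*/\pa y=\vp'(t)/\vp(t)$ reproduces (\ref{AK2}) and (\ref{AU2}). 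The sign argument via $g(s)=\exp[\int_t^s(\pa b/\pa y)du]\,\psi(s)$ nondecreasing with $g(T)<0$ is a tidy packaging of the paper's computation showing $\psi(t)>0$, $\vp(t)>0$ in the paper's normalization; and your $\pa^2 q/\pa x^2=K/\vp_1(T)+(\la^*-b)\pa K/\pa x\ge K/\vp_1(T)>0$ via $\pa K/\pa x\ge0$ (using $\xi(s)=\vp_1(s)/\vp_1(T)\ge0$ and $\pa^2 b/\pa y^2\le 0$) is a correct and slightly more explicit version of the paper's use of (\ref{AV2}). The boundary-limit analysis, where $V\equiv0$ on the degenerate trajectory and $\psi,\vp,\vp_1$ are given by explicit quadratures, is also correct and matches the paper in substance.
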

\begin{proof} Observe that the function $F(x,t),  \ x \in \R, \ t<T$, defined by (\ref{K1}) is a convex function of $x$.  In fact one has 
\be \label{AN2}
\frac{\pa F}{\pa x} (x,t)= \exp \left[ - \int^T_t \frac{\pa b}{\pa y}(y(s),s) ds \right],
\ee
where $y(s), s \le T$, is the solution to (\ref{K1}).  Hence by concavity of $b(\cd, s), \  t \le s \le T$,  one has that $\pa F(x,t)/\pa x$ is an increasing function of $x$.  It follows that the set $V_t = \{(x,y) \in \R^2 : y \ge F(x,t)\}$ on which  $q(\cd, \cd, t)$ vanishes is convex.  We also have from the argument of Proposition 2.2 that $q(x,y,t)$ is locally convex on the not necessarily convex open set $\R^2 \backslash V_t$.  Hence $q(x,y,t)$ is convex in $(x,y)$ for all $(x,y) \in \R^2$.

We assume now $b(y,s)$ is twice continuously differentiable in $y$ for $y \in \R,  \ s \le T$.  We can write (\ref{AL2}) as a system
\begin{eqnarray} \label{AO2}
\frac{d\vp}{ds} &-& \frac{\pa b}{\pa y} \big(y(s), s\big) \vp(s) = -\psi(s),  \quad  t \le s \le T, \\
\frac{d\psi}{ds} &+& \frac{\pa b}{\pa y} \big(y(s), s\big) \psi(s) = -V(s)\vp(s), \quad  t \le s \le T, \nn
\end{eqnarray}
where $y(\cd) \in E_{x,y,t}$ is the minimizer for (\ref{B2}).  Evidently (\ref{AO2}) has a unique solution $[\vp(s), \psi(s)],   \ t \le s \le T$, with terminal data $\vp(T) = 0$, $\psi(T) = 1$.  Multiplying the first equation in (\ref{AO2}) by $\psi(s)$ and the second by $\vp(s)$ we see on integration that 
\be \label{AP2}
\psi(s) \vp(s) = \int^T_s  \psi(s')^2 + V(s')\vp(s')^2 \ ds', \quad  t \le s \le T.
\ee
From the terminal conditions on $[\vp(s), \psi(s)]$ we have that $\vp(s) > 0, \; \psi(s) >0$ for $s$ close to $T$.  It follows then from (\ref{AP2}) that $\vp(s) > 0, \; \psi(s) > 0$ for $t \le s \le T$.

Next we use (\ref{Y2}) to write the equation for the minimizer $y(\cd) \in E_{x,y,t}$ of (\ref{B2}) in a form similar to (\ref{AO2}).  Thus we have
\begin{eqnarray}    \label{AQ2} 
\frac{dy}{ds} &-& b( y(s), s) = -p(s), \quad  t \le s \le T, \\
\frac{dp}{ds} &+& \frac{\pa b}{\pa y}( y(s), s) p(s) = 0, \quad  t \le s \le T. \nn
\end{eqnarray}
In (\ref{AQ2}) the first equation is the definition of the Hamiltonian momentum $p(s)$ while the second equation is equivalent to (\ref{Y2}).  Suppose now $z(\cd) \in E_{x',y',t}$ is also a minimizer for (\ref{B2}) and define $\Phi(s) = z(s) - y(s)$, $\Psi(s) = P(s) - p(s)$, where $P(s)$ is the momentum corresponding to $z(\cd)$.  Then since $z(\cd)$ satisfies an equation similar to $(\ref{AQ2})$ we have that
\begin{eqnarray} \label{AR2} 
 \frac{d\Phi}{ds} -\Phi(s) \int^1_0 \frac{\pa b}{\pa y}( \mu y(s) + (1-\mu)z(s),s) \ d\mu = -\Psi(s), &\quad&  t \le s \le T, \\
\frac{d\Psi}{ds} + \frac{\pa b}{\pa y}(z(s),s) \Psi(s) = - \Phi(s)p(s) \int^1_0 \frac{\pa^2 b}{\pa y^2}( \mu y(s) + (1-\mu)z(s),s) \ d\mu,  &\quad&  t \le s \le T. \nn
\end{eqnarray}
We consider now the situation where $x' = x$ so $\Phi(T) =0$.  Then if $y' = y + \De y$ we may write
\be \label{AS2}
\Phi(t) = \al(\De y)\Psi(T), \ \Psi(t) = \beta(\De y) \Psi(T),
\ee
where the functions $\al(\cd)$ and $\beta(\cd)$ satisfy
\be \label{AT2}
\lim_{\De y \ra 0} \al(\De y) = \vp(t), \ \ \lim_{\De y \ra 0} \beta(\De y) = \psi(t)
\ee
 since the coefficients in the equations (\ref{AR2}) converge as $\De y \ra 0$ to the coefficients in the equations (\ref{AO2}).  Now we have that
\begin{multline*}
\left[ \la^*(x,y + \De y, t) - \la^*(x,y,t) \right] \;\Big/ \De y \\
=  \left[ \Phi(t) \int^1_0 \frac{\pa b}{\pa y}( \mu y(s) + (1-\mu)z(s),s) \ d\mu - \Psi(t)\right] \; \Big/\De y,
\end{multline*}
and $\Phi(t) = \De y$.  Hence it follows from (\ref{AS2}), (\ref{AT2}) that $\la^*(x,y,t)$ is differentiable w.r. to $y$ and
\be \label{AU2}
\pa\la^*(x,y,t)/\pa y = \pa b(y,t)/\pa y - \psi(t)/\vp(t).
\ee
One also sees easily from the representation (\ref{AU2}) that $\pa\la^*(x,y,t)/\pa y$ is continuous in $U_T$ and that the limit exists as $(x,y,t) \ra (x_0,y_0,t_0) \in \pa U_T$ provided $t_0 < T$.  The fact that $\pa^2q(x,y,t) /\pa y^2 > 0$ follows now from (\ref{AF2}) and the fact that $\psi(t) > 0, \ \vp(t) > 0$.

We can similarly see that $\la^*(x,y,t)$ is differentiable w.r. to $x$ and $\pa \la^*(x,y,t)/\pa x$ is continuous in $U_T$ and the limit exists as $(x,y,t) \ra (x_0,y_0,t_0) \in \pa U_T$ provided  $t_0 < T$.  To see that $\pa^2q(x,y,t) /\pa x \pa y < 0$ we note that $\pa^2q(x,y,t) /\pa x \pa y = \psi(t)/\vp(T)$, where $[\vp(s), \psi(s)], \  t \le s \le T$, is the solution of (\ref{AO2}) with initial data $\vp(t) = 0, \ \psi(t) = 1$.  We have in this case
\[	\psi(s) \vp(s) = - \int^s_t \psi(s')^2 + V(s') \vp(s')^2 ds',	\]
whence $\vp(T) < 0$ and so $\pa^2 q(x,y,t)/\pa x \pa y$ is negative.

To prove the twice differentiability of $q(x,y,t)$ w.r. to $x$ we use the representation 
\be \label{AV2}
\pa q(x,y,t)/\pa x \ = \ p(T),
\ee
where $p(s)$ is given by (\ref{AQ2}) for the minimizer $y(\cd) \in E_{x,y,t}$ of (\ref{B2}).  The differentiability of $\pa q(x,y,t)/\pa x$ and the positivity of $\pa^2 q(x,y,t)/\pa x^2$ proceeds as before by representing $\pa^2 q(x,y,t)/\pa x^2$ in terms of a solution to (\ref{AO2}).  Finally we observe that (\ref{AV2}) follows from (\ref{Y2}), (\ref{AF2}) and (\ref{AW2}).
\end{proof}
\begin{rem}  Proposition 2.3 shows that all second derivatives of $q(x,y,t)$ with respect to $(x,y)$ have jump discontinuities across the boundary $y = F(x,t)$.  Hence $q(x,y,t)$ is not $C^2$ in $(x,y)$ for all $(x,y) \in \R^2$.
\end{rem}

\section{Proof of Theorem 1.1}
Our main goal in this section is to show that the function $q_\ve(x,y,t)$ defined by (\ref{G1}) converges as $\ve \ra 0$ to the function $q(x,y,t)$ defined by (\ref{J1}).  The formula (\ref{O1}) for $q_\ve(x,y,t)$ makes this intuitively clear, but it is not obvious under what circumstances the function defined by (\ref{G1}) has the representation (\ref{O1}).  As part of our proof of convergence we shall make use of various situations in which (\ref{O1}) is valid.  First we regularize the terminal data (\ref{I1}).

\begin{lem}  Suppose $b(\cd,\cd)$ satisfies (\ref{A1}) and $q_\ve(x,y,t)$ is given by (\ref{G1}).  Then there exists $\del > 0$ and universal constants $C_1, C_2 > 0$ such that if $T - t < \del, \ \ve < 1$, there is the inequality
\be \label{H3}
C_1(x-y)^2/(T-t) < q_\ve(x,y,t) < C_2(x-y)^2/(T-t),
\ee
for $y$ in the region
\be \label{I3}
x - y > 2 \; \int^T_t |b(x,s)| ds + \sqrt{\ve(T-t)}.
\ee
\end{lem}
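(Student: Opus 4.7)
The plan is to use the probabilistic representation (\ref{F1}), giving $q_\ve(x,y,t) = -\ve\log P(Y_\ve(T)>x \mid Y_\ve(t)=y)$, and to reduce (\ref{H3}) to Gaussian tail and small-ball estimates for the Brownian motion driving (\ref{E1}). Writing $Y_\ve(s) = y + Z(s) + \sqrt\ve(W(s)-W(t))$ with $Z(s) = \int_t^s b(Y_\ve(r),r)\,dr$ and using (\ref{A1}) in the form $|b(Y_\ve(r),r)| \leq |b(x,r)| + A(|Z(r)| + M + |y-x|)$, where $M := \sup_{t \leq s \leq T}|\sqrt\ve(W(s)-W(t))|$, a Gr\"onwall argument yields the pathwise drift bound
\begin{equation*}
\sup_{t\leq s \leq T}|Z(s)| \leq e^{A(T-t)}\Big(\int_t^T|b(x,r)|\,dr + A(T-t)(|y-x|+M)\Big).
\end{equation*}
I would then choose $\delta$ so that $A\delta$ is a small absolute constant; combined with (\ref{I3}), which gives $\int_t^T|b(x,r)|\,dr < (x-y)/2$, this controls $|Z|$ by $(1+O(A\delta))(x-y) + O(A\delta)M$.

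For the upper bound on $q_\ve$ (lower bound on the probability), I would consider the Brownian tube event
\begin{equation*}
E_* = \Big\{\bigl|\sqrt\ve(W(s)-W(t)) - 3(x-y)(s-t)/(T-t)\bigr| \leq (x-y)/2 \text{ for all } s \in [t,T]\Big\}.
\end{equation*}
On $E_*$ one has $\sqrt\ve(W(T)-W(t)) \in [5(x-y)/2,\, 7(x-y)/2]$ and $M \leq 7(x-y)/2$, so for $A\delta$ small enough the drift estimate gives $|Z(T)| < 3(x-y)/2$ and hence $Y_\ve(T) > x$. The Cameron-Martin formula applied to the linear shift $3(x-y)(s-t)/(T-t)$ expresses $P(E_*)$ as $\exp(-9(x-y)^2/[2\ve(T-t)])$ times a Brownian tube integral around zero of radius $(x-y)/(2\sqrt\ve)$; since by (\ref{I3}) this tube radius is at least $\sqrt{T-t}/2$, a standard small-ball bound gives a positive universal lower bound, yielding $P(E_*) \geq c\exp(-C_2(x-y)^2/[\ve(T-t)])$.

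For the lower bound on $q_\ve$ (upper bound on the probability), the event $\{Y_\ve(T)>x\}$ forces $\sqrt\ve(W(T)-W(t)) > (x-y) - Z(T)$. Inserting the pathwise bound $|Z(T)| \leq \int|b(x,r)|\,dr + A(T-t)\sup|Y_\ve-x|$ together with (\ref{I3}) and the Gr\"onwall control $\sup|Y_\ve-x| \leq O(1)\big((x-y)+M\big)$ (valid once $A\delta$ is small), one obtains the deterministic inclusion
\begin{equation*}
\{Y_\ve(T) > x\} \subseteq \{\sqrt\ve(W(T)-W(t)) > c_0(x-y)\} \cup \{M > c_1(x-y)\}
\end{equation*}
for universal $c_0, c_1 > 0$. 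The first event has probability at most $\exp(-c_0^2(x-y)^2/[2\ve(T-t)])$ by the one-sided Gaussian tail, and the second at most $4\exp(-c_1^2(x-y)^2/[2\ve(T-t)])$ by the reflection principle, yielding $P(Y_\ve(T)>x) \leq C\exp(-C_1(x-y)^2/[\ve(T-t)])$.

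The main technical obstacle is tracking constants so that the Gr\"onwall expansion of the drift stays strictly within the slack of a factor $1/2$ built into (\ref{I3}); once $\delta$ is chosen small enough that $A\delta$ absorbs the growth constants, and the additive $\sqrt{\ve(T-t)}$ in (\ref{I3}) handles the transition regime where $(x-y)^2/[\ve(T-t)]$ is close to $1$ so that the $\log$-prefactors from the probabilities do not spoil the exponential bounds, (\ref{H3}) follows with universal $C_1, C_2$.
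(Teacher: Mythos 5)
Your argument is sound and reaches the same conclusion, but the construction you use for the upper bound on $q_\ve$ (the lower bound on the probability) is genuinely different from the paper's. You build a Brownian tube around a linear drift $3(x-y)(s-t)/(T-t)$, check via the Gr\"onwall control that the tube forces $Y_\ve(T)>x$, and then estimate the tube probability by the Cameron--Martin formula together with a positive universal small-ball estimate. The paper instead dominates $x-Y_\ve(s)$ by an explicitly solvable Ornstein--Uhlenbeck process $Z'_\ve$ (equation (\ref{ZA3})), bounds below the probability that $Y_\ve$ hits $x$ before $T$ via the Gaussian law of $Z'_\ve(T)$, and then uses the strong Markov property at the hitting time together with the observation that once $Y_\ve$ is at level $x$ the conditional probability of ending above $x$ is bounded below by $1/4$. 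Both work; the paper's route avoids Girsanov and small-ball inequalities entirely, whereas your tube argument is more self-contained but requires the small-ball fact and a Girsanov computation. For the other direction (upper bound on the probability), your Gr\"onwall-plus-reflection decomposition $\{Y_\ve(T)>x\}\subseteq\{\sqrt\ve(W(T)-W(t))>c_0(x-y)\}\cup\{M>c_1(x-y)\}$ is essentially the same mechanism the paper uses via (\ref{K3}).

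Two places where the details must be pushed further than you wrote. First, in the tube argument you need $|Z(T)|<(x-y)$, not merely $|Z(T)|<3(x-y)/2$: on the event $E_*$ the worst-case inequality $\sqrt\ve(W(T)-W(t))\ge 5(x-y)/2$ gives $Y_\ve(T)\ge y+5(x-y)/2-|Z(T)|$, which exceeds $x$ iff $|Z(T)|<3(x-y)/2$ \emph{strictly}, and the strict margin must be uniform; tracking the Gr\"onwall constants with (\ref{I3}) in fact gives the stronger $|Z(T)|\le(x-y)(1/2+O(A\delta))<(x-y)$, which is what makes the tube cleanly land above $x$. Second, your deterministic inclusion in the reverse direction only yields a useful bound on $q_\ve$ across the transition regime $a:=(x-y)/\sqrt{\ve(T-t)}\approx 1$ if $c_0$ is pushed up to $c_0>1/\sqrt{2\pi}$ (so that the prefactor $1/(c_0a\sqrt{2\pi})$ in the Gaussian tail estimate is $\le 1$ for $a\ge 1$); taking $c_1$ of order a fixed constant and $A\delta$ small drives $c_0\to 1/2$, which suffices. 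You flag both of these in your last paragraph, so the plan is correct, but the constants are not arbitrary — the factor $2$ in front of $\int_t^T|b(x,s)|\,ds$ and the additive $\sqrt{\ve(T-t)}$ in (\ref{I3}) are exactly what make these thresholds achievable.
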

\begin{proof}   Since $b(\cd, \cd)$ satisfies (\ref{A1}) one can uniquely solve the stochastic equation (\ref{E1}) with given initial data. The solution  $u_\ve(x,y,t)$ of the terminal value problem (\ref{B1}), 
(\ref{C1})  is then given by the formula (\ref{F1}).
Letting $Z_\ve(s) = Y_\ve(s) - y$, we have then that
\begin{multline} \label{J3}
Z_\ve(s) = \int^s_t   \left[ \int^1_0 d\mu \; \frac{\pa b}{\pa y}\big( \mu Y_\ve(s') + (1-\mu)y, s' \big) \right] 
Z_\ve(s') ds' \\
+ \int^s_t  b(y,s') ds' + \sqrt{\ve} \big[ W(s) - W(t) \big], \quad  s > t. 
\end{multline}
Now applying Gronwall's inequality to (\ref{J3}) we conclude that 
\be \label{K3}
\sup_{t\le s\le T}|Z_\ve(s)| \le A(t, T) \sup_{t\le s \le T} \Big|  \int^s_t  b(y,s') ds' + \sqrt{\ve} \big[ W(s)-W(t) \big] \Big|,
\ee
where $A(t,T)$ is a constant depending only on $t,T$.  The lower bound in inequality (\ref{H3}) follows from (\ref{K3}) and (\ref{F1}), (\ref{G1}). 

To obtain the upper bound we consider the stochastic process $Z'_\ve(s), \ s\ge t$, defined by the equation
\be \label{ZA3}
dZ'_\ve(s)=\left[AZ'_\ve(s)+b(x,s)\right]ds+\sqrt{\ve}dW(s), \quad Z'_\ve(t)=x-y,
\ee
where $A$ is the constant in (\ref{A1}). If $\tau$ is the first hitting time at $x$ for the process $Y_\ve(s)$ of (\ref{E1}) with $Y_\ve(t)=y$, then it is evident that $Z'_\ve(s)\ge x-Y_\ve(s), \ t\le s\le \tau$. It follows that
\be \label{ZB3}
P(\tau<T) \ge P(Z'_\ve(T)<0 \  \big| \ Z'_\ve(t)=x-y).
\ee
Since the stochastic equation (\ref{ZA3}) is exactly solvable, we can estimate the RHS of (\ref{ZB3}). Assuming $x-y$ satisfies  (\ref{I3}) we conclude that
\be \label{ZC3}
P(Z'_\ve(T)<0 \  \big| \ Z'_\ve(t)=x-y) \ge \exp\left[-C(x-y)^2/\ve(T-t)\right],
\ee
for a constant $C$ depending only on the parameter $A$ in (\ref{ZA3}). The upper bound in (\ref{H3}) follows now from (\ref{ZB3}), (\ref{ZC3}), and the inequality
\be \label{ZD3}
P(Y_\ve(T)>x \ \big| \ Y_\ve(t)=y)\ge P(\tau<T) \inf_{t\le s\le T} P( Y_\ve(T)>x \ \big| \ Y_\ve(s)=x),
\ee
since it is clear that for $\delta$ small enough the infimum in (\ref{ZD3}) is larger than $1/4$.
\end{proof}
We consider a controller $\la_\ve(y,s), \ y \in \R, \ s < T$, which is uniformly Lipschitz in $y$ for $t \le s \le T-\del$.  Thus there is a constant $C$ such that
\be \label{M3}
|\la_\ve(y,s) - \la_\ve(y', s) | \le C|y-y'|, \quad  y, y' \in \R, \ \ t \le s \le T-\del.
\ee
Hence we may solve the stochastic differential equation (\ref{N1}) for $t \le s \le T - \del$.  We show that in this case the the expectation on the RHS of (\ref{O1}) is bounded below by the LHS.
\begin{lem}  Suppose $\la_\ve(\cd, \cd)$ satisfies (\ref{M3}) and $b(\cd, \cd)$ satisfies (\ref{A1}).  Then if $ q_\ve(x,y,t) $ is given by (\ref{G1}) there is the inequality
\begin{multline}  \label{N3}
 q_\ve(x,y,t) \le E \bigg\{ \frac 1 2 \int^{T-\del}_t \left[ \la_\ve(y_\ve(s),s) - b( y_\ve(s), s) \right]^2 ds  \\
 + q_\ve( x, y_\ve(T-\del), T-\del)  \ \Big|  \ y_\ve(t) = y \bigg\},  
\end{multline}
where $y_\ve(\cdot)$ is the solution to the SDE (\ref{N1}).
\end{lem}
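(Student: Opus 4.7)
The natural route is to apply It\^o's formula to the process $s\mapsto q_\ve(x,y_\ve(s),s)$ on $[t,T-\del]$ and use the PDE (\ref{H1}) together with a completion of squares. Since $u_\ve$ is strictly positive and $C^{1,2}$ on $\R\times(-\infty,T)$, the same holds for $q_\ve=-\ve\log u_\ve$; the Lipschitz condition (\ref{M3}), together with (\ref{A1}), guarantees a unique strong solution to (\ref{N1}) on $[t,T-\del]$, so It\^o's formula is legitimate.

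Carrying out the computation, It\^o gives
\[
dq_\ve(x,y_\ve(s),s)=\Big[\frac{\pa q_\ve}{\pa s}+\la_\ve\frac{\pa q_\ve}{\pa y}+\frac{\ve}{2}\frac{\pa^2 q_\ve}{\pa y^2}\Big]ds+\sqrt{\ve}\,\frac{\pa q_\ve}{\pa y}\,dW(s),
\]
and substituting $\pa q_\ve/\pa s+(\ve/2)\pa^2q_\ve/\pa y^2=-b\,\pa q_\ve/\pa y+\tfrac12(\pa q_\ve/\pa y)^2$ from (\ref{H1}) converts the $ds$-drift into
\[
(\la_\ve-b)\frac{\pa q_\ve}{\pa y}+\tfrac12\Big(\frac{\pa q_\ve}{\pa y}\Big)^2=\tfrac12\Big[\frac{\pa q_\ve}{\pa y}+(\la_\ve-b)\Big]^2-\tfrac12(\la_\ve-b)^2.
\]
Integrating from $t$ to $T-\del$, taking conditional expectation given $y_\ve(t)=y$, killing the stochastic integral, and rearranging yields
\begin{align*}
q_\ve(x,y,t)=\,&E\bigl[q_\ve(x,y_\ve(T-\del),T-\del)\bigr]+\tfrac12 E\int_t^{T-\del}\bigl(\la_\ve-b\bigr)^2 ds\\
&-\tfrac12 E\int_t^{T-\del}\bigl[\pa q_\ve/\pa y+(\la_\ve-b)\bigr]^2 ds,
\end{align*}
where all integrands are evaluated at $(x,y_\ve(s),s)$. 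Since the last integral is non-negative, dropping it gives (\ref{N3}); note also that equality would hold precisely for the optimal feedback $\la_\ve=b-\pa q_\ve/\pa y$ of (\ref{P1}), which is the version of (\ref{O1}) alluded to in the introduction.

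The only point that is not automatic is the vanishing of the expectation of the stochastic integral and the commutation of limits with expectation, since $\pa q_\ve/\pa y$ is not a priori bounded on $\R$. The plan is to carry out the identity first with $T-\del$ replaced by the stopping time $\tau_n=\inf\{s\ge t:|y_\ve(s)|\ge n\}\wedge(T-\del)$ and then let $n\ra\infty$. The Lipschitz bound (\ref{M3}) gives Gaussian tails for $\sup_{t\le s\le T-\del}|y_\ve(s)|$, while $0\le q_\ve(x,y,s)\le C(x-y)^2/(T-s)$ for sufficiently negative $y$ by Lemma~3.1, with $q_\ve\ra 0$ as $y\ra+\infty$; standard interior parabolic gradient estimates bound $\pa q_\ve/\pa y$ on compact subsets of $\R\times[t,T-\del]$. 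Together these supply enough integrability to let $n\ra\infty$ by monotone/dominated convergence.

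This integrability bookkeeping is really the main and only obstacle, since the verification step itself is textbook once (\ref{H1}) is in force. What matters here is that the restriction to the interval $[t,T-\del]$ built into the statement decouples the argument from the singular terminal data (\ref{I1}); all estimates needed are on a time interval bounded strictly away from~$T$, where $q_\ve$ and its derivatives are well controlled.
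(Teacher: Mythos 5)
Your proof is correct, but it takes a genuinely different (and dual) route from the paper's. The paper defines the cost functional $V_\ve(y,s)$ equal to the right-hand side of (\ref{N3}), verifies via Feynman--Kac and the cited regularity theory (Friedman, Ch.~1, Thm.~9) that $V_\ve$ solves the linear PDE (\ref{P3}), then shows that $W_\ve=V_\ve-q_\ve$ satisfies a parabolic equation with nonnegative zeroth-order source and vanishing terminal data, and concludes $W_\ve\ge 0$ by the maximum principle. You instead apply It\^o's formula directly to $q_\ve(x,y_\ve(s),s)$ and exploit the completion-of-squares structure built into the Hamilton--Jacobi--Bellman equation (\ref{H1}). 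The two are verification arguments running in opposite directions: the paper compares two solutions of PDEs, you compute a pathwise residual. Your route has the virtue of producing an exact identity with an explicit nonnegative defect $\tfrac12 E\int[\pa_y q_\ve+\la_\ve-b]^2\,ds$, which makes the characterization of equality (namely the optimal feedback (\ref{P1})) immediate rather than latent. The cost is that the integrability/localization bookkeeping now sits on your side: you need a stopping-time truncation $\tau_n$, Gaussian tails on $\sup_s|y_\ve(s)|$ coming from (\ref{M3}) and Gronwall, the quadratic growth of $q_\ve(x,\cdot,s)$ for $s\le T-\del$ (Lemma~3.1), and uniform integrability to pass to the limit in the boundary term $E[q_\ve(x,y_\ve(\tau_n),\tau_n)]$; the analogous work in the paper is the a-priori quadratic bound (\ref{O3}) on $V_\ve$ and the appeal to a maximum principle for unbounded domains. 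Both sets of technicalities are routine once (\ref{M3}) is in force, and you correctly identify that the restriction to $[t,T-\del]$ keeps everything away from the singular terminal data.
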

\begin{proof}  Let $V_\ve(y,s), \ y \in \R, \ s \le T-\del$, denote the RHS of (\ref{N3}).  Arguing as in Lemma 3.1, one sees that
\be \label{O3}
0 < V_\ve(y,s)\le Ay^2 + B, \quad  y \in \R, \ t \le s \le T-\del,
\ee
for some constants $A,B$.  In addition $V_\ve, \pa V_\ve/\pa s$, $\pa V_\ve / \pa y$ and $\pa^2 V_\ve / \pa y^2$ are all continuous functions of $(y,s), \ y \in \R, \ t \le s < T-\del$, and satisfy the equation
\be \label{P3}
\frac {\pa V_\ve}{\pa s} + \la_\ve(y,s)\frac{\pa V_\ve}{\pa y} + \frac \ve 2 \; \frac {\pa^2 V_\ve}{\pa y^2} + \frac 1 2 \big[ \la(y,s) - b(y,s) \big]^2 = 0, \quad   y \in R, \ t \le s < T-\del,
\ee
with terminal condition 
\be \label{Q3}
V_\ve(y, T-\del) = q_\ve(x, y, T-\del), \quad y \in \R.
\ee
Note that the twice differentiability of $V_\ve(y,s)$ with respect to $y$ uses the fact that the function $\la_\ve(\cd, s) - b(\cd,s)$ is Lipschitz continuous for $t \le s \le T-\del$ (see \cite{fried} Chapter 1, Theorem 9).  From (\ref{H1}), (\ref{P3}) we conclude that the function $W_\ve(y,s)=V_\ve(y,s) - q_\ve(x,y,s)$ satisfies the PDE
\[	\frac {\pa W_\ve}{\pa s} + \la_\ve(y,s)\frac{\pa W_\ve}{\pa y}+ \frac \ve 2 \; \frac {\pa^2 W_\ve}{\pa y^2} + \frac 1 2 \left[ \la_\ve(y,s) - b(y,s) + \frac{\pa q_\ve}{\pa y} \right]^2 = 0, \quad  y \in R, \ t \le s \le T-\del,  \]
and all the derivatives $\pa W_\ve / \pa s, \ \pa W_\ve/\pa y, \  \pa^2 W_\ve/\pa y^2 $ are continuous.  Furthermore by (\ref{Q3}) the terminal condition for $W_\ve$ is $W_\ve(y, T-\del) = 0, \; y \in \R$.  It follows then from Lemma 3.1, (\ref{O3}) and the maximum principle (see \cite{fried} Chapter 2, Theorem 9) that $W_\ve(y,t) \ge 0, \ y \in \R$, whence the result follows. 
\end{proof}
\begin{lem} Suppose $b(\cd, \cd)$ satisfies (\ref{A1}).  Then for $x,y \in \R, \ t < T$, and $\ve < 1$, there is the inequality
\be \label{R3}
q_\ve(x,y,t) \le q(x,y,t) + C(x,y,t,T) \sqrt{\ve},
\ee
where $q(x,y,t)$ is given by (\ref{J1}) and $C(x,y,t,T)$ is a constant independent of $\ve$.
\end{lem}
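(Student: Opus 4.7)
The plan is to apply Lemma 3.2 with $\del = \sqrt{\ve}$ and a controller derived from the deterministic minimizer of the classical variational problem. By Proposition 2.1 there exists a $C^1$ minimizer $y^*(\cd) \in E_{x,y,t}$ of (\ref{B2}) with $y^*(t)=y$, $y^*(T) \ge x$, and $\mathcal{F}[y^*(\cd)] = q(x,y,t)$. I take the state-independent (hence trivially Lipschitz) controller $\la_\ve(z,s) := (y^*)'(s)$; the corresponding solution of (\ref{N1}) with initial condition $y_\ve(t)=y$ is simply $y_\ve(s) = y^*(s) + \sqrt{\ve}[W(s)-W(t)]$, and Lemma 3.2 yields
\[
q_\ve(x,y,t) \le \tfrac12 E\!\int_t^{T-\sqrt{\ve}} \!\bigl[(y^*)'(s) - b(y_\ve(s),s)\bigr]^2 ds + E\bigl[q_\ve(x, y_\ve(T-\sqrt{\ve}), T-\sqrt{\ve})\bigr].
\]

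For the running-cost integral I would write $(y^*)'(s) - b(y_\ve(s),s) = \pi(s) + [b(y^*(s),s) - b(y_\ve(s),s)]$ where $\pi(s) := (y^*)'(s) - b(y^*(s),s)$ is deterministic, use (\ref{A1}) to control the second bracket by $A\sqrt{\ve}|W(s)-W(t)|$, and expand the square. The $\pi(s)^2$ integral is bounded by $2q(x,y,t)$; the cross term is $O(\sqrt{\ve})$ by Cauchy-Schwarz, using $E|W(s)-W(t)| = O(\sqrt{s-t})$ and $\int_t^T \pi(s)^2 ds = 2q(x,y,t)$; the third term is $O(\ve)$. Thus the running-cost expectation is $\le q(x,y,t) + O(\sqrt{\ve})$.

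The main obstacle is the terminal penalty $E[q_\ve(x, y_\ve(T-\sqrt{\ve}), T-\sqrt{\ve})]$: the singular data (\ref{I1}) makes $q_\ve(x,z,T-\sqrt{\ve})$ transition rapidly near $z=x$, while the endpoint $y_\ve(T-\sqrt{\ve})$ lies at distance $O(\sqrt{\ve})$ from $x$ with Gaussian fluctuations on the same scale, since $(y^*)'$ is bounded and $y^*(T) \ge x$. I would split the expectation by the threshold $\theta := 2\int_{T-\sqrt{\ve}}^T |b(x,s)|\,ds + \ve^{3/4} = O(\sqrt{\ve})$ appearing in condition (\ref{I3}). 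On the event $\{x - y_\ve(T-\sqrt{\ve}) > \theta\}$, Lemma 3.1 gives $q_\ve \le C_2(x-y_\ve(T-\sqrt{\ve}))^2/\sqrt{\ve}$; since $E[(x - y_\ve(T-\sqrt{\ve}))_+^2] = O(\ve)$, this contribution is $O(\sqrt{\ve})$. On the complementary event $\{y_\ve(T-\sqrt{\ve}) \ge x - \theta\}$, I would apply a direct Gaussian tail estimate to (\ref{E1}) on the short interval $[T-\sqrt{\ve},T]$, controlling the drift by (\ref{A1}) and Gronwall, to obtain $u_\ve(x,z,T-\sqrt{\ve}) \ge \exp(-C/\sqrt{\ve})$ uniformly for $z \ge x - O(\sqrt{\ve})$, hence $q_\ve = -\ve\log u_\ve \le C\sqrt{\ve}$ in this regime. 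Combining the two contributions yields the claimed terminal-penalty estimate and hence (\ref{R3}).
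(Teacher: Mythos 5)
Your proposal is correct and takes essentially the same route as the paper: both plug the deterministic optimal controller $\la_\ve(z,s)=(y^*)'(s)$ into Lemma 3.2 with horizon $T-\sqrt{\ve}$, expand the running cost around the deterministic minimizer using the Lipschitz bound (\ref{A1}) and Cauchy-Schwarz to get $q(x,y,t)+O(\sqrt\ve)$, and then bound the terminal penalty $E[q_\ve(x,y_\ve(T-\sqrt\ve),T-\sqrt\ve)]$ using Lemma 3.1 together with the fact that $y_\ve(T-\sqrt\ve)=y^*(T-\sqrt\ve)+\sqrt\ve[W(T-\sqrt\ve)-W(t)]$ sits within $O(\sqrt\ve)$ (plus Gaussian noise) of the region where $q_\ve$ is controlled. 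The one place you deviate slightly is the complementary event $\{y_\ve(T-\sqrt\ve)\ge x-\theta\}$, where you invoke a separate direct Gaussian estimate giving $u_\ve\ge\exp(-C/\sqrt\ve)$; the paper instead disposes of this case by exploiting that $q_\ve(x,\cdot,T-\sqrt\ve)$ is decreasing (maximum principle) and evaluating Lemma 3.1 at a point just inside the region (\ref{I3}), e.g. $z=x-2\theta$, which gives the same $O(\sqrt\ve)$ bound with no extra computation. Both variants are sound; the paper's monotonicity shortcut just saves a little work.
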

\begin{proof}  Let $y(s), \  t \le s \le T$, be a minimizer for (\ref{J1}), whose existence has been  established by Proposition 2.1.  We set $\la_\ve(y,s) = \la(s) = y'(s), \ y \in \R, \ t \le s \le T$, and apply Lemma 3.2, taking $\del = \sqrt{\ve}$.  We consider first the case $y \le F(x,t)$ so $y(T) = x$.  Hence $x-y(T-\del) < C\sqrt{\ve}$ for some constant $C$.  It follows then from Lemma 3.1 that
\be \label{S3}
E \left\{ q_\ve\big( x,y_\ve(T-\del), T-\del \big) \  \Big|  \ y_\ve(t) = y \right\} \le C_1\sqrt{\ve}
\ee
for some constant $C_1$.  Here we are using the fact that $y_\ve(s) - y(s) = \sqrt{\ve} \; [W(s) - W(t)]$ and that $q_\ve(x,y,T-\del)$ is a decreasing positive function of $y \in \R$.  We can similarly see that
\be\label{T3}
E \left\{ \frac 1 2 \int^{T-\del}_t \left[ \la(s) - b(y_\ve(s),s) \right]^2 ds  \ \Big| \  y_\ve(t) = y \right\} \le q(x,y,t) + C_2\sqrt{\ve}.
\ee
for some constant $C_2$.  Thus (\ref{R3}) follows from (\ref{S3}) (\ref{T3}) in the case $y \le F(x,t)$.  For $y > F(x,t)$ we may use the same argument, noting that $q_\ve(x,\cd,T-\del)$ is a decreasing positive function. 
\end{proof}
To obtain a lower bound for $q_\ve(x,y,t)$ corresponding to the upper bound established in Lemma 3.3 we shall need to use the fact that the function $\pa q_\ve(x,y,s)/\pa y$ is uniformly Lipschitz continuous in $y$ for $(y,s)$ in any region $\{ (y,s) : y \ge y_0, \; t \le s \le T - \del\}$, where $\del > 0, \; y_0 \in \R$ can be arbitrarily chosen.  

\begin{lem}  Suppose $b(\cd,\cd)$ satisfies (\ref{A1}) and $u_\ve(x,y,t),  \ t < T, y \in \R$, is the unique bounded solution to (\ref{B1}), (\ref{C1}).  Then for any $\del > 0,  \ y_0 \in \R,  \ t < T$, there is a positive constant $C(\del, y_0, t)$ such that
\be \label{U3}
u_\ve(x,y,s) \ge 1/C(\del, y_0, t), \quad  y \ge y_0, \ \ t \le s \le T-\del,
\ee
\[  |\pa u_\ve(x,y,s)/\pa y| + |\pa^2 u_\ve(x,y,s)/\pa y^2| \le C(\del, y_0, t), \quad  y \ge y_0, \ t \le s \le T-\del.   \]
\end{lem}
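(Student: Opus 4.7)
The plan is to prove the two assertions separately, using the probabilistic representation (\ref{F1}) for the lower bound and the fundamental-solution representation (\ref{D1}) for the derivative bounds, together with a deterministic shift of variables that handles the linear growth allowed by (\ref{A1}).

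For the lower bound on $u_\ve$, compare the diffusion $Y_\ve(\tau)$ satisfying (\ref{E1}) with $Y_\ve(s)=y$ to the deterministic trajectory $\phi(\tau)$ solving $\phi'=b(\phi,\tau)$ with $\phi(s)=y$. The Gronwall argument already used in the proof of Lemma 3.1 yields
\[\sup_{s\le\tau\le T}|Y_\ve(\tau)-\phi(\tau)|\le C_1\sqrt{\ve}\sup_{s\le\tau\le T}|W(\tau)-W(s)|,\]
with $C_1$ depending only on $A$ and $T-t$.  A second Gronwall argument applied to $\phi$ via $|b(y,r)|\le|b(0,r)|+A|y|$ bounds $|\phi(T)-y|$ by a constant depending only on $A,T,y_0,t$, uniformly for $y\ge y_0$.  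Hence for $y\ge y_0$ and $s\le T-\del$ one has $Y_\ve(T)>x$ on the event $\sqrt{\ve}(W(T)-W(s))>M$ for a suitable constant $M=M(x,y_0,A,T,t,\ve)$; since $W(T)-W(s)$ is Gaussian with variance $T-s\ge\del$, this event has positive probability bounded below uniformly in $y\ge y_0$ and $s\le T-\del$, yielding the lower bound in (\ref{U3}).

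For the bounds on derivatives, differentiate (\ref{D1}) under the integral, so that $\partial_y^k u_\ve(x,y,s)=\int_x^\infty \partial_y^k G_\ve(y,y',s,T)\,dy'$ for $k=1,2$, and invoke Gaussian-type bounds on $\partial_y^k G_\ve$.  To place (\ref{B1}) into the standard framework for such bounds, introduce the shift $\tilde y=y-\Phi(s)$ where $\Phi$ solves $\Phi'=b(\Phi,s)$ with fixed terminal value $\Phi(T)=0$.  This transforms (\ref{B1}) into an equation of the same form with drift $\tilde b(\tilde y,s)=b(\tilde y+\Phi(s),s)-b(\Phi(s),s)$ satisfying $\tilde b(0,s)=0$ and $|\partial_{\tilde y}\tilde b|\le A$.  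Friedman's bounds \cite{fried} for fundamental solutions of uniformly parabolic equations with Lipschitz coefficients then give
\[|\partial_{\tilde y}^k G_\ve(\tilde y,\tilde y',s,T)|\le\frac{C_k}{(\ve(T-s))^{(k+1)/2}}\exp\!\Bigl(-c\frac{(\tilde y-\tilde y')^2}{\ve(T-s)}\Bigr),\quad k=0,1,2.\]
Integrating over $y'>x$ and using $T-s\ge\del$ produces the asserted bound on $|\partial_y u_\ve|+|\partial_y^2 u_\ve|$.

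The main obstacle is obtaining estimates uniform in $y\ge y_0$.  A direct application of interior Schauder estimates to (\ref{B1}) yields constants depending on the $L^\infty$ norm of $b$ on a neighborhood of $(y,s)$, and under (\ref{A1}) alone this norm grows linearly in $|y|$.  The deterministic shift to coordinates in which the drift vanishes at the origin is precisely what makes the Gaussian bounds translation-invariant in the new spatial variable, and hence the resulting estimates on $\partial_y u_\ve$ and $\partial_y^2 u_\ve$ uniform in the original $y\ge y_0$.  A secondary point is to verify that, expressed in shifted coordinates, the region $\{y'>x\}$ remains amenable to Gaussian-tail integration; this is routine since $\Phi$ is a fixed deterministic function independent of $y$.
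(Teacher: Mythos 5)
Your lower-bound argument has the right shape but the statement of the key step is off. The Gronwall estimate you quote gives a two-sided bound $|Y_\ve(\tau)-\phi(\tau)|\le C_1\sqrt\ve\sup|W(\tau)-W(s)|$, which can only tell you that $Y_\ve(T)$ is close to $\phi(T)$; from it alone, ``$Y_\ve(T)>x$ on the event $\sqrt\ve(W(T)-W(s))>M$'' does not follow, and indeed for $y_0<y<F(x,s)$ one has $\phi(T)<x$, so this reasoning gives nothing. What saves you is that $Z(\tau)=Y_\ve(\tau)-\phi(\tau)$ solves $dZ=a(\tau)Z\,d\tau+\sqrt\ve\,dW$ with $|a|\le A$, hence $Z(T)$ is a mean-zero Gaussian with variance at least $\ve\delta e^{-2A(T-t)}$, and a Gaussian tail bound on $P(Z(T)>x-\phi(T))$ together with a Gronwall lower bound on $\phi(T)$ gives the uniformity. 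The paper instead stops at an intermediate level $y_1>x$ where $u_\ve\ge1/2$ and runs a hitting-time argument; either route works, but you should replace the quoted implication by the Gaussian-tail step.

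The derivative bound is where the genuine gap lies. After the global shift $\tilde y=y-\Phi(s)$, the new drift $\tilde b(\tilde y,s)=b(\tilde y+\Phi(s),s)-b(\Phi(s),s)$ vanishes only at the single point $\tilde y=0$; it still satisfies only $|\tilde b(\tilde y,s)|\le A|\tilde y|$, so it is unbounded. The translation-invariant Gaussian estimate you invoke,
$$|\partial_{\tilde y}^k G_\ve(\tilde y,\tilde y',s,T)|\le\frac{C_k}{(\ve(T-s))^{(k+1)/2}}\exp\!\left(-c\frac{(\tilde y-\tilde y')^2}{\ve(T-s)}\right),$$
is false for such drifts. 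Already for the Ornstein--Uhlenbeck drift $\tilde b(\tilde y)=-A\tilde y$ the transition density has its maximum at $\tilde y'=e^{-A\tau}\tilde y$, where $G_\ve=(2\pi\sigma^2)^{-1/2}$ is of order $(\ve\tau)^{-1/2}$ uniformly in $\tilde y$; plugging $\tilde y'=e^{-A\tau}\tilde y$ into your right-hand side produces the extra factor $\exp[-c(1-e^{-A\tau})^2\tilde y^2/\ve\tau]$, which tends to zero as $\tilde y\to\infty$, so the claimed bound fails. The correct Aronson-type estimate for these drifts centers the Gaussian at the deterministic flow image $\phi_T^s(\tilde y)$ rather than at $\tilde y$, and Friedman's 1964 bounds, which assume bounded coefficients, do not deliver even that for linearly growing drift; your citation cannot carry the weight you put on it. This is precisely the difficulty the paper's proof spends most of its length addressing: it constructs the local Green's function on a strip $|z|<\eta$ around each reference point $y_1$, by a perturbation series valid only when the window $\Delta$ satisfies $\Delta\lesssim\nu\ve/\sup|b|^2\sim\ve/y_1^2$, and then shows that the resulting blow-up of the derivative constants (polynomial in $y_1$) is beaten by the exponential decay of the boundary data $w_0,w_\pm$ as $y_1\to\infty$, which is established via the auxiliary comparison process (\ref{BA3})--(\ref{BF3}). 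A global shift that only kills the drift at the origin does not make the bounds translation-invariant in $\tilde y$, and so your approach, as written, does not close the case $y\to\infty$.
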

\begin{proof} To prove the first inequality in (\ref{U3}) we proceed as in Lemma 3.1, using the representation (\ref{F1}).  Since the solution $Y_\ve(s)$ of (\ref{E1}) which has initial condition $Y_\ve(t) = y$ satisfies the inequality (\ref{K3}), it follows that there exists $y_1 > x$ with the property that $u_\ve(x,y_1,s) \ge 1/2, \ t \le s \le T$.  We consider now $y$ in the interval $y_0 < y < y_1$.  Let $\al$ be defined by 
\[	\al = \inf \Big\{ b(y', s) : y_0-1 \le y' \le y_1, \  t \le s \le T \Big\},	\]
and $Z_\ve(s)$ satisfy the stochastic equation
\[  dZ_\ve(s) = \al ds + \sqrt{\ve} \; dW(s), \quad  Z_\ve(t) = y . \]
Then $Y_\ve(s) \ge Z_\ve(s), \  t \le s \le \tau $,  where $\tau$ is the first exit time of $Z_\ve(s)$ from the interval $[y_0 - 1, y_1]$.  We can easily estimate from below $P\left( \tau < T, \  Z_\ve(\tau) = y_1\right) $.  Combining this with (\ref{K3}) we see that the first inequality in (\ref{U3}) holds for $y_0 < y < y_1$.

We turn to the problem of estimating the derivatives in (\ref{U3}).  Let $y_1 \in \R$ and $T_1 \le T$.  We shall be interested in constructing the solution to the terminal-boundary value problem
\be \label{V3}
\frac{\pa w}{\pa t} + b(y,t) \; \frac{\pa w}{\pa y} + \frac \ve 2 \; \frac{\pa^2 w}{\pa y^2} = 0, \quad  y_1 - \eta < y < y_1 + \eta, \ t<T_1,
\ee
\[	w(y, T_1) \ = \ w_0(y), \quad y \in [ y_1 -\eta, \ y _1 + \eta],	\]
\[ 	w(y_1 - \eta, s) = w_-(s), \quad w(y_1 + \eta, s) = w_+(s), \ s \le T_1,  \]
where $\eta > 0$ and the functions $w_0(\cdot), w_-(\cdot), w_+(\cdot)$ are assumed to be continuous on their domains.  The solution to (\ref{V3}) can be represented in terms of the Dirchlet Green's function $G(y, y', t, T_1)$ for the problem.  Thus
\begin{multline} \label{X3}
w(y,t) = \int^{y_1 +\eta}_{y_1 -\eta} \ G(y, y', t, T_1)\; w_0(y')dy' + \\
 \ve \int^{T_1}_t ds\; w_-(s) \frac{\pa G}{\pa y'} (y, y_1 -\eta, t, s) - \ve \int^{T_1}_t ds\; w_+(s) \frac{\pa G}{\pa y'} (y, y_1 + \eta, t,s). 
\end{multline}
We shall show that the Green's function may be constructed by perturbation expansion provided $t < T_1$ lies in an interval $t \in [T_1 - \De, T_1]$ where $\De,\eta $ satisfy the inequalities
\be \label{Y3}
\De \le \eta^2/\ve, \quad  \De \le \nu \ve \Big/ \left[ \sup \Big\{ |b(y,s)| : y_1 - \eta \le y \le y_1 +\eta, \; T_1-\De \le s \le T_1 \Big\}\right]^2,
\ee
for some $\nu < 1$ independent of $b(\cd, \cd)$ and $\ve$.

We construct the Green's function by the standard method \cite{fried}.  Thus let $G_D(y, y', t)$ be the Green's function for the heat equation on the interval $[-1, 1]$ with Dirichlet boundary conditions.  The function $G_D$ is given from the method of images as an infinite series,
\be \label{AL3}
G_D(y,y',t) = \sum^\infty_{m=0} (-1)^{p(m)} \ G(y-y'_m, t),
\ee
where $y'_0 = y'$ and $y'_m, \; m \ge 1$, are the multiple reflections of $y'$ in the boundaries $-1, 1$, with $p(m)$ being the parity of the reflection, $p(0) = 0$.  The function $G(y,t)$ is a Gaussian with mean 0 and variance $t$.  We now set $K(y,y',t,s)$ to be
\be \label{Z3}
K(y,y',t,s) = \eta^{-1} G_D \big( [y-y_1]/\eta, [y'-y_1]/\eta, \; \ve(s-t)/\eta^2 \big),  \quad y,y' \in [y_1-\eta, y_1 +\eta], \  t < s.
\ee
The Green's function $G(y,y',t,T_1)$ is formally given by an expansion in terms of the function $K$.  Let ${\mathcal L}_{t,y}$ denote the operator on the LHS of (\ref{V3}), so (\ref{V3}) is ${\mathcal L}_{t,y} w=0$.  Then
\begin{eqnarray} \label{AA3}
G(y,y',t,T_1) &=& K(y,y',t,T_1) - \sum^\infty_{n=0} \; v_n(y,y',t,T_1), \\
v_n(y,y',t,T_1) &=& -\int^{T_1}_t ds \int^{y_1 + \eta}_{y_1 - \eta} dz \  K(y,z,t,s) g_n(z, y', s, T_1), \nn \\
g_0(y,y',t,T_1) &=& {\mathcal L}_{t,y} K(y,y',t,T_1), \nn \\
g_{n+1}(y,y',t,T_1) &=& \int^{T_1}_t ds \int^{y_1 + \eta}_{y_1 - \eta} dz \  {\mathcal L}_{t,y} K(y,z,t,s) g_n(z, y', s, T_1). \nn
\end{eqnarray}
One easily obtains from (\ref{AA3}) the estimate 
\begin{multline} \label{AB3}
|g_n(y,y',t,T_1)| \le C^n \left[ \sup \Big\{ |b(z,s)| : y_1 - \eta \le z \le y_1 + \eta,  \ t \le s \le T_1 \big\} \right]^{n+1} \\
\frac{(T_1-t)^{n/2\;-\;1/2}} {\ve^{(n+1)/2}} \ G\big(y - y', 2\ve(T_1 - t)\big), \quad n = 0,1,2,....,  
\end{multline}
for some universal constant $C$, provided $\eta \ge \sqrt{\ve(T_1 - t)}$.  It follows from (\ref{AB3}) that the series expansion (\ref{AA3}) for the function $G$ converges provided $t \in [T_1 -\De, T]$, where $\De,\eta$ satisfy (\ref{Y3})  for some sufficiently small universal $\nu > 0$.  In that case one has the following estimate on the Green's function:
\be \label{AC3}
G(y,y',t,T_1) \le C  \ G\big(y - y', 2\ve(T_1 - t)\big),
\ee
for a universal constant $C > 0$.

We can obtain estimates for the derivatives of $G$ analogous to (\ref{AC3}) by differentiating the expansion (\ref{AA3}) term by term.  We first consider $\pa G(y,y',t,T_1)/\pa y'$.  For $t \in [T_1 -\De, T_1]$ and $\De,\eta$ satisfying (\ref{Y3}). We have from (\ref{AA3}) that
\be \label{AD3}
\Big| \frac{\pa g_0}{\pa y'} (y,y',t,T_1) \Big| \le \frac C{T_1-t} \; \sqrt{\frac \nu{\ve \De}}  \ G\big(y-y', 2\ve(T_1 - t) \big)
\ee
for some universal constant $C$.  The integral representation in (\ref{AA3}) for $\pa v_0(y,y',t,T_1)/\pa y'$ gives rise to a non-integrable singularity in the integration with respect to $s, \; t \le s < T_1$, if we use (\ref{AD3}).  We therefore need to use the fact that $g_0(z,y',s,T_1) = b(z,s)  \ \pa K(z,y',s,T_1)/\pa z$ and integrate by parts with respect to $z$ in the representation (\ref{AA3}) for $v_0(y,y',t,T_1)$.  We conclude that 
\be \label{AE3}
\Big| \frac{\pa v_0}{\pa y'} \; (y,y',t,T_1)\Big| \le \left[ A\left( \frac{T_1-t}\ve \right)^{1/2} + \left( \frac \nu{\ve \De}\right)^{1/2}\right] C\; G(y-y', 2\ve(T_1-t))
\ee
for some universal constant $C$, where $A$ is the upper bound  in (\ref{A1}) on the derivative of $b(\cd,\cd)$.  We can use a similar method to obtain a bound on the derivative of $g_1$.  Thus we have
\be \label{AF3}
\Big| \frac{\pa g_1}{\pa y'} \; (y,y',t,T_1)\Big| \le \left[ A \left( \frac \nu{\ve \De}\right)^{1/2} + \frac \nu{\De\sqrt{\ve(T_1-t)}}\right] C\; G(y-y', 2\ve(T_1-t))
\ee
for some universal constant $C$.  Choosing $\De$ now to also satisfy $\De < \sqrt{\nu}/A$ we conclude from (\ref{AF3}) and the representation (\ref{AA3}) for $g_n$ that
\be \label{AG3}
\Big| \frac{\pa g_n}{\pa y'} \; (y,y',t,T_1)\Big| \le \frac {\nu^{(n+1)/2}(T_1-t)^{n/2\;-\;1}} {\sqrt{\ve}\ \De^{(n+1)/2}} C^n\; G(y-y', 2\ve(T_1-t)), \quad n=1,2,...,
\ee
where $C$ is a universal constant.  The estimate (\ref{AG3}) gives an estimate on the derivatives of $v_n$, $n \ge 1$,
\be \label{AH3}
\Big| \frac{\pa v_n}{\pa y'} \; (y,y',t,T_1)\Big| \le \frac {\nu^{(n+1)/2}(T_1-t)^{n/2}}{\sqrt{\ve}\ \De^{(n+1)/2}}  C^n\; G(y-y', 2\ve(T_1-t)).
\ee
for a universal constant $C$.  We conclude then from (\ref{AE3}), (\ref{AH3})  that on choosing $\nu > 0 $ sufficiently small in a universal way, the function $G(y,y', t,T_1)$  is differentiable with respect  to $y'$ for $t \in [T_1 - \De, T_1]$ and 
\be \label{AI3}
\Big| \frac{\pa G(y,y',t,T_1)}{\pa y'}\Big| \le \frac C {\sqrt{\ve(T_1-t)}} \ G(y-y', 2\ve(T_1-t)),
\ee
for some universal constant $C$.  Hence the integral representation (\ref{X3}) is well-defined for $\De,\eta$ satisfying (\ref{Y3}) and $t \in [T_1 - \De, T_1]$.

We can obtain estimates on other derivatives of $G$ by a similar method.  Observe that from (\ref{AB3}) we may conclude that $G(y,y', t,T_1)$  is differentiable with respect to $y$ for $t \in [T_1 - \De, T_1]$ and 
\be \label{AJ3}
\Big| \frac{\pa G(y,y',t,T_1)}{\pa y}\Big| \le \frac C{\sqrt{\ve(T_1-t)}} \ G(y-y', 2\ve(T_1-t)),
\ee
for some universal constant $C$.  To obtain an estimate on $\pa^2 G(y,y',t,T_1)/\pa y^2$ we must first obtain estimates on $\pa g_n (y,y',t,T_1)/\pa y$.  Evidently we have that
\be \label{AK3}
\Big| \frac{\pa g_0}{\pa y} \; (y,y',t,T_1)\Big| \le \left[ \frac A {\sqrt{\ve(T_1-t)}} + \frac {\sqrt{\nu}} {(T_1-t)\sqrt{\ve\De}} \right]  \; C \; G(y-y', 2\ve(T_1-t))
\ee
for some universal constant $C$.  To estimate $\pa g_1(y,y',t,T_1)/\pa y$ we write the integral representation (\ref{AA3}) as an integral over $t < s < (T_1 + t)/2$ plus an integral over $(T_1 + t)/2 < s < T_1$.  Since the integral over $(T_1 + t)/2 < s < T_1$, may be estimated using (\ref{AB3}) we concentrate on the integral over $t < s < (T_1 + t)/2$.  Now the kernel ${\mathcal L}_{t,y} K(y,z,t,s)$ which appears in the integral representation (\ref{AA3}) for $g_1$ is a sum of terms generated by the boundary reflections which occur in the representation (\ref{AL3}) for $G_D$.  We consider the principle term in this series, which makes a contribution to the representation for $g_1$ given by
\be\label{AM3}
f(y,y',t,T_1) = \int^{(T_1+t)/2}_t  ds  \int^{y_1+\eta}_{y_1-\eta}  dz \; b(y,t) \frac {\pa}{\pa y} G(y-z, \ve(s-t))g_0(z,y',s,T_1) 
\ee
\begin{eqnarray*}
&=& \int^{(T_1+t)/2}_t  ds \int^{y_1+\eta}_{y_1-\eta} \; dz \; b(y,t) \; G(y-z, \ve(s-t)) \frac {\pa g_0}{\pa z}(z,y',s,T_1) \\
&+& \int^{(T_1+t)/2}_t  ds \   b(y,t) \; G(y-y_1 +\eta, \ve(s-t)) g_0(y_1-\eta,y',s,T_1) \\
&-& \int^{(T_1+t)/2}_t  ds \   b(y,t) \; G(y-y_1 - \eta, \ve(s-t)) g_0(y_1+\eta,y',s,T_1). 
\end{eqnarray*}
Denoting the first integral on the RHS of (\ref{AM3}) by $I_1(y)$ we see from (\ref{AK3}) that $I_1(y)$ is differentiable with respect to $y$ and
\be \label{AN3}
\Big| \frac{dI_1}{dy}(y)\Big| \le \left[ A(T_1 - t) + \sqrt{\nu}(T_1-t)^{1/2} \Big/ \sqrt{\De} \right] \left[ \frac A{\sqrt{\ve(T_1-t)}} + \frac{\sqrt{\nu}} {(T_1-t)\sqrt{\ve\De}}\right] C\;G(y-y',2\ve(T_1-t)),
\ee
for some universal constant $C$.  Let $I_2(y)$ denote the second integral on the RHS of (\ref{AM3}).  Using the fact that
\[	\int^\del_0 \; ds \; \frac \xi {(\ve s)^{3/2}} \exp \left[ - \frac{\xi^2}{2\ve s} \right] = \int^\infty_{\xi^2/\ve\del} dz\;e^{-z/2} \big/ \ve z^{1/2},  \]
we see that $I_2(y)$ is differentiable w.r. to $y$ and
\be \label{AO3}
\Big| \frac{dI_2(y)}{dy} \Big| \le \left( \frac \nu{\ve\De} \right)^{1/2} \left[ A + \left\{ \frac \nu{\De(T_1-t)} \right\}^{1/2} \right] 
C\;G(y-y',2\ve(T_1-t)),
\ee 
for some universal constant $C$.  We get a similar estimate to (\ref{AO3}) for the third integral on the RHS of (\ref{AM3}).  It is clear that the higher terms in the series (\ref{AL3}) for ${\mathcal L}_{t,y} \;K(y,z,t,s)$ make smaller contributions to $\pa g_1/\pa y$ than the RHS of (\ref{AN3}), (\ref{AO3}).  We conclude that
\be \label{AP3}
\Big| \frac{\pa g_1}{\pa y} (y, y', t, T_1)\Big| \le \left( \frac{T_1 - t}{\ve}\right)^{1/2} \left[ A + \left\{ \frac \nu{\De(T_1-t)} \right\}^{1/2} \right]^2 C\,G(y-y',2\ve(T_1-t)),
\ee
for some universal constant $C$.  Using the representation (\ref{AA3}) for $g_{n+1}$ we can now see by induction that 
\be \label{AQ3}
\Big| \frac{\pa g_n}{\pa y} (y, y', t, T_1)\Big| \le  \frac{(T_1 - t)^{n\;-\;1/2}} {\sqrt{\ve}} \left[ A + \left\{ \frac \nu{\De(T_1-t)} \right\}^{1/2} \right]^{n+1} C^n \ G(y-y',2\ve(T_1-t)),  \ \ n \ge 0,
\ee
for some universal constant $C$.  We may use (\ref{AB3}) and (\ref{AQ3}) to estimate the second derivative of the function $v_n(y, y', t, T_1)$ in (\ref{AA3}) with respect  to $y$.  Thus we have 
\be \label{AR3}
\Big| \frac{\pa^2 v_n}{\pa y^2} (y, y', t, T_1)\Big| \le  \frac{(T_1 - t)^n} {{\ve}} \left[ A + \left\{ \frac \nu{\De(T_1-t)} \right\}^{1/2} \right]^{n+1} C^n \ G(y-y',2\ve(T_1-t)),  \ \ n \ge 0,
\ee
for some universal constant $C$.  We conclude then from (\ref{AR3}) that $G(y,y',t,T_1)$ is twice differentiable with respect to $y$ for $t \in [T_1 - \De, T_1]$ and
\be \label{AS3}
\left| \frac{\pa^2G(y,y',t,T_1)}{\pa y^2} \right|  \ \le \ \frac C {\ve(T_1-t)} G\big(y-y',2\ve(T_1-t)\big)
\ee
for some universal constant $C$.

Next we wish to estimate $\pa^2 G(y, y', t,T_1)/\pa y \pa y'$.  We can easily obtain this from the representation (\ref{AA3}) for $v_n$ and (\ref{AG3}).  Thus from (\ref{AG3}) we can estimate $\pa^2 v_n(y, y', t,T_1)/\pa y \pa y'$ for $n \ge 1$.  We need to integrate by parts to estimate $\pa^2 v_0(y, y', t,T_1)/\pa y \pa y'$ just as was the case for the estimate (\ref{AE3}).  We conclude that
\be \label{AT3}
\left| \frac{\pa^2G(y,y',t,T_1)}{\pa y \pa y'} \right| \le \frac C{\ve(T_1-t)} G\big(y-y',2\ve(T_1-t)\big)
\ee
for some universal constant $C$, provided $t \in [T_1 - \De, T_1]$. Finally we need to estimate the derivative $\pa^3 G(y, y', t,T_1)/\pa^2 y \pa y'$.  To do this we must first obtain estimates on $\pa^2 g_n(y, y', t,T_1)/\pa y \pa y'$.  Evidently we have that
\be \label{AU3}
\left| \frac{\pa^2g_0(y,y',t,T_1)}{\pa y \pa y'} \right| \le \frac 1{\ve(T_1-t)}  \left[ A + \left\{ \frac \nu{\De(T_1-t)} \right\}^{1/2} \right] \; C \ G \big(y-y',2\ve(T_1-t)\big),
\ee
for some universal constant $C$.  To estimate $\pa^2 g_1(y, y', t,T_1)/\pa y \pa y'$ we write the integral representation (\ref{AA3}) for $g_1$ as an integral over $t < s < (T_1 +t)/2$ plus an integral over $(T_1 +t)/2 < s < T_1$.  The second integral cannot be bounded by using (\ref{AD3}) so we need to resort to integration by parts as we did for the estimate (\ref{AE3}). To bound the contribution to $g_1$ from the integral over $t < s < (T_1+t)/2$ we use the representation (\ref{AM3}).  We conclude that
\be \label{AV3}
\Big| \frac{\pa^2 g_1}{\pa y\pa y'} (y, y', t, T_1)\Big| \le \frac 1\ve  \left[ A + \left\{ \frac \nu{\De(T_1-t)} \right\}^{1/2} \right]^2\; C\,G(y-y',2\ve(T_1-t)),
\ee
for some universal constant $C$.  Now by induction we see from the representation (\ref{AA3})  for $g_n$ that
\be \label{AW3}
\Big| \frac{\pa^2 g_n}{\pa y \pa y'} (y, y', t, T_1)\Big| \le  \frac{(T_1 - t)^{n-1}}{\ve} \left[ A + \left\{ \frac \nu{\De(T_1-t)} \right\}^{1/2} \right]^{n+1} C^n\,G(y-y',2\ve(T_1-t)), \ \  n \ge 0,
\ee
for some universal constant $C$.  Similarly to how we obtained (\ref{AR3}) from (\ref{AQ3}) we conclude from (\ref{AW3}) that
\be \label{AX3}
\Big| \frac{\pa^3 v_n}{\pa y^2 \pa y'} (y, y', t, T_1)\Big| \le  \frac{(T_1 - t)^{n\;-\;1/2}} {\ve^{3/2}} \left[ A + \left\{ \frac \nu{\De(T_1-t)} \right\}^{1/2} \right]^{n+1} C^n\,G(y-y',2\ve(T_1-t)),  \ \ n \ge 0,
\ee
for some universal constant $C$.  We conclude then from (\ref{AX3}) that provided $t \in [T_1 - \De, T_1]$, there is a universal constant $C$ such that
\be \label{AY3}
\Big| \frac{\pa^3 G(y,y',t,T_1)}{\pa y^2 \pa y'} (y, y', t, T_1)\Big| \le  \frac C{[\ve(T_1-t)]^{3/2}} \ G\big(y-y', 2\ve(T_1-t) \big).
\ee

We use the estimates (\ref{AJ3}), (\ref{AS3}), (\ref{AT3}) and (\ref{AY3}) to obtain bounds on the derivatives in (\ref{U3}).  In (\ref{X3}) we set $w(y,t) = 1 - u_\ve(x,y,t)$, where the boundary functions $w_0, w_-, w_+$ are all bounded by 1.  Then we estimate the derivatives of $u_\ve(x,y,t)$ with respect to $y$ by setting $y=y_1$ and estimating $\pa w(y,t)/\pa y, \; \pa^2 w(y,t)/\pa^2 y$ at $y = y_1$ using the Green's functions estimates.  It is clear then that by choosing $\De$ to be given by its maximum value in (\ref{Y3}) that we get an estimate
\be \label{AZ3}
|\pa u_\ve(x,y,s)/\pa y| + |\pa^2 u_\ve(x,y,s)/\pa y^2| \le C(\del, y_0, y_\infty, t)
\ee
for $(y,s)$ in any interval $y_0 \le y \le y_\infty$, $t \le s \le T-\del$.  Our final task is to show that the constant $C(\del,y_0,y_\infty,t)$ can be chosen independent of $y_\infty$ as $y_\infty \ra \infty$.  To see this we use the fact that the boundary functions $w_0, w_-, w_+$ converge to $0$ as $y_1 \ra \infty$.

Let $Y_\ve(s), \; t \le s \le T$, be the solution of the stochastic equation (\ref{E1}) with $Y_\ve(t) = y$, where $y > x$.  We need to estimate $P\big( Y_\ve(T) < x \  | \  Y_\ve(t) =y\big) $ as $y \ra \infty$.    To do this we let $Z_\ve(s)$ be the solution to the equation
\be \label{BA3}
dZ_\ve(s) = \left[ -A \; Z_\ve(s) + b(x,s)\right]ds + \sqrt{\ve} dW(s),  \quad s > t, \; Z_\ve(t) = y - x,
\ee
where $A$ is the upper bound in (\ref{A1})  for the derivative of $b(\cd,\cd)$.  
Then $Y_\ve(s) \ge Z_\ve(s) + x $, $t \le s \le \tau$,  where $\tau > t$ is the first hitting time at $0$ for the diffusion $Z_\ve(s)$ with $Z_\ve(t)=y-x$. 
The solution to (\ref{BA3}) is given by
\be \label{BB3}
Z_\ve(s) = (y-x)e^{-A(s-t)} + \int^s_t \; e^{-A(s-s')} \; b(x,s')ds' + \xi_\ve(s), \quad s > t,
\ee
where $\xi_\ve(s)$ satisfies the stochastic integral equation
\be \label{BC3}
\xi_\ve(s) = -A \int^s_t \xi_\ve(s')ds' + \sqrt{\ve}\; W(s), \quad s > t.
\ee
Applying Gronwall's inequality to (\ref{BC3}) we have that
\be \label{BD3}
\sup_{t \le s \le T} |\xi_\ve(s)| \le e^{A(T-t)} \; \sqrt{\ve} \ \sup_{t \le s \le T} |W(s)|.
\ee
We can estimate the probability that $\inf_{t \le s \le T} Z_\ve(s) < 0$ by using the inequality
\be \label{BE3}
P \left( \sup_{t \le s \le T} |W(s)| > a \right) \le \left[ \frac{ 8(T-t)}{\pi a^2}\right]^{1/2} \exp\left[ - \; \frac{a^2}{2(T-t)} \right].
\ee
Let us assume that the second term on the RHS of (\ref{BB3}) is smaller in absolute value than 1/2 the first term for $t \le s \le T$.  This can evidently be accomplished by choosing $y-x$ sufficiently large.  Then from  (\ref{BD3}), (\ref{BE3}) we conclude that 
\be \label{BF3}
P \left( \inf_{t \le s \le T} Z_\ve(s) < 0 \right) \le \left[ \frac{8(T-t)}{\pi }\right]^{1/2} \; \frac{2\sqrt{\ve} e^{2A(T-t)}} {(y-x)}  \exp\left[ - \; \frac {(y-x)^2}{8\ve(T-t)} e^{-4A(T-t)} \right].
\ee
Using the inequality
\[	P\left( Y_\ve(T) < x \  |  \ Y_\ve(t) = y \right) \le P \left( \inf_{t \le s \le T} Z_\ve(s) < 0 \right), \]
we obtain from (\ref{BF3}) bounds on the boundary functions $w_0, w_-, w_+$ in (\ref{X3}).  Evidently these are decaying exponentially in $y_1$ as $y_1 \ra \infty$, whereas it follows from (\ref{Y3}) and the Lipschitz condition (\ref{A1}) on $b(\cd,\cd)$ that we may take $\De \sim 1/y^2_1$ as $y_1 \ra \infty$.  We conclude that (\ref{AZ3}) holds uniformly as $y_\infty \ra \infty$. 
\end{proof} 
\begin{lem} Suppose $b(\cd, \cd)$ satisfies (\ref{A1}).  Then for $x,y \in \R, \; t < T$, and $\ve < 1$ there is the inequality
\be \label{BG3}
q_\ve(x,y,t) \ge q(x,y,t) - C(x,y,t,T) \sqrt{\ve},
\ee
where $q(x,y,t)$ is given by (\ref{J1}) and $C(x,y,t,T)$ is a constant independent of $\ve$.
\end{lem}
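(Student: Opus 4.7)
The plan is to apply It\^o's formula to $q_\ve$ along the optimally controlled diffusion with drift $\la^*_\ve=b-\pa q_\ve/\pa y$, obtain an exact stochastic representation of $q_\ve$, and then convert it into an upper bound for $q(x,y,t)$ by taking the expected path and invoking Jensen's inequality. First, Lemma 3.4 ensures $\pa q_\ve/\pa y$ is Lipschitz in $y$ on any region $\{y\ge y_0,\ t\le s\le T-\del\}$; after suitably localizing the drift (cut off for $y<y_0$ and stopping before $s=T$), one solves the SDE $dy^*_\ve=\la^*_\ve(y^*_\ve,s)\,ds+\sqrt{\ve}\,dW$ with $y^*_\ve(t)=y$ on $[t,T-\del]$. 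Applying It\^o to $q_\ve(x,y^*_\ve(s),s)$ and simplifying using (\ref{H1}), the drift collapses to $-\tfrac12(\pa q_\ve/\pa y)^2\,ds$, and taking expectations (the martingale term vanishes thanks to the Green's function bounds of Lemma 3.4) yields the exact identity
\[
q_\ve(x,y,t)=E\Big\{\tfrac12\int_t^{T-\del}\!\big[\tfrac{\pa q_\ve}{\pa y}(x,y^*_\ve(s),s)\big]^2\,ds+q_\ve(x,y^*_\ve(T-\del),T-\del)\Big\}.
\]

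Next, define $\bar y(s)=E[y^*_\ve(s)]$ for $s\in[t,T-\del]$. A Gronwall estimate for the SDE gives $\mathrm{Var}(y^*_\ve(s))=O(\ve)$, so by (\ref{A1}), $|E[b(y^*_\ve(s),s)]-b(\bar y(s),s)|=O(\sqrt{\ve})$; hence $\bar y'(s)-b(\bar y(s),s)=-E[\pa q_\ve/\pa y]+O(\sqrt{\ve})$. Squaring, invoking Jensen $(E[\pa_y q_\ve])^2\le E[(\pa_y q_\ve)^2]$, and controlling the cross term by Cauchy-Schwarz produces
\[
\tfrac12\int_t^{T-\del}(\bar y'(s)-b(\bar y(s),s))^2\,ds\le q_\ve(x,y,t)-E[q_\ve(x,y^*_\ve(T-\del),T-\del)]+C\sqrt{\ve}.
\]
Now choose $\del=\sqrt{\ve}$. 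The It\^o identity together with Lemma 3.1 (which yields $q_\ve(x,y,T-\del)\ge C_1(x-y)^2/\del$ on the appropriate range) gives $E[(x-y^*_\ve(T-\del))_+^2]=O(\sqrt{\ve})$, so $\bar y$ can be extended over $[T-\del,T]$ by a classical path reaching $x$ at $s=T$, producing a path that is admissible in (\ref{B2}); the terminal contribution $q(x,\bar y(T-\del),T-\del)$ is bounded by Lemma 3.1's quadratic estimate together with the moment control above by $E[q_\ve(x,y^*_\ve(T-\del),T-\del)]+O(\sqrt{\ve})$. Summing the two pieces yields $q(x,y,t)\le q_\ve(x,y,t)+C\sqrt{\ve}$, which is (\ref{BG3}).

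The main technical obstacle is the rigorous execution of the It\^o step: the regularity of $\pa q_\ve/\pa y$ provided by Lemma 3.4 is only local in $y$ and degenerate as $s\to T$, so one must handle the SDE with stopping times and pass to the limit in the cut-offs via dominated convergence using the Green's function estimates, and justify the martingale property of $\sqrt{\ve}\int_t^\tau (\pa q_\ve/\pa y)\,dW$ uniformly in the localization. A secondary delicate point is closing the terminal comparison: since joint convexity of $q_\ve(x,\cdot,T-\del)$ is not available at this stage, the bound on $q(x,\bar y(T-\del),T-\del)$ must be derived directly from Lemma 3.1's quadratic two-sided bounds combined with the moment information $E[q_\ve(x,y^*_\ve(T-\del),T-\del)]\le q_\ve(x,y,t)$ supplied by the It\^o identity, rather than from a convex Jensen argument.
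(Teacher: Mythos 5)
Your starting point — the It\^o identity
\[
q_\ve(x,y,t)=E\Big\{\tfrac12\int_t^{T-\del}\big[\tfrac{\pa q_\ve}{\pa y}(x,y^*_\ve(s),s)\big]^2\,ds+q_\ve(x,y^*_\ve(T-\del),T-\del)\Big\}
\]
is exactly the paper's (\ref{BH3}), and the localization by stopping time and Lemma 3.4 is also how the paper justifies it. The divergence is in how you compare to the classical problem, and there the argument has two genuine gaps.

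First, the variance bound $\mathrm{Var}(y^*_\ve(s))=O(\ve)$ does not follow from a naive Gronwall estimate on $[t,T-\del]$. The Lipschitz constant of $\la_\ve(\cdot,s)$ in $y$ degenerates like $1/(T-s)$ as $s\to T$ (this is precisely what Lemma 4.1 and Lemma 6.1 later make quantitative), so Gronwall gives $\mathrm{Var}(y^*_\ve(T-\del))\lesssim \ve\big((T-t)/\del\big)^{2C}$, which with $\del=\sqrt\ve$ is not $O(\ve)$ for $C\ge1/2$. A useful variance bound requires exploiting the restoring sign of $\pa_y\la_\ve$, which in turn uses either the convexity of $q_\ve(x,\cdot,s)$ — only available under concavity of $b$ (Theorem A1), an assumption Lemma 3.5 does not make — or the sharp short-time asymptotics of Section 4, which are downstream of this lemma. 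So this step is not justified at this point in the paper.

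Second, and more seriously, the terminal comparison does not close. You need $q(x,\bar y(T-\del),T-\del)\le E\big[q_\ve(x,y^*_\ve(T-\del),T-\del)\big]+O(\sqrt\ve)$. Lemma 3.1's two-sided bound has constants $C_1<C_2$ with no control on the ratio $C_2/C_1$, so what you actually get (via Jensen applied to $z\mapsto (x-z)_+^2$ and then Lemma 3.1 in both directions) is $q(x,\bar y(T-\del),T-\del)\le (C_2/C_1)\,E[q_\ve(\cdots)]+O(\sqrt\ve)$, and the moment information $E[q_\ve(\cdots)]\le q_\ve(x,y,t)$ only bounds the excess $(C_2/C_1-1)E[q_\ve(\cdots)]$ by $O(1)$, not $O(\sqrt\ve)$. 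You would need either matched constants in Lemma 3.1 (which require the Cameron--Martin estimates of Lemma 4.1, not available here) or a quantitative decay rate $E[q_\ve(x,y^*_\ve(T-\del),T-\del)]=O(\sqrt\ve)$ (which is the content of the later Theorem 6.1 and is harder than the present lemma).

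The paper avoids both obstructions by working path-by-path rather than averaging: for each random trajectory $y_\ve(\cdot)$ it constructs a classical path $y_{\ve,c}(\cdot)$ via the drift shift $k$ in (\ref{BJ3})--(\ref{BK3}) so that $y_{\ve,c}(T)\ge x$ holds by construction, and the single "bad" term $\tfrac{C_3}{\eta}\max[x-\del-y_\ve(T-\del),0]^2$ that appears from Young's inequality (\ref{BP3})--(\ref{BR3}) is absorbed into $q_\ve(x,y_\ve(T-\del),T-\del)$ by choosing the Young parameter $\eta\sim\del$ to match the constant $C_1$ of Lemma 3.1. That absorption is one-sided and has a free parameter to tune, which is what makes the constant bookkeeping go through; your averaged comparison is a tight two-sided inequality with no slack, and it requires information the paper only develops in Sections 4--7.
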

\begin{proof} Suppose $y_0< x$ and $y > y_0$.  Then by Lemma 3.4 we have the representation
\begin{multline} \label{BH3}
q_\ve(x,y,t) = E\bigg\{ \frac 1 2 \int^{(T-\del)\wedge\tau}_t \left[ \la_\ve \left( y_\ve(s), s\right) - b( y_\ve(s), s)\right]^2\; ds \\
+ q_\ve( x, y_\ve (T-\del)\wedge\tau), \ (T - \del) \wedge \tau)  \ \big| \  y_\ve(t) = y \bigg\}. 
\end{multline}
Here $\la_\ve(y,s)$ is given by the formula
\be \label{BI3}
\la_\ve(y,s) = b(y,s) - \pa q_\ve(x,y,s)/\pa y, \quad  y \in \R, \ s < T.
\ee
By Lemma 3.4 the function $\la_\ve(y,s)$ is uniformly Lipschitz in $y$ for $y \ge y_0$ and $t \le s \le T-\del$.  Hence (\ref{N1}) has a unique solution $y_\ve(s)$, $t \le s \le (T-\del) \wedge \tau$, where $\tau$  is the first hitting time at $y_0$.

We consider a random path $y_\ve(s), \; t \le s \le T-\del $, for which $\tau > T-\del$, and associate with it a classical path $y_{\ve,c}(s), \; t \le s \le T$.  To do this let $k$ be defined by 
\be \label{BJ3}
k = \max \left[ x - \del - y - \int^{T-\del}_t \; \la_\ve( y_\ve(s), s) ds,  \ 0 \right].
\ee
Then $y_{\ve,c}(s), \;t \le s \le T $, is the solution to the initial value problem
\begin{eqnarray} \label{BK3}
\frac{dy_{\ve,c}(s)}{ds} &=& \la_\ve( y_\ve(s), s) + k/(T-t-\del), \quad  t \le s \le T-\del, \\
\frac{dy_{\ve,c}(s)}{ds} &=& 2+b(y_{\ve,c}(s),s), \quad T-\del \le s \le T, \ \ y_{\ve,c}(t) = y. \nn
\end{eqnarray}
Since from (\ref{BJ3}) one has that $y_{\ve,c}(T-\del) \ge x-\del$, it follows that $y_{\ve,c}(T) \ge x$ provided $\del$ is sufficiently small.  Hence from (\ref{J1}) we conclude that
\be \label{BL3}
\frac 1 2 \int^T_t \left[ \frac{dy_{\ve,c}(s)}{ds} - b( y_{\ve,c}(s), s) \right]^2 \; ds \ \ \ge \ \ q(x,y,t).
\ee
From (\ref{N1}), (\ref{BK3})  we see that
\be \label{BM3}
y_{\ve,c}(s) - y_\ve(s) = \frac{k(s-t)}{(T-t-\del)} + \sqrt{\ve} \ [W(s) - W(t)], \quad  t \le s \le T-\del.
\ee
We may also rewrite the parameter $k$ in (\ref{BJ3}) as
\be \label{BN3}
k = \max \big[ x - \del - y_\ve(T-\del) + \sqrt{\ve} \ [W(T-\del) - W(t)],  \ 0 \big].
\ee
Observe now that
\begin{multline} \label{BO3}
\frac 1 2 \int^{(T-\del)}_t \left[ \la_\ve( y_\ve(s), s) - b( y_\ve(s), s)\right]^2\; ds \ge 
\frac 1 2 \int^{(T-\del)}_t \left[ \frac{dy_{\ve,c}(s)} {ds}  - b( y_{\ve,c}(s), s)\right]^2\; ds  \\
- \int^{(T-\del)}_t \left| \frac{dy_{\ve,c}(s)}{ds}  - b( y_{\ve,c}(s), s)\right| \left| b( y_{\ve,c}(s), s) - b( y_{\ve}(s), s) - k/ (T-t-\del)\right| ds. 
\end{multline}
Evidently from (\ref{BL3}) the first term on the RHS of (\ref{BO3}) is bounded below by $q(x,y,t) - C\del$ for some constant $C$.  Using (\ref{BN3}) and Lemma 3.1 we may bound the second term on the RHS of (\ref{BO3}).  First observe that this second term is bounded in absolute value by
\begin{multline} \label{BP3}
\frac \eta 2 \int^{(T-\del)}_t \left[ \la_\ve( y_\ve(s), s) - b( y_\ve(s), s)\right]^2\; ds \  + \\
\left[ 1 + \frac 1{2\eta}\right]  \int^{(T-\del)}_t \left[ b( y_{\ve,c}(s), s) - b( y_\ve(s), s) 
- k/(T-t-\del)\right]^2\; ds ,  
\end{multline}
for any $\eta > 0$.  From (\ref{BM3}) and the Lipschitz condition (\ref{A1}) on $b(\cd, \cd)$ the second term in (\ref{BP3}) is bounded above as
\begin{multline} \label{BQ3}
\int^{(T-\del)}_t \left[ b( y_{\ve,c}(s), s) - b( y_\ve(s), s) - k/(T-t-\del)\right]^2\; ds  \\
\le C_1 \; \ve \; \int^{T-\del}_t \left[ W(s) - W(t) \right]^2\; ds + C_2\; k^2,	
\end{multline}
where the constants $C_1, C_2$ depend only on $T-t$, assuming $\del < (T-t)/2$.  Hence from (\ref{BN3}), (\ref{BP3}), (\ref{BQ3}) we conclude that the second term on the RHS of (\ref{BO3}) is bounded by 
\begin{multline} \label{BR3}
\frac{\eta}{2} \int^{(T-\del)}_t \left[ \la_\ve( y_\ve(s), s) - b( y_\ve(s), s)\right]^2\; ds 
 + \  \frac{C_1\ve}\eta \int^{(T-\del)}_t  \left[ W(s) - W(t) \right]^2\; ds  \\
 + \frac{C_2\ve}\eta   \left[ W(T-\del) - W(t) \right]^2  
+ \  \frac{C_3}\eta \left\{ \max \left[ x - \del - y_\ve(T-\del), \  0\right]\right\}^2, 
\end{multline}
for any $\eta, \; 0 < \eta < 1$ and constants $C_1, C_2, C_3$ depending only on $T-t$.  It follows then on taking $\eta \sim \del$ in (\ref{BR3}) and using Lemma 3.1 that
\begin{multline} \label{BS3}
q_\ve(x, y_\ve(T-\del), T-\del) + \frac 1 2 \int^{(T-\del)}_t \left[ \la_\ve( y_\ve(s), s) - b( y_\ve(s), s)\right]^2\; ds \\
\ge q(x,y,t) - C_1\del - \frac{C_2\ve}{\del} \int^{(T-\del)}_t \left[ W(s) - W(t)\right]^2 \; ds +O(\ve)  \\
 - \frac{C_3\ve}{\del} \left[ W(T-\del) - W(t)\right]^2 - C_4 \del \int^{(T-\del)}_t \left[ \la_\ve( y_\ve(s), s) - b( y_\ve(s), s)\right]^2\; ds, 
 \end{multline}
for constants $C_1, C_2, C_3, C_4$ depending only on $T-t$.

To conclude the proof we take the expectation of (\ref{BS3}) on a set of paths $y_\ve(s)$, $t \le s \le T-\del$, for which $\tau > T-\del$.  To find a suitable set of paths note that $\pa q_\ve(x,y,s)/\pa y \le 0$, $y \in \R, \; s < T$, whence (\ref{BI3}) implies that $\la_\ve(y,s) \ge b(y,s)$, $y \in \R, s < T$.  Thus $y_\ve(s) \ge Y_\ve(s), \ t \le s \le T-\del$, where $Y_\ve(s)$ is the solution to (\ref{E1}) with $Y_\ve(t) = y$.  We have already estimated the fluctuation of $Y_\ve(s), \ t \le s \le T$, from $y$ by (\ref{K3}).  We therefore conclude that for given $y$ we may choose $y_0 < y$ such that
\be \label{BT3}
\sup_{t \le s \le T} |W(s) - W(t)| < 1/\sqrt{\ve} \ \ {\rm implies} \ \ \tau > T-\del.
\ee
The inequality (\ref{BG3}) follows now on taking $\del = \sqrt{\ve}$ in (\ref{BS3}) and taking the expectation on the paths for which (\ref{BT3}) holds. 
\end{proof}
\begin{proof}[Proof of Theorem 1.1] Evidently (\ref{L1}) follows from Lemma 3.3 and Lemma 3.5.
\end{proof}

\section{The Optimally Controlled Process}
In  Lemma 3.5 we already used the optimally controlled process $y_\ve(s)$ of (\ref{N1}) with controller (\ref{P1}) to obtain a lower bound on $q_\ve(x,y,t)$. The main goal of this section is to prove that $\liminf _{s\ra T} y_\ve(s) >x$ with probability $1$.  To do this we need to prove some short time asymptotic results for the cost function $q_\ve(x,y,t)$.
\begin{lem}  Suppose that $0 < T-t<\del \le \ve <1$.  Then the function $q_\ve(x,y,t)$ satisfies the inequalities
\be \label{Z4}
0 < q_\ve(x,y,t) \le C\ve + (x-y)^2/(T-t) + C(x,\del) \left[ (y-x)^2 + |y-x| + \sqrt{\ve(T-t)}\right], \quad  y < x,
\ee
\[   0 < q_\ve(x,y,t) \le C\ve \exp  \left[ -(x-y)^2 /2\ve(T-t)\right] + C(x,\del)\left[ (y-x)^2 + |y-x| + \sqrt{\ve(T-t)}\right], \quad  y > x,  \]
where $C$ is a universal constant and $C(x,\del)$ depends only on $x$ and $\del$.  The function $\pa q_\ve(x,y,t)/\pa y$ satisfies the inequality
\be \label{M4}
-\frac{\pa q_\ve}{ \pa y}(x,y,t) \ge \frac{x-y}{T-t} \exp \left[ -C(x,\del) \left\{ (T-t) | \log (T-t)| + [(T-t)/\ve]^{1/2} + (y-x)^2/\ve +|y-x|/\ve \right\} \right],
\ee
for a constant $C(x,\del)$ depending only on $x$ and $\del$.
\end{lem}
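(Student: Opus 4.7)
I work throughout in the short-time regime $0<T-t\le\delta\le\ve\le 1$, in which the diffusion $Y_\ve$ of (\ref{E1}) is a small perturbation of the driftless Brownian motion $Z(s)=y+\sqrt{\ve}[W(s)-W(t)]$. The plan is to control $u_\ve=e^{-q_\ve/\ve}$ from below by a Gaussian-type expression derived from the Cameron--Martin identity; the upper bound (\ref{Z4}) on $q_\ve$ then follows by taking $-\ve\log$, and the derivative inequality (\ref{M4}) is obtained by differentiating this representation in the initial variable $y$.

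To prove (\ref{Z4}), I would apply the Cameron--Martin / Girsanov identity \cite{simon} to write $u_\ve(x,y,t)=E\bigl[M_T\mathbf{1}_{\{Z(T)>x\}}\bigr]$ with $M_T=\exp\bigl(\ve^{-1/2}\int_t^T b(Z,s)\,dW(s)-(2\ve)^{-1}\int_t^T b(Z,s)^2\,ds\bigr)$, then apply Jensen's inequality to the conditional law of $M_T$ given $\{Z(T)>x\}$, yielding $q_\ve\le-\ve\log P(Z(T)>x)-\ve\,E[\log M_T\mid Z(T)>x]$. For $y<x$ the first term is $-\ve\log N(-(x-y)/\sqrt{\ve(T-t)})$; the Mills inequality $N(-z)\ge\phi(z)/(z+1/z)$ for $z>0$ together with the elementary bound $\ve\log z\le\ve z^2/2+C\ve$ gives $(x-y)^2/(T-t)+C\ve$ (the factor-of-two slack in (\ref{Z4}) accommodates the Mills log-defect). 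The second term is controlled by writing $|b(Z,s)|\le|b(x,s)|+A|Z(s)-x|$ via (\ref{A1}) together with the conditional moment bounds $E[|Z(s)-x|^k\mid Z(T)>x]\lesssim|x-y|^k+(\ve(T-t))^{k/2}$ for $k=1,2$, obtained from the Brownian-bridge decomposition of $Z$ conditioned on its terminal value; the resulting remainder matches $C(x,\delta)[(y-x)^2+|y-x|+\sqrt{\ve(T-t)}]$. The case $y>x$ calls for a sharper estimate: apply the same machinery to $1-u_\ve=P(Y_\ve(T)\le x)$, reducing via (\ref{K3}) to a Brownian upper-tail computation, then use $-\log(1-a)\le 2a$ for $a\le 1/2$ to produce the exponentially small main term $C\ve\exp[-(x-y)^2/(2\ve(T-t))]$.

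To prove (\ref{M4}), I would exploit $-\pa q_\ve/\pa y=\ve\,\pa_y u_\ve/u_\ve$ and differentiate the Cameron--Martin representation in the initial condition $y$. The leading contribution comes from the Gaussian density gradient at the indicator boundary $\{Z(T)=x\}$; its ratio with $u_\ve$ supplies the sharp prefactor $(x-y)/[\ve(T-t)]$ via the Mills inequality $\phi(z)/N(-z)\ge z$ for $z\ge 0$, yielding $-\pa q_\ve/\pa y\ge (x-y)/(T-t)$ to leading order. Multiplicative corrections from the Girsanov factor and from shifting the initial point are bounded using the same conditional-moment estimates as for (\ref{Z4}); the four terms in the exponent in (\ref{M4}) correspond respectively to the logarithmic Mills defect $(T-t)|\log(T-t)|$, the Girsanov $L^2$-norm $\sqrt{(T-t)/\ve}$, and the quadratic and linear conditional moments of $|Z-x|$ producing $(y-x)^2/\ve$ and $|y-x|/\ve$.

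I expect (\ref{M4}) to be the principal obstacle. The Jensen/Cameron--Martin argument behind (\ref{Z4}) naturally degrades the sharp prefactor $(x-y)/(T-t)$ by a polynomial Mills-ratio factor; to obtain only multiplicative exponential correction, as (\ref{M4}) requires, one needs a cleaner Malliavin/Bismut-type integration by parts separating the boundary Gaussian-density gradient from the Girsanov bulk, carried out in the exponentially weighted norms prescribed by the right-hand side of (\ref{M4}).
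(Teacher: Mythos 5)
Your high-level plan for (\ref{Z4}) — Cameron--Martin representation of $u_\ve$, Jensen's inequality, Brownian-bridge moment bounds — is in the same family as the paper's argument, but there is a technical gap you have not addressed. In your $\log M_T$ the term $\ve^{-1/2}\int_t^T b(Z,s)\,dW(s)$ is a stochastic integral against the Brownian motion driving $Z$, and its conditional expectation given $\{Z(T)>x\}$ is not controlled by the bridge moment bounds on $|Z(s)-x|$ that you invoke; it requires an Itô integration by parts under the bridge law, or some other device. The paper sidesteps this by first applying the Schwarz inequality in the PDE (\ref{H1}) (see (\ref{A4})), trading the $y$-dependent drift for the frozen drift $b(x,t)$ plus a zeroth-order potential; the Feynman--Kac weight in (\ref{C4})--(\ref{F4}) is then a deterministic path functional, so Jensen's inequality (\ref{G4}) involves no conditional stochastic integral. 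You would need an analogue of this transformation to make your version of (\ref{Z4}) rigorous.

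For (\ref{M4}), which you rightly identify as the principal difficulty, your proposal does not close the gap, and the paper's actual mechanism is of a different character that is absent from your outline. The key step is (\ref{O4}): differentiating the linear equation (\ref{B1}) in $y$ and applying the maximum principle gives $\pa u_\ve(x,y,t)/\pa y \ge e^{-A(T-t)}G(y,x,t,T)$, reducing the problem to a sharp Gaussian \emph{lower} bound on the fundamental solution $G(y,x,t,T)$, combined with the complementary lower bound (\ref{U4}) on $q_\ve$ (upper bound on $u_\ve$). The lower bound on $G$ is proved in (\ref{V4})--(\ref{AA4}) by a Dirichlet-box perturbation expansion: one introduces a small spatial window of width $\eta$ around $x$, a small time-shift $\De$, takes the power $\al=T-t$, and expands the Dirichlet Green's function by a convergent Neumann series subject to (\ref{Y3}). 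In particular the $(T-t)|\log(T-t)|$ term in the exponent of (\ref{M4}) comes from the integral (\ref{Y4}) of $G_D^{1-\al}$ over the window (the exponent $1-\al$ against the Gaussian normalization produces the logarithm), not from a Mills-ratio defect as you suggest. Without the reduction (\ref{O4}) and the Green's-function localization, the Jensen-type argument you sketch inevitably contaminates the prefactor $(x-y)/(T-t)$ with polynomial Mills-ratio factors, and (\ref{M4}) remains unproved.
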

\begin{proof}   We apply the Schwarz inequality in the PDE (\ref{H1}) for $q_\ve(x,y,t)$.  Thus for any $\al > 0$, 
\be \label{A4}
\frac {\pa q_\ve}{\pa t} +b(x,t) \frac {\pa q_\ve}{\pa y} -\frac{1}{2}(1-\al) \left( \frac {\pa q_\ve}{\pa y} \right)^2 + \frac \ve 2 \ \frac {\pa^2 q_\ve}{\pa y^2} + \frac{1}{2\al}[ b(y,t) - b(x,t)]^2  \ge 0.
\ee
Setting $v_\al(y,t) = \exp[-(1-\al) q_\ve(x,y,t)/\ve ]$, we see from (\ref{A4}) that
\be \label{B4}
\frac{\pa v_\al}{\pa t} + b(x,t) \frac{\pa v_\al}{\pa y} + \frac \ve 2 \ \frac{\pa^2 v_\al}{\pa y^2}  \le \frac{(1-\al)}{2\al \ve}
\left[ b(y,t) - b(x,t)\right]^2 \ v_\al,
\ee
provided $\al<1$. It follows now from (\ref{B4}) that $v_\al$ is bounded below by
\begin{multline} \label{C4}
v_\al(y,t) \ge E\Bigg[\exp\left\{-\int_t^T (1-\al)[b(y+g(s)+\sqrt{\ve} \ W(s-t),s)-b(x,s)]^2 ds/2\al\ve\right\}  \\
v_\al(y+g(T)+\sqrt{\ve} \ W(T-t),T)\Bigg],
\end{multline}
where $W(\cd)$ is Brownian motion and $g(\cd)$ is given by
\be \label{D4}
g(s) = \int^s_t \ b(x,s')ds', \quad t \le s \le T.
\ee
Observing that $v_\al$ has terminal data $v_\al(y,T) = 0$ for  $y < x$, and $v_\al(y,T) = 1$ for $y > x$, we conclude from (\ref{C4}) that
\be \label{E4}
v_\al(y,t) \ge \int^\infty_{x-g(T)} \ \frac 1 {\sqrt{2\pi \ve(T-t)}} \exp \left[ - \frac{(y-z)^2}{2\ve(T-t)} \right] F(y,z)  \ dz,
\ee
where $F(y,z)$ is given by the formula
\begin{multline} \label{F4}
F(y,z) = E \Bigg[ \exp \Big\{ - \int^T_t \; (1-\al) \Big[ b([(T-s)y + (s-t)z]/(T-t) \\
	+ \sqrt{\ve} [W(s-t) - (s-t)W(T-t)/(T-t)] + g(s), s) - b(x,s) \Big]^2 \; ds \Big/2\al \ve \Big\} \Bigg].  
\end{multline}
In (\ref{E4}) we have used the Brownian bridge representation for Brownian motion conditioned at times $t$ and $T$.  Using Jensen's inequality in (\ref{F4}) and the Lipschitz bound (\ref{A1}) on $b(\cd, \cd)$, we conclude that 
\begin{multline} \label{G4}
- \log F(y,z) \le \frac{A^2(1-\al)}{2\al \ve} \int^T_t \; ds \; E\Big[ \Big\{ (T-s)(y - x) \\
+ (s-t)(z-x)] / (T-t) + g(s) + \sqrt{\ve} \ [ W(s-t) - (s-t)W(T-t)/(T-t)]\Big\}^2 \Big]  \\
= \frac{A^2(1-\al)}{2\al \ve} \int^T_t \; ds \; \Big\{[ (T-s)(y - x) + (s-t)(z-x)]/(T-t) + g(s)\Big\}^2  \\
+ \frac{A^2(1-\al)}{2\al } \int^T_t \; ds \; (s-t)(T-s)/(T-t) \ . 
\end{multline}
It follows now from (\ref{D4}) and (\ref{C4}) that for any $\del > 0$ there is a constant $C(x,\del)$ depending only on $x,\del$ such that
\begin{multline}  \label{H4}
- \log F(y,z) \le  \frac{A^2(1-\al)}{2\al \ve}  \Big[ (z-x)^2(T-t) + (y-x)^2(T-t) \\
+ C(x,\del)(T-t)^3 + \ve(T-t)^2/6 \Big], \quad  T-t < \del.
\end{multline}
We may combine (\ref{E4}), (\ref{H4}) to obtain an upper bound on $q_\ve(x,y,t)$.  Thus on using the inequality $(z-x)^2 \le 2(z-y)^2 + 2(y-x)^2$ in (\ref{H4}), we conclude from (\ref{E4}) that 
\begin{multline} \label{I4}
v_\al(y,t) \ge  
\exp \left[ - \frac{A^2(1-\al)}{2\al \ve}  \left\{ 3(y-x)^2(T-t) + C(x,\del)(T-t)^3 + \ve(T-t)^2/6 \right\} \right] \\
\int^\infty_{x-y-g(T)} \; \frac 1{\sqrt{2\pi\ve (T-t)}} \exp \left[ -z'^2 \left\{ \frac 1{2\ve (T-t)} + \frac{A^2(1-\al)}{\al \ve} (T-t) \right\} \right] dz'  \  ,  \quad T-t<\del . 
\end{multline}

Let us recall the inequality
\be \label{J4}
\frac 1 a \left( 1 - \frac 1{a^2} \right)  e^{-a^2/2} < \int^\infty_a e^{-z^2/2} \; dz < \frac 1 a \ e^{-a^2/2}, \quad a > 0.
\ee
We shall use it to show that there is a universal constant $C$ such that
\be \label{K4}
\int^\infty_{a+\eta} e^{-z^2/2} \; dz \ge \exp \Big[ -\eta^2/2 - C\eta \max\{a,1\}  \Big] \int^\infty_a e^{-z^2/2} \; dz, \quad \eta>0, \ a\in\R.
\ee
To see this observe that by Jensen's inequality
$$
\int^\infty_{a+\eta} e^{-z^2/2} \; dz \ge \exp \Big[ -\eta^2/2 - \eta \left< Z \right> \Big] \int^\infty_a e^{-z^2/2} \; dz,
$$
 where $Z$ is the standard normal variable conditioned on  $Z > a$.   Evidently if $a \le 2$ then $|\left< Z \right>| \le C_1$ for some universal constant $C_1$.  If $a\ge 2$ we see from (\ref{J4}) that
\[	\left< Z \right> \ \le \  a\left( 1 - \frac 1{a^2}\right)^{-1} \ \le 4a/3, \]
whence (\ref{K4}) holds for all $a \in \R$.

We shall apply the inequality (\ref{K4}) in (\ref{I4}) to obtain an upper bound on $q_\ve(x,y,t)$ in terms of the cumulative distribution function $\Phi$ for the standard normal variable.  Now the integral with respect to $z'$ on the RHS of (\ref{I4}) is given by
\be \label{L4}
\frac 1{[1+2A^2(1-\al)(T-t)]^{1/2}}  \ \Phi \left( \frac{y-x+g(T)}{\sqrt{\ve(T-t)}} \ \left[ 1+2A^2(1-\al)(T-t)\right]^{1/2} \right)
\ee
if we set $\al = T-t$.  We write the argument of $\Phi$ in (\ref{L4}) as $-[a+\eta]$ with $a = (x-y)/\sqrt{\ve(T-t)}$ and apply (\ref{K4}).  Thus we obtain the inequality 
\begin{multline} \label{P4}
\Phi \left( \frac{y-x+g(T)}{\sqrt{\ve(T-t)}} \ \left[ 1+2A^2(1-\al)(T-t)\right]^{1/2} \right) \ge \\
\Phi  \left( \frac{y-x}{\sqrt{\ve(T-t)}} \right) \exp \left[ - \frac{C(x,\del)}{\ve} \left\{ (y-x)^2 + |y-x| + \sqrt{\ve(T-t)} \right\}\right], \quad T-t < \del \le \ve, 
\end{multline}
for some constant $C(x,\del)$ depending only on $x,\del$.  If we combine (\ref{P4}) with (\ref{I4}), taking $\al = T-t$, we obtain an upper bound on $q_\ve$,
\begin{multline} \label{N4}
q_\ve(x,y,t) \le -\ve \log \Phi\left( [y-x] / \sqrt{\ve(T-t)} \ \right) + \\
C(x,\del) \left[ (y-x)^2 + |y-x| + \sqrt{\ve(T-t)} \ \right], \quad T-t < \del \le \ve,
\end{multline}
 for a constant $C(x, \del)$ depending only on $x$ and $\del$.  The inequality (\ref{Z4}) follows from (\ref{N4}) on using (\ref{J4}).  Note that (\ref{J4}) for $y < x$ follows from (\ref{N4}) on using the fact that log $a \le a^2/2$ for $a>1$.

Next we turn to estimating $\pa q_\ve(x,y,t)/\pa y$.  To do this we consider the Green's function 
$G(y,y',t,T)$  of (\ref{D1}). It follows from (\ref{D1})  that $-\pa u_\ve(x,y,t)/\pa x$ $=G(y,x,t,T)$.  If we differentiate (\ref{B1}) with respect to $y$ and use the maximum principle, we see also that
\be \label{O4}
\pa u_\ve (x,y,t)/\pa y \ge e^{-A(T-t)} G(y,x,t,T),
\ee
where $A$ is the Lipschitz constant in (\ref{A1}).  Since 
$-\pa q_\ve(x,y,t)/\pa y = \ve \ [\pa u_\ve(x,y,t)/\pa y]/u_\ve(x,y,t)$, we may obtain the lower bound (\ref{M4}) by finding a lower bound for $G(y,x,t,T)$ and a lower bound for $q_\ve(x,y,t)$ which is complimentary to (\ref{N4}).

We turn to the problem of obtaining a lower bound for $q_\ve$.  Instead of (\ref{A4}) we use the differential inequality
\be \label{Q4}
\frac {\pa q_\ve}{\pa t} + b(x,t) \; \frac {\pa q_\ve}{\pa y}  - \frac 1 2 (1+\al) \left( \frac {\pa q_\ve}{\pa y}\right)^2 +
\frac \ve 2 \; \frac {\pa^2 q_\ve}{\pa y^2} - \big[ b(y,t) - b(x,t) \big]^2 / 2\al \le 0,
\ee
for any $\al > 0$.  Setting $v_\al(y,t) = \exp [-(1+\al)q_\ve(x,y,t)/\ve]$ we see from (\ref{Q4}) that
\be \label{R4}
v_\al(y,t) \le \int^\infty_{x-g(T)} \frac 1{\sqrt{2\pi \ve(T-t)}} \exp \left[ - \frac{(y-z)^2}{2\ve(T-t)} \right] F(y,z)dz,
\ee
where $F(y,z)$ is given by the formula
\begin{multline}  \label{S4}
F(y,z) = E\Big[ \exp \Big\{ \int^T_t \frac{A^2(1+\al)}{2\al\ve}\; ds \Big( [T-s)(y-x) + (s-t)(z-x)] \big/ (T-t)\\
+ g(s) + \sqrt{\ve}  \ [W(s-t)-(s-t)W(T-t)/(T-t)]\Big)^2\Big\} \Big]. 
\end{multline}
The expectation in (\ref{S4}) cannot be evaluated exactly as was the case with (\ref{G4}), but it may be estimated using the fact that one knows the probability density function of $\displaystyle{\sup_{t\le s\le T}} W(s-t)$.  Taking $\al = T-t$ in (\ref{S4}), we see from this that
\be \label{T4}
\log F(y,z) \le \frac{CA^2}\ve \left[ (z-x)^2 + (y-x)^2 + C(x,\del)(T-t)^2 + \ve(T-t)\right], \quad  T-t < \del,
\ee
for a universal constant $C$ and constant $C(x,\del)$ depending on only $x,\del$.  Note here that we require $\del < 1/A^2$ for the expectation (\ref{S4}) to be finite.  To obtain the lower bound on $q_\ve$ we combine (\ref{T4}) and (\ref{R4}) with the inequality (\ref{K4}).  Since we are obtaining an upper bound on the function $v_\al(y,t)$, we apply (\ref{K4}) with $a + \eta = (x-y)/\sqrt{\ve(T-t)}$.  Hence we get an inequality complimentary to (\ref{N4}),
\begin{multline} \label{U4}
q_\ve(x,y,t) \ge - \ve \; \log \Phi \left( [y-x]/\sqrt{\ve(T-t)}  \ \right) - \\ 
C(x,\del) \left[(y-x)^2 + |y-x| +  \sqrt{\ve(T-t)} \  \right], \quad T-t< \del \le \ve,
\end{multline}
for a constant $C(x,\del)$ depending only on $x$ and $\del$.

The lower bound for $G(y,x,t,T)$ may be obtained in a similar way to the upper bound on $q_\ve(x,y,t)$.  Let $0 < \De < T-t$ and $0 < \al < 1$.  Then just as in (\ref{C4}) we have that
\begin{multline}  \label{V4}
G(y,x,t,T)^{1-\al} \ge \int^\infty_{- \infty} \frac{1}{\sqrt{2\pi\ve (T-t-\De)}}  \\
\exp \left[ - \frac{(y-z)^2}{2\ve(T-t-\De)} \right] F_\De(y,z)  \ G(z+ g(T-\De), x, T-\De, T )^{1-\al}\; dz,  
\end{multline}
where $F_\De$ is as in (\ref{F4}) but with $T$ replaced by $T-\De$.  Observe that we cannot take $\De \ra 0$ on the RHS of (\ref{V4}) since the integrand would contain in the limit $\del(z+g(T-\De)-x)^{1-\al}$, which is identically zero.  We shall choose $\De$ so that $0 < \De << T-t$ and $\al = T-t$, in a way that the function $z \ra G(z+ g(T-\De), x, T-\De, T)^{1-\al}$ is approximately a Dirac delta function concentrated at $x$.

It is evident that the RHS of (\ref{V4}) is decreased upon replacing $G$ by the corresponding Dirichlet Green's function $G_D$ for an interval centered at $x$.  As in Lemma 3.4 we choose this interval sufficiently small and $\De$ sufficiently small so that $G_D$ may be expanded in a perturbation series.  The condition for this has already been given in (\ref{Y3}).  Thus the Green's function $G_D(z, x, T-\De, T)$ on the interval $x-\eta \le z \le x+\eta$ has a convergent perturbation expansion provided $\eta, \De$ satisfy the inequalities
\be \label{W4}
\ve\De \le \eta^2, \ \ \De \le \nu \ve \Big/ [A\eta + C(x,\del)]^2, \quad  \De \le \del,
\ee
where $A$ is the Lipschitz constant from (\ref{A1}) and $C(x,\del)$ is a constant depending only on $x,\del$.  In that case there are universal constants $C_1, C_2$ such that
\begin{multline} \label {X4}
\int^{x+\eta}_{x-\eta} G_D(z,x,T-\De,T)dz \ge 1 - C_2\exp[-\eta^2/4\ve\De] \\
 - C_2[A\eta + C(x,\del)](\De/\ve)^{1/2}, \quad 0 < \De < \del.
\end{multline}
Observe that if we take $\De = (T-t)^3, \eta = (T-t)\sqrt{\ve}$ then the RHS of (\ref{X4}) is bounded below by $1-C(x,\del)(T-t)$ for $0 < T-t < \del \le \ve$, where $C(x,\del)$ depends only on $x$ and $\del$.  Taking $\al = T-t$ we may see further that with the same values for $\De, \eta$ there is the inequality
 \be \label{Y4}
\int^{x+\eta}_{x-\eta} G_D(z,x,T-\De,T)^{1-\al}dz \ge 1 - C(x,\del)(T-t)|\log(T-t)|, \quad 0 < T-t < \del \le \ve,
\ee
for a constant $C(x,\del)$ depending only on $x,\del$.   It follows then from (\ref{H4}), (\ref{V4}), (\ref{Y4}), that
\begin{multline} \label{AA4}
G(y,x,t,T) \ge \frac 1{\sqrt{2\pi \ve(T-t)}} \exp \bigg[ - \frac{(y-x)^2}{2\ve(T-t)}  -C(x,\del)\big\{ (T-t)|\log(T-t)| \\
+ (y-x)^2/\ve + |y-x|/\ve\big\} \bigg],  \quad 0 < T-t < \del \le \ve,  
\end{multline}
for a constant $C(x,\del)$ depending only on $x,\del$.

To obtain the lower bound (\ref{M4}) we combine (\ref{U4}) and (\ref{AA4}) using (\ref{O4}).  The inequality (\ref{M4}) now follows from (\ref{J4}). 
\end{proof}
\begin{rem}  There is a vast literature on short time asymptotics of solutions to diffusive equations.  See in particular the classical papers of Kannai \cite{kan},  Minakshisundaram \cite{min},  Molchanov \cite{mol}, and Varadhan \cite{var}.
\end{rem}
Lemma 4.1 shows that for $y < x$ and $s < T$ with $T-s$ small, the optimal controller $\la^*(x,y,s)$,  given by (\ref{P1}) for the stochastic control problem (\ref{O1}),  is approximately $\la^*(x,y,s) = (x-y)/(T-s)$.  This will enable us to  show that the solution $y_\ve(s)$ of the corresponding stochastic differential equation (\ref{N1}) satisfies $\displaystyle{\liminf_{s\ra T}} \ y_\ve(s) > x$ with probability 1.  First we show this for the linear approximation which we have just established. 
\begin{lem} Suppose $\mu > 0, \ \ve > 0$ and $Z_\ve(s), \; t \le s < T$, is a solution to the SDE
\be \label{AB4}
dZ_\ve(s) = \frac{-\mu Z(s)}{T-s} \ ds + \sqrt{\ve}\; dW(s),
\ee
with initial condition $Z_\ve(t) = z \in \R$.  Then $\displaystyle{\lim_{s\ra T}}\; Z_\ve(s) = 0$ with probability 1, and if $\mu > 1/2$ then $\displaystyle{\liminf_{s\ra T}}\; Z_\ve(s)/\sqrt{T-s} = -\infty$ with probability 1.
\end{lem}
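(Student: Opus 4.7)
The plan is to solve the linear SDE (\ref{AB4}) explicitly and then reduce everything to the asymptotic behaviour of a standard Brownian motion via the Dambis--Dubins--Schwarz representation and the law of the iterated logarithm. Using $(T-s)^{-\mu}$ as an integrating factor, It\^o's formula applied to $(T-s)^{-\mu}Z_\ve(s)$ kills the drift and integrates to
\be
Z_\ve(s) \ = \ (T-s)^\mu\left[(T-t)^{-\mu}z+M_\ve(s)\right], \quad  M_\ve(s)\DEF\sqrt{\ve}\int_t^s (T-u)^{-\mu}\,dW(u),
\ee
where $M_\ve(\cd)$ is a continuous Gaussian martingale with quadratic variation $\sig(s)^2=\ve\int_t^s (T-u)^{-2\mu}\,du$. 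By Dambis--Dubins--Schwarz, $M_\ve(s)=\tilde W(\sig(s)^2)$ for a standard Brownian motion $\tilde W$, so the problem reduces to controlling $(T-s)^\mu\tilde W(\sig(s)^2)$ and, for the $\liminf$ statement, $(T-s)^{\mu-1/2}\tilde W(\sig(s)^2)$ as $s\ra T$.

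For the first assertion I split into three regimes. If $\mu<1/2$, then $\sig(T)^2<\infty$, so $M_\ve$ has a finite a.s.\ limit by martingale convergence and the prefactor $(T-s)^\mu$ sends $Z_\ve(s)$ to zero. If $\mu\ge 1/2$, then $\sig(s)^2\ra\infty$ and from the explicit formula one checks directly that $(T-s)^\mu\sig(s)\ra 0$ as $s\ra T$ (with rate $O((T-s)^{1/2})$ when $\mu>1/2$ and a logarithmic correction when $\mu=1/2$). Combining this with the upper LIL bound $|\tilde W(\tau)|=O(\sqrt{\tau\log\log\tau})$ a.s.\ as $\tau\ra\infty$ gives
\be
(T-s)^\mu |M_\ve(s)| \ = \ O\!\left((T-s)^\mu \sig(s)\sqrt{\log\log\sig(s)^2}\right) \ \ra \ 0 \quad\text{a.s.},
\ee
so $Z_\ve(s)\ra 0$ a.s.\ in every case.

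For the second assertion, assume $\mu>1/2$ and divide by $\sqrt{T-s}$. The deterministic piece $(T-s)^{\mu-1/2}(T-t)^{-\mu}z$ tends to zero. For the stochastic piece, the asymptotic $\sig(s)^2\sim\frac{\ve}{2\mu-1}(T-s)^{1-2\mu}$ rewrites the scaling as $(T-s)^{\mu-1/2}=[\sqrt{\ve/(2\mu-1)}+o(1)]\,\sig(s)^{-1}$, giving
\be
\frac{Z_\ve(s)}{\sqrt{T-s}} \ = \ \left[\sqrt{\ve/(2\mu-1)}+o(1)\right]\,\frac{\tilde W(\sig(s)^2)}{\sig(s)} \ + \ o(1).
\ee
Since $\sig(s)^2\ra\infty$ as $s\ra T$, the lower half of the LIL, $\liminf_{\tau\ra\infty}\tilde W(\tau)/\sqrt{2\tau\log\log\tau}=-1$ a.s., forces $\liminf_{\tau\ra\infty}\tilde W(\tau)/\sig(\tau)\cdot\sqrt{\ve/(2\mu-1)}=-\infty$ a.s., whence $\liminf_{s\ra T}Z_\ve(s)/\sqrt{T-s}=-\infty$ a.s.

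The only real subtlety is keeping the powers straight across the three regimes and noticing that the borderline case $\mu=1/2$ still falls inside the LIL argument: there $\sig(s)^2=\ve\log\frac{T-t}{T-s}$ grows only logarithmically in $1/(T-s)$, but the prefactor $(T-s)^{1/2}$ is correspondingly small enough to kill any poly-logarithmic LIL factor. Once the explicit solution and the time change are in place, the rest is routine bookkeeping.
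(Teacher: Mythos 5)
Your proposal is correct, and it takes a genuinely different route from the paper. You both start from the explicit solution $Z_\ve(s)=(T-s)^\mu[(T-t)^{-\mu}z+M_\ve(s)]$, but from there the methods diverge sharply. For the first claim, the paper works directly with the dyadic points $s_n=T-(T-t)/2^n$, controls $\sup_{t\le s\le s_n}|M(s)|$ via Doob's $L^2$ inequality, and sums the tail bounds by Borel--Cantelli to conclude $(T-s)^\mu|M(s)|\to 0$ a.s. You instead invoke the Dambis--Dubins--Schwarz time change $M_\ve(s)=\tilde W(\sigma(s)^2)$ and the upper half of the law of the iterated logarithm, which is cleaner and handles all $\mu>0$ uniformly (the $\mu<1/2$ case dropping out of martingale convergence alone). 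For the second claim the contrast is even more pronounced: the paper builds iid increments $Y_n$ along the dyadic skeleton, packages the $\liminf$ in a random recursion $\xi_n=Y_n+\del\xi_{n-1}$, and runs a rather involved Laplace-transform contraction argument to get a summable escape estimate before applying Borel--Cantelli. You bypass all of this by noting $\sigma(s)^2\to\infty$, rewriting $(T-s)^{\mu-1/2}$ as essentially $\sqrt{\ve/(2\mu-1)}/\sigma(s)$, and appealing to the lower half of the LIL, $\liminf_{\tau\to\infty}\tilde W(\tau)/\sqrt{2\tau\log\log\tau}=-1$ a.s., which forces $\liminf\tilde W(\tau)/\sqrt{\tau}=-\infty$ a.s. Your argument is correct and considerably shorter; what the paper's hands-on dyadic approach buys is a self-contained derivation that avoids quoting the LIL as a black box, which is a matter of exposition rather than necessity. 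The only point worth flagging explicitly in a polished write-up is the standard caveat that DDS a priori produces $\tilde W$ on an enlarged probability space, but since $\sigma(\cdot)^2$ is strictly increasing this is not an issue, and in any case a.s.\ statements transfer.
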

\begin{proof}  The SDE (\ref{AB4}) is explicitly solvable, whence we find
\be \label{AC4}
Z_\ve(s)  = \left( \frac{T-s}{T-t} \right)^\mu \; z + \sqrt{\ve} \ \int^s_t \ \left( \frac{T-s}{T-s'} \right)^\mu \; dW(s'), \quad t \le s < T.
\ee
Thus $Z_\ve(s)$ is a Gaussian variable with mean of order $(T-s)^\mu$ as $s \ra T$.  We shall assume wlog that $\mu > 1/2$, in which case the variance of $Z_\ve(s)$ is order $T - s$ as $s \ra T$.  Hence the standard deviation of $Z_\ve(s)$ dominates the mean for $s \ra T$.  For $n = 0,1,2....$, let $s_n = T-(T-t)/2^n$, so $t=s_0 < s_1 < s_2 < \cd\cd\cd < T$.  For $t < s < T$ we consider the Martingale $M(s)$ defined by
\[	M(s) = \int^s_t \ (T-s')^{-\mu} \ dW(s'),  \]
which by Doob's inequality satisfies
\[     P \bigg( \sup_{t\le s\le s_n} \ |M(s)| > a\bigg) \le 2^{(2\mu - 1)n} \big/ a^2(2\mu - 1)(T-t)^{2\mu-1}, \quad a>0.  \]
It follows that 
\[ \sum^\infty_{n=1} \  P \bigg( \sup_{t\le s\le s_n} \ |M(s)| > 2^{(\mu - 1/4)n} \bigg) < \infty.  \]
Hence by the Borel-Cantelli lemma $\displaystyle{\limsup_{s\ra T}}\; (T-s)^\mu \; |M(s)| = 0$ with probability 1.  We conclude from (\ref{AC4}) that $\displaystyle{\lim_{s\ra T}}\;Z_\ve(s) = 0$ with probability 1.

We turn to showing that $\displaystyle{\liminf_{s\ra T}}\;Z_\ve(s)\big/ \sqrt{T-s} = -\infty$ with probability 1. For $n=1,2,...$ we define variables $Y_n$ by $Y_n = (T-s_n)^{\mu - 1/2}$ $\big[ M(s_n) - M(s_{n-1})\big]$.  We may write the $Z_\ve(s_n)$ in terms of the $Y_n$ as 
\be \label{AD4}
Z_\ve(s_n)  = \left( \frac{T-s_n}{T-t} \right)^\mu \; z + \sqrt{\ve(T-s_n)} \ \sum^n_{m=1} \; Y_m/2^{(n-m)(\mu - 1/2)}, \quad  n=1,2,\cdots .  \ee
Evidently the $Y_n, \; n \ge 1$, are independent and Gaussian with zero mean and variance var$(Y_n) = \big[ 1-2^{1-2\mu}\big]/(2\mu-1)$.  By the Borel-Cantelli lemma for any $K>0$, one has $Y_n < -K$ for infinitely many $n$, with probability 1.  Thus if in (\ref{AD4}) we were to replace the sum over $1 \le m \le n$ by its dominant term $m=n$, we would have shown that $\displaystyle{\liminf_{n\ra \infty}}\; Z_\ve(s)\big/ \sqrt{T-s_n} = -\infty$ with probability 1.

To take account of the sum in (\ref{AD4}) we need to make a more elaborate argument.  Denoting the sum in (\ref{AD4}) by $\xi_n$ it is easy to see that
\be \label{AE4}
\xi_n = Y_n + \xi_{n-1} \big/ 2^{(\mu - 1/2)}, \quad  n \ge 1,
\ee
where $\xi_0 = 0$.  For $\xi \in \R$, $n\ge 1$, we put
\[	u(\xi,n) = P\big[ \xi_m >a, \ 1 \le m \le n  \  \big| \  \xi_0 = \xi \big],  \]
where the $\xi_n$ are defined by the recurrence (\ref{AE4}).  Setting $\del = 1/2^{(\mu - 1/2)} < 1$, it is easy to see that the $u(\xi,n)$ satisfy the recurrence equation
\be \label{AF4}
u(\xi,n) = \frac 1{\sqrt{2\pi \sig^2}} \ \int^\infty_a d\xi ' \; u(\xi', n-1) \exp \left[ - \big(\xi' - \del\xi\big)^2 \big/ 2\sigma^2 \right] ,  \  \quad n\ge 1,
\ee
where we define $u(\xi,0) = 1, \ \xi \in \R$, and $\sigma^2 = \big[ 1-2^{1-2\mu}\big] / (2\mu - 1)$.  If for $z > 0, \ \hat u(\xi,z)$ is the Laplace transform of $u(\xi,n)$, 
\[	\hat u(\xi,z) = \sum^\infty_{n=0} u(\xi,n)e^{-nz}, \quad  \xi \in \R, \ z > 0,	\]
then we see from (\ref{AF4}) that
\be \label{AG4}
\hat u(\xi,z) = 1 + \frac{e^{-z}}{\sqrt{2\pi\sigma^2}} \int^\infty_a d\xi' \  \hat u(\xi',z) \exp \big[ -(\xi' - \del \xi)^2/2\sigma^2\big], \quad  \xi \in \R, \ z > 0.
\ee
It follows from (\ref{AG4}) that for $\eta > 0$,
\be \label{AH4}
\sup_{\xi > a} \; \big[\hat u(\xi,z)e^{-\eta\xi} \big] \le e^{-\eta a} + e^{-z} \sup_{\xi > a} \; \big[\hat u(\xi,z)e^{-\eta\xi} \big] \sup_{\xi > a} \; h_\eta(\xi),
\ee
where $h_\eta(\xi)$ is given by the expression
\[	h_\eta(\xi) = \frac 1{\sqrt{2\pi \sig^2}} \ \int^\infty_a d\xi '  \exp \left[ \eta (\xi' -\xi) - (\xi' - \del\xi)^2 \big/ 2\sigma^2 \right].   \]
Evidently $\displaystyle{\sup_{\xi > a}}\; h_\eta(\xi) = 1$ if $\eta = 0$.  We shall show that there is an $\eta > 0$ such that $\displaystyle{\sup_{\xi > a}}\; h_\eta(\xi) < 1$.

To see this we shall assume wlog that $a < 0$ and $0 < \eta < 1$.  We choose $\al$ to satisfy $\del < \al < 1$, and  for $\xi > 0$ consider the integral
\begin{multline*}
\int^\infty_{\al\xi} d\xi' \exp\Big[ \eta(\xi' - \xi) - (\xi' - \del \xi)^2 / 2\sig^2 \Big] = \\
\sigma \exp \Big[ -\eta(1-\del)\xi\Big] \int^\infty_K \exp\Big[ \eta \sig \zeta - \zeta^2/2 \Big]d\zeta, 
\end{multline*}
where $K = [\al - \del]\xi/\sigma$.  We have now that
\begin{multline*}	\int^\infty_K \exp\Big[ \eta \sig \zeta - \zeta^2/2 \Big]d\zeta = e^{\eta^2\sigma^2/2} \int^\infty_{K-\eta\sigma} \ e^{-\zeta^2/2} \ d\zeta  \\
\le \exp \big[ \eta^2\sigma^2 + C(K-\eta\sigma)\eta\sigma\big]  \int^\infty_K \ e^{-\zeta^2/2} \ d\zeta , 
\end{multline*}
where we have used (\ref{K4}) and assumed $K-\eta\sigma > 1$.  Taking $C > 1$ and choosing $\al$ so that $(1-\del) > C(\al-\del)$, we conclude from the last 2 inequalities that there exists $\xi_0 > 0$ depending only on $\sigma,\al$, such that
\begin{multline*}	\int^\infty_{\al\xi} d\xi' \exp\Big[ \eta(\xi' - \xi) - (\xi' - \del \xi)^2 / 2\sig^2 \Big]  \\ 
\le    \exp \big[ -\eta\xi\{ (1-\del)-C(\al - \del)\}\big] \int^\infty_{\al\xi} d\xi' \exp\Big[ -(\xi' - \del \xi)^2/2\sig^2 \Big]
\end{multline*}
provided $\xi > \xi_0$.  It easily follows that
\be \label{AI4}
h_\eta(\xi) \le \exp [-\rho\eta\xi], \quad  \xi > \xi_0, \ 0 < \eta < 1,
\ee
where $\rho = \min \big[ (1-\del) - C(\al - \del), \ 1 - \al\big]$.  One can also see that we may choose $\eta > 0$ sufficiently small such that $\displaystyle{\sup_{a<\xi<\xi_0}}\; h_\eta(\xi) < 1$.  Combining this with (\ref{AI4}), we conclude that $\displaystyle{\sup_{\xi > a}}\; h_\eta(\xi) < 1$ for sufficiently small $\eta > 0$.  Now on letting $z \ra 0$ in (\ref{AH4}), we see that
\[	\sum^\infty_{n=1} \ P\Big( \xi_m > a, \ 1 \le m\le n \  \Big| \  \xi_0 = \xi \Big) < \infty.	\]
Hence by the Borel-Cantelli lemma $\displaystyle{\liminf_{n\ra \infty}}\; \xi_n \le a$ with probability 1.  Now (\ref{AD4}) implies that $\displaystyle{\liminf_{n\ra \infty}}\; Z_\ve(s_n) \big/ \sqrt{T-s_n} = -\infty$ with probability 1.
\end{proof}
\begin{theorem}  Let $\la_\ve(\cd, \cd)$ be the optimal controller defined by (\ref{BI3}).  Then the SDE (\ref{N1}) has a unique strong solution $y_\ve(s), \; t \le s < T$, with initial condition $y_\ve(t) = y$,  Furthermore $\displaystyle{\liminf_{s\ra T}}\; y_\ve(s) > x$ with probability 1.
\end{theorem}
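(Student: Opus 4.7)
The plan is to combine the regularity of $\la_\ve$ from Lemma 3.4, a supermartingale identity for $q_\ve$ derived from the Hamilton--Jacobi equation (\ref{H1}), and the short-time estimates of Lemma 4.1 together with the Brownian bridge behavior of Lemma 4.2.

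First I would establish existence and uniqueness of the strong solution on $[t,T)$. By Lemma 3.4 the function $\pa q_\ve/\pa y$ is bounded and Lipschitz in $y$ on every region $\{y\ge y_0,\ t\le s\le T-\del\}$, so $\la_\ve(y,s)=b(y,s)-\pa q_\ve(x,y,s)/\pa y$ satisfies the standard Picard hypotheses uniformly there. This yields a unique strong solution up to the first exit from compact sets. The maximum principle applied to (\ref{B1}) gives $\pa q_\ve/\pa y\le 0$, hence $\la_\ve\ge b$ and $y_\ve(s)\ge Y_\ve(s)$ where $Y_\ve$ solves (\ref{E1}); combined with the upper bound on $|\pa q_\ve/\pa y|$ from Lemma 3.4 this prevents explosion, and letting $\del\to 0$ and $y_0\to-\infty$ extends the solution to $[t,T)$.

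Next, apply It\^o's formula to $N_s=q_\ve(x,y_\ve(s),s)$. Substituting $\la_\ve-b=-\pa q_\ve/\pa y$ into (\ref{H1}) collapses the drift and produces
\begin{equation*}
dN_s=-\tfrac{1}{2}\bigl(\pa q_\ve/\pa y\bigr)^2\,ds+\sqrt{\ve}\,(\pa q_\ve/\pa y)\,dW(s).
\end{equation*}
Thus $N_s$ is a non-negative supermartingale on $[t,T)$, converging a.s.\ as $s\to T$ to a finite limit $q_\infty$; in particular the continuous local-martingale part converges and $\int_t^T(\pa q_\ve/\pa y)^2\,ds<\infty$ almost surely. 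Now suppose for contradiction that $P(\liminf_{s\to T}y_\ve(s)\le x)>0$. If a random subsequence $s_n\to T$ satisfies $y_\ve(s_n)\le x-\delta$ for some fixed $\delta>0$, the lower bound (\ref{U4}) of Lemma 4.1 forces $q_\ve(x,y_\ve(s_n),s_n)\ge\delta^2/[2(T-s_n)]-C\to\infty$, contradicting finiteness of $q_\infty$. Otherwise $y_\ve(s)$ accumulates at $x$ from above, and I would localize by stopping times to a window where (\ref{M4}) yields $-\pa q_\ve/\pa y\ge\mu(s)(x-y)/(T-s)$ with $\mu(s)\to 1$. A pathwise comparison between $y_\ve-x$ and the linear Brownian bridge $Z_\ve$ from (\ref{AB4}) driven by the same Brownian motion, using $\mu>1/2$, transfers the conclusion $\liminf_{s\to T}Z_\ve(s)/\sqrt{T-s}=-\infty$ of Lemma 4.2 to $y_\ve$, producing a subsequence $s'_n\to T$ along which $(x-y_\ve(s'_n))^2/(T-s'_n)\to\infty$. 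One more appeal to (\ref{U4}) then forces $q_\ve(x,y_\ve(s'_n),s'_n)\to\infty$, again contradicting the supermartingale convergence.

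The principal obstacle is the pathwise transfer of Lemma 4.2 to the nonlinear process. The lower bound (\ref{M4}) is only informative for $y<x$ and carries an exponential correction factor that must be controlled uniformly on the stopping-time window; moreover $y_\ve$ may pass through the transition region $|y-x|\lesssim\sqrt{\ve(T-s)}$ repeatedly. Hence one needs a robust one-dimensional comparison theorem for It\^o diffusions with identical noise, applied on successive excursions, together with uniform control of $\mu(s)-1$ ensuring $\mu>1/2$ throughout the window. Verifying these items and patching the excursions constitute the technical heart of the argument.
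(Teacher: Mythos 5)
Your supermartingale identity for $N_s=q_\ve(x,y_\ve(s),s)$ is correct: substituting $\la_\ve-b=-\pa q_\ve/\pa y$ into It\^o's formula and using (\ref{H1}) does give drift $-\tfrac12(\pa q_\ve/\pa y)^2\le 0$, so $N_s$ is a non-negative local supermartingale, hence a genuine supermartingale with a finite a.s.\ limit $q_\infty$. Coupled with the lower bound (\ref{U4}), this cleanly yields $\liminf_{s\to T}y_\ve(s)\ge x$ a.s.\ and is a genuinely different (and arguably more transparent) route than the paper's, which instead pushes the comparison inequality (\ref{AJ4}) through the $\lim_{s\to T}Z_\ve(s)=0$ part of Lemma 4.2. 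Modulo the usual localization needed to justify It\^o (using Lemma 3.4's regularity on $\{y\ge y_0,\ t\le s\le T-\del\}$, then sending $y_0\to-\infty$, $\del\to 0$), this half of your argument is sound.

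The strict inequality $\liminf_{s\to T}y_\ve(s)>x$ is where your case~(b) breaks down, and it breaks down for a structural reason, not a technical one. The inequality (\ref{M4}) gives only a \emph{lower} bound on the drift of $y_\ve$, so the pathwise comparison with the linear bridge $Z_\ve$ of (\ref{AB4}) runs in only one direction: $y_\ve(s)-x\ge Z_\ve(s)$. From a lower bound and Lemma 4.2 you can extract $\limsup_{s\to T}(y_\ve(s)-x)/\sqrt{T-s}=+\infty$ (the process is forced far \emph{above} $x$ relative to the bridge scale), but you cannot extract $\liminf(y_\ve(s)-x)/\sqrt{T-s}=-\infty$; a lower bound on $y_\ve-x$ that oscillates down to $-\infty$ on the $\sqrt{T-s}$ scale places no constraint on how far $y_\ve-x$ dips. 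So the claimed subsequence with $(x-y_\ve(s'_n))^2/(T-s'_n)\to\infty$ is not produced. Moreover, even if it were, the supermartingale argument cannot distinguish $\liminf y_\ve=x$ from $\liminf y_\ve>x$: once $y_\ve(s)\ge x$, the bound (\ref{Z4}) (or Corollary 4.1) shows $q_\ve(x,y_\ve(s),s)$ stays bounded, so $q_\infty<\infty$ offers no contradiction. Finiteness of $q_\infty$ is simply the wrong tool to rule out $\liminf y_\ve=x$.

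What is missing is precisely the paper's final step. After establishing $\limsup(y_\ve-x)/\sqrt{T-s}=+\infty$ a.s.\ (which you can get from the one-sided comparison), introduce the stopping time $\tau_K=\inf\{s\ge t: y_\ve(s)-x=K\sqrt{T-s}\}$, which satisfies $P(\tau_K<T)=1$. Restarting at $s_1=\tau_K$ and $y_1=x+K\sqrt{T-\tau_K}$, the crude one-sided comparison $y_\ve\ge Y_\ve$ together with the Gronwall estimate (\ref{K3}) controls the subsequent fluctuations to size $O(\sqrt{\ve(T-\tau_K)})$, which for $K$ large is tiny compared with $K\sqrt{T-\tau_K}$; this yields $P(\liminf_{s\to T}y_\ve(s)\le x)\le 4\ve/K^2$, and letting $K\to\infty$ finishes the proof. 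Without this anti-recurrence step, the strict inequality does not follow.
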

\begin{proof}  To show existence and uniqueness of a solution to (\ref{N1}) we argue as in Lemma 3.5.  Thus for $y_0 < y$ let $\tau(y_0) = \inf\{s \ge t : s < T, \ y_\ve(s) = y_0\}$.  Since $\la_\ve(y',s) \ge b(y',s), \  y' \in \R, \ s < T$, it follows that $\displaystyle{\lim_{y_0 \ra -\infty}}\; P(\tau(y_0) < T) = 0$.  Hence by the Lipschitz property of $\la_\ve(y',s)$ for $y' \ge y_0, \ t \le s \le T-\del$, for any $\del > 0$, we obtain a unique strong solution to (\ref{N1}) up to time $T-\del$.  Letting $\del \ra 0$ we get existence and uniqueness in the interval $t \le s < T$.

To show that $\displaystyle{\liminf_{s\ra T}}\; y_\ve(s) > x$ we consider for $y_0 < y$ solutions $y_\ve(s), \ t \le s < T$, of (\ref{N1}) with $y_\ve(t) = y$ such that $\tau(y_0) = T$.  From (\ref{M4}) and the fact that $b(\cd, s)$ is uniformly 
Lipschitz for $t \le s \le T$, we see that there exists $s_0$ with $t\le s_0<T$, and $\mu_0>0$, such that such that
\be \label{AJ4}
dy_\ve(s) \ge \left( b(x,s) + \frac{\mu_0[x-y_\ve(s)]}{T-s} \right)ds + \sqrt{\ve} \  dW(s), \quad  s_0 \le s < T,
\ee
on paths $y_\ve(\cd)$ for which $\tau(y_0) = T$.  It follows then from (\ref{AJ4}) and Lemma 4.2 that on paths $y_\ve(\cd)$ for which  $\tau(y_0) = T$ one has in fact $\displaystyle{\liminf_{s\ra T}}\; y_\ve(s) \ge x$ with probability 1.  Letting $y_0 \ra -\infty$, we conclude that $\displaystyle{\liminf_{s\ra T}}\; y_\ve(s) \ge x$ with probability 1 on all paths $y_\ve(\cd)$ for which $y_\ve(t) = y$.

Next for $\eta > 0$ and $s_0 < T$ let $U_{\eta,s_0} = \big\{ y_\ve(\cd) : y_\ve(t) = y, \ y_\ve(s) \ge x-\eta,  \ s_0 \le s < T\big\}$.  If $\eta$ and $T - s_0$ are sufficiently small it follows from (\ref{M4}) that we may take $\mu_0 > 1/2$ for a path $y_\ve(\cd) \in U_{\eta,s_0}$.  Hence by Lemma 4.2 we have that  $\displaystyle{\limsup_{s\ra T}}\;[ y_\ve(s)-x] \big/ \sqrt{T-s} = +\infty$ with probability 1 for all paths $y_\ve(\cd) \in U_{\eta,s_0}$.  Since $\lim_{s_0\ra T} P(U_{\eta, s_0})=1$, we conclude that $\displaystyle{\limsup_{s\ra T}}\;[ y_\ve(s)-x] \big/ \sqrt{T-s} = +\infty$ with probability 1 on all solutions to (\ref{N1}) with $y_\ve(t) = y$.

For $K > 0$ we define a stopping time $\tau_K$ by $\tau_K = \inf\big\{ s \ge t : s < T, \; y_\ve(s) - x = K\sqrt{T-s} \big\}$.  We have just shown that $P(\tau_K < T) = 1$.  Consider now a solution $y_\ve(s)$ to (\ref{N1}) for $s_1 \le s < T$ with initial condition $y_\ve(s_1) = y_1$.  Now $y_\ve(s) \ge Y_\ve(s), \; s_1 \le s < T$, where $Y_\ve(s)$ is the solution to (\ref{E1}) with $Y_\ve(s_1) = y_1$.  From (\ref{K3}) we conclude that
\be \label{AK4}
\inf_{s_1\le s < T} y_\ve(s) \ge y_1 -C\; \sup_{s_1\le s < T} \Big| \int^s_{s_1} b(y_1,s')ds' +\sqrt{\ve} \; \big[ W(s) - W(s_1)\big] \Big|,
\ee
for some constant $C$.  We take now $s_1 \ge t$ and $y_1 = x + K\sqrt{T-s_1}$ in (\ref{AK4}).  It is clear that there is a constant $K_0 > 0$ such that for $K > K_0$,
\be \label{AL4}
P\left( \inf_{s_1\le s <T} y_\ve(s) \le x \right) \le P \left( \sqrt{\ve} \; \sup_{s_1\le s < T} | W(s) - W(s_1)| >  K\sqrt{T-s_1} \big/ 2 \right) \le 4\ve/K^2.
\ee
Taking $s_1 = \tau_K$ in (\ref{AL4}) we conclude that for $K > K_0$ one has $P\big( \displaystyle{\liminf_{s\ra T}}\; y_\ve(s) \le x \big) \le 4\ve/K^2$.  Letting $K \ra \infty$ yields the result. 
\end{proof}
\begin{corollary} Let $\la_\ve(\cd,\cd)$ be the optimal controller defined by (\ref{BI3}), and $y_\ve(s)$ be the corresponding solution to (\ref{N1}) with initial condition $y_\ve(t) = y$.  Then one has
\end{corollary}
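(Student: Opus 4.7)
The plan is to apply Itô's formula to the process $s \mapsto q_\ve(x, y_\ve(s), s)$, where $y_\ve(\cdot)$ is the optimally controlled diffusion constructed in Theorem 4.1. By Lemma 3.4 the function $q_\ve$ is $C^{2,1}$ on $\{y > y_0\} \times [t, T-\del]$ for any $y_0, \del$, so Itô's formula gives
\[
dq_\ve(x, y_\ve(s), s) = \left[ \frac{\pa q_\ve}{\pa s} + \la_\ve \frac{\pa q_\ve}{\pa y} + \frac{\ve}{2} \frac{\pa^2 q_\ve}{\pa y^2} \right] ds + \sqrt{\ve} \, \frac{\pa q_\ve}{\pa y} \, dW(s).
\]
Substituting the PDE (\ref{H1}) for $\pa q_\ve/\pa s$ and the definition (\ref{BI3}) of $\la_\ve = b - \pa q_\ve/\pa y$, every term in the bracket cancels except $-\frac{1}{2}(\pa q_\ve/\pa y)^2 = -\frac{1}{2}(\la_\ve - b)^2$.

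Integrating from $t$ to $s < T$ and taking expectations conditional on $y_\ve(t) = y$, the stochastic integral vanishes in expectation (after localization against exit from $y > y_0$, using $\la_\ve \ge b$ to send $y_0 \to -\infty$), which yields
\[
q_\ve(x, y, t) = E\Big[ q_\ve(x, y_\ve(s), s) \;\big|\; y_\ve(t)=y \Big] + \tfrac{1}{2} E\Big[ \int_t^s [\la_\ve(y_\ve(s'), s') - b(y_\ve(s'), s')]^2 \, ds' \;\big|\; y_\ve(t)=y \Big].
\]
The desired representation
\[
q_\ve(x, y, t) = \tfrac{1}{2} E\Big[ \int_t^T [\la_\ve(y_\ve(s), s) - b(y_\ve(s), s)]^2 \, ds \;\big|\; y_\ve(t)=y \Big]
\]
then follows if one can pass to the limit $s \to T$ on both sides.

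The main obstacle is justifying this passage to the limit. Pointwise convergence of $q_\ve(x, y_\ve(s), s) \to 0$ as $s \to T$ follows from Theorem 4.1: since $\liminf_{s \to T} y_\ve(s) > x$ with probability 1, there is a random $s^* < T$ with $y_\ve(s^*) > x$, and then the terminal condition (\ref{I1}) combined with continuity of $q_\ve$ gives $q_\ve(x, y_\ve(s), s) \to 0$. To upgrade pointwise to $L^1$ convergence I would use the upper bound (\ref{Z4}) of Lemma 4.1, which shows $q_\ve(x, y_\ve(s), s)$ is dominated by $C\ve + C(x,\del)\{(y_\ve(s)-x)^2 + |y_\ve(s)-x| + \sqrt{\ve(T-s)}\}$ for $s$ close to $T$, together with the fact that $y_\ve(\cdot)$ dominates the uncontrolled diffusion $Y_\ve(\cdot)$ of (\ref{E1}) (because $\pa q_\ve/\pa y \le 0$), whose second moment is uniformly bounded by (\ref{K3}). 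This supplies the uniform integrability needed to conclude $E[q_\ve(x, y_\ve(s), s)] \to 0$, and simultaneously lets us invoke monotone convergence on the integral term since the integrand is nonnegative.

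Finally, the stated corollary will presumably also record the companion formulas for $\pa q_\ve/\pa x$ and $\pa q_\ve/\pa y$ that are promised at the start of $\S 6$; for these I would differentiate the PDE (\ref{H1}) in $x$ and $y$, apply Itô's formula to $\pa q_\ve/\pa x(x,y_\ve(s),s) \cdot M(s)$ and $\pa q_\ve/\pa y(x,y_\ve(s),s) \cdot M(s)$ with appropriate exponential integrating factors $M(s)$ built from $\pa b/\pa y$, and repeat the same limiting argument, using the uniform lower bound (\ref{M4}) together with Theorem 1.2 (invoked in $\S 6$) to control the boundary behavior at $s = T$.
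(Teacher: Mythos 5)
You have misidentified the content of this corollary. The display that completes it is (\ref{AM4}):
\begin{equation*}
\lim_{\del\ra 0}\, q_\ve\big(x, y_\ve(T-\del), T-\del\big) = 0 \quad \text{with probability } 1.
\end{equation*}
The stochastic representation formula for $q_\ve$ that you derive via It\^o's formula, together with the companion formulas for $\pa q_\ve/\pa x$ and $\pa q_\ve/\pa y$, is the content of Theorem 6.1 (equations (\ref{FO4})--(\ref{FQ4})), which appears two sections later and \emph{uses} the present corollary as one of its ingredients, not the other way around. Your It\^o-formula plan is in the right spirit for Theorem 6.1 (the paper's own route is the equivalent PDE-verification argument based on Lemma 3.2 and the identity (\ref{BH3}), followed by $\del\ra 0$), but it is out of scope for the statement at hand.

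The one step in your proposal that does bear on (\ref{AM4}) is the claim that almost-sure convergence follows from Theorem 4.1, the terminal data (\ref{I1}), and continuity of $q_\ve$. As written this has a gap. First, ``there is a random $s^*<T$ with $y_\ve(s^*)>x$'' is weaker than what Theorem 4.1 gives and weaker than what you need: Theorem 4.1 gives $\liminf_{s\ra T} y_\ve(s) > x$ strictly with probability $1$, which yields a random $\eta>0$ and $s_0<T$ such that $y_\ve(s)\ge x+\eta$ for all $s_0\le s<T$. Second, (\ref{I1}) provides convergence $q_\ve(x,y,t)\to 0$ only at each fixed $y>x$, while $y_\ve(s)$ is a moving random point, so continuity of $q_\ve$ on $\{t<T\}$ alone does not transfer the pointwise limit along the path. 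To close the gap you should add the monotonicity of $q_\ve(x,\cdot,t)$ in $y$ (nonincreasing, since $\pa u_\ve/\pa y\ge 0$ by the maximum principle), giving $q_\ve(x,y_\ve(s),s)\le q_\ve(x,x+\eta,s)\to 0$. The paper instead makes this quantitative: from the second inequality in (\ref{Z4}) and the same monotonicity it extracts the uniform bound (\ref{AN4}), namely $q_\ve(x,y,T-\del)\le C_1(\ve\del)^{1/4}$ for all $y>x+(\ve\del)^{1/4}$, and combines that directly with Theorem 4.1. Either route is short, but one of them is necessary.
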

\be \label{AM4}
\lim_{\del \ra 0} \ q_\ve\big(x, y_\ve(T-\del), T-\del\big) = 0 \ \ {\rm with \ probability} \ \ 1.
\ee
\begin{proof}  We use the second inequality of (\ref{Z4}) to obtain an estimate on $q_\ve(x, y, T-\del)$ when $y>x$.  Since $q_\ve\big(x, \cd, T-\del)$ is a positive decreasing function we have that
\be \label{AN4}
q_\ve(x, y,T-\del ) \le C\ve \exp \left[ -(x-y)^2/2\ve\del \right] + C_1(\ve\del)^{1/4}, \ \   x < y < x + (\ve\del)^{1/4}, 
\ee
$$
q_\ve( x, y, T-\del ) \le C_1(\ve\del)^{1/4}, \quad  y > x + (\ve\del)^{1/4}, 
$$
for some constants $C, C_1$.  Now (\ref{AM4}) follows from (\ref{AN4}) and Theorem 4.1. 
\end{proof}

\section{Proof of Theorem 1.2}
The problem of estimating $\pa q_\ve(x, y, t)/\pa y$ is closely related to the problem of estimating certain conditional probabilities.  For $0 < \del < T/2$ we shall consider the conditional probability $P\big(Y_\ve(T-\del) \in U \  |  \ Y_\ve(0) = y, Y_\ve(T) = 0 \big)$, where $Y_\ve(s),  \ 0 \le s \le T$, satisfies the SDE (\ref{E1}) and $U$ is an arbitrary open set.  In the linear approximation $b(y,s) = A(s)y$ the variable  $Y_\ve(T)$ conditioned on $Y_\ve(0)=y$ is Gaussian with mean $\La(T)y$ and variance $\ve\sig^2(T)$, where
$\La(T), \ \sig^2(T)$ are given by the formulas,
\be \label{AQ4}
\La(T) = \exp \left[ \int^T_0 A(s)ds\right], \ \sig^2(T) = \int^T_0 \exp \left[ 2\; \int^T_s A(s')ds'\right]ds.
\ee
The variable 
$Y_\ve(T-\del)$ conditioned on $Y_\ve(0) = y, \; Y_\ve(T) = 0$, is also Gaussian with mean and variance given by the formulas
\begin{eqnarray}  \nn
E\big[ Y_\ve(T-\del) \  |  \ Y_\ve(0) = y, \ Y_\ve(T) = 0 \big] &=& \frac{\La(T-\del)}{\sig^2(T)} \; y \; \int^T_{T-\del} \exp \left[ 2 \; \int^T_s A(s')ds'\right]ds,   \\  \label{AS4} \\
 Var \; \big[ Y_\ve(T-\del) \  |  \ Y_\ve(0) = y, \ Y_\ve(T) = 0 \big] &=& \frac{\ve \sig^2(T-\del)}{\sig^2(T)} \; \int^T_{T-\del} \exp \left[ 2 \; \int^T_s A(s')ds'\right]ds. \nn
\end{eqnarray}
The mean in (\ref{AS4}) is equal to $y_{\rm min}(T-\del)$ where $y_{\rm min}(s), \ 0 \le s \le T$, is the unique minimizer for the functional ${\mathcal F}[y(\cd)]$ of (\ref{R2}) conditioned on $y(0) = y, \ y(T) = 0$.  One easily sees from (\ref{AS4}) that there are positive universal constants $C_1, C_2$ such that
\begin{eqnarray} \label{AT4}
\frac{C_2\del y}{T} &\le& E\left[ Y_\ve(T-\del) \  |  \ Y_\ve(0) = y, \ Y_\ve(T)=0 \right] \le \frac{C_1\del y}{T} ,  
 \\
C_1 \ve \del &\le& \; {\rm Var}\; \left[ Y_\ve(T-\del) \  |  \ Y_\ve(0) = y, \ Y_\ve(T)=0 \right] \le C_2 \ve \del, \nn
\end{eqnarray}
for $y < 0$ provided $0 < \del < T/2, \ AT < 1$. It follows from (\ref{AT4}) that there are positive universal constants $C_3, \ga_3, C_4, \ga_4$ such that
\begin{eqnarray} \nn
P\Big( Y_\ve(T-\del) &<& \frac{C_3\del y}{T} \  \big|  \ Y_\ve(0) = y,  Y_\ve(T)=0 \Big) \le 
\exp \left[-\frac{\ga_3 \del y^2}{\ve T^2}\right], \  y < -T\sqrt{\ve/\del},  \\  \label{AU4}  \\
P\Big( Y_\ve(T-\del) &>& \frac{C_4\del y}{T} \  \big|  \  Y_\ve(0) = y,  Y_\ve(T)=0 \Big) \le 
\exp \left[-\frac{\ga_4 \del y^2}{\ve T^2}\right], \  y < -T\sqrt{\ve/\del}, \nn
\end{eqnarray}
provided $0 < \del < T/2, \ AT < 1$. 

Evidently (\ref{AU4}) proves Theorem 1.2 in the case of  $b(y,\cdot)$
linear in $y\in\R$.
We need to show therefore that (\ref{AU4}) continues to hold for nonlinear $b(\cd,\cd)$ satisfying (\ref{A1}) and $b(0,\cdot)\equiv 0$.  Towards that goal we first observe that in the linear case there are positive universal constants $C_3, \ga_3, C_4, \ga_4$ such that if ${\mathcal F}_{\rm min} = {\mathcal F}[y_{\rm min}(\cd)]$ then
\begin{eqnarray} \label{AV4}
{\mathcal F}[y(\cd)] - {\mathcal F}_{\rm min} &\ge& \ga_3\;\del\;y^2/T^2 \ {\rm if} \ y(T-\del) < C_3\del y /T, \\
{\mathcal F}[y(\cd)] - {\mathcal F}_{\rm min} &\ge& \ga_4\;\del\;y^2/T^2 \ {\rm if} \ y(T-\del) > C_4\del y /T, \nn
\end{eqnarray}
provided $0 < \del < T/2, \ AT < 1, \ y < 0$.  For nonlinear $b(\cd, \cd)$ there is not necessarily a unique minimizer of the functional ${\mathcal F}[y(\cd)]$ subject to $y(0) = y < 0$, $y(T) = 0$.  Nevertheless, if ${\mathcal F}_{\rm min}$ denotes now the minimum of ${\mathcal F}[y(\cd)]$ then (\ref{AV4}) continues to hold.
\begin{lem} Let $b(\cd,\cd)$ satisfy (\ref{A1}) and $b(0,\cdot)\equiv 0$. Assume further that  $y < 0, \; \del < T/2, \ AT < 1$ and ${\mathcal F}_{\rm min}$ is the minimum of the functional ${\mathcal F}[y(\cd)]$ of (\ref{R2}) subject to $y(0) = y,  \ y(T) = 0$.  Then (\ref{AV4}) holds for some positive universal constants $C_3, \ga_3, C_4, \ga_4$, on any path $y(s), 0 \le s \le T$, satisfying $y(0) = y, y(T) =0$.
\end{lem}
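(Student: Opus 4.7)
My plan is to reduce to the linear case already established by (AQ4)--(AV4). Since $b(0,\cdot)\equiv 0$ and $|\pa b/\pa y|\le A$, any admissible path $y(\cd)$ admits the pointwise representation $b(y(s),s)=\hat a_y(s)\,y(s)$ with $\hat a_y(s)\DEF\int_0^1(\pa b/\pa y)(\tau y(s),s)\,d\tau$ satisfying $|\hat a_y|\le A$, so that
\be
\mathcal{F}[y(\cd)]=\frac12\int_0^T[y'(s)-\hat a_y(s)y(s)]^2\,ds
\ee
is a linear-coefficient functional with a path-dependent coefficient. The Gaussian bridge computations behind (AQ4)--(AS4) extend verbatim when the constant $A$ is replaced by a bounded function $a(\cd)$, producing a linear minimum $\mathcal{M}(\hat a_y)=y^2/[2I(\hat a_y)]$ with $I(a)=\int_0^T\exp(-2\int_0^s a(s')\,ds')\,ds$, and, via the change of variable $z(s)=y(s)\exp(-\int_0^s\hat a_y)$ and orthogonal decomposition of $\int(z')^2 w\,ds$ with $w=\exp(2\int_0^s\hat a_y)$, the excess inequality
\be
\mathcal{F}[y(\cd)]-\mathcal{M}(\hat a_y)\ge\gamma'\,\del y^2/T^2\quad\text{whenever}\quad y(T-\del)<c_2\,\del y/T,
\ee
and the symmetric inequality when $y(T-\del)>c_1\,\del y/T$, for universal $c_1,c_2,\gamma'$.

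To pass from $\mathcal{M}(\hat a_y)$ to $\mathcal{F}_{\mathrm{min}}$, I would note that $|\hat a_y|\le A$ with $AT<1$ forces $\mathcal{M}(\hat a_y)\in[y^2 e^{-2AT}/(2T),\,y^2 e^{2AT}/(2T)]$, while testing $\mathcal{F}$ on the linear interpolant $y_*(s)=y(1-s/T)$ and using $|b|\le A|y|$ gives the upper bound $\mathcal{F}_{\mathrm{min}}\le y^2(1+O(AT))/(2T)$. Both numbers therefore lie inside an $O(AT)\cdot y^2/T$ band around $y^2/(2T)$.

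The main obstacle is that this $O(AT)\cdot y^2/T$ discrepancy between $\mathcal{F}_{\mathrm{min}}$ and $\mathcal{M}(\hat a_y)$ can exceed the target excess $\gamma\,\del y^2/T^2$ when $\del$ is very small, so a crude comparison of minima does not close the argument. My fix is a direct competitor-path argument: for a $y(\cd)$ violating the window condition, form $y^*=y+\psi$ with a piecewise linear bump $\psi$ satisfying $\psi(0)=\psi(T)=0$ and $\psi(T-\del)=\del y/T-y(T-\del)$, use $b(y+\psi,s)-b(y,s)=\tilde a(s)\psi(s)$ with $|\tilde a|\le A$, and expand
\be
\mathcal{F}[y^*]-\mathcal{F}[y]=\int_0^T(y'-b(y,s))[\psi'-\tilde a\psi]\,ds+\frac12\int_0^T[\psi'-\tilde a\psi]^2\,ds.
\ee
Choosing the sign of $\psi$ so the linear term is negative, bounding the quadratic term by $O(y^2\del/T^2)$, and invoking the a priori estimate $\sup_s|y(s)|\le C|y|$ valid on paths with $\mathcal{F}[y]\le C'y^2/T$ (Gronwall applied to $y'=b(y,s)+[y'-b(y,s)]$ together with Cauchy--Schwarz on the $L^2$ term), one shows the right-hand side is at most $-\gamma\,\del y^2/T^2$. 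Since $\mathcal{F}_{\mathrm{min}}\le\mathcal{F}[y^*]$, this gives $\mathcal{F}[y(\cd)]-\mathcal{F}_{\mathrm{min}}\ge\gamma\,\del y^2/T^2$ as required, and the same construction with the opposite sign of $\psi$ handles the upper-threshold case.
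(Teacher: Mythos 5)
Your plan divides into an exploratory part (paragraphs one and two, which you correctly abandon) and an actual argument (paragraph three: perturb the given path $y(\cd)$ by a piecewise-linear bump $\psi$ and Taylor-expand $\mathcal{F}$). This is genuinely different from the paper's route. Let me explain where I think the argument has a real gap.

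Write $\Delta=|\psi(T-\del)|=|y(T-\del)-\del y/T|$. Using $\psi(0)=\psi(T)=0$ and $\psi$ piecewise linear, a short computation (in which all terms involving $y$ itself cancel algebraically) gives
\[
\int_0^T(y'-b)\psi'\,ds=\mp\frac{\Delta^2\,T}{\del(T-\del)}\pm\Delta\left[\frac{\int_0^{T-\del}b\,ds}{T-\del}-\frac{\int_{T-\del}^T b\,ds}{\del}\right],
\]
so that, up to the $\tilde a\psi$ corrections,
\[
\mathcal{F}[y]-\mathcal{F}[y^*]=\frac{\Delta^2\,T}{2\del(T-\del)}\pm\Delta\cdot\mathcal{E},\qquad |\mathcal{E}|\le 2A\sup_{[0,T]}|y(s)|.
\]
The first term is the genuine gain and is $\ge\Delta^2/(2\del)$. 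The problem is the second. On paths with $\mathcal{F}[y]\le C' y^2/T$ (which is the class your Gronwall bound applies to), one has $\sup|y|\le C|y|$ with $C=O(1)$ but not $O(AT)$, so $|\mathcal{E}|=O(A|y|)$ and the ratio of the error to the gain is $O\!\big(A\del|y|/\Delta\big)$. The threshold hypothesis bounds $\Delta$ from below: by $(C_3-1)\del|y|/T$ in the lower case, by $(1-C_4)\del|y|/T$ in the upper case. In the lower case you may take $C_3$ as large as you like (and indeed the paper's $C_3=\la_0$ is large), so the ratio is $O(AT)/(C_3-1)<1$ and your argument can be pushed through. But in the upper case $C_4<1$ is forced, so the best the threshold gives is $\Delta\gtrsim\del|y|/T$ and the ratio is only $O(AT)$. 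Since the lemma only assumes $AT<1$, this is not small, and the $b$-error can swallow the quadratic gain; the sign of $\mathcal{F}[y]-\mathcal{F}[y^*]$ is then not under control. So the statement ``the same construction with the opposite sign of $\psi$ handles the upper-threshold case'' is not justified under the stated hypothesis.

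The paper avoids this by not perturbing the arbitrary path at all for the second inequality. Instead it introduces the first hitting time $\tau$ of $0$ for $y_1(\cd)$, treats separately the (easy) case $\tau$ small, and otherwise compares $\mathcal{F}[y_1]$ from below to $\mathcal{F}[y_{\rm min}]$ where $y_{\rm min}$ is the Euler--Lagrange minimizer tied to $0$ at time $\tau$. The first integral in (X2) shows $y_{\rm min}'(s)-b(y_{\rm min}(s),s)$ is uniformly comparable to $|y|/T$ (the bounds (BC4) and (BD4)) with constants controlled only by $e^{\pm AT}\in[e^{-1},e]$, and then the explicit perturbation $y_\ve$ of $y_{\rm min}$ produces a clean gain $\sim\ve\del y^2/T^2$ minus a higher-order $\ve^2$ loss, see (BE4). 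This is the step your proposal is missing: reducing an arbitrary competitor to a minimizer with Euler--Lagrange structure before perturbing, so that no uncontrolled $O(AT)$ term appears. As a secondary remark, your first two paragraphs claim the Gaussian bridge formulas (AQ4)--(AS4) ``extend verbatim'' to the path-dependent coefficient $\hat a_y$; that step is not verbatim because the effective linear problem has a coefficient that depends on the competitor, which is exactly the obstacle you then correctly identify and route around, so those paragraphs can be deleted.
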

\begin{proof} We first show that there are positive universal constants $C_1,C_2$ such that
\be \label{AW4}
C_1 y^2/T \le {\mathcal F}_{\rm min} \le C_2y^2/T.
\ee
The upper bound in (\ref{AW4}) can be obtained by estimating ${\mathcal F}[y(\cd)]$ for the linear path $y(s) = (T-s)y/T$, $0 \le s \le T$.  To get the lower bound we consider a path $y(s), 0 \le s \le T$, satisfying $y(0) = y, y(T) =0$, and write \be \label{AX4} 
\frac{dy}{ds} = b( y(s), s) + f(s) = A(s) y(s) + f(s),
\ee
where $|A(s)| \le A, \ 0 \le s \le T$.  Evidently we see from (\ref{AX4}) that
\[	y = y(0) = - \int^T_0 \exp \left[ - \int^s_0 A(s')ds' \right] f(s) ds.   \]
Since $AT < 1$ we conclude that
\[	|y| \le e \ \int^T_0 |f(s)|ds \ \le \ e\sqrt{T} \ \left[ \int^T_0 |f(s)|^2\;ds\right]^{1/2},  \]
whence we obtain the lower bound in (\ref{AW4}) with $C_1 = 1/2e^2$.

To prove the first inequality in (\ref{AV4}) we consider for $\la > 1$ a path $y_\la(s), \; 0 \le s \le T$, satisfying $y_\la(0) = y$, $y_\la(T) = 0$ and $y_\la(T-\del) = \la \del y/T$.  We derive a second path $y^*_\la$ from $y_\la$ by setting $y^*_\la(s) = 0$, $T-\del < s < T, \; y^*_\la(s) = y_\la(s) - s\la \del y/T(T-\del)$, $0 < s < T-\del$.  Thus $y^*_\la(\cd)$ is continuous and $y^*_\la(0) = y, \; y^*_\la(T) = 0$, whence we must have ${\mathcal F}[y^*_\la(\cd)] \ge {\mathcal F}_{\rm min}$.  We also have that
\begin{multline} \label{AY4}
{\mathcal F}[y_\la(\cd)] = \frac 1 2 \ \int^{T-\del}_0 \ \left[ \frac{d y^*_\la(s)}{ds} + \frac{\la\del y}{T(T-\del)} 
- b( y^*_\la(s) + s\la\del y \; / T(T-\del), s) \right]^2\; ds  \\
+ \frac 1 2 \int^T_{T-\del} \left[ \frac{dy_\la}{ds} - b(y_\la(s), s ) \right]^2\; ds. 
\end{multline}
Arguing as we did to get the lower bound in (\ref{AW4}) we see that
\be \label{AZ4}
\frac 1 2 \ \int^T_{T-\del} \ \left[ \frac{d y_\la(s)}{ds} - b(y_\la(s),s)\right]^2\; ds \ge  \frac{\la^2\del y^2}{2e^2T^2} \  .
\ee
The first term on the RHS of (\ref{AY4}) is bounded below by
\be \label{BA4}
{\mathcal F}[y^*_\la(\cd)] - \frac{2\la\del|y|}{T(T-\del)} \ \int^{T-\del}_0 \left| \frac{dy^*_\la(s)}{ds} - b(y^*_\la(s), s ) \right|\; ds,
\ee
where we have used the fact that $AT < 1$.  It follows then from (\ref{AZ4}), (\ref{BA4}) that
\be \label{BB4}
{\mathcal F}[y_\la(\cd)]  \ge {\mathcal F}[y^*_\la(\cd)] - \frac{2\sqrt{2} \la\del|y|}{T\sqrt{T-\del}} {\mathcal F}[y^*_\la(\cd)]^{1/2}+\frac{\la^2\del y^2}{2 e^2T^2}.
\ee 
Observe now from (\ref{AW4}), (\ref{AY4}), (\ref{AZ4}) that there is a universal constant $C_3$ such that if $\la\del/T > C_3$ then ${\mathcal F}[y_\la(\cd)]  - {\mathcal F}_{\rm min} \ge \la^2\del y^2/2e^2 T^2$.  Suppose now that $\la \del/T < C_3$.  If  ${\mathcal F}[y^*_\la(\cd)] \ge [2C_2 + 64 C^2_3]y^2/T$ it follows from (\ref{BB4}) that ${\mathcal F}[y_\la(\cd)]  - {\mathcal F}_{\rm min} \ge \la^2\del y^2/2e^2 T^2$.  On the other hand if ${\mathcal F}[y^*_\la(\cd)] \le [2C_2 + 64 C^2_3]y^2/T$ we see again from (\ref{BB4}) that ${\mathcal F}[y_\la(\cd)]  - {\mathcal F}_{\rm min} \ge \la^2\del y^2/4e^2 T^2$ if $\la > \la_0 \ge 1$ for some universal $\la_0$.  We have proven the first inequality of (\ref{AV4}).

We turn to the proof of the second inequality in (\ref{AV4}).  Let $y_1(\cd)$ be a trajectory satisfying $y_1(0) = y, \ y_1(T) = 0$ and set $\tau = \inf\{ s \ge 0 : y_1(s) = 0\}$.  Suppose now that $\tau \le T-\del$.  From (\ref{AW4})  one has that ${\mathcal F}[y_1(\cd)] \ge C_1y^2/\tau$, and so the second inequality of (\ref{AV4}) follows if $\tau < C_1T/2C_2$.  We assume therefore that $C_1T/2C_2 < \tau \le T-\del$.  Let $y_{\rm min}(\cd)$ be a minimizing path for the functional ${\mathcal F}[y(\cd)]$ subject to the conditions $y(0) = y, \; y(s) = 0, \; \tau \le s \le T$.  Then ${\mathcal F}[y_1(\cd)] \ge {\mathcal F}[y_{\rm min}(\cd)]$.  From (\ref{X2}) we see that there are positive universal constants $C_3, C_4$ such that
\be \label{BC4}
\frac{C_3|y|}{T} \le \frac{dy_{\rm min}(s)}{ds} - b(y_{\rm min}(s), s) \le \frac{C_4|y|}{T}, \quad  0 \le s \le \tau.
\ee
Since $AT < 1$ we conclude from (\ref{BC4}) that
\be \label{BD4}
{eC_4(\tau - s)y}/T \le y_{\rm min}(s) \le {C_3(\tau - s)y} /{eT}, \quad  0 \le s \le \tau.
\ee
It is clear that there is a positive universal constant $\ve_0$ such that for $0 < \ve < \ve_0$ we may define a path $y_\ve(\cd)$ as follows:  $y_\ve(s) = y_{\rm min}(s), \ 0 \le s \le \tau - \ve \del$; $y_\ve(s) = (T-s) y_{\rm min}(\tau - \ve\del)/(T-\tau + \ve\del)$, $\tau - \ve\del \le s \le T$.  Since $y_\ve(\cd)$ is continuous, $y_\ve(0) = y, \; y_\ve(T)=0$, we have that ${\mathcal F}[y_\ve(\cd)] \ge {\mathcal F}_{\rm min}$.  From (\ref{BC4}), (\ref{BD4}) we also have that
\be \label{BE4}
{\mathcal F}[y_{\rm min}(\cd)] - {\mathcal F}[y_\ve(\cd)] \ge {\ve\del C^2_3y^2}/{2T^2} - {e^2C^2_4\ve^2\del y^2}/{2(1+\ve)T^2}\; ,
\ee
where we have used the fact that $\tau \le T - \del$.  Evidently the second inequality of (\ref{AV4}) follows from (\ref{BE4}) by choosing $\ve = {\rm min}[1, C_3/2eC_4]^2$.

To complete the proof of the second inequality of (\ref{AV4}) we need to consider the case $T -\del \le \tau \le T$.  It is evident that if $C\del y/T < y_1(T-\del) \le 0$ for sufficiently small universal $C>0$ we may repeat the argument of the previous paragraph.  Hence the result follows in all cases. 
\end{proof}
We begin the proof of (\ref{AU4}) by sharpening the estimate (\ref{AA4}) on the Green's function $G(y,x,t,T)$ defined by (\ref{D1}).
\begin{lem}  Suppose $b(\cd,\cd)$ satisfies (\ref{A1}) and in addition $b(0,\cdot)\equiv 0$. Then there are universal constants $C,\del > 0$ such that the Green's function $G$ defined by (\ref{D1}) satisfies the inequalities
\be \label{BH4}
G(y,0,0,T) \le \frac 1{\sqrt{2\pi\ve T}} \exp \left[ \frac{-y^2}{2\ve T(1 +  CAT)} + CAT\right],
\ee
\be \label{BI4}
G(y,0,0,T) \ge \frac 1{\sqrt{2\pi\ve T}} \exp \left[ \frac{-y^2(1 + CAT)} {2\ve T} - CAT\right],
\ee
provided $AT \le \del$.
\end{lem}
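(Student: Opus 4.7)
The plan is to derive both inequalities from the parabolic comparison principle applied to the backward Kolmogorov equation for $G$, exploiting the hypothesis $b(0,s)\equiv 0$ (which gives $|b(y,s)|\le A|y|$, hence $b(y,s)\,y\ge -Ay^2$) to build explicit Gaussian super- and subsolutions. With $\tau:=T-t$, the function $G(y,\tau):=G(y,0,T-\tau,T)$ solves
\[
\pa_\tau G = b(y,T-\tau)\,\pa_y G + \tfrac{\ve}{2}\pa_y^2 G, \qquad G(y,0)=\del(y).
\]

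For (\ref{BH4}) I would test the upper barrier
\[
\overline G(y,\tau) := \frac{e^{A\tau}}{\sqrt{2\pi\ve\,\sigma_+^2(\tau)}}\exp\!\left[-\frac{y^2}{2\ve\,\sigma_+^2(\tau)}\right], \qquad \sigma_+^2(\tau):=\frac{e^{2A\tau}-1}{2A},
\]
so that $(\sigma_+^2)' = 1 + 2A\sigma_+^2$. The prefactor $e^{A\tau}$ is chosen precisely so that the $y$-independent part of $[\pa_\tau\overline G-b\pa_y\overline G-(\ve/2)\pa_y^2\overline G]/\overline G$ cancels identically; the remaining calculation collapses to
\[
\frac{\pa_\tau\overline G\,-\,b(y,T-\tau)\,\pa_y\overline G\,-\,(\ve/2)\pa_y^2\overline G}{\overline G} \;=\; \frac{Ay^2 + b(y,T-\tau)\,y}{\ve\,\sigma_+^2(\tau)} \;\ge\; 0,
\]
confirming that $\overline G$ is a supersolution. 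The lower barrier $\underline G$ is constructed symmetrically with $\sigma_-^2(\tau):=(1-e^{-2A\tau})/(2A)$ (so $(\sigma_-^2)'=1-2A\sigma_-^2$) and prefactor $e^{-A\tau}$; the analogous identity then reads $(-Ay^2 + by)/(\ve\sigma_-^2)\le 0$, so $\underline G$ is a subsolution.

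Both $\overline G(\cdot,\tau)$ and $\underline G(\cdot,\tau)$ concentrate to $\del(\cdot)$ as $\tau\to 0^+$, since $\sigma_\pm^2(\tau)\to 0$, thus matching the initial datum of $G$. The parabolic comparison principle then gives $\underline G\le G\le\overline G$ on $\R\times(0,T]$, and specializing to $\tau=T$ with the expansions $\sigma_\pm^2(T)=T(1\pm AT+O((AT)^2))$ together with $e^{\pm AT}(1\pm AT)^{-1/2}=1+O(AT)$ yields (\ref{BH4}) and (\ref{BI4}) for a universal constant $C$ provided $AT\le\del$ is small enough.

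The main technical point to handle is the singular $\del$ initial datum, for which the comparison principle cannot be invoked pointwise at $\tau=0$. I would proceed in the standard way: pick $\eta>0$, compare on the slab $[\eta,T]$ using the smooth function $G(\cdot,\eta)$ as the true initial value, and use slightly inflated barriers $\overline G^{(\eta)},\underline G^{(\eta)}$ obtained by replacing $\sigma_\pm^2(\tau)$ with a translate or enlargement of size $O(\eta)$. One checks that such modifications preserve the super/subsolution property (the underlying ODE structure is time-homogeneous), and that standard short-time Gaussian estimates of the type already proved in Lemma 4.1 ensure the pointwise ordering $\underline G^{(\eta)}(\cdot,\eta)\le G(\cdot,\eta)\le\overline G^{(\eta)}(\cdot,\eta)$. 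Letting $\eta\to 0^+$ recovers the claimed bounds; the Gaussian decay of $G$ and of the barriers trivially verifies the quadratic-exponential growth condition needed for the maximum principle on $\R$.
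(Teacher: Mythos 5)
Your proposal is correct, and it takes a genuinely different and substantially more elementary route than the paper's own proof. The paper establishes (\ref{BI4}), (\ref{BH4}) by a two-step dyadic induction: it first proves the estimate on the short interval $[T-T/2^N,T]$ with a constant $C=C'|\log(AT)|$ that diverges logarithmically (using a perturbation expansion for the Dirichlet Green's function near the endpoint, cf.\ (\ref{BO4})--(\ref{BQQ4})), and then shows that the induction map $C_{N-1}=\tfrac58 C_N+2$ attenuates this logarithmic divergence, so that the limit $C_0$ is finite and universal. Your argument replaces all of this by an explicit Gaussian barrier: writing $G(y,\tau):=G(y,0,T-\tau,T)$ for the solution of $\partial_\tau G=b(y,T-\tau)\partial_y G+\tfrac{\ve}2\partial_y^2 G$ with initial datum $\delta(y)$, the ansatz $\sigma^2(\tau)$ satisfying $(\sigma^2)'=1+2A\sigma^2$ (resp.\ $1-2A\sigma^2$) with the prefactor $e^{\pm A\tau}$ makes the $y$-independent terms cancel and reduces the super/subsolution defect exactly to $(\pm Ay^2+b(y,T-\tau)\,y)/\ve\sigma^2$, whose sign is controlled by $|b(y,s)|\le A|y|$ (an immediate consequence of (\ref{A1}) and $b(0,\cdot)\equiv 0$). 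The algebra in your display checks out, the autonomy of the ODE for $\sigma^2$ lets you shift the initial time to mollify the $\delta$ datum exactly as you propose (take $\hat\sigma^2(0)=\mu>0$ and let $\mu\to 0$), the quadratic-exponential decay supplies the Phragm\'en--Lindel\"of growth condition for the comparison principle on $\R$, and expanding $\sigma_\pm^2(T)=(e^{\pm2AT}\mp1)/(\pm2A)$ gives $T\le\sigma_+^2(T)\le T(1+CAT)$ and $T(1-AT)\le\sigma_-^2(T)\le T$ for $AT\le\delta$, from which (\ref{BH4}) and (\ref{BI4}) follow with $C$ universal. The barrier approach therefore buys a one-page proof free of perturbation expansions; what the paper's heavier induction machinery buys, by contrast, is the template reused essentially verbatim for Lemmas 5.3, 5.5, and 5.6, where the comparison geometry is no longer centered at $0$ and a clean autonomous barrier is less readily available.
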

\begin{proof}   We shall first prove (\ref{BI4}).  Suppose that we have shown that
\be \label{BJ4}
G(y,0,t,T) \ge \frac 1{\sqrt{2\pi\ve (T-t)}} \exp \left[ \frac{-y^2}{2\ve (T-t)} \left\{ 1 + CA(T-t)\right\} -  CA(T-t) \right],
\ee
for $T-t = T/2^N$, where $N$ is some integer $N \ge 1$.  We shall show that for sufficiently large universal constant $C > 0$ then (\ref{BJ4}) also holds for $T-t = T/2^{N-1}$.  The inequality (\ref{BI4}) will then follow by induction if we can prove (\ref{BJ4}) holds as $T-t \ra 0$.

Defining $t_N$ by $T-t_N = T/2^N, N=0,1,2,...$ we see in a similar way to how we derived (\ref{V4}) that
\be \label{BK4}
G(y,0, t_{N-1},T)^{1-\al} \ge \int^\infty_{-\infty} \ \frac 1{\sqrt{2\pi\ve T/2^N}} \exp \left[ - \frac{(y-z)^2}{2\ve T/2^N}\right] F_N(y,z) G(z,0,t_N, T)^{1-\al}\; dz,
\ee
where $F_N(y,z)$ is given by the formula,
\be \label{BN4}
F_N(y,z) = \exp \left\{ - \frac{A^2(1-\al)}{6\al\ve} \; \frac T{2^N}(y^2 + zy + z^2) - \frac{A^2(1-\al)}{12\al} \frac {T^2}{2^{2N}} \right\} .
\ee
Assuming now that we may bound $G(z,0,t_N,T)$ according to (\ref{BJ4}), then the RHS of (\ref{BK4}) becomes a Gaussian integral which we can evaluate.  Taking $\al = AT/2^N$ in (\ref{BK4}) and $C_N$ to be the constant $C$ in (\ref{BJ4}) when $t=t_N$, we see that it is possible to take $C_{N-1} = 5C_N/8 + 2$ provided $N \ge 1$ and $\del \le 1$.  We conclude therefore that
\be \label{BL4}
C_0 = \frac{16}{3} \left[ 1 - \left( \frac 5 8 \right)^N \right] + \left( \frac 5 8 \right)^N \; C_N, \quad N \ge 1
\ee
The inequality (\ref{BI4}) follows from (\ref{BL4}) if we can show that $\displaystyle{\lim_{N\ra \infty}} 5^NC_N/8^N=0$.   We can do this by the same method we used to derive (\ref{AA4}).

We shall show that the inequality (\ref{BI4}) holds with a constant $C=C(AT)$ which can diverge as $T\ra 0$, but in a mild in fact logarithmic way.  As in (\ref{V4}) we write
\begin{multline} \label{BM4}
G(y,0,0,T)^{1-\alpha} \ge \int^\eta_{-\eta} \frac 1{\sqrt{2\pi\ve(T-\De)}} \ \exp \left[ - \frac{(y-z)^2}{2\ve(T-\De)} \right]
\\
 F_0(y,z) G_{D,\eta} \left( z,0,T-\De, T\right)^{1-\al}\; dz,  
 \end{multline}
where $G_{D,\eta}$ is the Dirichlet Green's function for the equation (\ref{B1})  on the interval $[-\eta,\eta]$.  The function $F_0$ is given by the formula (\ref{BN4}) when $N=0$, and we take $\al = AT$.  As in Lemma 3.4 we use perturbation theory to estimate $G_{D,\eta}$.  In order for the perturbation expansion to converge we need that
\be \label{BNN4}
\eta = K \sqrt{\ve\De}, \ \ (A\eta)^2\De = \nu \ve, 
\ee
where $K > > 1$ and $\nu < < 1$.  In that case there is the lower bound
\begin{multline} \label{BO4}
G_{D,\eta} (z, 0, T-\De, T) \ge \frac 1{\sqrt{2\pi\ve\De}} \ \bigg[ \exp \left\{ \frac{-z^2}{2\ve\De}\right\} -  
C_1\;e^{-K^2/4}  \\
- C_2(\rho)\nu^{1/2} \exp \left\{ \frac{-z^2}{2\ve(1+\rho)\De}\right\} \bigg], \quad  |z| < \eta, 
\end{multline}
where $C_1$ is a universal constant, $\rho > 0$ can be arbitrary and $C_2(\rho)$ is a constant depending only on $\rho$.  We shall substitute the RHS of (\ref{BO4}) into (\ref{BM4}), choosing $\De/T, \; K$ and $\nu$ to be powers of $AT$, in order to obtain a lower bound as in (\ref{BI4}).

Consider the situation when we approximate $G_{D,\eta}$ by the first term on the RHS of (\ref{BO4}).  From (\ref{BM4}) we have that
 \begin{multline}  \label{BP4}
G(y,0,0,T)^{1-\alpha} \ge \inf_{|z| < \eta}  \ \left\{ \exp \left[ - \frac{(y-z)^2}{2\ve(T-\De)} \right] F_0(y,z)\right\}
\\
 \frac 1{\sqrt{2\pi\ve(T-\De)}} \int^\eta_{-\eta} G_{D,\eta} \left( z,0,T-\De, T\right)^{1-\al}\; dz. 
\end{multline}
Observe now that
\begin{multline} \label{BQ4}
\left\{  \frac 1{\sqrt{2\pi\ve(T-\De)}} \int^\eta_{-\eta} \frac 1{{(2\pi\ve\De)}^{(1-\al)/2}} \  \exp\left[- \frac{z^2(1-\al)}{2\ve\De}\right] dz \right\}^{1/(1-\al)}
\\
	\ge \frac 1{\sqrt{2\pi\ve T}} \left[ 1 - e^{-K^2/4} \right] \exp \left[ -C k_0 AT | \log(AT)| \right], 
\end{multline}
for some universal constant $C$, provided we choose $\De/T = (AT)^{k_0}$ with $k_0 > 1$ and $AT \le 1/2$.  From (\ref{BQ4}) it is clear that it is sufficient to choose $K = (AT)^{-k_1}$ for any $k_1 > 0$, whence (\ref{BM4}) implies that $\nu^{1/2} = (AT)^{k_0 + 1 - k_1}$.  If we now use the inequality
\[	{2\eta|y|}/{\ve T} \le (AT)^{k_0/2 - k_1} \left[ {y^2}/{\ve T} + 1 \right],   \]
and choose $k_0 > 2k_1 + 2$, we conclude from (\ref{BP4}), (\ref{BQ4}) that (\ref{BI4}) holds with $C = C'|\log (AT)|$ for some universal constant $C'$.  We may easily extend this argument to apply to the actual lower bound (\ref{BO4}) on $G_{D,\eta}$ by using the inequality
\be \label {BQQ4}
\max[a-b,0]^{1-\al} \ge (a-b), \quad  a,b > 0,  \ a < 1.   
\ee
Returning now to (\ref{BL4}), it follows that we may take $C_N = O(N)$ whence $\lim_{N\ra 0} 5^N C_N/8^N = 0$.  We have therefore show that (\ref{BI4}) holds for some universal constant $C > 0$ provided $AT < \del$ where $\del$ is also universal.

To prove (\ref{BH4}) we use a similar method as in the proof of the lower bound.  Suppose we have shown that
\be \label{BR4}
G(y,0,t,T) \le \frac 1 {\sqrt{2\pi \ve(T-t)}} \exp \left[ \frac{-y^2}{2\ve(T-t)[1+CA(T-t)]} + CA(T-t) \right],
\ee
for $T-t = T/2^N$ where $N$ is some integer $N \ge 1$.  We shall show that for sufficiently large universal constant $C > 0$, the inequality (\ref{BR4}) also holds for $T-t = T/2^{N-1}$.  Analogously to (\ref{BK4}) there is the inequality
\begin{multline} \label{BS4}
G(y,0,t_{N-1},T)^{1+\alpha} \le \int^\infty_{-\infty} \frac 1{\sqrt{2\pi\ve T/2^N}} \ \exp \left[ - \frac{(y-z)^2}{2\ve T/2^N} \right]
\\
  F_N(y,z) G \left( z,0,t_N, T\right)^{1+\al}\; dz,  
\end{multline}
where $F_N(y,z)$ is given by (\ref{S4}) with $g \equiv 0, x = 0, \; t= t_{N-1}$ and $T$ is replaced by $t_N$.  Using the fact that one knows the pdf of $\displaystyle{\sup_{t\le s \le T}\; W(s-t)}$ we see that $F_N(y,z)$ is bounded above by
\be \label{BT4}
F_N(y,z) \le \exp \left\{ \frac{A^2(1+\al)}{3\al\ve} \ \frac T{2^N} \ (y^2 + zy + z^2) + \frac{K_0 A^2(1+\al)}{\al} \; \frac{T^2}{2^{2N}} \right\} 
\ee
for a universal constant $K_0 > 0$, where we are assuming $\al = AT/2^N < \del$ and $\del$ is a sufficiently small universal constant.  Letting $C_N$ be the constant $C$ in (\ref{BR4}) when $t=t_N$, we see from (\ref{BS4}), (\ref{BT4}) that it is possible to take $C_{N-1} = 2C_N/3 + K_0 + 4$, $N \ge 1$, provided $AT < \del$ and $\del$ is sufficiently small.  Arguing as before then, in order to complete the proof of (\ref{BH4}) we need to show that $\displaystyle{\lim_{N\ra \infty}}\; 2^NC_N/3^N = 0$.

To do this we show that (\ref{BH4}) holds with a constant $C = C(AT)$ which can diverge as $T \ra 0$ but only in a logarithmic way.  We use the inequality
\begin{multline} \label{BU4}
G(y,0,0,T)^{1+\alpha} \le \int^\infty_{-\infty} \frac 1{\sqrt{2\pi\ve (T-\De)}} \ \exp \left[ - \frac{(y-z)^2}{2\ve (T-\De)} \right]
\\
F_0(y,z) G \left( z,0, T-\De, T \right)^{1+\al}\; dz,  
\end{multline}
where $F_0$ is given by the RHS of (\ref{BT4}) when $N=0$.  

Choosing $\eta,\nu$ as in (\ref{BNN4}) we see by perturbation theory that there is an upper bound
\be \label{BW4}
G_{D,\eta} ( z,0, T-\De, T) \le  \frac 1{\sqrt{2\pi\ve\De}} \left[ \exp \left\{ - \frac{z^2}{2\ve \De}\right\} + C_2(\rho)\nu^{1/2} \exp \left\{ - \frac{z^2}{2\ve (1+\rho) \De}\right\}  \right], \quad |z| < \eta,
\ee
analogous to the lower bound (\ref{BO4}).  Suppose now that $0 < z < \eta/2$.  Then
\be \label{BX4}
G( z,0, T-\De, T) = G_{D,\eta} ( z,0, T-\De, T) + \int^T_{T-\De} dt \  \rho(t)  \ G_{D,\eta} ( \eta/2, 0, t, T) ,
\ee
where $\rho(t)$ is the density of the hitting time at $\eta/2$ for paths of the diffusion $Y_\ve(\cd)$ satisfying (\ref{E1}) with $Y_\ve(T-\De) = z$, which exit the interval $[0,\eta]$ through $\eta$ before time $T$.  Since $|z| < \eta/2$, it is evident that
\be \label{BY4}
\int^T_{T-\De} \rho(t) dt \le 1 - \int^\eta_{-\eta} G_{D,\eta}( z,z',T-\De, T) \ dz' \le C_1 e^{-K^2/16} + C_2\; \nu^{1/2},
\ee
for universal constants $C_1, C_2$.  One can also see from (\ref{BW4}) on replacing $T-\De$ by $t > T-\De$ that
\be \label{BZ4}
G_{D,\eta} ( \eta/2, 0, t, T) \le \frac{C_3}{\sqrt{2\pi\ve\De}} \; e^{-K^2/16} , \quad  T-\De < t < T,
\ee
for some universal constant $C_3$.  Substituting the RHS of (\ref{BY4}), (\ref{BZ4}) into the RHS of (\ref{BX4}) we conclude from (\ref{BW4}) that
\begin{multline} \label{CA4}
G( z,0, T-\De, T) \le  \frac 1{\sqrt{2\pi\ve\De}} \Bigg[ \exp \left\{ - \frac{z^2}{2\ve \De}\right\} + C_4\;e^{-K^2/16} \\
+ C_2(\rho)\nu^{1/2} \exp \left\{ - \frac{z^2}{2\ve (1+\rho) \De}\right\}  \Bigg], \quad  |z| < \eta/2.
\end{multline}
We may estimate $G(z,0,T-\De, T)$ similarly for $|z| > \eta/2$.  Thus we have
\be \label{CB4}
G(z,0,T-\De, T) = \int^T_{T-\De} dt \ \rho(t) G(\eta/2, 0, t, T), \quad z > \eta/2,
\ee
where again $\rho(\cd)$ is the hitting time density at $\eta/2$.  Evidently we have that
\be \label{CC4}
\int^T_{T-\De}  \rho(t) dt = P\Big( \inf_{T-\De < t < T}\;Y_\ve(t) < \eta/2 \ \big| \  Y_\ve(T-\De) = z \Big).
\ee
It is easy to bound the RHS of (\ref{CC4}) by using the inequality $b(y,s) \ge - Ay, \ y > 0$, in (\ref{E1}) and estimating the probability on the RHS of (\ref{CC4}) for the corresponding Gaussian process.  Assuming that $A\De < 1/10$ and $z > 2\eta$ we have that
\be \label{CD4}
\int^T_{T-\De}  \rho(t) dt \le P\Big( \inf_{0 < t < \De }\; \int^t_0 e^{As} dW(s) < -z/2\sqrt{\ve} \Big),
\ee
where $W(\cd)$ is Brownian motion.  We may estimate the RHS of (\ref{CD4}) by using the fact that
\[  \exp \left[ \la \  \int^t_0 e^{As} dW(s) - \la^2 \left[ e^{2At} - 1\right] \big/ 4A \right]    \]
is a Martingale for any $\la \in \R$.  We conclude that
\be \label{CE4}
\int^T_{T-\De} \rho(t)dt  \le \exp \Big[ -z^2/16\ve \De \Big], \quad  z > 2\eta.
\ee
From  (\ref{CA4}) and  (\ref{CE4}) applied to (\ref{CB4}) we can see now that there is a universal constant $C_5$ such that
\be \label{CF4}
G(z,0,T-\De,T) \le \frac{C_5}{\sqrt{2\pi\ve\De}} \exp \left[ \frac{-z^2}{2C_5\ve \De}\right], \quad  |z| > \eta/2.
\ee
The estimates (\ref{CA4}), (\ref{CF4}) may be substituted into the RHS of (\ref{BU4}) to obtain the inequality
\begin{multline} \label{CG4}
G(y,0,0,T)^{1+\al} \le \\
 \sup_{|z| < \eta/2} \left\{ \exp \left[- \frac{(y-z)^2}{2\ve (T-\De)}\right] F_0(y,z) \right\}
	\frac 1{\sqrt{2\pi\ve(T-\De)}} \int^{\eta/2}_{-\eta/2} G\big( z,0,T-\De,T\big)^{1+\al} \ dz  \  + \\
\exp \big[ -K^2/C_6] \int^\infty_{-\infty} \frac 1{\sqrt{2\pi\ve(T-\De)}}  \exp \left[- \frac{(y-z)^2}{2\ve (T-\De)}\right] F_0(y,z) 
\frac {C_6}{{(2\pi\ve\De)}^{(1+\al)/2}} \exp \left[ -\frac{z^2}{2C_6 \ve \De}\right] dz ,
\end{multline}
where $C_6$ is a universal constant.  The second term on the RHS of (\ref{CG4}) is a Gaussian integral and so can be explicitly evaluated.  To estimate the first term we use (\ref{CA4}) and the inequality
\[  (a+b)^{1+\al} \le a^{1+\al} + 2^\al(1+\al)a^\al b + 2^{1+\al} b^{1+\al},  \quad a,b > 0,	\]
in the integration over the interval $[-\eta/2, \eta/2]$.  One sees then from (\ref{CG4}) that (\ref{BH4}) holds for a constant $C = C' |\log (AT)|$ where $C'$ is universal.  Hence as for the lower bound we may conclude that (\ref{BH4}) holds for some universal $C$ provided $AT < \del$ with $\del > 0$ also universal. 
\end{proof}
We can use the methodology of Lemma 5.2 to obtain similar estimates on $G(y, \xi, 0, T)$ for all $\xi \in \R$.  To motivate the estimates we shall obtain, consider the linear case $b(y,s) = A(s)y$ for which
\[	G(y,\xi,0,T) = \frac 1 {\sqrt{2\pi\ve\sig^2(T)}} \exp \left[ - \frac{(\xi - \La(T)y)^2}{2\ve\sig^2(T)} \right] , \]
where $\La(T), \sig^2(T)$ are as in (\ref{AQ4}).  Observe now that
\[	y - \xi/\La(T) = y + \int^T_0 \; b(\xi, s)ds - \xi + O[(AT)^2]\xi .  \]
It follows that provided $AT \le 1$ there is a universal constant $C > 0$ such that
\[
[y - \xi/\La(T)]^2 \le \left[ y + \int^T_0 b(\xi, s) ds - \xi \right]^2 (1 + CAT) + C(AT)^3 \xi^2.
\]

\begin{lem} Suppose $b(\cd, \cd)$ satisfies (\ref{A1}) and in addition $b(0,\cd)\equiv 0$.  Then there are universal constants $\del, C>0$ such that the Green's function $G$ defined by (\ref{D1}) satisfies the inequalities,
\be \label{CH4}
G(y,\xi,0,T) \le \frac 1{ \sqrt{2\pi\ve T}} \exp \left[- \frac{\{ y+\int^T_0 b(\xi,s)ds-\xi\}^2}{2\ve T(1+ CAT)} + \frac{C(AT)^3\xi^2}{2\ve T} + CAT \right], 
\ee
\be \label{CI4}
G(y,\xi,0,T) \ge \frac 1{ \sqrt{2\pi\ve T}} \exp \left[ -\frac{\{ y+\int^T_0 b(\xi,s)ds-\xi\}^2}{2\ve T}(1+ CAT) - \frac{C(AT)^3\xi^2}{2\ve T} - CAT \right], 
\ee
provided $AT \le \del$.
\end{lem}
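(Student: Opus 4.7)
The plan is to reduce Lemma 5.3 to Lemma 5.2 by a drift-adapted change of variables that centers the terminal point at the origin. Let $\phi(\cdot)$ be the solution on $[0,T]$ to the ODE
\begin{equation*}
\phi'(s) = b(\phi(s), s), \qquad \phi(T) = \xi,
\end{equation*}
and define $\tilde u(\tilde y, t) = u_\ve(\tilde y + \phi(t), t)$. A direct computation using $\phi' = b(\phi, \cdot)$ shows that $\tilde u$ satisfies the PDE $(\ref{B1})$ with $b$ replaced by $\tilde b(\tilde y, t) = b(\tilde y + \phi(t), t) - b(\phi(t), t)$. This new drift inherits the Lipschitz bound $(\ref{A1})$ with the same constant $A$ and satisfies $\tilde b(0, t) \equiv 0$, so Lemma~5.2 is applicable to its fundamental solution $\tilde G$. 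Relating fundamental solutions through the change of variables gives $G(y, \xi, 0, T) = \tilde G(y - \phi(0), 0, 0, T)$, and Lemma~5.2 yields the two-sided bound
\begin{equation*}
\tilde G(\tilde y, 0, 0, T) \;\lessgtr\; \frac{1}{\sqrt{2\pi\ve T}}\exp\!\left[\mp\frac{\tilde y^2}{2\ve T}\bigl(1 + C_0 AT\bigr)^{\mp 1} \pm C_0 AT\right]
\end{equation*}
at $\tilde y = y - \phi(0)$, with universal $C_0$.

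Next I would compare the shift $y - \phi(0)$ to the shift $y + \int_0^T b(\xi,s)\,ds - \xi$ appearing in the statement. From $\phi(0) = \xi - \int_0^T b(\phi(s),s)\,ds$ one reads off
\begin{equation*}
(y - \phi(0)) - \Bigl(y + \tint_0^T b(\xi,s)\,ds - \xi\Bigr) = \int_0^T [b(\phi(s),s) - b(\xi,s)]\,ds =: \Delta.
\end{equation*}
Since $b(0,\cdot) \equiv 0$ and $b$ is $A$-Lipschitz in $y$, Gronwall applied to $\phi$ gives $|\phi(s)| \le |\xi| e^{A(T-s)} \le 2|\xi|$ for $AT \le \delta$ small, and then $|\phi(s) - \xi| \le 2A(T-s)|\xi|$. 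Hence $|\Delta| \le A^2 T^2 |\xi|$, so $\Delta^2 \le (AT)^4 \xi^2$.

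Writing $a = y + \int_0^T b(\xi,s)\,ds - \xi$ and $\tilde y = a + \Delta$, Young's inequality with weight $\eta = AT$ gives both
\begin{equation*}
\tilde y^2 \ge a^2(1 - AT) - \Delta^2/(AT) \ge a^2(1 - AT) - (AT)^3\xi^2,
\end{equation*}
\begin{equation*}
\tilde y^2 \le a^2(1 + AT) + \Delta^2/(AT) \le a^2(1 + AT) + (AT)^3\xi^2.
\end{equation*}
Substituting these into the Lemma~5.2 bounds for $\tilde G(y - \phi(0),0,0,T)$, absorbing $(1\pm AT)(1\pm C_0 AT)$ into $1 \pm C AT$ for an enlarged universal constant $C$, and using that the $\Delta^2/(AT)$ error is $O((AT)^3 \xi^2)$, one arrives at $(\ref{CH4})$ and $(\ref{CI4})$ after a final adjustment of constants.

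The only nontrivial step is the Young-inequality comparison that converts $\tilde y^2$-bounds into $a^2$-bounds while keeping the multiplicative correction in the Gaussian coefficient at order $AT$ and the additive correction at order $(AT)^3\xi^2/\ve T$. Choosing the weight $\eta = AT$ (rather than a fixed $\eta$) is what makes the two error types line up with the statement; everything else is the routine transfer through Lemma~5.2 once the hypothesis $\tilde b(0,\cdot)\equiv 0$ has been arranged by the translation $\phi$.
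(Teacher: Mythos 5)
Your proof is correct, and it takes a genuinely different, more economical route than the paper.  Your key observation is that conjugating by the characteristic flow $\phi(s)=F(\xi,s)$ of (\ref{K1}) carries (\ref{B1}) into the same type of equation with the time-dependent drift $\tilde b(z,t)=b(z+\phi(t),t)-b(\phi(t),t)$, which automatically satisfies (\ref{A1}) with the same Lipschitz constant $A$ \emph{and} the centering condition $\tilde b(0,\cdot)\equiv 0$; Lemma 5.2 therefore applies verbatim to the transformed Green's function $\tilde G$, and the identity $G(y,\xi,0,T)=\tilde G(y-\phi(0),0,0,T)$ together with the estimate $|\Delta|\le (AT)^2|\xi|$ and a Young inequality with weight $AT$ finishes the job.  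The paper does not invoke Lemma 5.2 as a black box: it re-runs the full inductive scheme behind that lemma (the estimate (\ref{CJ4}) on dyadic intervals $T-t=T/2^N$, the bootstrap $C_{N-1}=5C_N/8+K$ via (\ref{CK4})--(\ref{CM4}), and a logarithmically divergent base case), and it introduces the very same change of variables $v(z,t)=G(z+F(\xi,t),\xi,t,T)$ only locally, at (\ref{CP4}), to make the short-time Dirichlet perturbation expansion converge; the final shift comparison $|F(\xi,t)-\{\xi-\int_t^T b(\xi,s)ds\}|\le C[A(T-t)]^2|\xi|$ is the same inequality you derive for $\Delta$.  What your approach buys is a one-step reduction and a much shorter argument.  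What the paper's approach buys is a template it redeploys in Lemmas 5.5 and 5.6, where a single translation no longer places the drift inside the hypotheses of Lemma 5.2 and the dyadic induction has to be run anew.  For Lemma 5.3 in isolation, your shortcut is sound and preferable.
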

\begin{proof}  We proceed as in Lemma 5.2.  To establish (\ref{CI4}) we suppose we have already shown that
\begin{multline}  \label{CJ4}
G(y,\xi, t, T) \ge \frac 1 {\sqrt{2\pi\ve(T-t)}} \exp \bigg[ - \frac{\{ y + \int^T_t b(\xi,s)ds - \xi\}^2}{2\ve(T-t)}
\{ 1 + CA(T-t)\}  \\
- \frac{C[A(T-t)]^3\xi^2}{2\ve(T-t)} - CA(T-t) \bigg]  
\end{multline}
for $T-t = T/2^N$, where $N$ is some integer $N \ge 1$.  We shall show that for sufficiently large constant $C>0$ then (\ref{CJ4}) also holds for $T-t = T/2^{N-1}$.
Using (\ref{B4}) with $x = \xi$ we may obtain an inequality analogous to (\ref{BK4}).  Thus on setting $T-t_N = T/2^N, \ N = 0,1,2,...,$ it follows from (\ref{C4}) that
\begin{multline}  \label{CK4}
G( y, \xi, t_{N-1}, T)^{1-\al} \ge \int^\infty_{-\infty} \frac 1{\sqrt{2\pi\ve T/2^N}} \exp \left[- \frac{(y-z)^2}{2\ve T/2^N} \right] F_N(y,z)  \\
G \left (z + \int^{t_N}_{t_{N-1}} b(\xi,s)ds,  \ \xi, \  t_N,  \ T \right)^{1-\al} \ dz,   
\end{multline}
where similarly to (\ref{BN4}) one may take
\begin{multline} \label{CL4}
- \log F_N(y,z) = \frac{A^2(1-\al)}{3\al\ve} \ \frac T{2^N} \left[ (y-\xi)^2 + (y - \xi)(z-\xi)+(z-\xi)^2 \right] \\
+ \frac{A^2(1-\al)}{\al\ve}  \int^{t_N}_{t_{N-1}} ds \left\{  \int^s_{t_{N-1}} b(\xi,s')ds' \right\}^2 + \frac{A^2(1-\al)}{12\al} \; \frac{T^2}{2^{2N}}. 
\end{multline}
We change the variable $z$ of integration in (\ref{CK4}) to $z'$ where
\be \label{CNN4}
z' = z + \int^T_{t_{N-1}} b(\xi, s)ds - \xi.
\ee
From (\ref{CL4}) we see that
\begin{multline} \label{CM4}
-\log F_N(y,z)  \le \frac{A^2(1-\al)}{\al\ve} \; \frac T{2^N} \left[ \left\{ y + \int^T_{t_{N-1}} b(\xi,s)ds-\xi \right\}^2 + z'^2 \right] \\
+ \frac{3A^2(1-\al)}{\al\ve} \; \frac T{2^N} \left[ \frac{AT\xi}{2^N}\right]^2 + \frac{A^2(1-\al)}{12\al} \ \frac{T^2}{2^{2N}} \ . 
\end{multline}
Using the variable $z'$ of (\ref{CNN4}) and (\ref{CM4}) we may argue as in Lemma 5.2  that (\ref{CJ4}) holds for $t=t_{N-1}$ with constant $C_{N-1} = 5C_N/8 + K$ for some universal constant $K$, where $C_N$ is the constant in (\ref{CJ4}) when $t = t_N$.  Thus we have established (\ref{CI4}) provided we can show that $\lim_{N\ra\infty} \; 5^NC_N/8^N = 0$.

As in Lemma 5.2 we shall complete the proof of (\ref{CI4}) by showing that it holds with a constant $C = C(AT)$ which diverges logarithmically as $T \ra 0$.  To see this we observe as in (\ref{CK4}) that 
\begin{multline} \label{CN4}
G( y, \xi, 0, T)^{1-\al} \ge \int^\infty_{-\infty} \frac 1{\sqrt{2\pi\ve (T-\De)}} \exp \left[ -\frac{\big\{y+\int^T_0 b(\xi,s)ds-\xi-z\big\}^2}{2\ve (T-\De)}\right] F(y,z) \\
 G \left (z +\xi- \int^{T}_{T-\De} b(\xi,s)ds,  \ \xi, \ T-\De, \ T \right)^{1-\al} \ dz,   
 \end{multline}
where as in (\ref{CM4}) we may take $F(y,z)$ to be given by
\begin{multline} \label{CO4}
-\log F(y,z) = \frac {A^2(1-\al)}{\al \ve} T \left[ \left\{ y + \int^T_0 b(\xi, s)ds - \xi\right\}^2 + z^2 \right] \\
+ \frac{3A^2(1-\al)}{\al\ve} \; T[AT\xi]^2  + \frac{A^2(1-\al)}{12\al} T^2. 
\end{multline}
Let $F(\cdot,\cdot)$ be the function defined from (\ref{K1}). Then the function 
\[	v(z,t) = G\left( z +F(\xi,t) ,  \ \xi, \ t, \ T\right), \quad t < T,  \]
satisfies the terminal value problem
\begin{eqnarray} \label{CP4}
0 &=& \frac {\pa v}{\pa t} + \big[ b(z + F(\xi,t), t) - b(F(\xi,t),t) \big] \frac {\pa v}{\pa z}  + \frac \ve 2 \; \frac {\pa^2 v}{\pa z^2} , \quad  t < T,  \ z\in\R \\
 \del(z-\xi)&=& \lim_{t\ra T} v(z,t) ,   \quad z\in\R. \nn
\end{eqnarray}
From (\ref{CP4}) we see that we may proceed now exactly as in Lemma 5.2 by replacing the Green's function on the RHS of (\ref{CN4}) by the solution to (\ref{CP4}) on the interval $|z| < \eta$ with Dirichlet boundary conditions on $|z|=\eta$. Using the fact that
$$|F(\xi,t)-\{\xi - \int^T_t \; b(\xi,s)ds\}|\le C[A(T-t)]^2|\xi|, $$
for some universal constant $C$, we conclude that  the inequality (\ref{CI4}) holds.  The proof of the upper bound (\ref{CH4}) on the Green's function is obtained in a similar way, following the argument of Lemma 5.2. 
\end{proof}
\begin{corollary} Suppose $b(\cd,\cd)$ satisfies (\ref{A1}) and $b(0,\cdot)\equiv 0$.  Then there exist positive universal constants $\eta,C_3,\ga_3,C_4,\ga_4$ such that (\ref{AU4}) holds provided $AT \le \eta$ and $\del = T/2$.
\end{corollary}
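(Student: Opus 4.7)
The plan is to write the conditional probability as a ratio of Green's functions and then apply the sharp Gaussian estimates of Lemmas 5.2 and 5.3 together with the variational gap provided by Lemma 5.1. By the Markov property,
\[
P\left( Y_\ve(T-\del) \in U \ \Big| \ Y_\ve(0)=y, \ Y_\ve(T)=0 \right) \; = \; \frac{\int_U G(y,\xi,0,T-\del)\, G(\xi,0,T-\del,T)\, d\xi}{G(y,0,0,T)}\, .
\]
I would bound the numerator above by applying (\ref{CH4}) to $G(y,\xi,0,T-\del)$ and (\ref{BH4}) to $G(\xi,0,T-\del,T)$, noting that $b(0,\cd)\equiv 0$ kills the would-be drift shift $\int_{T-\del}^T b(0,s)\, ds$ in the second factor. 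The denominator is bounded below by (\ref{BI4}).

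With $\del=T/2$, the combined exponent of the two upper-bound factors in the numerator, viewed as a function of $\xi$, is essentially $-{\mathcal F}_\xi/\ve$, where ${\mathcal F}_\xi$ is the value of the functional (\ref{R2}) on the broken path that is linear from $y$ to $\xi$ on $[0,T/2]$ and linear from $\xi$ to $0$ on $[T/2,T]$, with the drift $b(y(\cd),\cd)$ approximated by $b(\xi,\cd)$ on the first half and by $b(0,\cd)=0$ on the second. The error in this substitution is controlled by the Lipschitz condition (\ref{A1}) and shows up as the additive terms $C(AT)^3\xi^2/(\ve T)+CAT$ already present in (\ref{CH4})--(\ref{BH4}). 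The denominator's lower bound (\ref{BI4}) is essentially $\exp(-{\mathcal F}_{\min}/\ve)/\sqrt{2\pi\ve T}$ up to the multiplicative factor $\exp[-CAT\cdot y^2/(2\ve T)-CAT]$, since by (\ref{AW4}) one has ${\mathcal F}_{\min}\asymp y^2/T$.

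Taking the ratio and invoking Lemma 5.1 on $U=\{\xi<C_3\del y/T\}$ for the lower tail (respectively $U=\{\xi>C_4\del y/T\}$ for the upper tail), the leading exponent becomes $-\ga\,\del y^2/(\ve T^2)$ for a universal $\ga>0$, while the Gaussian integration in $\xi$ contributes only an $O(1)$ prefactor since the Hessian of ${\mathcal F}_\xi$ in $\xi$ at its minimum is of order $1/\del$ and so produces a conditional variance of order $\ve\del$. Choosing $\eta$ small enough that the correction terms $CAT\cdot y^2/(2\ve T)$, $C(AT)^3\xi^2/(\ve T)$ and $O(AT)$ are all dominated by $\ga\del y^2/(2\ve T^2)$ whenever $y<-T\sqrt{\ve/\del}$ and $AT\le\eta$, one recovers (\ref{AU4}) with possibly reduced constants $\ga_3,\ga_4$.

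The main obstacle will be controlling the subleading corrections. In particular, the factor $(1+CAT)$ multiplying $y^2/(2\ve T)$ inside the exponent of the denominator lower bound (\ref{BI4}) is of the same order of magnitude as ${\mathcal F}_{\min}/\ve$ itself, and so competing with the variational gap $\ga\,\del y^2/(\ve T^2)=\ga y^2/(2\ve T)$ (when $\del=T/2$) forces the quantitative smallness $AT\le\eta$ with $\eta$ universal. The restriction $\del=T/2$ is what makes a single application of Lemma 5.3 on each half-interval sufficient; for general $T-t<T/2$ one would need to iterate the estimates or sharpen Lemma 5.3 to handle unequal time intervals.
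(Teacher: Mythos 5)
Your proposal takes essentially the same route as the paper's own proof: express the conditional probability as a ratio of Green's functions via the Markov property, bound the numerator factors from above using Lemma 5.3 (for $G(y,\xi,0,T/2)$) and Lemma 5.2 (for $G(\xi,0,T/2,T)$), and bound the denominator $G(y,0,0,T)$ from below using Lemma 5.2, then evaluate the resulting Gaussian integral in $\xi$. The paper's proof is terse and does not explicitly invoke Lemma 5.1; your appeal to it serves as a useful heuristic organizing the exponent bookkeeping, but is not a departure in method. Your observations that the $(1+CAT)$ correction competes with the gap $\ga y^2/(2\ve T)$ when $\del = T/2$, thereby forcing $AT \le \eta$ small, and that the case $\del \ll T$ requires the sharper asymmetric-interval estimates (here, Lemma 5.6), are both correct and match what the paper does for the full Theorem 1.2.
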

\begin{proof}  To show the first inequality in (\ref{AU4}) we consider
\begin{multline*}
P\left( Y_\ve(T/2) < C_3y/2 \  \big|  \ Y_\ve(0) = y, \ Y_\ve(T) = 0 \right) \\
= G(y,0,0,T)^{-1} \int^{C_3y/2}_{-\infty} d\xi \; G(y, \xi, 0, T/2) \  G(\xi, 0, T/2, T).
\end{multline*}
It is easy to see now by using Lemma 5.2 how to bound $G(y,0,0,T)$ from below and $G(\xi, 0, T/2, T)$ from above. Using also Lemma 5.3 to bound $G(y,\xi, 0, T/2)$ from above,  we conclude that the first inequality in (\ref{AU4}) holds for $\del = T/2$ provided $\eta > 0$ is sufficiently small.
To show the second inequality of (\ref{AU4}) we write
\begin{multline*}
P\left( Y_\ve(T/2) >C_4\;y/2 \  \big| \  Y_\ve(0) = y, \ Y_\ve(T) = 0 \right) \\
= G(y,0,0,T)^{-1} \int^\infty_{C_4\;y/2} G(y, \xi, 0, T/2)  \ G(\xi, 0, T/2, T),
\end{multline*}
and argue as in the previous paragraph. 
\end{proof}
In order to show that (\ref{AU4}) continues to hold when $\del/T < < 1/2$ we need to obtain some further estimates on Green's functions.  Towards that goal we strengthen Corollary 5.1 as follows:
\begin{lem} Suppose $b(\cd, \cd)$ satisfies (\ref{A1}) and $b(0,\cdot)\equiv 0$.  Then there exist positive universal constants $\eta, C_1,C_2$ such that if $AT \le \eta$, 
\be \label{CQ4}
P\left( \sup_{0\le s\le T} | Y_\ve(s) | > \rho \  \big| \  Y_\ve(0) = y, \ Y_\ve(T) = 0 \right) \le \exp \left[ - C_1\rho^2/2\ve T\right],
\ee
provided $|\rho| \ge C_2\Big[ |y| + \sqrt{\ve T} \Big]$.
\end{lem}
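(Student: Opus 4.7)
The plan is to apply the strong Markov property at the first hitting time $\tau_\rho = \inf\{s \ge 0 : |Y_\ve(s)| = \rho\}$ of the two-sided barrier $\{\pm\rho\}$, and then to combine the Green's function estimates of Lemma 5.2 and Lemma 5.3. Conditioning on the value at $\tau_\rho$ gives
\[
P\bigl( \tau_\rho < T \,\bigm|\, Y_\ve(0)=y,\, Y_\ve(T)=0 \bigr) \;=\; \frac{E\bigl[\, \mathbf{1}\{\tau_\rho < T\}\, G(Y_\ve(\tau_\rho),0,\tau_\rho,T) \,\bigm|\, Y_\ve(0)=y \bigr]}{G(y,0,0,T)}.
\]
The denominator is bounded below by (\ref{BI4}). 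For the numerator, $|Y_\ve(\tau_\rho)|=\rho$ and $\xi=0$; because $b(0,\cdot)\equiv 0$, both the $\int_{\tau_\rho}^T b(\xi,s)\,ds$ correction and the $(AT)^3\xi^2$ term in the upper bound (\ref{CH4}) vanish, yielding
\[
G(\pm\rho,0,\tau_\rho,T) \;\le\; \frac{1}{\sqrt{2\pi\ve(T-\tau_\rho)}}\,\exp\Bigl[-\frac{\rho^2}{2\ve(T-\tau_\rho)(1+CAT)} + CAT\Bigr].
\]
Thus the ratio inside the expectation is bounded, up to a universal constant, by $\sqrt{T/(T-\tau_\rho)}\,\exp\bigl[-\rho^2/(2\ve(T-\tau_\rho)(1+CAT)) + y^2(1+CAT)/(2\ve T)\bigr]$, and I split the expectation according to $\{\tau_\rho < T/2\}$ (Case A) versus $\{T/2 \le \tau_\rho < T\}$ (Case B).

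In Case A one has $T-\tau_\rho > T/2$, so the prefactor is at most $\sqrt{2}$ and the exponent is at most $-\rho^2/(\ve T(1+CAT)) + y^2(1+CAT)/(2\ve T)$. The remaining probability $P(\tau_\rho < T/2 \mid Y_\ve(0)=y)$ is controlled by a Gronwall argument as in (\ref{K3}): since $b(0,\cdot)\equiv 0$ gives $|b(y',s)|\le A|y'|$, one obtains $\sup_{0\le s\le T/2}|Y_\ve(s)-y| \le e^{AT}(AT|y| + \sqrt{\ve}\sup_{s\le T}|W(s)|)$, and the hypothesis $\rho \ge C_2(|y|+\sqrt{\ve T})$ with $C_2$ large forces $\sqrt{\ve}\sup_{s\le T/2}|W(s)| \ge \rho/4$, to which the standard Brownian tail estimate (\ref{BE3}) gives a bound $\exp[-c\rho^2/(\ve T)]$. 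In Case B, writing $u=T-\tau_\rho\in(0,T/2]$, the function $u\mapsto u^{-1/2}\exp[-\rho^2/(2\ve u(1+CAT))]$ is increasing on this interval provided $\rho^2 \ge \ve T(1+CAT)$, which is again ensured by the hypothesis on $\rho$ once $C_2$ is large; its supremum is attained at $u=T/2$ and equals a constant times $T^{-1/2}\exp[-\rho^2/(\ve T(1+CAT))]$. The Case B contribution is thus at most $\exp[-\rho^2/(\ve T(1+CAT)) + y^2(1+CAT)/(2\ve T)]$ times $P(T/2 \le \tau_\rho < T \mid Y_\ve(0)=y) \le 1$.

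Summing the two cases, the conditional probability is bounded by a universal constant times $\exp\bigl[-c'\rho^2/(\ve T) + y^2(1+CAT)/(2\ve T)\bigr]$; choosing $C_2$ large enough (depending only on $c'$ and $C$) absorbs the $y^2$ contribution into the main exponent and yields the claimed $\exp[-C_1\rho^2/(2\ve T)]$. The principal difficulty is Case B, where $G(\pm\rho,0,\tau_\rho,T)$ individually diverges as $\tau_\rho\uparrow T$; the argument crucially exploits that the Gaussian factor $\exp[-\rho^2/(\ve u)]$ decays far faster than the $u^{-1/2}$ blowup as $u\downarrow 0$, so that under the condition $\rho^2 \gtrsim \ve T$ the worst case lies at the interior of the interval rather than at the singular endpoint. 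The hypothesis $\rho \ge C_2(|y|+\sqrt{\ve T})$ plays a dual role: it furnishes the Brownian tail estimate in Case A and simultaneously guarantees the monotonicity needed in Case B.
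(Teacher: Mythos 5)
Your proof is correct, and it takes a genuinely different route from the paper's. The paper proves (\ref{CQ4}) by a dyadic chaining argument: it decomposes $[0,T]$ into levels $S_n = \{jT/2^n\}$, bounds for each scale $n$ the probability that some dyadic increment $|Y_\ve(s)-Y_\ve(s+T/2^n)|$ exceeds $\rho\mu^n(1-\mu)$ while the coarser scale stays below, writes each such increment probability as a triple Green's-function integral (\ref{CT4}), estimates it via Lemmas 5.2 and 5.3, and then sums the resulting geometric series (\ref{CZ4}). Your argument instead applies the strong Markov property at the single stopping time $\tau_\rho$, reducing everything to a ratio $G(\pm\rho,0,\tau_\rho,T)/G(y,0,0,T)$ plus the unconditioned hitting probability, and handles early versus late $\tau_\rho$ by separate mechanisms (Brownian tail for Case A, Gaussian dominance over the $(T-\tau_\rho)^{-1/2}$ blowup for Case B). Both approaches rest on the same Green's-function inputs; yours is more elementary, avoiding the bookkeeping of a chaining series. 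Two minor slips: in Case A, $T-\tau_\rho\in(T/2,T]$ so the exponent in the Green's-function ratio is bounded above by $-\rho^2/(2\ve T(1+CAT))$, not $-\rho^2/(\ve T(1+CAT))$ as you wrote — but since you also multiply by the Brownian tail bound $\exp[-c\rho^2/(\ve T)]$, this has no effect; and in Case B the Green's function itself does not diverge as $\tau_\rho\uparrow T$ (it vanishes, since $\rho\neq0$) — only the $(T-\tau_\rho)^{-1/2}$ prefactor in the upper bound blows up, which is exactly what the monotonicity observation controls. Neither affects the validity. One implicit but important point worth making explicit: applying (\ref{CH4}) on the interval $[\tau_\rho,T]$ with $\tau_\rho$ a random time arbitrarily close to $T$ requires the bound of Lemma 5.3 to hold uniformly for all starting times $t$ with $A(T-t)\le\eta$; this is indeed the case (and is what the paper's inductive proof establishes), but since the lemma is stated for the fixed interval $[0,T]$ you should say so.
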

\begin{proof}  We do a dyadic decomposition of the interval $0 \le s \le T$.  Thus let $S_n, n = 0, 1,2,...,$ be defined by  $S_n = \left\{  {jT}/{2^n} : 0 \le j \le 2^n \right\}$.
It is evident from the continuity of $Y_\ve(\cd)$ that
\begin{multline} \label{CR4}
P\left( \sup_{0\le s\le T}| Y_\ve(s) | > \rho  \ \big| \  Y_\ve(0) = y,  \ Y_\ve(T) = 0 \right) 
\le \\ 
 \sum^\infty_{n=1} P\bigg( \sup_{s\in S_n-S_{n-1}} | Y_\ve(s)| > \rho(1-\mu^{n+1}), \ \sup_{s\in S_{n-1}} |Y_\ve(s)|  
\le \rho(1 - \mu^n) \  \big| \ Y_\ve(0) = y, \ Y_\ve(T) = 0 \bigg), 
\end{multline}
provided $\mu \in (0,1)$ satisfies $\rho(1-\mu) > |y|$.  Observe next that
\begin{multline} \label{CS4}
P\left( \sup_{s\in S_n-S_{n-1}} | Y_\ve(s)| > \rho(1-\mu^{n+1}), \ \sup_{s\in S_{n-1}} |Y_\ve(s)| \le \rho(1 - \mu^n) \  \big| \ Y_\ve(0) = y, \ Y_\ve(T) = 0 \right), 
\\
\le \sum_{s \in S_n - S_{n-1}} P\left( |Y_\ve(s) - Y_\ve(s + T/2^m)| > \rho\mu^n(1-\mu) \  \big| \ Y_\ve(0)  = y,   \ Y_\ve(T) = 0 \right). 
\end{multline}
The probability in the sum on the RHS of (\ref{CS4}) can be expressed in terms of the Green's function (\ref{D1}) as follows:
\begin{multline} \label{CT4}
P \left( |Y_\ve(s) - Y_\ve(s + T/2^n)| > \rho\mu^n(1-\mu) \  \big| \ Y_\ve(0)  = y,  Y_\ve(T) = 0 \right)= \\
G(y,0,0,T)^{-1} \int^\infty_{-\infty} \int^\infty_{-\infty}  d\xi  \ d\zeta \  G(y,\xi,0,s) 
H \left(|\xi - \zeta| - \rho\mu^n(1-\mu) \right) \\
G(\xi, \zeta, s, s + T/2^N) \ G(\zeta, 0, s+ T/2^n, T),  
\end{multline}
where $H(z), \ z \in \R$, is the Heaviside function.  We may estimate the integral on the RHS of (\ref{CT4}) by using Lemma 5.2 and Lemma 5.3.  We first consider the integral with respect to $\xi$ in (\ref{CT4}) for a fixed $\xi \in \R$.  From Lemma 5.3 we have that
\begin{multline} \label{CU4}
G(y, \xi,0,s) \  G(\xi, \zeta, s, s+T/2^n) \le   \frac 1{2\pi\ve\sqrt{sT/2^n}} \exp \bigg[ -\frac{\{ y+\int^s_0 b(\xi,s')ds'-\xi\}^2}{2\ve s} (1-CAs) \\
+ \frac{C(As)^3\xi^2}{2\ve s} + CAs - \frac {\{\xi+ \int^{s+T/2^n}_s b(\zeta,s')ds' - \zeta\}^2}{2\ve T/2^n} 
(1-CAT/2^n) + \frac{C(AT/2^n)^3}{2\ve T/2^n} \; \zeta^2 + CAT/2^n \bigg]  \ .
\end{multline}
Setting $z = y + \int^s_0 \; b(\zeta, s')ds' - \zeta$ we see from (\ref{CU4}) that
\begin{multline} \label{CV4}
G(y, \xi,0,s) \  G(\xi, \zeta, s, s+T/2^n) \le \\
 \frac1 {2\pi \ve \sqrt{sT/2^n}} 
\exp \bigg[- \frac{\{ z^2 + 2(\zeta - \xi)z + (\zeta - \xi)^2 \}}{2\ve s} 
- \frac{(\zeta-\xi)^2}{2\ve T/2^n}  
+ \frac{CA}{\ve} \left[ (\zeta-\xi)^2 + z^2 + \zeta^2 \right] + CAT \bigg],  
\end{multline}
for some universal constant $C$.  Integrating the RHS of (\ref{CV4}) over the region $|\xi - \zeta| > \rho\mu^n(1-\mu)$ we conclude that
\begin{multline} \label{CW4}
\int^\infty_{-\infty} d\xi \  G(y, \xi, 0, s) \  H \left(|\xi - \zeta| - \rho\mu^n(1-\mu) \right) G(\xi, \zeta, s, s + T/2^n) 
\le \\
 \exp \left[ - \frac{\rho^2\mu^{2n}(1-\mu)^2} {4\ve T/2^n} \right]
\frac 1 {\sqrt{2\pi\ve \tau_n}} \exp \left[ -\frac{z^2}{2\ve\tau_n} \left\{ \frac 1 2 - \frac{2CAT}{2^n} \right\} + \frac{CA}\ve \left\{ z^2 + \zeta^2\right\} + CAT \right] \ , 
\end{multline}
where  $\tau_n = T/2^n + s/2 - 2CAsT/2^n$.
Hence if we use the inequalities
\[   (y-\zeta)^2 [1 - As] - As \zeta^2 \le z^2 \le (y-\zeta)^2 [1+As] +2As\zeta^2,  \]
which are valid for $AT \le 1$, and substitute the RHS of (\ref{CW4}) into the RHS of (\ref{CT4}), we may conclude from Lemma 5.2  that the LHS of (\ref{CT4}) is bounded by a Gaussian integral in $\zeta$.  Evaluating this integral we have then that
\begin{multline}  \label{CX4}
P\left( |Y_\ve(s) - Y_\ve(s + T/2^n)| > \rho \mu^n(1-\mu)  \  \big| \ Y_\ve(0) = y,  \ Y_\ve(T) = 0 \right) \le \\
 G(y,0,0,T)^{-1} \   \frac{\sqrt{2}} {[2\pi\ve(T+T/2^n)]^{1/2}}
 \exp \left[- \frac{\rho^2\mu^{2n}(1-\mu)^2}{4\ve T/2^n}- \frac{y^2}{2\ve(T+T/2^n)} + \frac{CAy^2}{\ve} + CAT \right] 
 \end{multline}
for some universal constant $C$.  Choosing now $\mu$ in (\ref{CX4}) to satisfy $1 /\sqrt{2} < \mu < 1$ and using the lower bound for $G(y,0,0,T)$ in Lemma 5.2 we conclude from (\ref{CX4}) that
\begin{multline}  \label{CY4}
P\left( |Y_\ve(s) - Y_\ve(s + T/2^n)| > \rho \mu^n(1-\mu)  \  \big| \ Y_\ve(0) = y,  \ Y_\ve(T) = 0 \right) \le \\
\exp \left[ \frac{-\rho^2\mu^2(1-\mu)^{2n}}{8\ve T/2^n} \right]  \quad {\rm if \ }\rho \ge C_2[|y| + \sqrt{\ve T}],
\end{multline} 
provided $C_2$ is a sufficiently large universal constant.  Hence (\ref{CR4}), (\ref{CS4}) imply that
\begin{multline}  \label{CZ4}
P \left( \sup_{0\le s \le T} |Y_\ve(s)| > \rho \  \big| \ Y_\ve(0) = y, \ Y_\ve(T) = 0\right) \le \\
\sum^\infty_{n=1} 2^{n-1} \exp \left[ \frac{-\rho^2\mu^{2n}(1-\mu)}{8\ve T/2^n}\right] \le \exp \left[ - \frac{C_1\rho^2}{2\ve T} \right],  
\end{multline}
for some universal constant $C_1 > 0$. 
\end{proof}
To prove (\ref{AU4}) under the assumptions (\ref{D1}) and $b(0,\cdot)\equiv 0$ we actually need versions of Lemma 5.3 and Lemma 5.4 which hold in the situation when $b(0,\cd) \not\equiv 0$.  A slight modification of the proof of Lemma 5.3 yields:
\begin{corollary} Suppose $b(\cd,\cd)$ satisfies (\ref{A1}).  Then there are universal constants $\eta, \ C > 0$ such that the Green's function $G$ defined by (\ref{D1}) satisfies the inequalities
\be \label{DA4}
G(y,\xi,0,T) \le \frac 1{ \sqrt{2\pi\ve T}} \exp \left[- \frac{\{ y+\int^T_0 b(\xi,s)ds-\xi\}^2}{2\ve T(1+ CAT)} + \frac{C(AT)^3\xi^2}{2\ve T} + CAT  + \frac{CA}\ve \left\{ \int^T_0 |b(0,s)| ds\right\}^2 \right], 
\ee
\begin{multline}  \label{DB4}
G(y,\xi,0,T) \ge \frac 1{ \sqrt{2\pi\ve T}} \exp \bigg[ -\frac{\{ y+\int^T_0 b(\xi,s)ds-\xi\}^2}{2\ve T} (1+ CAT)\\
 - \frac{C(AT)^3\xi^2}{2\ve T} 
- CAT - \frac{CA}\ve \left\{ \int^T_0 |b(0,s)| ds\right\}^2 \bigg], 
\end{multline}
provided $AT \le \eta$.
\end{corollary}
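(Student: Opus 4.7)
The plan is to revisit the proof of Lemma 5.3, identify the one place where the centering hypothesis $b(0,\cd) \equiv 0$ is invoked, and show that dropping it produces precisely the correction $(CA/\ve)\{\int_0^T |b(0,s)|ds\}^2$ appearing in (\ref{DA4})--(\ref{DB4}).

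First I would observe that the Schwarz-type inequality (\ref{A4}) with $x$ replaced by $\xi$, and the Lipschitz estimate $|b(y,s) - b(\xi,s)| \le A|y-\xi|$, remain valid without the centering hypothesis.  Consequently, the dyadic iteration representation (\ref{CK4}) and its upper-bound analogue go through verbatim, as do the Brownian-bridge identities that produce the bound (\ref{CL4}).  The only use of $b(0,\cd) \equiv 0$ occurs in the passage from (\ref{CL4}) to (\ref{CM4}), where the integral
\begin{equation*}
\int_{t_{N-1}}^{t_N}\Big\{\int_{t_{N-1}}^s b(\xi,s')\,ds'\Big\}^2\,ds
\end{equation*}
is bounded via $|b(\xi,s')| \le A|\xi|$ to produce the $[AT\xi/2^N]^2$ term.

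Without that hypothesis I would use $|b(\xi,s')| \le |b(0,s')| + A|\xi|$ together with $(a+b)^2 \le 2a^2 + 2b^2$ to split
\begin{equation*}
\int_{t_{N-1}}^{t_N}\Big\{\int_{t_{N-1}}^s b(\xi,s')\,ds'\Big\}^2\,ds \le 2\int_{t_{N-1}}^{t_N}\Big\{\int_{t_{N-1}}^s |b(0,s')|\,ds'\Big\}^2\,ds + 2A^2\xi^2\int_{t_{N-1}}^{t_N}(s-t_{N-1})^2\,ds.
\end{equation*}
The second term is the same as in Lemma 5.3 and is absorbed into the $C(AT)^3\xi^2/(2\ve T)$ contribution.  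For the first term, Cauchy--Schwarz gives $\int_{t_{N-1}}^{t_N}\{\int_{t_{N-1}}^s |b(0,s')|ds'\}^2 ds \le (T/2^N)\{\int_{t_{N-1}}^{t_N}|b(0,s')|ds'\}^2$, and with the Lemma 5.3 choice $\al = AT/2^N$ the prefactor $A^2(1-\al)/(\al\ve) \cdot (T/2^N)$ simplifies to $A(1-\al)/\ve$, so the extra contribution to $-\log F_N(y,z)$ is bounded by $(CA/\ve)\{\int_{t_{N-1}}^{t_N}|b(0,s')|ds'\}^2$.

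Summing these extra contributions over $N = 1, 2, \ldots$ and using the elementary inequality $\sum_N a_N^2 \le (\sum_N a_N)^2$ with $a_N = \int_{t_{N-1}}^{t_N}|b(0,s')|ds'$ produces a cumulative additional term bounded by $(CA/\ve)\{\int_0^T |b(0,s)|ds\}^2$, which is exactly the correction in (\ref{DA4})--(\ref{DB4}).  The logarithmic base case (the perturbative estimate at small $T - t$, as in (\ref{BM4})--(\ref{BQQ4})) is handled by the same split applied to the function $g(s) = \int_0^s b(\xi,s')\,ds'$ appearing in the analogue of (\ref{F4}), and yields an additional contribution of the same form.  The upper bound (\ref{DA4}) is obtained symmetrically by modifying the upper-bound half of Lemma 5.3 starting from (\ref{BT4}).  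The only real hurdle is bookkeeping: one must check that the geometric recursions $C_{N-1} = 5C_N/8 + K$ and $C_{N-1} = 2C_N/3 + K_0 + 4$ are unaffected, but this is automatic since the new contribution is independent of $C_N$ and summable in $N$.
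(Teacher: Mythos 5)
Your proposal is correct and fleshes out exactly what the paper intends — the text preceding Corollary~5.2 merely says ``a slight modification of the proof of Lemma~5.3 yields'' the statement, without further detail, and the modification you describe is the natural one. You correctly locate the use of $b(0,\cdot)\equiv 0$ in the step where the second term of (\ref{CL4}) is estimated (and in the corresponding estimate in the logarithmic base case), and the split $|b(\xi,s')|\le|b(0,s')|+A|\xi|$ together with $(a+b)^2\le 2a^2+2b^2$ is exactly how the new term emerges with the right prefactor: with $\al=AT/2^N$ the coefficient $A^2(1-\al)/(\al\ve)\cdot(T/2^N)$ collapses to $A(1-\al)/\ve$ as you note, giving a per-step contribution $\frac{CA}{\ve}\{\int_{t_{N-1}}^{t_N}|b(0,s)|ds\}^2$.

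Two points worth being more explicit about, both of which your outline already gestures at. First, the dyadic-sum inequality $\sum_N a_N^2\le(\sum_N a_N)^2$ is indeed the right closure mechanism, but one should note that unlike the original term $CA(T-t)$ (whose recursion needs the damping factor $5/8$ to tame the $O(N)$ base constant), the new term $\frac{CA}{\ve}\{\int_t^T|b(0,s)|ds\}^2$ is self-damping: since $b(0,\cdot)$ is continuous on $[0,T]$, the integral over $[t_{N_0},T]$ is $O(T/2^{N_0})$, so even a base constant of size $O(N_0)$ from the logarithmic argument produces a contribution $O(N_0/4^{N_0})\to 0$, and the final bound reduces to the dyadic sum. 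Second, the hypothesis $b(0,\cdot)\equiv 0$ is also used implicitly in the estimate $|F(\xi,t)-\{\xi-\int_t^T b(\xi,s)ds\}|\le C[A(T-t)]^2|\xi|$ appearing after (\ref{CP4}); dropping it adds a term $\le CA(T-t)\int_t^T|b(0,s)|ds$ on the right, which after squaring and dividing by $\ve(T-t)$ in the exponent is again of the form $\frac{CA}{\ve}\{\int_0^T|b(0,s)|ds\}^2$ when $AT\le 1$. Neither affects your conclusion, but they belong in the bookkeeping.
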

We can also slightly modify the proof of Lemma 5.4 to obtain the following:
\begin{corollary} Suppose $b(\cd, \cd)$ satisfies (\ref{A1}).   Then for any $y \in \R$ which satisfies
\be \label{DCC4}
|y| + \sqrt{\ve T} \ge \int^T_0 |b(0,s)|ds.
\ee
the result of Lemma 5.4 holds.
\end{corollary}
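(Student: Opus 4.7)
The plan is to retrace the proof of Lemma 5.4 almost verbatim, replacing each appeal to Lemma 5.3 by Corollary 5.2 and each appeal to Lemma 5.2 by the $\xi=0$ specialization of Corollary 5.2. The whole task reduces to tracking the new exponential factor $\exp\bigl[CA\{\int_0^T|b(0,s)|ds\}^2/\ve\bigr]$ that now appears in both the upper and lower Green's-function estimates, and showing that under hypothesis (\ref{DCC4}) it is dominated by the principal Gaussian decay.

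Concretely, I would first repeat the dyadic decomposition (\ref{CR4})--(\ref{CT4}) without alteration.  Applying the upper bound (\ref{DA4}) of Corollary 5.2 to the product $G(y,\xi,0,s)\,G(\xi,\zeta,s,s+T/2^n)$ yields the same estimate as (\ref{CV4}) but with an additional additive term $2CA\{\int_0^T|b(0,s')|ds'\}^2/\ve$ inside the exponential.  Performing the Gaussian integration first over $\xi$ on the region $|\xi-\zeta|>\rho\mu^n(1-\mu)$ and then over $\zeta$, exactly as in the proof of Lemma 5.4, produces an analog of (\ref{CX4}) carrying the same extra factor.

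Next, I would divide by $G(y,0,0,T)$ using the lower bound obtained by setting $\xi=0$ in (\ref{DB4}).  The quantity $\{y+\int_0^Tb(0,s)ds\}^2$ that appears in the denominator is bounded by $2y^2+2\{\int_0^T|b(0,s)|ds\}^2$, which under (\ref{DCC4}) is at most $6y^2+4\ve T$. Combining numerator and denominator, the total exponent in the bound takes the form
\begin{equation*}
-\frac{\rho^2\mu^{2n}(1-\mu)^2}{4\ve T/2^n}+\frac{Cy^2}{\ve T}+\frac{CA}{\ve}\bigl(|y|+\sqrt{\ve T}\bigr)^2+CAT,
\end{equation*}
where I have applied (\ref{DCC4}) one more time to the newly introduced factor.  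Since $\rho\ge C_2(|y|+\sqrt{\ve T})$, the second and third terms are bounded by $\rho^2/\ve T$ times a constant of order $1/C_2^2+AT/C_2^2$; choosing $C_2$ large and $\eta$ small makes this smaller than $C_1\mu^{2n}(1-\mu)^2/8$, say.  One then obtains the analog of (\ref{CY4}) with slightly adjusted universal constants, and the dyadic sum (\ref{CZ4}) goes through unchanged.

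\textbf{Main obstacle.} No new idea is required; the argument is essentially bookkeeping.  The quantitative heart of the matter is that hypothesis (\ref{DCC4}) is precisely what is needed to absorb the extra exponential factor $A\{\int_0^T|b(0,s)|ds\}^2/\ve$ appearing in Corollary 5.2 into the principal Gaussian decay $\rho^2/\ve T$, given the smallness of $AT$ and the lower bound $\rho\ge C_2(|y|+\sqrt{\ve T})$.  Without (\ref{DCC4}) no such domination is possible, since $\int_0^T|b(0,s)|ds$ could otherwise swamp $|y|+\sqrt{\ve T}$ and the resulting bound would be useless.
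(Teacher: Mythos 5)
Your proposal is correct and takes the same route as the paper, whose own proof is a one-liner: "We simply use the Green's functions bounds of Corollary 5.2 in place of the bounds of Lemma 5.3 in the argument of Lemma 5.4." You have supplied the bookkeeping the paper leaves implicit, correctly identifying the extra factor $\exp[CA\{\int_0^T|b(0,s)|ds\}^2/\ve]$ and showing that hypothesis (\ref{DCC4}) together with $\rho\ge C_2(|y|+\sqrt{\ve T})$ and $AT\le\eta$ allows it to be absorbed into the principal Gaussian decay, so the argument of Lemma 5.4 goes through with only adjusted universal constants.
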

\begin{proof}   We simply use the Green's functions bounds of Corollary 5.2 in place of the bounds of Lemma 5.3 in the argument of Lemma 5.4.
\end{proof}
\begin{lem} Suppose $b(\cd,\cd)$ satisfies (\ref{A1}) and $b(0,\cdot)\equiv 0$.  For $\la \in \R$ define $b_\la(\cd,\cd)$ by $b_\la(y,s) = b(y+\la s, s)$, $y \in \R, \ 0 \le s \le T$,  and let $G_\la$ be the Green's function (\ref{D1}) associated with $b_\la$.  Then there are universal constants $\eta, C > 0$ such that the following inequalities hold provided $AT \le \eta$:
\be \label{DC4}
\frac{G_\la(y,0,0,T)}{G_0(y,0,0,T)} \le \exp \left[ \frac{C|\la|AT}\ve \left\{|y| + |\la|AT^2 \right\} + CAT \right],
\ee
\be \label{DD4}
\frac{G_\la(y,0,0,T)}{G_0(y,0,0,T)} \ge \exp \left[- \frac{C|\la|AT}\ve \left\{|y| + |\la|AT^2 \right\} - CAT \right].
\ee
\end{lem}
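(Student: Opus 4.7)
The plan is to write the ratio $G_\la/G_0$ as a conditional expectation via the Cameron-Martin/Girsanov formula and to estimate that expectation using the deterministic pointwise control on $b_\la-b$ together with the conditional sup estimate of Corollary 5.3. The crucial observation is that, by the Lipschitz bound (\ref{A1}),
\[
|b_\la(y,s)-b(y,s)| \ = \ |b(y+\la s,s)-b(y,s)| \ \le \ A|\la|s,
\]
and this bound is uniform in $y$. Any naive attempt to use the Green's function bounds of Lemma 5.3 and Corollary 5.2 separately for $G_\la$ and $G_0$ and then take the ratio introduces a spurious $Ay^2/\ve$ term in the exponent, which is incompatible with the \emph{linear} $|y|$ dependence in (\ref{DC4})--(\ref{DD4}); the $y$-independent character of the Lipschitz bound on $b_\la-b$ is what avoids this.

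Applying Girsanov to the drift change $b \ra b_\la$, one obtains
\begin{equation*}
\frac{G_\la(y,0,0,T)}{G_0(y,0,0,T)} \ = \ E^b\left[\exp\left(\int_0^T \frac{(b_\la-b)(Y,s)}{\sqrt\ve}\,dW(s) \ - \ \frac{1}{2}\int_0^T \frac{(b_\la-b)^2(Y,s)}{\ve}\,ds\right)\,\Big|\,Y(0)=y,\,Y(T)=0\right],
\end{equation*}
where $E^b$ denotes expectation for the SDE (\ref{E1}) with drift $b$. The pointwise estimate on $b_\la-b$ gives the deterministic bound $\int_0^T(b_\la-b)^2/\ve\,ds \le A^2\la^2 T^3/(3\ve)$, which yields the $C\la^2A^2T^3/\ve$ contribution in (\ref{DC4}). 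To control the stochastic integral $M_T := \int_0^T(b_\la-b)(Y,s)dW/\sqrt\ve$ under the bridge measure, I decompose the driving noise by the Doob $h$-transform: under the conditioning on $Y(0)=y,Y(T)=0$, the Brownian driver satisfies $dW = d\tilde W + \sqrt\ve\,\pa_y\log G_0(Y(s),0,s,T)\,ds$ with $\tilde W$ a Brownian motion in the bridge filtration. This splits $M_T$ into a genuine bridge martingale (whose exponential moments are controlled by its quadratic variation, bounded by $A^2\la^2 T^3/(3\ve)$) plus an absolutely continuous piece whose integrand is pointwise dominated by $A|\la|s\cdot|\pa_y\log G_0(Y(s),0,s,T)|$. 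The Gaussian estimates of Lemma 5.2 give $|\pa_y\log G_0(y,0,s,T)|\le C|y|/(\ve(T-s))+CA$, and combining this with the sub-Gaussian tail of Corollary 5.3 for $\sup_{0\le s\le T}|Y(s)|$ together with the conditional second-moment estimate $E[Y(s)^2\,|\,Y(0)=y,Y(T)=0]\le C(y^2(T-s)^2/T^2+\ve s(T-s)/T)$ produces the exponential moment bound that yields the $C|\la|AT|y|/\ve$ term. The leftover $CAT$ absorbs the Itô second-order correction and the cross terms; the lower bound (\ref{DD4}) follows by applying Jensen's inequality $\log E[e^X|\cdot]\ge E[X|\cdot]$ to the same decomposition and using the tail bound of Corollary 5.3 in the opposite direction.

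The main technical obstacle is controlling the drift part of $M_T$ under the bridge measure, because $\pa_y\log G_0(Y,0,s,T)$ is singular as $s\ra T$ (behaving like $-Y(s)/\ve(T-s)$ in the near-Gaussian regime) while $|Y(s)|\sim\sqrt{\ve(T-s)s/T}$ typically vanishes at the right rate. The cancellation is delicate and relies on the precise quantitative Green's function estimates of Lemma 5.2 rather than merely order-of-magnitude bounds; carrying out this cancellation without reintroducing the forbidden $Ay^2/\ve$ term is the crux of the argument.
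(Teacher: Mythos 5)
Your Girsanov + Doob $h$-transform route is genuinely different from the paper's: the paper instead iterates a Cameron--Martin inequality across dyadic time scales (as in (\ref{DO4})), combined with a short-time perturbation expansion near $T$ and a separate logarithmic-constant bootstrap, and never needs a pointwise bound on $\pa_y \log G_0$. Your observation that $|b_\la - b| \le A|\la|s$ uniformly in $y$ is the correct central observation, and the Girsanov representation of $G_\la/G_0$ as a bridge expectation is correct.

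However, there is a real gap. Once you split $dW = d\tilde W + \sqrt{\ve}\,\pa_y\log G_0(Y(s),0,s,T)\,ds$, the drift part of $M_T$ requires the estimate $|\pa_y\log G_0(y,0,s,T)|\le C|y|/(\ve(T-s))+CA$, which you attribute to ``the Gaussian estimates of Lemma 5.2.'' But Lemma 5.2 bounds $G_0$ itself, not its $y$-derivative, and there is no direct way to pass from two-sided bounds on a function to a bound on its logarithmic derivative. A pointwise bound of this shape on $\pa_y\log G_0$ is essentially the content of Lemma 5.6 (which the paper proves \emph{from} Lemma 5.5), so invoking it here is circular; proving it independently would be at least as hard as the lemma itself --- indeed, for nonlinear $b$ the only derivative estimates available in the paper, e.g.\ (\ref{AJ3}), bound $\pa_y G_0$ by $C(\ve(T-s))^{-1/2}G(y,2\ve(T-s))$, which gives $|\pa_y\log G_0|\lesssim (\ve(T-s))^{-1/2}e^{y^2/4\ve(T-s)}$, far too weak. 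A secondary issue: even granting the gradient bound, controlling $E[\exp(D_T)]$ near the singularity $s\to T$ needs more than the sup-bound of Corollary 5.3 and a conditional second moment; the integrand $A|\la|s\,|Y(s)|/(\ve(T-s))$ is integrable only along paths where $|Y(s)|$ shrinks at rate $\sqrt{\ve(T-s)}$, and exponential moments of $\int_0^T |Y(s)|/(T-s)\,ds$ under the bridge measure require a genuine argument (Lemma 4.2 shows how delicate such estimates can be; see also the remark after (\ref{FZ4}) that the analogous $L^2$ quantity is in fact infinite). You have correctly identified this as ``the crux,'' but the proposal as written does not resolve it.
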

\begin{proof}  Consider first the situation when $|y| \le |\la| AT^2$.  The result follows from Corollary 5.2 on using the inequality $\int^T_0 |b_\la(0,s)|ds \le |\la| AT^2/2$, whence we need only prove (\ref{DC4}), (\ref{DD4}) for $|y| \ge |\la|AT^2$.  Observe now that if $|y| = O(\sqrt{\ve T \ })$ then $|\la| AT |y|/\ve = O(y^2/\ve T) = O(1)$.  Hence we might expect to prove (\ref{DC4}), (\ref{DD4}) for $|y| = O(\sqrt{\ve T \ })$ by perturbation methods.  To implement this we consider the function $u_\la(z,t), \ z \in \R, \ t < T$, defined by
\be \label{DK4}
u_\la(z,t) = G_\la \left( z - \int^T_t b_\la (0,s)ds,  \ 0, \  t, \  T \right).
\ee
Evidently $u_\la$ is a solution to the terminal value problem
\begin{eqnarray} \label{DE4}
\frac {\pa u_\la}{\pa t} &+& \tilde b_\la(z,t) \frac {\pa u_\la}{\pa z} + \frac \ve 2 \; \frac {\pa^2 u_\la}{\pa z^2} = 0,  \quad z \in \R, \ t < T, \\
& \ & \lim_{t \ra T} \; u_\la (z,t) = \del(z),  \nn
\end{eqnarray}
where  $\tilde b_\la(z,t)$ is given by the formula
\be \label{DEE4}
 \tilde b_\la(z,t) = b_\la \left( z - \int^T_t b_\la(0,s) \ ds, \  t\right) - b_\la(0,t). 
\ee
Following the argument of Lemma 3.4 we see that the terminal value problem (\ref{DE4}) on the interval $|z| < \eta$ with Dirichlet boundary conditions can be solved by perturbation expansion for times $0 \le t < T$ provided $\sup \{|\tilde b_\la(z,t) | : |z| < \eta, 0 < t < T\} (T/\ve)^{1/2} < < 1$.  Assuming now that $y, \eta$ satisfy the inequalities
\be \label{DF4}
|\la|AT^2 \le |y| \le \sqrt{\ve T} / (AT)^\del, \ \eta = \sqrt{\ve T} \big/ (AT)^{2\del},
\ee
it is clear that the perturbation expansion converges provided $\del < 1/2$ and $AT$ is smaller than some constant depending only on $\del$.  In fact, letting $G(z,t), z \in \R, \; t > 0$, be the probability density function for the normal variable with mean 0 and variance $t$ we have that
\begin{multline} \label{DG4}
\Big|u_\la(z,t) - G( z, \ve(T-t)) 
+ \int^T_t ds \int^\eta_{-\eta} d\xi  \ G( z-\xi, \ve(s-t))  \ \tilde b_\la(\xi,s)\frac \pa{\pa\xi}G(\xi,\ve(T-s))\Big|  \\
\le C(AT)^{2-4\del} \ G(z, \; 2\ve(T-t )), 
\end{multline}
provided $|z| \le \sqrt{\ve T} \big/ (AT)^\del$.  Here $AT$ needs to be smaller than some constant depending only on $\del$, and the constant $C$ on the RHS of (\ref{DG4}) also depends on $\del$.  It is easy to see that
\begin{multline} \label{DH4}
 \Big| \int^\eta_{-\eta} d\xi \  G( z-\xi, \ve(s-t)) \  \tilde b_\la(\xi,s)\; \frac \pa{\pa\xi}\ G(\xi,\ve(T-s))\Big| \\
\le  \frac 1 2 \int^\infty_{- \infty} d\xi  \ G( z-\xi, \ve(s-t)) \frac A{\ve(T-s)} \left[ 3\xi^2 + A^2\la^2T^2(T-s)^2\right] G(\xi,\ve(T-s))  \\
=  \frac 1 2 \left\{  \frac{A^3\la^2T^2(T-s)}\ve + \frac{3A(T-s)z^2}{\ve(T-t)^2} + \frac{3A(s-t)}{T-t} \right\} G(z,\ve(T-t)). 
\end{multline}
Substituting the inequality (\ref{DH4}) into (\ref{DG4}) we conclude that $u_\la(z,t)$ satisfies the inequalities
\begin{multline}  \label{DI4}
u_\la(z,t) \le  G(z,\ve (T-t)) +C(AT)^{2-4\del} G(z,2\ve(T-t))+
\\
\bigg\{ \frac{A^3\la^2 T^2(T-t)^2}{4\ve} + \frac{3Az^2}{4\ve} + \frac{3A(T-t)}{4} \bigg\} G(z,\ve (T-t)),  
\end{multline}
\begin{multline}  \label{DJ4}
u_\la(z,t)   \ge   G(z,\ve (T-t)) - C(AT)^{2-4\del} \  G(z,2\ve(T-t))-
\\
\bigg\{ \frac{A^3\la^2T^2(T-t)^2}{4\ve} + \frac{3Az^2}{4\ve} + \frac{3A(T-t)}{4} \bigg\} G(z,\ve(T-t)). 
\end{multline}

We have shown that (\ref{DI4}), (\ref{DJ4}) holds for the function $u_\la(z,t)$ which satisfies (\ref{DE4}) on the intervals $|z| < \eta, \ 0 < t < T$, and with Dirichlet boundary conditions on $|z| = \eta$.  It follows that the function $u_\la(z,t)$ defined by (\ref{DK4}), also satisfies (\ref{DJ4}) for $|z| \le \sqrt{\ve T} / (AT)^\del, \ 0 < t < T$.  From the argument of Lemma 5.2 we  see that the upper bound (\ref{DI4}) continues to hold for the function (\ref{DK4}) when $|z| \le \sqrt{\ve T} / (AT)^\del$.

The inequalities (\ref{DC4}), (\ref{DD4}) can be deduced from (\ref{DI4}) ,(\ref{DJ4}) in the case when $y$ lies in the interval $|\la| AT^2 \le |y| \le K\sqrt{\ve T}$, where $K \ge 1$ is a constant.  The constant $C$ now in (\ref{DC4}), (\ref{DD4}) depends on $K$, and $AT$  must be chosen sufficiently small depending only on $K$.  To obtain the lower bound (\ref{DD4}) we set $\del = 1/8$ and $z = y + \int^T_0 b_\la(0,s)ds,  \ t = 0$ in (\ref{DJ4}).  Thus we obtain the inequality
\be \label{DL4}
G_\la(y,0,0,T) \ge G \left( y + \int^T_0 b_\la(0,s) \ ds,  \ \ve T \right) \exp \left[ - C\left\{ \frac{\la^2(AT)^3 T}{\ve} + AT\; e^{K^2} \right\} \right] 
\ee
for some universal constant $C$.  Lemma 5.2 implies that $G_0(y,0,0,T)$ satisfies the upper bound
\be \label{DM4}
G_0(y,0,0,T) \le G(y, \ve T) \exp \Big[ CAT (1 + K^2) \Big]
\ee
for some universal constant $C$.  Now (\ref{DD4}) follows by estimating from below the ratio of the RHS of (\ref{DL4}) to (\ref{DM4}).  The upper bound (\ref{DC4}) can be similarly obtained from (\ref{DI4}) and Lemma 5.2.

To complete the proof of the lemma we use induction as we did in Lemma 5.2.  We consider the lower bound (\ref{DD4}).  Observe first that the previous arguments imply that the lower bound
\be \label{DN4}
\frac {G_\la(y,0,t,T)}{G_0(y,0,t,T)} \ge \exp \left[ \frac{-C|\la|AT}{\ve} \left\{ |y| + |\la|AT(T-t)\right\} -CA(T-t) \right]
\ee
holds for $0 \le t < T$ if $y$ lies in one of the regions $|y| \le |\la|AT(T-t)$ or $|\la|AT(T-t) \le |y| \le K \sqrt{\ve(T-t)}$.  For the former region the constant $C$ in (\ref{DN4}) can be chosen in a universal way provided $AT$ is smaller than some universal constant.  For the latter region $C$ depends on $K$ and $AT$, and must be taken sufficiently small depending only on $K$.

Suppose now we have proved (\ref{DN4}) for $T-t = T/2^N, \; y \in \R$, where $N$ is some integer $N \ge 1$ with constant $C = C_N$.  We show that (\ref{DN4}) also holds for $T-t = T/2^{N-1}, \; y \in \R$, with a constant $C_{N-1}$ given in terms of $C_N$.  To do this we use the inequality
\begin{multline}  \label{DO4}
G_\la(y,0,t_{N-1},T)^{1-\al} \ge \\
\int^\infty_{-\infty} G_0(y,z,t_{N-1},t_N) 
\exp \left[ - \frac{(1-\al)}{2\al\ve} \ \frac{A^2\la^2T^3}{2^N} \right] G_\la \left(z,0,t_N,T\right)^{1-\al}\; dz,  
\end{multline}
where $T-t_n = T/2^n, \; n = 0,1,2..., \  0 < \al < 1$.  The inequality (\ref{DO4}) is derived similarly to (\ref{BK4}).

We assume $y$ in (\ref{DO4}) satisfies $|y| \ge \max \left[ |\la|AT(T-t_{N-1}), K\sqrt{\ve(T-t_N)}\right]$ and set $\al = |\la|AT^2/|y|2^N \le 1/2$.  Then, on substituting (\ref{DN4}) for $t=t_N$ into (\ref{DO4}) we obtain the inequality
\begin{multline} \label{DP4}
\frac{G_\la(y,0,t_{N-1},T)^{1-\al}} {G_0(y,0,t_{N-1},T)^{1-\al} } \ge \\
 \exp \bigg[ - \frac{(1-\al) |\la|AT|y|} {2\ve} 
\frac{-C_N(1-\al)|\la|^2(AT)^2T}{\ve 2^N} - \frac{C_N(1-\al)AT}{2^N} \bigg] G_0(y,0,t_{N-1},T)^\al  \\
E \bigg\{ G_0(Y_\ve(t_N),0, t_N, T)^{-\al} \exp \left[ - \frac{C_N(1-\al)|\la| AT|Y_\ve(t_N)|}{\ve} \right] 
 \ \Big| \ Y_\ve(t_{N-1})= y, \ Y_\ve(T) =0 \bigg\} ,  
\end{multline}
where $Y_\ve(\cdot)$ is the solution to (\ref{E1}).  From Lemma 5.2 we have that
\be \label{DQ4}
\frac {G_0(y,0,t_{N-1},T)} {G_0(z,0,t_N,T)} \ge \exp \left[ -\frac 1 2 \log 2 - \frac{y^2}{2\ve T/2^N} - \frac{CAT}{2^N} \right], \quad z \in \R,
\ee
for some universal constant $C$.  Since we are assuming that $|y| \ge K\sqrt{\ve(T-t_N)}$, we conclude from (\ref{DQ4}) that for sufficiently large $K$,
\be \label{DR4}
\frac {G_0(y,0,t_{N-1},T)^\al} {G_0(z,0,t_N,T)^\al} \ge \exp \left[ - \frac{|\la|AT|y|}{\ve}\right], \quad z \in \R.
\ee
To get a lower bound for the RHS of (\ref{DP4}) we are therefore left to estimate from below the expectation
\begin{multline}  \label{DS4}
E \left\{  \exp \left[  - \frac{C_N(1-\al)|\la| AT| |Y_\ve(t_N)|}{\ve} \right]  \  \Big| \  Y_\ve(t_{N-1})= y, \ Y_\ve(T) =0 \right\} 
\\
 \ge  \exp \left[ - \frac{C_N(1-\al)|\la| AT}{\ve}  E \Big\{ |Y_\ve(t_N)| \  \Big|  \ Y_\ve(t_{N-1})= y, \ Y_\ve(T) =0 \Big\} \right].   
\end{multline}
We estimate the expectation on the RHS of (\ref{DS4}) by
\begin{multline} \label{DT4}
 E \left[ | Y_\ve(t_N)|  \ \big|  \ Y_\ve(t_{N-1}) = y, \  Y_\ve(T) = 0 \right ] \le \\
  \frac{3|y|}4 
+ \int^\infty_{|y|/4} d\rho \ P\left( | Y_\ve(t_N)| - y/2| > \rho \; \Big|  \ Y_\ve(t_{N-1}) = y, Y_\ve(T) = 0 \right) . 
\end{multline}
We have now that 
\begin{multline}  \label{DU4}
P\left( | Y_\ve(t_N) - y/2| > \rho \; \big|  \ Y_\ve(t_{N-1}) = y,  \ Y_\ve(T) = 0 \right) 
\\
= G(y,0,t_{N-1}, T)^{-1}\int^\infty_{-\infty} \; d\xi \; G( y, \xi, t_{N-1}, t_N) \  H( |\xi - y/2| - \rho)
 \ G(\xi,0, t_N, T ),  
\end{multline}
with $H(\cdot)$ being the Heaviside function.  Now, arguing in the same way as we did to obtain (\ref{CY4}) we conclude from (\ref{DU4}) that
\begin{multline} \label{DV4}
P\left( | Y_\ve(t_N) -y/2| > \rho \; \big|  \ Y_\ve(t_{N-1}) = y, \  Y_\ve(T) = 0 \right)  \le
\\
\exp \left[ - \frac{\rho^2}{2\ve T/2^N} + \frac{CAy^2} \ve \right], \quad {\rm if} \  \rho \ge K_0 \sqrt{\ve(T-t_N)}, 
\end{multline}
where $C$ and $K_0$ are universal constants.  Hence we have that
\begin{multline}  \label{DW4}
\int^\infty_{|y|/4} d\rho \ P\left( | Y_\ve(t_N) - y/2| > \rho \; \big|  \; Y_\ve(t_{N-1}) = y, \  Y_\ve(T) = 0 \right)
\\
\le \frac{\ve T}{|y|2^{N-2}} \exp \left[ - \frac{|y|^2\; 2^{N-4}}{2\ve T} + \frac{CAy^2} \ve \right], 
\end{multline}
provided the constant $K_0$ in (\ref{DV4}) satisfies $K \ge 4K_0$.  Now (\ref{DT4})  and (\ref{DW4})  imply that the expectation on the LHS of (\ref{DT4}) is bounded by $4|y|/5$.  Hence it follows from (\ref{DP4}), (\ref{DR4}), (\ref{DS4}), that we can take $C_{N-1} = 4C_{N}/5 + K_1$ for some universal constant $K_1$.  Thus in order to complete the proof of (\ref{DD4}) we need to show that $C_N$ satisfies $\lim_{N\ra \infty} 4^N \; C_N /5^N = 0$.  To do this we proceed as in Lemma 5.2 by proving that (\ref{DD4}) holds with a constant $C = C(AT)$ which diverges logarithmically in $AT$ as $AT \ra 0$.

We have already observed that (\ref{DD4}) holds for a universal constant $C$ if $|y| \le |\la|AT^2$ and for a constant $C$ depending only on $K$ if $|\la|AT^2 \le |y| \le K\sqrt{\ve T}$.   Hence we shall assume that $|y| \ge \max \left[| \la|AT^2 , K\sqrt{\ve T}\right]$.  Analogously to (\ref{DO4}) there is the inequality
\begin{multline}  \label{DX4}
G_\la(y,0,0,T)^{1-\al} \ge \\
\int^\infty_{-\infty} G_0(y,z,0,T-\De) 
	\exp \left[ - \frac{(1-\al)} {2\al\ve} \ A^2\la^2 T^3 \right] G_\la \left(z,0,T-\De, T\right)^{1-\al}\; dz.  
\end{multline}
We set $\al=|\la|AT^2 /2|y|$ in (\ref{DX4}), whence $0 < \al \le 1/2$ and the exponential on the RHS of (\ref{DX4}) can be absorbed into the RHS of (\ref{DD4}).  As in (\ref{DK4}) we shall obtain a perturbation expansion of $G_\la \left(z,0,T-\De, T\right)$ by considering the function,
\begin{eqnarray} \label{DY4}
u_\la(z,t) &=& G_\la (z + \vp_\la(t), 0, t, T ), \quad {\rm where} \\
\phi '_\la(t) &=& b_\la(\vp_\la(t), t ),  \quad t < T, \ \vp_\la(T) = 0. \nn
\end{eqnarray}
Then $u_\la(z,t)$ is a solution to the terminal value problem (\ref{DE4}) but now with drift $\tilde b_\la(z,t)$ given by
\be \label{DZ4}
\tilde b_\la(z,t) = b_\la(z + \vp_\la(t), t) - b_\la(\vp_\la(t), t).
\ee
Since $|\tilde b_\la(z,t)| \le A|z|, \  z \in \R$, we may expand the solution of the Dirichlet problem (\ref{DE4}) by perturbation theory on the intervals $|z| < \eta, \ T-\De < t < T$, provided $\eta, \De$ satisfy
\be \label{EA4}
\eta = K_1\sqrt{\ve \De}, \ \ (A\eta)^2\De = \nu \ve,
\ee
where $K_1 > > 1$ and $\nu < < 1$.  Thus if $u_{\la,D}(z,t)$ denotes the solution to this Dirichlet problem we have as in (\ref{BO4}) the inequality
\begin{multline} \label{EB4}
u_{\la,D}(z, T-\De) \ge \frac 1{\sqrt{2\pi\ve\De}} \bigg[ \exp \left\{ \frac{-z^2}{2\ve\De} \right\} - \\
C_3 \exp \left[- \frac{K^2_1}{4} \right] - C_4(\rho) \nu^{1/2} \exp \left\{ - \frac{z^2}{2\ve(1+\rho)\De} \right\} \bigg], \quad  |z|<\eta, 
\end{multline}
where $C_3$ is a universal constant, $\rho > 0$ can be arbitrary and $C_4(\rho)$ is a constant depending only on $\rho$.  We choose now $\nu, K_1, \De/T$ by
\begin{eqnarray} \label{EC4}
K_1 &=& (AT)^{-k_1} \exp [C_1Ay^2/\ve], \quad  \nu^{1/2} = (AT)^{k_2} \exp [-C_2Ay^2/\ve], \\
\De/T &=& (AT)^{k_1 + k_2 - 1} \exp \left[ -(C_1 + C_2) Ay^2/\ve \right] , \nn
\end{eqnarray}
where $k_1, k_2, C_1, C_2 > 0$ and $k_1 + k_2 > 1$.  Evidently the choice of $K_1, \nu, \De$ in (\ref{EC4}) is consistent with (\ref{EA4}).

To estimate from below the LHS of (\ref{DD4}) we use the inequality derived from (\ref{DX4}), (\ref{DY4}),
\begin{multline}  \label{ED4}
G_\la(y, 0, 0, T)^{1-\al} \ge \exp \left[ - \frac{(1-\al)|\la| AT|y|}{\ve} \right] 
\\
\int^{\eta + \vp_\la(T-\De)}_{-\eta + \vp_\la(T-\De)} G_0(y, z,0, T-\De) \ u_{\la,D}\big(z-\vp_\la(T-\De), \  T-\De\big)^{1-\al}\; dz.  
\end{multline}
If we substitute now the RHS of (\ref{EB4}) into the RHS of (\ref{ED4}) we obtain an integral which we would like to show is comparable to $G_0(y, 0, 0, T)^{1-\al} $.  To do this we write
\be \label{EE4}
G_0(y,0,0,T) = \int^\infty_{-\infty} G_0(y, z,0, T-\De)  \  G_0(z, 0, T-\De, T)  \ dz,
\ee
and use perturbation analysis to show that $G_0(z,0,T-\De, T)$ is comparable to the RHS of (\ref{EB4}).  Using the upper bound (\ref{CA4}) on $G_0(z,0,T-\De, T)$ in (\ref{EE4}) we obtain an upper bound on $G_0(y,0,0, T)$ which has the same form as the integral on the RHS of (\ref{ED4}).

We compare the principal terms of these integrals.  Thus for the integral on the RHS of (\ref{ED4}) the principal term is 
\be \label{EF4}
(2\pi\ve\De)^{\al/2} \int^{\eta + \vp_\la(T-\De)}_{-\eta + \vp_\la(T-\De)} G_0(y, z,0, T-\De) \frac{1}{\sqrt{2\pi\ve\De}}  \exp \left[ -(1-\al)  \frac{\{z - \vp_\la(T-\De)\}^2} {2\ve\De} \right] \ dz.
\ee
For the integral on the RHS of (\ref{EE4}) the principal term is
\be \label{EG4}
\int^\infty_{-\infty} G_0(y, z,0, T-\De) \ \frac 1{\sqrt{2\pi\ve\De}} \exp \left[ \frac{-z^2}{2\ve\De} \right] dz.
\ee
Observe now that we may assume $|\vp_\la(T-\De)| \le \eta/2$.  To see this we first note from (\ref{DY4}) that $|\vp_\la(T-\De)| \le CA|\la|T\De$ for some universal constant $C$, whence it follows that $|\vp_\la(T-\De)| \le C|y|\De/T$.  Thus from (\ref{EA4}) the inequality will follow if we can show that $2C|y| \le K_1(T/\De)^{1/2}\; \sqrt{\ve T}$, which is equivalent to showing that $4C^2|y|^2/\ve T \le K^2_1(T/\De)$.  Choosing $K_1, \; T/\De$ as in (\ref{EC4}) we see that this inequality holds provided $3k_1 + k_2 > 2$ and $AT$ is sufficiently small, depending only on $C_1,C_2$.  Similarly we have that
\begin{multline} \label{EH4}
\frac{\vp_\la(T-\De)^2}{2\ve \De} + \frac{\eta|\vp_\la(T-\De)|}{\ve \De} \le \frac{C^2A^2|\la|^2\;T^2\De}{2\ve}  + \frac{C\eta\;A|\la|T}{\ve } \le \\
 \frac{C^2A^2|\la|^2\;T^3}{2\ve} + \frac{CK_1|y|}{\sqrt{\ve T}} \; \left( \frac \De T \right)^{1/2} 
\le \frac{C^2A^2|\la|^2\;T^3}{2\ve}  + C'AT, 
\end{multline}
provided the constants in (\ref{EC4}) satisfy $k_2 > k_1 + 5, \ C_2 > C_1$.  In that case the constant $C'$ in (\ref{EH4}) depends only on $k_1,k_2,C_1,C_2$.  We conclude then that the expression in (\ref{EF4}) is bounded below by
\be \label{EI4}
(2\pi\ve \De)^{\al/2} \exp \left[ - \frac{C|\la|^2\;A^2\;T^3}{\ve}  - CAT \right] \int^{\eta/2}_{-\eta/2} G_0(y,z,0,T-\De)
\frac 1{\sqrt{2\pi\ve\De}} \exp \left[ - \frac{z^2}{2\ve\De} \right] dz,
\ee
for a constant $C$ depending only on the constants in (\ref{EC4})

Next we bound the integral in (\ref{EI4}) from below by a constant times the integral in (\ref{EG4}).  To show this we use Lemma 5.3.  Thus the Green's function $G_0(y,z,0,t)$ is bounded above and below by the inequalities
\begin{eqnarray} \label{EJ4}
G_0(y,z,0,t) &\le& \frac 1{\sqrt{2\pi\ve t}} \exp\left[ -\frac{(y-z)^2}{2\ve t} + \frac {CA}\ve \; (y^2+ z^2) + CAt\right], \\
G_0(y,z,0,t) &\ge& \frac 1{\sqrt{2\pi\ve t}} \exp\left[ - \frac{(y-z)^2}{2\ve t} - \frac {CA}\ve \; (y^2+ z^2) - CAt\right]. \nn
\end{eqnarray}
Substituting the lower bound of (\ref{EJ4}) into (\ref{EG4}) we conclude that
\begin{multline} \label{EK4}
\int^\infty_{-\infty} G_0(y,z,0,T-\De) \ \frac 1{\sqrt{2\pi\ve\De}} \exp \left[  -\frac{z^2}{2\ve\De} \right] dz \ge
\\
 \frac 1{\sqrt{2\pi\ve T}} \exp \left[ - \frac{y^2}{2\ve T} - \frac{CA} \ve \ y^2 - CAT \right],  
 \end{multline}
for some universal constant $C$.  Using the upper bound in (\ref{EJ4}) we also have that 
\begin{multline} \label{EL4}
\int_{|z| > \eta/2} G_0(y,z,0,T-\De) \ \frac 1{\sqrt{2\pi\ve\De}} \exp \left[-  \frac{z^2}{2\ve\De} \right] dz \\
\le \exp\left[ -\frac{\eta^2}{16\ve\De} \right] \int^\infty_{-\infty} G_0(y,z,0,T-\De) \ \frac 1{\sqrt{2\pi\ve\De}} \exp \left[ - \frac{z^2}{4\ve\De} \right] dz  \\
 \le  \frac{1}{\sqrt{2\pi\ve T}}   \exp\left[ -\frac{\eta^2}{16\ve \De} - \frac{y^2}{2\ve T} + \frac{CA}{\ve} \; y^2 + CAT + \frac 1 2 \; \log 2 \right], 
\end{multline}
for some universal constant $C$.  Observe that in (\ref{EL4}) we are assuming that the constants in (\ref{EC4}) satisfy $k_1 + k_2 > 2$ so that $\De/T \le AT$.  Now taking $\eta$ to be given by (\ref{EA4}), (\ref{EC4}), we conclude  from (\ref{EK4}), (\ref{EL4}) that
\begin{multline} \label{EM4}
 \int_{|z| > \eta/2} G_0(y,z,0,T-\De) \ \frac 1{\sqrt{2\pi\ve\De}} \exp \left[ - \frac{z^2}{2\ve\De} \right] dz \\
\le \exp\left[- \frac{1}{32(AT)^{2k_1}} \right] \int^\infty_{-\infty} G_0(y,z,0,T-\De) \ \frac 1{\sqrt{2\pi\ve\De}} \exp \left[  -\frac{z^2}{2\ve\De} \right] dz. 
\end{multline}
It follows that (\ref{EF4}) is bounded below by
\begin{multline} \label{EN4}
(2\pi\ve\De)^{\al/2} \exp \left[ - \frac{C^|\la|^2\;A^2T^3}{\ve} - CAT \right] 
\\
 \left\{ 1- \exp\left[- \frac{1}{32(AT)^{2k_1}} \right] \right\} \int^\infty_{-\infty} G_0(y,z,0,T-\De) \ \frac 1{\sqrt{2\pi\ve\De}} \exp \left[  - \frac{z^2}{2\ve\De} \right] dz.  
 \end{multline}
The integral in (\ref{EN4}) is the principle term in the expression (\ref{EE4}) for $G_0(y,0,0,T)$.  Assuming then that we can replace the integral by $G_0(y,0,0,T)$ and that we take into account only the principal term for $u_{\la,D}$ in (\ref{ED4}), we have from (\ref{EN4}) that the inequality
\begin{multline}  \label{EO4}
G_\la(y,0,0,T)^{1-\al} \ge \left[ (2\pi \ve \De)^{1/2} \ G_0(y,0,0,T) \right]^{\al} 
\\
\exp \left[ - \frac{C(1-\al)|\la|AT}{\ve} \left\{ |y| + \la AT^2 \right\} - C(1-\al)AT \right] G_0(y,0,0,T)^{1-\al}  
\end{multline}
holds for some universal constant $C$.  By Lemma 5.2 we have that
\be \label{EP4}
 \left[ (2\pi \ve \De)^{1/2} \ G_0(y,0,0,T) \right]^{\al} \ge \exp\left[ - \frac \al 2 \log (T/\De) - \frac{\al y^2} {2\ve T} \ (1 + CAT) - \al CAT \right] 
\ee
for some universal constant $C$.  Taking $\al$ as before to be given by $\al = |\la|AT^2/2|y| \le 1/2$ and using the fact that $|y| \ge K\sqrt{\ve T}$ we see from (\ref{EC4}), (\ref{EP4})  that
\be \label{EQ4}
\left[ (2\pi \ve \De)^{1/2} \ G_0(y,0,0,T) \right]^{\al} \ge \exp \left[ -(1-\al) CAT \left\{ \frac{|\la y|}\ve |\log(AT)| + 1 \right\}\right],
\ee
where the constant $C$ depends only on the constants $C_1, C_2, k_1, k_2$ of (\ref{EC4}) and also $K$.  Combining then (\ref{EO4}), (\ref{EQ4}) we have obtained a lower bound of the form (\ref{DD4}) with a constant $C=C(AT) = C'|\log (AT)|$.

To complete the proof of (\ref{DD4}) with a constant  $C=C(AT) = C'|\log (AT)|$ we need to estimate the effect of the error terms in  (\ref{CA4}),  (\ref{EB4}).  From (\ref{CA4}) the main error term in  (\ref{EE4}) is given by
\be \label{ER4}
\int^\infty_{-\infty} G_0(y,z,0,T-\De) \  C_2(\rho)\nu^{1/2} \; \frac 1{\sqrt{2\pi\ve\De}} \; \exp \left[ - \frac{z^2}{2\ve(1+\rho)\De}\right] dz,
\ee
where $\nu$ is given by (\ref{EC4}).  It is evident that by choosing $k_2, C_2$ sufficiently large in a universal way in  (\ref{EC4}) that the integral of  (\ref{ER4}) is bounded above by $AT$ times the integral on the LHS of (\ref{EK4}).  We can similarly estimate the error terms in (\ref{ED4})  of $u_{\la,D}$.  If we use the inequality (\ref{BQQ4}) then from  (\ref{EB4}) we obtain a term like  (\ref{ER4}).   Hence (\ref{DD4}) with a constant $C=C(AT) = C'|\log (AT)|$ holds.  By previous argument it follows then that (\ref{DD4}) holds with some universal constant $C$ provided $AT \le \eta$, where $\eta$ may also be chosen in a universal way.

The completion of the proof of the upper bound (\ref{DC4}) can be carried out in a similar way to the method we used to prove the lower bound. 
\end{proof}
\begin{lem}  Suppose $b(\cd, \cd)$ satisfies (\ref{A1}) and $b(0,\cdot)\equiv 0$.  If $G$ is the Green's function defined by (\ref{D1}), then there are universal constants $\eta, C > 0$ such that $G$ satisfies the inequalities,
\be \label{ES4}
\frac{G(y,\xi,0,T)}{G(y,0,0,T)} \le \exp \left[ -\frac{\xi^2}{2\ve T}(1 - CAT) + \frac{\xi y}{\ve T} \left[1 + CAT \ {\rm sgn}(\xi y)\right] + CAT \right],
\ee
\be \label{ET4}
\frac{G(y,\xi,0,T)}{G(y,0,0,T)} \ge \exp \left[- \frac{\xi^2}{2\ve T}(1 + CAT) + \frac{\xi y}{\ve T} \left[1 - CAT \ {\rm sgn}(\xi y)\right] - CAT \right],
\ee
for all $y,\xi \in \R$, provided $AT \le \eta$.
\end{lem}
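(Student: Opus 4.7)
The plan is to proceed via a Cameron-Martin change of measure that isolates the leading Gaussian factor $\exp[\xi y/(\ve T) - \xi^2/(2\ve T)]$, then apply Lemma 5.6 and the bridge estimates of Lemma 5.4 and Corollary 5.3 to control the remaining corrections.

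Setting $\la = \xi/T$, I would first make the change of variable $Z(s) = Y(s) - \la s$, where $Y$ has drift $b$ and $Y(0) = y$. Then $Z(0) = y$ and $Z(T) = Y(T) - \xi$, so the density of $Y(T)$ at $\xi$ equals that of $Z(T)$ at $0$; the process $Z$ satisfies $dZ = [b_\la(Z,s) - \la]\,ds + \sqrt{\ve}\,dW$ with $b_\la(z,s) = b(z+\la s,s)$. Girsanov's theorem then converts this to the drift $b_\la$ of Lemma 5.6: using the Radon-Nikodym derivative together with the identity $\sqrt{\ve}[W(T)-W(0)] = -y - \int_0^T b_\la(Z,s)\,ds$ on the event $\{Z(T)=0\}$, one obtains
\begin{equation*}
G(y,\xi,0,T) = \exp\!\left[\frac{\xi y}{\ve T} - \frac{\xi^2}{2\ve T}\right] G_\la(y,0,0,T)\, E^{\mathrm{br}}_\la\!\left[\exp\!\left(\frac{\la}{\ve}\int_0^T b_\la(Z,s)\,ds\right)\right],
\end{equation*}
where $E^{\mathrm{br}}_\la$ denotes expectation under the drift-$b_\la$ bridge from $y$ to $0$ on $[0,T]$. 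Dividing by $G(y,0,0,T) = G_0(y,0,0,T)$ exhibits the leading factor explicitly and reduces the problem to bounding the two remaining correction factors.

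Lemma 5.6 applied with $|\la| = |\xi|/T$ gives bounds on $G_\la/G_0$ of the form $\exp[\pm C\{AT|y\xi|/(\ve T) + (AT)^2\xi^2/(\ve T) + AT\}]$, which fits inside the $\xi^2(1 \pm CAT)$ and $y\xi[1 \pm CAT\,\mathrm{sgn}(y\xi)]$ tolerances of (\ref{ES4}), (\ref{ET4}). For the bridge expectation, the drift $b_\la$ is $A$-Lipschitz and satisfies $\int_0^T|b_\la(0,s)|\,ds \le AT|\xi|/2$, so whenever $|y| + \sqrt{\ve T}$ exceeds a universal multiple of $AT|\xi|$, Corollary 5.3 yields the concentration $\sup_{0\le s\le T}|Z(s)| \le C(|y|+\sqrt{\ve T})$ at the Gaussian rate of Lemma 5.4. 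Using $|b_\la(Z,s)| \le A(|Z(s)|+|\la|T)$ together with the Schwarz inequality to absorb $|\xi|\sqrt{\ve T}/(\ve T)$ into $CAT + CAT\xi^2/(\ve T)$, integration of the tail estimate yields
\begin{equation*}
\log E^{\mathrm{br}}_\la\!\left[\exp\!\left(\frac{\la}{\ve}\int_0^T b_\la(Z,s)\,ds\right)\right] \le C\left[\frac{AT|y\xi|}{\ve T} + \frac{AT\xi^2}{\ve T} + AT\right],
\end{equation*}
with the corresponding lower direction following from Jensen's inequality applied to $\exp(\la F/\ve)$. Combining the three factors produces (\ref{ES4}); taking the complementary inequalities throughout gives (\ref{ET4}).

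The main obstacle is the bridge expectation estimate in the regime where $AT|\xi|$ is large compared to $|y| + \sqrt{\ve T}$, since Corollary 5.3 does not apply there. One would have to mimic the opening of the proof of Lemma 5.6: the analogous obstruction for small $|y|$ was resolved there by a direct perturbative expansion of the Green's function via (\ref{DY4})--(\ref{EA4}), and the same perturbative analysis, now applied to the Green's function for the drift $b_\la - \la$, furnishes a direct proof of Lemma 5.5 in this regime. The sign refinement in the coefficient of $y\xi$ is then automatic, since the additive correction $\pm CAT|y\xi|/(\ve T)$ combines with the leading $y\xi/(\ve T)$ as $(y\xi/(\ve T))[1 \pm CAT\,\mathrm{sgn}(y\xi)]$, matching the stated form.
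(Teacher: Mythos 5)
Your core identity -- expressing $G(y,\xi,0,T)$ as $\exp[\xi y/(\ve T) - \xi^2/(2\ve T)]$ times $G_\la(y,0,0,T)$ times a bridge expectation of $\exp[(\la/\ve)\int_0^T b_\la\,ds]$ with $\la=\xi/T$ -- is exactly the paper's (\ref{EU4})--(\ref{EW4}), merely phrased via Girsanov rather than the PDE transform, and your treatment of the three factors (Lemma 5.5 for $G_\la/G_0$, Corollary 5.3 and Lemma 5.4 with the moment generating function identity for the bridge term, Jensen for the lower direction) matches the paper. Two of your references to ``Lemma 5.6'' should read ``Lemma 5.5'', since Lemma 5.6 is the statement being proved.

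The genuine gap is that you identify the regime $AT|\xi|$ large compared with $|y|+\sqrt{\ve T}$ as an obstacle to using Corollary 5.3, and propose to patch it by repeating the perturbative Green's-function expansion of Lemma 5.5. This is unnecessarily heavy, and you never actually carry it out. The paper disposes of the issue in the first line of the proof with the reduction: ``The result follows from Lemma 5.3 if $|y|\le|\xi|$, so we shall assume $|\xi|\le|y|$.'' Once $|\xi|\le|y|$ one has $\int_0^T|b_\la(0,s)|\,ds\le AT|\xi|/2\le|\xi|\le|y|\le|y|+\sqrt{\ve T}$ for $AT\le 2$, so the hypothesis (\ref{DCC4}) of Corollary 5.3 holds automatically and the problematic regime never arises. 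In the complementary regime $|y|\le|\xi|$ one simply compares the two-sided Lemma 5.3 bounds on $G(y,\xi,0,T)$ against the Lemma 5.2 bounds on $G(y,0,0,T)$: expanding $\{y+\int_0^Tb(\xi,s)\,ds-\xi\}^2-y^2$ and using $|\int_0^Tb(\xi,s)\,ds|\le AT|\xi|$ together with $|y|\le|\xi|$ shows all error terms are of order $AT\xi^2/(\ve T)$, $AT|y\xi|/(\ve T)$ or $AT$, so (\ref{ES4}), (\ref{ET4}) follow directly without any bridge argument. You should replace the proposed perturbative detour with this case split; the rest of your outline then closes correctly.
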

\begin{proof} The result follows from Lemma 5.3 if $|y| \le |\xi|$, so we shall assume that $|\xi| \le |y|$.  Letting $w(z,t) = G(z,\xi, t, T), \  t < T$, it follows from (\ref{B1}) that the function $w_\la(z,t) $ defined by
\be \label {EU4}
w_\la(z,t) = \exp \left[ - \frac{\la z}{\ve} + \frac{\la^2}{2\ve} \; (T-t) \right] w(z + \la t, t)
\ee
is the solution to the terminal value problem
\begin{eqnarray} \label{EV4}
0&=& \frac{\pa w_\la}{\pa t} + b(z + \la t, t) \; \frac{\pa w_\la}{\pa z} + \frac \ve 2 \; \frac{\pa^2 w_\la}{\pa z^2} + \frac \la \ve \; b(z + \la t, t)w_\la, \\
\lim_{t \ra T} w_\la (z,t) &=& \exp \left[ - \frac{\la z}{\ve}\right] \del (z + \la T - \xi). \nn
\end{eqnarray}
Taking $\la = \xi/T$ in (\ref{EV4}) we see from Lemma 5.5 that
\be  \label{EW4}
w_\la(y,0) = G_\la(y,0,0,T) E \left[ \exp \left\{ \frac \la \ve \; \int^T_0 
b( Y_{\ve,\la}(s) + \la s, s) \ ds \right\}  \  \Big| \  Y_{\ve,\la}(0) = y, \ Y_{\ve,\la}(T) = 0 \right],  
\ee
where $Y_{\ve,\la}(\cdot)$ is the solution to (\ref{D1}) with the drift $b_\la$ of Lemma 5.5 in place of $b$.  Since
\[	\int^T_0 |b_\la(0,s)| \ ds \le AT^2|\la|/2 \le |\xi| \le |y|   \]
if $AT \le 2$, we can use Corollary 5.3 to estimate the expectation in (\ref{EW4}).  To see this first observe that the expectation is bounded above by
\be \label{EX4}
\exp \left[ \frac{\la^2 AT^2}{2\ve} \right] E\left[ \exp \left\{ \frac{AT|\la|}\ve \; \sup_{0 \le s \le T}  |Y_{\ve,\la}(s)| \right\} \; \Big| \; Y_{\ve,\la}(0) = y, \; Y_{\ve,\la}(T) = 0 \right].
\ee
To bound the expectation in (\ref{EX4}) we use the identity,
\be \label{EY4}
E\Big [e^X\Big] = \ \ 1 + \int^1_0 \ E\Big[Xe^{kX}\Big] dk 
\ee
for any random variable $X$. 
From (\ref{CQ4}) we have that
\begin{multline}  \label{EZ4}
E \left[  \sup_{0 \le s \le T} |Y_{\ve,\la}(s)| \exp \left\{ r  \sup_{0 \le s \le T}|Y_{\ve,\la}(s)| \right\} \Big| 
\; Y_{\ve,\la}(0) = y, \; Y_{\ve,\la}(T) = 0 \right]    
\\
\le C_2 \left[ |y| + \sqrt{\ve T} \right] \ \exp \left\{ r C_2 \left[ |y| + \sqrt{\ve T} \right] \right\} \\
\sum^\infty_{n=0} (n+1) \exp\left\{nr C_2 \left[ |y| + \sqrt{\ve T} \right] - C_1C^2_2n^2 \left[ |y| + \sqrt{\ve T} \right]^2/2\ve T \right\} , 
\end{multline}
for any $r \ge 0$.  Assuming $r \le AT |\la|/\ve$ and using the fact that $|\la| \le |y| /T$, we see that there is an integer $n_0 \ge 1$, depending only on $AT$ and $C_1,C_2$, such that $2r \le C_1C_2 n_0[|y| + \sqrt{\ve T} ]/2\ve T$.  Hence (\ref{EZ4}) implies that there is a constant $C$ depending only on $AT$ such that
\begin{multline}  \label{FA4}
E \left[  \sup_{0 \le s \le T} |Y_{\ve,\la}(s)| \exp \left\{ r  \sup_{0 \le s \le T} |Y_{\ve,\la}(s)| \right\} \Big| 
\; Y_{\ve,\la}(0) = y, \; Y_{\ve,\la}(T) = 0 \right] 
\\
\le \ C \left[ |y| + \sqrt{\ve T}  \ \right] \ \exp \left\{  Cr \left[ |y| + \sqrt{\ve T}  \ \right] \right\}, \quad  0 \le r \le AT |\la|/\ve\; . 
\end{multline}
It follow now from (\ref{FA4}), on using the inequality $2\sqrt{\ve T} \le \ve/|\la| + |\la|T$, that there is a constant $C$ depending only on $AT$ such that
\begin{multline} \label{FB4}
E \left[ \exp \left\{ \frac \la \ve \ \int^T_0 \; b \left( Y_{\ve,\la}(s) + \la s, s \right)ds \right\} \; \Big| \; Y_{\ve,\la}(0) = y, Y_{\ve,\la} (T) = 0 \right]
\\
\le \exp \left[ \frac{C|\la|AT}\ve \Big\{ |y| + |\la|T \Big\} + CAT \right].	
\end{multline}
Substituting (\ref{FB4}) into (\ref{EW4}) and using inequality (\ref{DC4}) of Lemma 5.5, we conclude that upper bound (\ref{ES4}) holds.  The lower bound (\ref{ET4}) can be established by a similar argument. \ \ \end{proof}

\begin{proof}[Proof of Theorem 1.2]  Since in Corollary 5.1 we already proved the result for $\del \sim T/2$ we shall be concerned here with the situation where $\del/T < < 1$.  We have now with $y < 0$, the identity
\begin{multline}  \label{FC4}
P \left( Y_\ve(T-\del) < {C_3\del y}/{T} \; \Big| \; Y_\ve(0) = y, \ Y_\ve(T) = 0 \right) = 
\\
G(y,0,0,T)^{-1} \int^{C_3\del y/T}_{-\infty} \; d\xi  \ G(y, \xi, 0, T-\del) \  G(\xi, 0, T-\del, T). 
\end{multline}
There is also the identity,
\be \label{FD4}
G(y,0,0,T)  = \int^{\infty}_{-\infty} \; d\xi  \ G(y, \xi, 0, T-\del)  \ G(\xi, 0, T-\del, T) .
\ee
From Lemma 5.2 and Lemma 5.6 one obtains from (\ref{FD4}) the inequality, 
\begin{multline}  \label{FE4}
G(y, 0, 0, T) \ge \frac{G(y, 0, 0, T-\del)}{\sqrt{2\pi\ve \del}} \int^{\infty}_{-\infty} \ d\xi \exp \bigg[ \frac{-\xi^2}{2\ve(T-\del)}
\\
 \frac{-\xi^2}{2 \ve \del} + \frac{\xi y}{\ve(T-\del)} - \frac{CA\xi^2}{\ve} - \frac{CA|\xi y|}{\ve} - CAT \bigg]  
 \end{multline}
for some universal constant $C$.  Now let $X$ be the normal variable with mean $\del y/T$ and variance $\ve \del(T-\del)/T$.  Then (\ref{FE4}) is equivalent to
\be \label{FF4}
G(y, 0, 0, T) \ge G(y, 0, 0, T-\del) \left( \frac{T-\del}T \right)^{1/2} \exp \left[ \frac{\del y^2}{2\ve T(T-\del)} - CAT \right] E\left[ \exp \left\{ \frac{-CAX^2}{\ve} - \frac{CA|y| |X|}{\ve} \right\} \right] .  
\ee
Applying Jensen's inequality in (\ref{FF4}) and then the Schwarz inequality, we conclude that
\be \label{FG4}
G(y, 0, 0, T) \ge G(y, 0, 0, T-\del) \left( \frac{T-\del}T \right)^{1/2} \exp \left[ \frac{\del y^2}{2\ve T(T-\del)} - CAT- \frac{CA\del y^2}{\ve T}  - CA\left(\frac{\del}{\ve}\right)^{1/2} |y| \right] ,
\ee
for some universal constant $C$.  We similarly have from Lemma 5.2 and Lemma 5.6  that 
\begin{multline}  \label{FH4}
\int^{C_3\del y/T}_{-\infty}  d\xi \  G(y, \xi, 0, T-\del) \  G(\xi, 0, T-\del, T) \le \\
G(y, 0, 0, T-\del) \left( \frac{T-\del}T \right)^{1/2} \exp \bigg[ \frac{\del y^2}{2\ve T(T-\del)} + CAT \bigg] 
E\left[ \exp \left\{ \frac{CAX^2}\ve + \frac{CA|y| |X|}\ve \right\} \; ; \; X < \frac{C_3\del y}{T}\right],   
\end{multline}
for some universal constant $C$.  Assuming now that $C_3 > 1$, we have then
\begin{multline}  \label{FI4}
E\left[ \exp \left\{ \frac{CAX^2}\ve + \frac{CA|y| |X|}\ve \right\} \; ; \; X < \frac{C_3\del y}{T} \right] \le 
\sqrt{2} \exp \left[ - \frac{(C_3-1)^2\del y^2}{4\ve T(T-\del)} \right]
 \\
 \ E \left[ \exp \left\{ \frac{CAX_1^2}\ve + \frac{CA|y| X_1}\ve \right\}+
  \exp \left\{  \frac{CAX_1^2}\ve - \frac{CA|y| X_1}\ve \right\}  \right],
\end{multline}
where $X_1$ is the Gaussian variable with mean $\del y/T$ and variance $2\ve \del(T-\del)/T$.  The expectation on the RHS of (\ref{FI4}) can be explicitly computed.  Hence we conclude that
\begin{multline}  \label{FJ4}
E\left[ \exp \left\{ \frac{CAX^2}\ve +  \frac{CA|y||X|}\ve \right\} \; ; \; X < \frac{C_3\del y}T \right] \le 
\\
	2 \sqrt{2} \ \exp \left[ - \frac{(C_3-1)^2 \del y^2}{4\ve T(T-\del)} +  \frac{CA\del y^2}{\ve T} + CA\del \right], 
\end{multline}
for some universal constant $C$.  The first inequality of (\ref{AU4}) follows from (\ref{FC4}) -- (\ref{FJ4}) upon taking $C_3$ large enough and using the fact that $y < -T \sqrt{\ve/\del}$.

To prove the second inequality of (\ref{AU4}) we consider the identity,
\begin{multline} \label{FK4}
P \left( Y_\ve(T-\del) > \frac{C_4\del y}T \; \big| \; Y_\ve(0) = y, \; Y_\ve(T) = 0 \right) = 
\\
G(y,0,0,T)^{-1} \ \int^\infty_{ {C_4\del y}/T} \; d\xi \; G(y,\xi,0, T-\del)  \ G(\xi,0,T-\del,T).   
\end{multline}
We now choose $C_4$ to satisfy $0 < C_4 < 1$ and proceed as previously. 
\end{proof}

\section{Representation formula for the Stochastic Cost function}
Corollary 4.1 suggests that we may take the limit $\del \ra 0$ in (\ref{BH3}) by setting $\displaystyle{\lim_{\del\ra 0}}\; E[q_\ve(x, y_\ve(T-\del), T-\del)]=0$, but it does not prove it.  In fact Lemma 3.1 shows that $q_\ve(x, y_\ve (T-\del),T-\del)$ becomes arbitrarily large for $y$ close to $x$ with $y<x$ as $\del \ra 0$.  To deal with this problem we need to obtain a sharper lower bound on $-\pa q_\ve(x,y,t)/\pa y$ than in (\ref{M4}), in particular one that does not decay as $y \ra -\infty$.  In the linear approximation $b(y,s) = A(s)y$, one can express $-\pa q_\ve(0,y,0)/\pa y$ for $y < 0$ by the formula,
\be \label{AP4}
-\frac {\pa q_\ve(0,y,0)}{\pa y} = \frac{\ve\La (T)}{\sqrt{2\pi \ve \sig^2(T)}} \exp \left[ -\frac{\La(T)^2y^2}{2\ve \sig^2(T)} \right] \Big/ \Phi \left( \frac{\La(T)y}{\sqrt{ \ve \sig^2(T)}} \right),
\ee
where $\Phi$ is the cumulative distribution function for the standard normal variable and $\La(T), \ \sig^2(T)$ are given by (\ref{AQ4}).
Hence provided $AT < 1$ we see from (\ref{J4}) that
\be \label{AR4}
-\frac{\pa q_\ve}{\pa y} \; (0,y,0) \sim -\frac{\La(T)^2y}{\sig^2(T)}, \quad  y \big/ \sqrt{\ve T} << - 1.
\ee
Comparing (\ref{AR4}) and (\ref{M4}), we see that the exponential factor in (\ref{M4}) may be removable in the case of nonlinear $b(\cdot,\cdot)$. We prove this in the following:
\begin{lem}  Suppose $b(\cd, \cd)$ satisfies (\ref{A1}) and let $q_\ve(x,y,t), \; x,y \in \R, \; t < T$, be defined by (\ref{G1}).  Then there are universal constants $C,\eta > 0$ such that
\be \label{FL4}
-\frac{\pa q_\ve(x,y,t)}{\pa y} \ge \left[ \frac{F(x,t)-y}{T-t} \right] e^{-CA(T-t)},
\ee
provided $0 \le t < T$, $A(T-t) < \eta, \ y < F(x,t)$, where $F(x,t)$ is the function defined from (\ref{K1}).
\end{lem}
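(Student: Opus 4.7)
The plan is to reduce to the normalized case $b(0,\cd) \equiv 0$, $x = 0$ via a shift along the backward characteristic, and then combine the Hopf formula with the pointwise gradient bound (\ref{O4}) and the Green's function ratio estimate of Lemma 5.6.

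For the reduction, let $y_c(s)$ solve $y_c'(s) = b(y_c(s), s)$ with $y_c(T) = x$, so $y_c(t) = F(x, t)$. Setting $\hat u_\ve(\tilde y, t) = u_\ve(x, \tilde y + y_c(t), t)$ and $\hat q_\ve = -\ve \log \hat u_\ve$, a direct chain-rule computation shows that $\hat u_\ve$ satisfies (\ref{B1}) with modified drift $\tilde b(\tilde y, s) = b(\tilde y + y_c(s), s) - b(y_c(s), s)$, which enjoys $\tilde b(0, \cd) \equiv 0$, $|\pa \tilde b/\pa \tilde y| \le A$, and obeys the terminal condition (\ref{C1}) with $x$ replaced by $0$. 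Since $\pa q_\ve/\pa y(x, y, t) = \pa \hat q_\ve/\pa \tilde y(y - F(x, t), t)$, the desired bound (\ref{FL4}) is equivalent to the normalized statement
\[
-\frac{\pa \hat q_\ve}{\pa \tilde y}(\tilde y, t) \ge \frac{-\tilde y}{T-t}\, e^{-CA(T-t)}, \qquad \tilde y < 0.
\]

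To prove this, combine the Hopf identity $-\pa \hat q_\ve/\pa \tilde y = \ve (\pa \hat u_\ve/\pa \tilde y)/\hat u_\ve$, the inequality (\ref{O4}) applied to the shifted equation (which yields $\pa \hat u_\ve/\pa \tilde y \ge e^{-A(T-t)} \hat G(\tilde y, 0, t, T)$), and the representation $\hat u_\ve(\tilde y, t) = \int_0^\infty \hat G(\tilde y, \tilde y', t, T)\, d\tilde y'$ to obtain
\[
-\frac{\pa \hat q_\ve}{\pa \tilde y}(\tilde y, t) \ge \ve\, e^{-A(T-t)} \left(\int_0^\infty \frac{\hat G(\tilde y, \tilde y', t, T)}{\hat G(\tilde y, 0, t, T)}\, d\tilde y'\right)^{-1}.
\]
Because $\tilde b(0, \cd) \equiv 0$, Lemma 5.6 applies to $\hat G$ (with $T$ replaced by $T - t$); for $\tilde y < 0$ and $\tilde y' > 0$, so that $\operatorname{sgn}(\tilde y' \tilde y) = -1$, its upper-bound clause gives, after completing the square,
\[
\frac{\hat G(\tilde y, \tilde y', t, T)}{\hat G(\tilde y, 0, t, T)} \le \exp\!\left[-\frac{\alpha(\tilde y' + |\tilde y|)^2}{2\ve(T-t)} + \frac{\alpha \tilde y^2}{2\ve(T-t)} + CA(T-t)\right], \quad \alpha := 1 - CA(T-t).
\]
Integrating over $\tilde y' > 0$ and invoking the Gaussian tail estimate $\int_a^\infty e^{-u^2/2}\, du \le e^{-a^2/2}/a$ (from (\ref{J4})) with $a = |\tilde y|\sqrt{\alpha/(\ve(T-t))}$ yields
\[
\int_0^\infty \frac{\hat G(\tilde y, \tilde y', t, T)}{\hat G(\tilde y, 0, t, T)}\, d\tilde y' \le \frac{\ve(T-t)}{\alpha\, |\tilde y|}\, e^{CA(T-t)}.
\]
Substituting back produces $-\pa \hat q_\ve/\pa \tilde y \ge (\alpha |\tilde y|/(T-t))\, e^{-(C+1)A(T-t)}$; for $A(T-t) < \eta$ sufficiently small this is at least $(-\tilde y)/(T-t)\, e^{-C'A(T-t)}$ for a suitable universal $C'$, and undoing the shift returns (\ref{FL4}).

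The only point to be careful about is that the tail bound $e^{a^2/2}\int_a^\infty e^{-u^2/2}\,du \le 1/a$ is valid for every $a > 0$, so no case split between small and large $|\tilde y|$ is required: when $|\tilde y|$ is small the resulting lower bound on $-\pa q_\ve/\pa y$ is weak, but in that regime the target $|\tilde y|/(T-t)$ is itself correspondingly small. Essentially all of the real work has already been done in Section 5; the present lemma amounts to the translation of the Green's function ratio bound of Lemma 5.6 into a bound on the gradient of the cost function, with the characteristic shift used to manufacture the hypothesis $b(0,\cd) \equiv 0$ required by that lemma.
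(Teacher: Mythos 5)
Your proof is correct and follows essentially the same route as the paper's: reduce to the normalized case $b(0,\cd)\equiv 0$ by shifting along the backward characteristic (as the paper does via (\ref{DY4}), (\ref{DZ4})), convert the gradient bound to a Green's function ratio via (\ref{O4}), and then apply Lemma 5.6. The only cosmetic difference is in the integration step: the paper simply discards the favorable quadratic term $-\tilde\xi^2(1-CA(T-t))/2\ve(T-t)$ and integrates the remaining exponential directly, whereas you keep it, complete the square, and invoke the Gaussian tail bound from (\ref{J4}); both routines give the same estimate with a universal constant.
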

\begin{proof} From (\ref{O4}) we see that
\be \label{FM4}
-\frac{\pa q_\ve(x,y,t)}{\pa y} \ge e^{-A(T-t)} \frac{\ve \; G(y,x,t,T)}{\int^\infty_x\; G(y,z,t,T) \ dz} \ .
\ee
Let us assume first that $b(x,s) = 0, \ 0 \le s \le T$, whence $F(x,t) = x, \ 0 \le t < T$.  From Lemma 5.6 we see that provided $A(T-t) < \eta$ and $\eta$ is chosen sufficiently small,
\be \label{FN4}
\frac{\int^\infty_x G(y,z,t,T)dz}{\ve G(y,x,t,T)} \le \frac 1 \ve \ \int^\infty_0 d\xi \exp \left[ -\frac{\xi(x-y)}{ \ve(T-t)\{1 + CA(T-t)\}}\right] 
 = \frac{(T-t)\{1+CA(T-t)\}}{x-y} \ 
\ee
where $C$ is a universal constant.  The inequality (\ref{FL4}) follows now from (\ref{FM4}), (\ref{FN4}).  To deal with the more general case we make the change of variable as in (\ref{DY4}), (\ref{DZ4}), and proceed as above.
\end{proof}
\begin{theorem} Suppose $b(\cd, \cd)$ satisfies (\ref{A1}) and  $q_\ve(x,y,t), \; x,y \in \R, \; t < T$, is defined by (\ref{G1}).  If $\la_\ve(\cd, \cd)$ is the optimal controller defined by (\ref{P1})  then for $0 \le t < T$, $x,y \in \R$, the functions $q_\ve(x,y,t)$, $\pa q_\ve(x,y,t)/\pa x$ and $\pa q_\ve(x,y,t)/\pa y$ have the representations,
\be \label{FO4}
q_\ve(x,y,t) = E \left\{ \frac 1 2 \; \int^T_t \left[ \la_\ve(y_\ve(s), s) - b(y_\ve(s), s)\right]^2\; ds \  \big|  \ y_\ve(t) = y \right\}
\ee
\begin{multline} \label{FP4}
\frac{ \pa q_\ve(x,y,t)}{\pa y} = \\
-\frac{1}{T-t} \; E \left\{  \int^T_t \left[ 1 + (T-s) \frac{\pa b}{\pa y} (y_\ve(s),s) \right] 
\left[ \la_\ve(y_\ve(s), s) - b(y_\ve(s), s)\right] ds \  \big|  \ y_\ve(t) = y \right\},
\end{multline}
\begin{multline} \label{FQ4}
\frac{ \pa q_\ve(x,y,t)}{\pa x} = \\
\frac{1}{T-t} \;E \left\{  \int^T_t \left[ 1 - (s-t) \frac{\pa b}{\pa y} (y_\ve(s),s) \right] 
\left[ \la_\ve(y_\ve(s), s) - b(y_\ve(s), s)\right] ds \  \big|  \ y_\ve(t) = y \right\},
\end{multline}
where $y_\ve(s), \; t \le s < T$, is the solution to the SDE (\ref{N1}) with initial condition $y_\ve(t) = y$.
\end{theorem}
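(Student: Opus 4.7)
The plan is to apply It\^o's formula along the optimally controlled diffusion $y_\ve(s)$ to three related functions: $q_\ve$ itself, the time-weighted derivative $h(y,s)=(T-s)\pa q_\ve/\pa y$, and the sum $j(y,s)=\pa q_\ve/\pa y+\pa q_\ve/\pa x$. The key input is that the PDE (\ref{H1}), combined with the feedback identity $\la_\ve=b-\pa q_\ve/\pa y$, produces \emph{exact} deterministic evolutions for each of these (unlike the inequality in Lemma 3.2), so that after taking expectations only a boundary contribution at $s=T-\del$ survives. Passing $\del\to 0$ then yields the three formulas.

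For (\ref{FO4}), substituting (\ref{H1}) into the It\^o expansion of $q_\ve(x,y_\ve(s),s)$ gives
\begin{equation*}
d\,q_\ve(x,y_\ve(s),s)=-\tfrac12[\la_\ve-b]^2\,ds+\sqrt{\ve}\,(\pa q_\ve/\pa y)\,dW(s),
\end{equation*}
so integration from $t$ to $T-\del$ followed by conditional expectation yields
\begin{equation*}
q_\ve(x,y,t)=E\Big\{\tfrac12\int_t^{T-\del}[\la_\ve-b]^2\,ds+q_\ve(x,y_\ve(T-\del),T-\del)\;\Big|\;y_\ve(t)=y\Big\}.
\end{equation*}
Corollary 4.1 gives $q_\ve(x,y_\ve(T-\del),T-\del)\to 0$ almost surely; to upgrade this to $L^1$ convergence I would combine the sharp drift lower bound of Lemma 6.1 with Theorem 1.2 to show that the law of $y_\ve(T-\del)$ concentrates within $O(\sqrt{\ve\del})$ of $x$, and then use the explicit pointwise bound from Lemma 4.1 to construct a $\del$-uniform dominating envelope.

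For (\ref{FP4}), differentiating (\ref{H1}) in $y$ and using $\la_\ve=b-\pa q_\ve/\pa y$ shows that $h$ satisfies
\begin{equation*}
\pa_s h+\la_\ve\pa_y h+\tfrac{\ve}{2}\pa_{yy}h=[1+(T-s)\pa b/\pa y][\la_\ve-b],
\end{equation*}
and It\^o plus expectation produces
\begin{equation*}
(T-t)\frac{\pa q_\ve}{\pa y}(x,y,t)=\del\,E\Big[\frac{\pa q_\ve}{\pa y}(x,y_\ve(T-\del),T-\del)\Big]-E\!\int_t^{T-\del}[1+(T-s)\tfrac{\pa b}{\pa y}][\la_\ve-b]\,ds.
\end{equation*}
The boundary term vanishes as $\del\to 0$ because the heat-kernel estimates of Lemma 3.4 together with the concentration of $y_\ve(T-\del)$ near $x$ give $|\pa q_\ve/\pa y|=O(\sqrt{\ve/\del})$, hence $\del\cdot O(\sqrt{\ve/\del})=O(\sqrt{\ve\del})\to 0$. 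For (\ref{FQ4}), differentiating (\ref{H1}) in $x$ yields $\pa_s(\pa q_\ve/\pa x)+\la_\ve\pa_y(\pa q_\ve/\pa x)+\tfrac{\ve}{2}\pa_{yy}(\pa q_\ve/\pa x)=0$, so summing the two differentiated PDEs shows that $\pa_s j+\la_\ve\pa_y j+\tfrac{\ve}{2}\pa_{yy}j=(\pa b/\pa y)[\la_\ve-b]$. The It\^o-plus-truncation argument on $j$ yields the stochastic analogue of (\ref{AW2}), namely $\pa q_\ve/\pa y+\pa q_\ve/\pa x=-E\int_t^T(\pa b/\pa y)[\la_\ve-b]\,ds$, and subtracting (\ref{FP4}) from this identity is an algebraic rearrangement (the coefficient $1+(T-s)\pa b/\pa y-(T-t)\pa b/\pa y$ reduces to $1-(s-t)\pa b/\pa y$) that delivers (\ref{FQ4}).

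The principal obstacle throughout is justifying the $\del\to 0$ limit of the boundary contributions, most delicately in (\ref{FO4}) where Lemma 4.1 allows $q_\ve(x,y,T-\del)$ to grow like $(x-y)^2/\del$ for $y\ll x$; the sharp drift bound from Lemma 6.1 together with the conditional-probability estimates of Theorem 1.2 are exactly what is needed to confine $y_\ve(T-\del)$ to an $O(\sqrt{\ve\del})$-neighbourhood of $x$ with sufficient probability for dominated convergence. The analogous control for the boundary terms arising from $h$ and $j$, and the true-Martingale property of the stochastic integrals appearing in the It\^o expansions, follow from the same moment estimates.
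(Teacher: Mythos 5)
Your approach for (\ref{FO4}) coincides with the paper's: the identity obtained by applying It\^o's formula to $q_\ve$ along the optimally controlled path is exactly the content of (\ref{BH3}) in Lemma 3.5 (the paper derives it via the maximum principle in Lemma 3.2, which is the analytic counterpart of the It\^o argument), and the vanishing of the boundary term via the drift lower bound of Lemma 6.1, the concentration estimate (\ref{FU4}), and Lemma 3.1 is the content of (\ref{FR4})--(\ref{FS4}).

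For (\ref{FP4}) your route is genuinely different. The paper perturbs the \emph{path}: it introduces $y_{\ve,\De y}(s)=y_\ve(s)+(T-s)\De y/(T-t)$, which solves the SDE (\ref{FW4}) with a modified controller, and then applies Lemma 3.2 twice (once at $y$ and once at $y+\De y$) to trap the difference quotient between a $\limsup$ bound (\ref{GB4}) and a $\liminf$ bound (\ref{GD4}). You instead apply It\^o's formula directly to $h(y,s)=(T-s)\pa q_\ve/\pa y$, having verified it satisfies the forced transport equation with source $[1+(T-s)\pa b/\pa y][\la_\ve-b]$. Both routes are valid. The trade-off is that the It\^o route requires square-integrability (or localizability) of $(T-s)\pa_{yy}q_\ve$ along the optimal path so that the stochastic integral drops, whereas the paper's perturbation argument requires only the $L^1$-continuity estimates (\ref{GE4})--(\ref{GF4}) and avoids second derivatives entirely. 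You also get an extra factor $(T-s)\big|_{s=T-\del}=\del$ in the boundary term, and $\del\cdot O(\sqrt{\ve/\del})\to 0$; this is the right cancellation for $h$.

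Your treatment of (\ref{FQ4}), however, has a genuine gap. You propose to apply It\^o to $j=\pa q_\ve/\pa y+\pa q_\ve/\pa x$ to obtain the stochastic analogue of (\ref{AW2}), then subtract (\ref{FP4}). The algebraic reduction $1+(T-s)\pa b/\pa y-(T-t)\pa b/\pa y=1-(s-t)\pa b/\pa y$ is correct, but the claim that the boundary term $E[j(y_\ve(T-\del),T-\del)]$ vanishes ``by the same moment estimates'' does not hold. Each of $\pa q_\ve/\pa y$ and $\pa q_\ve/\pa x$ evaluated at $(y_\ve(T-\del),T-\del)$ is of size $\sqrt{\ve/\del}$ in expectation (cf. Corollary 6.1 applied at $T-\del$ together with $E[q_\ve(x,y_\ve(T-\del),T-\del)]=O(\ve)$), so the na\"ive bound on $E|j|$ is $O(\sqrt{\ve/\del})$, which \emph{diverges} as $\del\to 0$; there is no factor $(T-s)$ to save you as there was for $h$. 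The term only vanishes because $\pa q_\ve/\pa y$ and $\pa q_\ve/\pa x$ nearly cancel: from $u_\ve=\int_x^\infty G\,dy'$, $\pa_x q_\ve=\ve G(y,x,t,T)/u_\ve$ and $-\pa_y q_\ve=\ve\,\pa_y u_\ve/u_\ve$, and since $\pa_y u_\ve$ solves (\ref{B1}) with the zeroth-order term $\pa b/\pa y$ added, the Feynman--Kac comparison (\ref{O4}) and its reverse give $e^{-A(T-t)}G\le\pa_y u_\ve\le e^{A(T-t)}G$, whence $|j|\le C A(T-t)\,\pa_x q_\ve$. Only then is the boundary contribution $O(A\del\cdot\sqrt{\ve/\del})=O(A\sqrt{\ve\del})\to 0$. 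You would need to establish and invoke this cancellation explicitly; without it the argument for (\ref{AW2})'s stochastic analogue fails. The paper sidesteps the issue entirely: it proves (\ref{FQ4}) by the perturbation argument in $x$, directly analogous to (\ref{FP4}), and only afterward derives (\ref{AW2}) as a corollary.
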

\begin{proof}  From (\ref{BH3}) the representation (\ref{FO4}) for $q_\ve(x,y,t)$ holds provided we can show that
\be \label{FR4}
\lim_{\del\ra 0} E\left[ q_\ve(x,y_\ve(T-\del),T-\del) \  \big| \ y_\ve(t)=y\right] = 0.
\ee
In view of Lemma 3.1 and Corollary 4.1, (\ref{FR4}) will follow if we can show that for $M\ge 1$, 
\begin{multline} \label{FS4}
\limsup_{\del \ra 0} \; E\left[ q_\ve(x,y_\ve(T-\del),T-\del) \; ; \; y_\ve(T-\del) \le x-M\sqrt{\ve \del} \  \big|  \ y_\ve(t) = y \right] \le c(M),
\end{multline}
where the constant $c(M)$ satisfies $\lim_{M \ra \infty}\; c(M) = 0$.  To prove (\ref{FS4}) we use Lemma 6.1.  Thus let $t_0 <T$ be such that $CA(T-t_0) < 1/10$, where $C$ is the constant in (\ref{FL4}).  If in addition $A(T-t_0) < 1/10$ then $y_\ve(s)$ satisfies the differential inequality,
\be \label{FT4}
dy_\ve(s) \ge \left[ \frac 3 4 \ \frac{\{x-y_\ve(s)\}}{T-s} - 2 \sup_{s\le s' \le T} |b(x,s')| \right] ds + \sqrt{\ve}  \ dW(s),
\ee
provided $t_0 \le s < T$, and $y_\ve(s) < x$.  If $t \ge t_0$ then we see from (\ref{FT4}), by following the argument of Lemma 4.2, that for $\del/(T-t) < 1/K$,
\be \label{FU4}
P \left( y_\ve(T-\del)  < x-\rho \  \big| \  y_\ve(t) = y \right) \le \exp \left[ {-\rho^2}/{20\ve \del} \right],
\ee
provided $\rho \ge K\sqrt{\ve \del}$ and the constant $K$ depends only on $x,y$.  Evidently (\ref{FS4}) follows from (\ref{FU4}) on using Lemma 3.1.  If $t < t_0$ then one can argue as in Theorem 4.1 equation (\ref{AK4}) that the probability of $y_\ve(t_0)$ conditioned on $y_\ve(t) = y$ being very negative is extremely small.  Then one applies (\ref{FT4}) for $t_0 \le s < T$ to show that (\ref{FS4}) holds in this case also.  We have obtained the representation (\ref{FO4}).

To prove (\ref{FP4}) we proceed in a similar way to how we obtained the analogous representation (\ref{AC2}) in the classical case.  Thus let $y_\ve(s), t \le s < T$, with $y_\ve(t) = y$ be as before and for $\De y \in \R$ define $y_{\ve,\De y}(s)$ by
\be \label{FV4}
y_{\ve,\De y}(s) = y_\ve(s) + (T-s)\De y / (T-t), \quad  t \le s < T,
\ee
so that $y_{\ve,\De y}(t) = y+\De y$ and $y_{\ve,\De y}(s)$ satisfies the SDE
\be \label{FW4}
dy_{\ve,\De y}(s) = \left[ \la_\ve\left( y_{\ve,\De y}(s) - (T-s)\De y/(T-t), s \right) - \De y /(T-t) \right]ds + \sqrt{\ve}  \ dW(s), \;  \ 
t\le s < T.
\ee
Then by Lemma 3.2 there is the inequality
\begin{multline} \label{FX4}
q_\ve(x,y+ \De y,t) \le  E\left\{ q_\ve(x, y_\ve(T-\del) + \del \De y/(T-t), T-\del ) \  \big|  \ y_\ve(t) = y \right\}
\\
+ E \Big\{ \frac 1 2 \int^{T-\del}_t \left[ \la_\ve(y_\ve(s), s) - \De y/(T-t) - b(y_\ve(s) + (T-s)\De y / (T-t),s)\right]^2 ds \  \big| \  y_\ve(t) = y \Big\} \ , 
\end{multline}
where we have used the fact that (\ref{FV4}) gives the solution to (\ref{FW4}).  Since by the argument we used to establish (\ref{FO4}) one has that
\[ \lim_{\del \ra 0} \; E\left\{ q_\ve(x, y_\ve(T-\del) + \del \De y/(T-t), T-\del ) \  \big|  \ y_\ve(t) = y \right\}=0, \]
we conclude from (\ref{FX4}) that
\begin{multline} \label{FY4}
q_\ve(x,y+ \De y,t) \le \\
E \Big\{ \frac 1 2 \int^T_t \left[ \la_\ve(y_\ve(s), s) - \De y/(T-t) - b(y_\ve(s) + (T-s)\De y / (T-t),s)\right]^2 ds \  \big| \  y_\ve(t) = y \Big\}.
\end{multline}
To see that the RHS of (\ref{FY4}) is finite, it will be sufficient to show that
\be \label{FZ4}
E \left\{ \int^T_t \; \la_\ve (y_\ve(s), s)^2 \; ds \; | \; y_\ve(t) = y \right\} < \infty.
\ee
Observe that the inequality (\ref{FZ4}) does not follow in a straightforward way from the fact that $y_\ve(s)$ is a solution to (\ref{N1}), where $\la_\ve(\cd,\cd)$ is given by (\ref{P1}) and $-\pa q_\ve(x,y,t) /\pa y$ satisfies (\ref{FL4}).  In fact for $Z_\ve(s)$ the solution to (\ref{AB4}), it is easy to see that
\[
E \left\{ \int^T_t \ \frac{Z_\ve(s)^2}{(T-s)^2} \ ds \ \big|  \ Z_\ve(t) = z \right\} = \infty,
\]
for all $\mu > 0$.  To prove (\ref{FZ4}) we use the fact that the LHS of (\ref{FO4}) is finite.  Hence (\ref{FZ4}) follows if we can show that
\be \label{GA4}
E \left\{ \int^T_t \; y_\ve(s)^2 \; ds \; \big| \; y_\ve(t) = y \right\} < \infty.
\ee
It is easy to see that (\ref{GA4}) is a consequence of the fact that $\la_\ve(y,s) \ge b(y,s),  \ y \in \R,  \ t \le s < T$, and Lemma 3.4.  Here we use the fact that Lemma 3.4 implies that for any $\eta > 0$, $\la_\ve(y,s)$ is uniformly Lipschitz in $y$ in any region $y \ge x + \eta,  \ t \le s < T$.  Having established (\ref{FZ4}), we obtain from (\ref{FO4}), (\ref{FY4}) the inequality
\begin{multline} \label{GB4}
\limsup_{\De y \ra 0} [q_\ve(x,y + \De y, t) - q_\ve(x,y,t)] /\De y \le 
\\
-\frac{1}{T-t}E \left\{  \; \int^T_t \left[ 1 + (T-s) \frac{\pa b}{\pa y} (y_\ve(s),s) \right] 
\left[ \la_\ve(y_\ve(s), s) - b(y_\ve(s), s)\right] ds  \ \big| \  y_\ve(t) = y \right\}.
\end{multline}
Next in analogy to (\ref{FY4}) we have that
\begin{multline} \label{GC4}
q_\ve(x,y,t) \le \\
E \Big\{ \frac 1 2 \int^T_t \left[ \la_\ve(y_\ve(s), s) + \De y/(T-t) - b(y_\ve(s) - (T-s)\De y / (T-t),s)\right]^2 ds \  \big| \  y_\ve(t) = y + \De y \Big\}.
\end{multline}
Using now (\ref{FO4}) with $y$ replaced by $y + \De y$ we conclude from (\ref{GC4}) that
\begin{multline} \label{GD4}
\liminf_{\De y \ra 0} \  [q_\ve(x,y + \De y, t) - q_\ve(x,y,t)] /\De y \ge 
\\
- \frac{1}{T-t} E \left\{\; \int^T_t \left[ 1 + (T-s) \frac{\pa b}{\pa y} (y_\ve(s),s) \right] 
\left[ \la_\ve(y_\ve(s), s) - b(y_\ve(s), s)\right] ds \  \big| \  y_\ve(t) = y \right\},
\end{multline}
provided we show that 
\be \label{GE4}
\lim_{\eta \ra 0} \; E \left[ \int^T_t \; |y_{\ve, \eta}(s) - y_{\ve,0}(s)| \ ds \right]= 0,
\ee
\be \label{GF4}
\lim_{\eta \ra 0} \; E \left[ \int^T_t \; |\la_\ve(y_{\ve, \eta}(s),s) - \la_\ve(y_{\ve, 0}(s),s)| \ ds \right]= 0,
\ee
where $y_{\ve, \eta}(s), \; t \le s < T$, is the solution to (\ref{N1}) with initial condition $y_{\ve, \eta}(t) = y + \eta$.  To prove (\ref{GE4}) we use the uniform Lipschitz continuity of $\la_\ve(z, s)$ in any region $z \ge z_0, \  t \le s \le T-\del$, where $\del > 0,  \ z _0 \in \R$ can be arbitrary.  Thus by introducing a stopping time and using the fact that the probability of $y_{\ve, \eta}(s)$ being large and negative is very small we see that
\be \label{GG4}
\lim_{\eta \ra 0} \; E \left[ \int^{T-\del}_t \; |y_{\ve, \eta}(s) - y_{\ve,0}(s)| \ ds \right]= 0.
\ee
Now (\ref{GE4}) follows from (\ref{GA4}), (\ref{GG4})  using the fact that one can obtain a bound in (\ref{GA4}) which is uniform in $\eta$ for small $\eta$.  To prove (\ref{GF4})  first observe
that (\ref{FO4}) implies that
\be \label{GH4}
\sup_{|\eta| \le \eta_0} \; E \left\{ \int^T_t \; \la_\ve(y_{\ve, \eta}(s),s)^2 \ ds \right\} < \infty,
\ee
for any $\eta_0 > 0$.  Thus it is sufficient to show that 
\be \label{GI4}
\lim_{\eta \ra 0} \; E \left[ \int^{T-\del}_t \; |\la_\ve(y_{\ve, \eta}(s),s) - \la_\ve(y_{\ve,0}(s),s)| \ ds \right]= 0
\ee
for any $\del > 0$.  For any $z_0 \in \R$ we introduce a stopping time $\tau_\eta(z_0) = \inf\{ s \ge t : y_{\ve,\eta}(s) = z_0\}$.  From the uniform Lipschitz continuity of $\la_\ve(z,s)$ in $z \ge z_0, \ t \le s \le T-\del$, and (\ref{GG4}) we have that 
\be \label{GJ4}
\lim_{\eta \ra 0} \; E \left[ \int^{(T-\del) \wedge \tau_\eta(z_0) \wedge \tau_0(z_0)}_t \; |\la_\ve(y_{\ve, \eta}(s),s) - \la_\ve(y_{\ve,0}(s),s)| \ ds \right]= 0.
\ee
The expectation in (\ref{GI4}) exceeds the expectation in (\ref{GJ4}) by at most
\be \label{GK4}
2P\big( \tau_\eta(z_0) \wedge \tau_0(z_0) < T-\del \big) \sup_{t \le t' < T-\del} 
 E \left[ \int^{T-\del}_{t'} \; |\la_\ve(y_{\ve}(s),s) | \ ds \; \big| \  y_\ve(t') = z_0  \right].
\ee
Since $\la_\ve(z,s) \ge b(z,s), \; z \in \R, \; t \le s < T$, the probability in (\ref{GK4}) is decaying exponentially fast in $z_0$ as $z_0 \ra -\infty$.  In contrast the expectation in (\ref{GK4}) is increasing at most linearly in $|z_0|$ as $z_0 \ra -\infty$.  This follows from the representation  (\ref{FO4}) for $q_\ve$ and Lemma 3.1.  Hence the expression in (\ref{GK4}) converges to 0 as $z_0 \ra -\infty$, whence we conclude that (\ref{GI4}) follows from (\ref{GK4}).  We have proved (\ref{GF4}).  Now (\ref{FP4}) follows from (\ref{GB4}), (\ref{GD4}).  The proof of (\ref{FQ4}) is similar to the proof of (\ref{FP4}). 
\end{proof}
Once we have the representations in Theorem 6.1 for $q_\ve(x,y,t)$ and its first derivatives, the inequality (\ref{V1}) easily follows.
\begin{corollary} Suppose the function $b(\cdot,\cdot)$ satisfies the Lipschitz condition (\ref{A1}).  Then for $x,y\in\R, \ t<T$, the following inequalities hold:
\begin{eqnarray} \label{A5}
\left| \frac{\pa q_\ve}{\pa x} \; (x,y,t)\right| &\le& \Big[ 1 + (T-t)A\Big] \Big[2q_\ve(x,y,t) / (T-t)\Big]^{1/2}, \\
\left| \frac{\pa q_\ve}{\pa y} \; (x,y,t)\right| &\le& \Big[ 1 + (T-t)A\Big] \Big[2q_\ve(x,y,t) / (T-t)\Big]^{1/2}. \nn
\end{eqnarray}
\end{corollary}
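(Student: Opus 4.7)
The plan is to read the inequalities (\ref{A5}) as a direct consequence of the three representations established in Theorem 6.1, combined with a Cauchy--Schwarz / Jensen estimate. The key observation is that the pathwise integrand in (\ref{FP4}) has the pointwise bound $|1+(T-s)\pa b/\pa y(y_\ve(s),s)| \le 1+(T-s)A \le 1+(T-t)A$ for $t \le s \le T$, by the Lipschitz bound (\ref{A1}); and similarly $|1-(s-t)\pa b/\pa y(y_\ve(s),s)| \le 1+(T-t)A$ on the same interval, which will handle (\ref{FQ4}).

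First I would apply the Schwarz inequality pathwise inside the conditional expectation in (\ref{FP4}). This gives
\[
\Big(\int_t^T [1+(T-s)\pa b/\pa y(y_\ve(s),s)] [\la_\ve(y_\ve(s),s) - b(y_\ve(s),s)] \, ds\Big)^{2}
\le (T-t)[1+(T-t)A]^{2} \int_t^T [\la_\ve(y_\ve(s),s) - b(y_\ve(s),s)]^{2}\, ds.
\]
Next I would take conditional expectation and apply the conditional Jensen inequality to bring the square root outside, to obtain
\[
\Big| E\Big\{\int_t^T [1+(T-s)\pa b/\pa y][\la_\ve - b]\, ds \,\big|\, y_\ve(t)=y\Big\}\Big|
\le \sqrt{T-t}\,[1+(T-t)A]\,\Big(E\Big\{\int_t^T [\la_\ve - b]^{2}\, ds \,\big|\, y_\ve(t)=y\Big\}\Big)^{1/2}.
\]
By (\ref{FO4}), the last conditional expectation equals $2q_\ve(x,y,t)$. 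Dividing by $T-t$ and inserting into (\ref{FP4}) yields the second inequality of (\ref{A5}). The first inequality follows by the identical argument applied to (\ref{FQ4}), with the kernel $1-(s-t)\pa b/\pa y$ in place of $1+(T-s)\pa b/\pa y$ and the same pointwise bound.

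There is no real difficulty: the only point to watch is that the representations (\ref{FO4})--(\ref{FQ4}) and hence the square-integrability of $\la_\ve(y_\ve(\cdot),\cdot) - b(y_\ve(\cdot),\cdot)$ under the conditional law starting from $y_\ve(t)=y$ have already been verified in the course of proving Theorem 6.1 (cf.\ (\ref{FZ4})--(\ref{GA4})), so all the interchanges of limits, conditional expectations, and square roots invoked above are legitimate.
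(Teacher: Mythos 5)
Your proof is correct and is exactly the route the paper takes: the paper's proof of this corollary is the one-line statement that the result follows from Theorem 6.1 via the representations (\ref{FO4})--(\ref{FQ4}) and the Schwarz inequality applied in (\ref{FP4}), (\ref{FQ4}), which is precisely the computation you have spelled out.
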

\begin{proof} This follows from Theorem 6.1 on using the representations (\ref{FO4}), (\ref{FP4}), (\ref{FQ4}) and applying the Schwarz inequality in (\ref{FP4}), (\ref{FQ4}). 
\end{proof}

\section{Proof of Theorem 1.3}
In order to prove convergence of first derivatives in $x$ and $y$  of the function $q_\ve(x,y,t)$ defined by (\ref{G1}) to the corresponding derivatives of the function $q(x,y,t)$ defined by (\ref{J1}) as $\ve\ra 0$, it will generally be necessary to assume the concavity in $y$ of the function $b(y,t)$  in (\ref{B1}). Recall however that $q(x,y,t)=0$ if $y\ge F(x,t)$, where $F(\cdot,\cdot)$ is the function defined from (\ref{K1}). Thus for $y\ge F(x,t)$ the derivatives of $q(x,y,t)$ are $0$. In this case it easily follows from Corollary 6.1 that the derivatives in $x$ or $y$ of $q_\ve(x,y,t)$ converge to $0$ as $\ve\ra 0$, without making any further assumptions on the function $b(\cd,\cd)$ beyond the Lipschitz condition (\ref{A1}). 
\begin{corollary} Suppose $b(\cd, \cd)$ satisfies (\ref{A1}) and the function $F(\cdot,\cdot)$ is defined from (\ref{K1}).  Then for $0 < \ve \le 1$ there is a constant $C(x,y,t,T)$ such that
\be \label{B5}
\left| \frac{\pa q_\ve}{\pa x} \; (x,y,t)\right| + \left| \frac{\pa q_\ve}{\pa y} \; (x,y,t)\right| \le C(x,y,t,T)\ve^{1/4},
\ee
provided $y \ge F(x,t)$.
\end{corollary}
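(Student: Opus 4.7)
The plan is to combine Theorem 1.1 with Corollary 6.1 in essentially one step. Recall from the discussion following (\ref{K1}) that whenever $y \ge F(x,t)$, the classical cost function satisfies $q(x,y,t) = 0$, because the constant trajectory $y(s)$ solving $y'(s) = b(y(s),s)$, $y(t)=y$ already lies above $x$ at time $T$ and achieves zero cost in (\ref{J1}).

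First I would invoke Theorem 1.1 to obtain
\[
0 < q_\ve(x,y,t) \le q(x,y,t) + C(x,y,t,T,A)\sqrt{\ve} = C(x,y,t,T,A)\sqrt{\ve},
\]
using $q(x,y,t) = 0$ in the region $y \ge F(x,t)$. Thus $q_\ve(x,y,t)^{1/2} \le C_1(x,y,t,T,A)\,\ve^{1/4}$ for a suitable constant.

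Then I would apply Corollary 6.1 directly: for $0 < \ve \le 1$,
\[
\left|\frac{\pa q_\ve}{\pa x}(x,y,t)\right| + \left|\frac{\pa q_\ve}{\pa y}(x,y,t)\right| \le 2[1+(T-t)A]\bigl[2q_\ve(x,y,t)/(T-t)\bigr]^{1/2},
\]
and substitute the bound $q_\ve(x,y,t) \le C\sqrt{\ve}$ to produce the claimed $\ve^{1/4}$ estimate, absorbing the factor $[1+(T-t)A]\sqrt{2/(T-t)}$ into the constant $C(x,y,t,T)$.

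There is no significant obstacle here: the entire content of the corollary is the observation that the square root appearing in Corollary 6.1, when combined with the $O(\sqrt{\ve})$ convergence rate from Theorem 1.1 on the region where $q$ itself vanishes, yields the $\ve^{1/4}$ decay. No concavity of $b(y,t)$ in $y$ is needed, which is exactly why this corollary is stated before the main Theorem 1.3 and serves as the easy half (the region $y > F(x,t)$) of that theorem.
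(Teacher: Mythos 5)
Your argument is correct and is exactly the paper's own (very terse) proof: use $q(x,y,t)=0$ for $y\ge F(x,t)$, apply Theorem 1.1 to get $q_\ve(x,y,t)\le C\sqrt{\ve}$, and feed this into the square-root bound of Corollary 6.1 to obtain the $\ve^{1/4}$ rate. Nothing to add.
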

\begin{proof}   The inequality (\ref{B5}) follows from Theorem 1.1 and Corollary 6.1 since $q(x,y,t) = 0$ for $y \ge F(x,t)$.
\end{proof}
In order to show convergence when $y < F(x,t)$ we shall need to assume $b(\cdot,\cdot)$ is concave as well as that (\ref{A1}) holds.  We first prove a result about the classical problem.
\begin{lem}  For $\al \ge 0$ let $y_\al(s), \; 0 \le s \le T$, be the solution to the equation
\be \label{C5}
\frac{dy_\al}{ds} = b(y_\al(s), s) - \al \; \frac{\pa q}{\pa y} \; (x,y_\al(s), s), \quad  0 \le s < T, \ y_\al(0) = y,
\ee
where $q(\cdot,\cdot,\cdot)$ is the classical cost function (\ref{J1}). 
There there is a constant $C(AT)$ depending only on $AT$ such that
\be \label{D5}
0 \le y_\al(s) - y_0(s) \le \max [1,\al]  \ C(AT) \; \sqrt{Tq(x,y,0)}, \quad  0 \le s \le T.
\ee
\end{lem}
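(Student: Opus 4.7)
The plan is to reduce the general $\alpha$ case to the optimal trajectory $y_1(\cdot)$ (the minimizer of the variational problem (\ref{J1})), and then transfer that bound to arbitrary $\alpha$ using monotonicity in $\alpha$ together with the convexity of $q(x,\cdot,s)$ guaranteed by Proposition 2.3.

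\textbf{Preliminary observations.} Since $-\pa q/\pa y \ge 0$ pointwise (Corollary 2.1), the drift of $y_\alpha$ dominates that of $y_0$, and standard ODE comparison at the common initial value $y$ gives $y_\alpha(s) \ge y_0(s)$, establishing the lower bound. I also note the monotonicity $\alpha_1 \le \alpha_2 \Rightarrow y_{\alpha_1}(s) \le y_{\alpha_2}(s)$: at a putative first touching time $s^\ast > 0$, the derivative difference equals $(\alpha_2 - \alpha_1)[-\pa q/\pa y(x, y_{\alpha_1}(s^\ast), s^\ast)] \ge 0$, which prevents a crossing.

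\textbf{The case $\alpha = 1$.} Here $y_1(\cdot)$ is the minimizer in (\ref{J1}) with $y_1(T) = x$. The Euler--Lagrange identity (\ref{Y2}) gives
\[
y_1'(s) - b(y_1(s), s) \;=\; A_0 \exp\!\Bigl[-\textstyle\int_0^s \frac{\pa b}{\pa y}(y_1(u), u)\, du\Bigr],
\]
where $A_0 = -\pa q/\pa y(x,y,0) \ge 0$. Substituting into $q(x,y,0) = \tfrac12 \int_0^T [y_1' - b(y_1,\cdot)]^2 ds$ and applying (\ref{A1}) yields $q(x,y,0) \ge \tfrac12 A_0^2 T e^{-2AT}$, hence $A_0 \le e^{AT}\sqrt{2q(x,y,0)/T}$ and
\[
-\pa q/\pa y(x, y_1(s), s) \;\le\; A_0 e^{AT} \;\le\; e^{2AT}\sqrt{2q(x,y,0)/T}
\]
uniformly in $s \in [0,T]$. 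Setting $\psi_1 := y_1 - y_0$, the Lipschitz bound on $b$ gives $\psi_1'(s) \le A\psi_1(s) + e^{2AT}\sqrt{2q(x,y,0)/T}$, and Gronwall with $\psi_1(0) = 0$ yields $\psi_1(s) \le \sqrt{2}\,e^{3AT}\sqrt{T q(x,y,0)}$.

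\textbf{General $\alpha$.} For $0 \le \alpha \le 1$, monotonicity gives $y_\alpha(s) - y_0(s) \le y_1(s) - y_0(s)$, so the preceding estimate applies directly. For $\alpha \ge 1$, monotonicity gives $y_\alpha(s) \ge y_1(s)$; the convexity of $q(x,\cdot,s)$ (Proposition 2.3) makes $-\pa q/\pa y(x,\cdot,s)$ a nonincreasing function of $y$, so
\[
-\pa q/\pa y(x, y_\alpha(s), s) \;\le\; -\pa q/\pa y(x, y_1(s), s) \;\le\; e^{2AT}\sqrt{2q(x,y,0)/T}.
\]
The same Gronwall argument applied to $\psi_\alpha = y_\alpha - y_0$, now with the extra multiplicative factor $\alpha$ in the forcing, produces $\psi_\alpha(s) \le \alpha\sqrt{2}\,e^{3AT}\sqrt{T q(x,y,0)}$. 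Combining both cases gives the claimed bound with $C(AT) = \sqrt{2}\,e^{3AT}$.

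\textbf{Main obstacle.} The nontrivial step is the case $\alpha \ge 1$, where the desired bound grows only linearly in $\alpha$ rather than exponentially. This linear dependence is possible precisely because of the combination of the two ingredients above: the $\alpha$-monotonicity of $y_\alpha$ and the pointwise convexity of $q(x,\cdot,s)$, which together force $-\pa q/\pa y$ along $y_\alpha$ to remain dominated by its value along the optimal trajectory $y_1$. Without convexity of $q$ in $y$ (i.e.\ without concavity of $b$ in $y$), no such transfer would be available and the bound would be substantially weaker.
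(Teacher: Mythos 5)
Your proposal is correct and follows essentially the same route as the paper: treat $\alpha=1$ first by bounding $y_1'-b(y_1,\cdot)$ (equivalently $-\pa q/\pa y$ along the optimal trajectory) by $C(AT)\sqrt{q(x,y,0)/T}$ via (\ref{X2})/(\ref{Y2}) and the definition of $q$, then apply Gronwall to $\psi_\alpha=y_\alpha-y_0$; for $\alpha>1$ use monotonicity in $\alpha$ together with the convexity of $q(x,\cdot,s)$ to replace $-\pa q/\pa y(x,y_\alpha(s),s)$ by the value along $y_1$. You are somewhat more explicit than the paper on two small points it leaves tacit — the monotonicity $y_{\alpha_1}\le y_{\alpha_2}$ for $\alpha_1\le\alpha_2$, and the numerical form of the constant — but these are elaborations, not a different argument.
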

\begin{proof}   We first consider the case $\al = 1$ since $y_1(\cd)$ is the optimal trajectory for the variational problem (\ref{B2}).  From (\ref{B2}), (\ref{X2}) we see that
\be \label{E5}
C_1(AT) \left[ q(x,y,0)/T\right]^{1/2} \le \frac{dy_1(s)}{ds} - b(y_1(s), s) \le C_2(AT) \left[ q(x,y,0)/T\right]^{1/2}, \quad 0 \le s < T,
\ee
for some positive constants $C_1,C_2$ depending only on $AT$.  Setting $\vp_1(s) = y_1(s) - y_0(s)$ it follows from (\ref{C5}), (\ref{E5}) that
\be \label{F5}
|\vp'_1(s)| \le A\vp_1(s)  + C_2(AT) \left[q(x,y,0)/T\right]^{1/2}, \  0 \le s \le T, \quad \vp_1(0) = 0.
\ee
Applying Gronwall's inequality to (\ref{F5}) we conclude that (\ref{D5}) holds for $\al = 1$ and a-fortiori for $0 \le \al \le 1$.

Suppose now that $\al > 1$ in which case $y_\al(s) > y_1(s), \; 0 \le s \le T$.  Using the fact that $q(x,y,t)$ is convex in $y$, we see from (\ref{C5}) that
\[	\frac{dy_\al(s)}{ds} \le b( y_\al(s), s) - \al \ \frac{\pa q}{\pa y}( x, y_1(s), s), \quad  0 \le s < T.  \]
Thus if $\vp_\al(s) = y_\al(s) - y_0(s)$ we have that
\[   |\vp'_\al (s) | \le A\; \vp_\al (s) + \al \ C_2(AT) \left[ q(x,y,0)/T\right]^{1/2}, \]
whence (\ref{D5}) follows for $\al > 1$ as before. 
\end{proof}
We can use the method of Lemma 7.1 to find a region where the paths $y_\ve(s), \  0 \le s < T$, for the stochastic control problem (\ref{N1}), (\ref{O1}) are most likely to be found.
\begin{lem} Let $y_\ve(s), \; 0 \le s < T$, be the solution to the stochastic equation (\ref{N1}) with $y_\ve(0) = y$, where $\la_\ve(\cd,\cd)$ is given by (\ref{P1}).  Then there is a universal constant $M$ and a constant $C(AT)$ depending only on $AT$ such that
\be \label{G5}
P \left[ \inf_{0\le s < T} \big[ y_\ve(s) - y_0(s)\big] < -\rho \right] \le \exp \left[ -\rho^2/\ve T\;C(AT) \right],
\ee
provided $\rho^2 \ge M\ve T\;C(AT)$.  There is a further constant $C_1(x,y,A,T)$ depending only on $x,y,A,T$ such that
\be \label{H5}
P \left[ \sup_{0\le s < T} [ y_\ve(s) - y_0(s)] > \rho + C(AT)\sqrt{Tq(x,y,0)} + C_1(x,y,A,T)\ve^{1/4}\right]
\le \exp \left[ -\rho^2/\ve T\;C(AT)\right],
\ee
provided $\rho^2 \ge M \ve T \; C(AT)$.
\end{lem}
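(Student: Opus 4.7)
The two inequalities call for rather different arguments. For the lower bound (\ref{G5}), the decisive observation is that the drift of $y_\ve$ pointwise dominates that of the diffusion $Y_\ve$ of (\ref{E1}): since the terminal data (\ref{C1}) is nondecreasing in $y$, the maximum principle applied to $\pa u_\ve/\pa y$ yields $-\pa q_\ve/\pa y(x,y,s)\ge 0$ everywhere. Since both SDEs have diffusion coefficient $\sqrt\ve$ and the smaller drift $b(\cd,s)$ is globally Lipschitz by (\ref{A1}), the standard one-dimensional pathwise comparison theorem gives $y_\ve(s)\ge Y_\ve(s)$ for $0\le s\le T$ when $y_\ve(0)=Y_\ve(0)=y$. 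Writing $Y_\ve(s)-y_0(s)=\int_0^s[b(Y_\ve,s')-b(y_0,s')]ds'+\sqrt\ve W(s)$ and applying Gronwall produces $\sup_{s\le T}|Y_\ve(s)-y_0(s)|\le e^{AT}\sqrt\ve \sup_{s\le T}|W(s)|$, and Doob's maximal inequality for Brownian motion delivers (\ref{G5}) with $C(AT)=C_0e^{2AT}$.

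For (\ref{H5}), I would first invoke Lemma 7.1 with $\al=1$ to absorb the $C(AT)\sqrt{Tq(x,y,0)}$ term, so that it suffices to prove a Gaussian tail bound for $\sup_s\phi(s)$ beyond $C_1(x,y,A,T)\ve^{1/4}$, where $\phi(s)=y_\ve(s)-y_1(s)$. Then $\phi$ evolves according to
\[
d\phi=[b(y_\ve,s)-b(y_1,s)]ds-[(\pa q_\ve/\pa y)(x,y_\ve,s)-(\pa q/\pa y)(x,y_1,s)]ds+\sqrt\ve dW.
\]
I would split the derivative difference by adding and subtracting $(\pa q_\ve/\pa y)(x,y_1,s)$, isolating the residual $r(s)=(\pa q/\pa y-\pa q_\ve/\pa y)(x,y_1(s),s)$ together with $(\pa q_\ve/\pa y)(x,y_\ve,s)-(\pa q_\ve/\pa y)(x,y_1,s)$. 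Since $q_\ve(x,\cd,s)$ is convex in $y$ by Theorem A1 of the Appendix, the latter term has the sign of $\phi$, so it contributes a nonpositive (restoring) drift to $\phi$ on positive excursions, leaving the inequality $d\phi\le[A\phi+r(s)]ds+\sqrt\ve dW$ on $\{\phi>0\}$.

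The quantitative heart of the argument is a bound $\int_0^T|r(s)|ds\le C_1(x,y,A,T)\ve^{1/4}$. Both $q_\ve(x,\cd,s)$ and $q(x,\cd,s)$ (the latter by Proposition 2.3) are convex in $y$, while Theorem 1.1 gives $\|q_\ve(x,\cd,s)-q(x,\cd,s)\|_\infty\le C\sqrt\ve$ on any bounded neighbourhood of $y_1(s)$. The elementary fact that convex $f,g$ with $|f-g|\le\eta$ on $[y-h,y+h]$ and $|g''|\le M$ there satisfy $|f'(y)-g'(y)|\le 2\eta/h+Mh/2$, optimized at $h=2\sqrt{\eta/M}$, gives $|f'(y)-g'(y)|\le 2\sqrt{\eta M}$. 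Applied with $\eta=C\sqrt\ve$ and $M=\pa^2q/\pa y^2(x,y_1(s),s)$, this produces $|r(s)|\le 2\sqrt{C\sqrt\ve\,M(s)}$. The representation $\pa^2 q/\pa y^2=\psi(s)/\vp(s)$ coming from (\ref{AU2}) with $(\vp,\psi)$ solving (\ref{AO2}) with terminal data $\vp(T)=0,\psi(T)=1$ yields $M(s)\le C_3(AT)/(T-s)$ uniformly on $[0,T)$, and hence $\int_0^T|r|\,ds\le C_4(x,y,A,T)\ve^{1/4}$.

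To finish, on each positive excursion of $\phi$ the process $\Psi(s)=e^{-As}\phi(s)-\int_0^s e^{-As'}r(s')ds'-\sqrt\ve\int_0^s e^{-As'}dW(s')$ is nonincreasing and vanishes at the left endpoint of the excursion; comparing values along excursions yields the pathwise bound
\[
\sup_{0\le s\le T}\phi(s)_+\le e^{AT}\Bigl[\int_0^T|r(s')|ds'+\sqrt\ve\sup_{0\le s\le T}\Bigl|\int_0^s e^{-As'}dW(s')\Bigr|\Bigr].
\]
The stochastic integral has quadratic variation at most $T$, so by Doob's inequality its supremum has Gaussian tail $\exp(-\rho^2/(2\ve T e^{2AT}))$, which combined with Lemma 7.1 gives (\ref{H5}). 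The principal obstacle is making the $\ve^{1/4}$ estimate for $\int|r|$ genuinely uniform: the interpolation argument requires $h=2\sqrt{\eta/M}$ to fit inside the interval $[y_1(s),F(x,s))$ on which $\pa^2q/\pa y^2$ is the correct bound, but $F(x,s)-y_1(s)\sim(T-s)$ near $s=T$, so the argument degenerates when $T-s=O(\sqrt\ve)$. For that short terminal layer one must instead bound $\int_{T-\sqrt\ve}^T|r(s)|ds$ crudely via Corollary 6.1 and the short-time asymptotics of Lemma 4.1, both of which give the required $O(\ve^{1/4})$ contribution; the rest is the routine Gronwall-on-excursions analysis.
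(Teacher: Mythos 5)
Your proof of (\ref{G5}) matches the paper's: $y_\ve\ge Y_\ve$ by comparison because $-\pa q_\ve/\pa y\ge0$, then Gronwall plus a Gaussian maximal inequality. That part is fine.

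For (\ref{H5}) you take a genuinely different route from the paper, and the difference is where the gap lies. You try to control the \emph{difference of derivatives} $r(s)=(\pa q/\pa y-\pa q_\ve/\pa y)(x,y_1(s),s)$ by an interpolation lemma for convex functions, which forces you to estimate $\pa^2 q/\pa y^2(x,y_1(s),s)$. There are two problems with the bound $M(s)\le C(AT)/(T-s)$ that you need. First, the representation $\pa^2q/\pa y^2=\psi(s)/\vp(s)$ via (\ref{AO2})--(\ref{AU2}) is proven in Proposition 2.3 \emph{only} under the extra hypothesis that $b(y,s)$ is twice continuously differentiable in $y$; Theorem 1.3 and Lemma 7.2 assume only that $b$ is Lipschitz and concave, so $q$ need not have a classical second derivative at all. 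Second, even when that regularity is available, the system (\ref{AO2}) contains $V(s)=-\,\pa^2b/\pa y^2\,(y(s),s)\,[\,dy/ds-b\,]$, which is nonnegative but has no bound in terms of $A$ alone; while $\psi(s)/\vp(s)\sim 1/(T-s)$ near $s=T$ and is bounded on compacts of $[t,T)$, the claim that the constant in front depends \emph{only on} $AT$ is unjustified. The interpolation also needs the pointwise estimate $|q_\ve-q|\le C\sqrt\ve$ of Theorem 1.1 to hold uniformly on the moving window $[y_1(s)-h,y_1(s)+h]$, which is plausible but not free, and you correctly flag the terminal-layer degeneracy when $T-s=O(\sqrt\ve)$.

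The paper avoids all of this by bounding $-\pa q_\ve/\pa y$ \emph{itself}, not the difference $r(s)$. Since $q_\ve(x,\cd,s)$ is convex (Theorem A1), for $y>y_c(s)$ one has $-\pa q_\ve/\pa y(x,y,s)\le-\pa q_\ve/\pa y(x,y_c(s),s)$; then Corollary 6.1 gives $|\pa q_\ve/\pa y(x,y_c(s),s)|\le(1+AT)\big[2q_\ve(x,y_c(s),s)/(T-s)\big]^{1/2}$, and Lemma 3.3 replaces $q_\ve$ by $q+C\sqrt\ve$ along the optimal path, yielding the drift bound (\ref{K5}) containing the $\ve^{1/4}/\sqrt{T-s}$ term. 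This needs only first derivatives of $q_\ve$ and $q$, no $C^2$ hypothesis on $b$, and no representation of $\pa^2q/\pa y^2$. The paper then couples $y_\ve$ from above by a diffusion $Z_\ve$ with this explicit one-sided drift and compares to the associated ODE to get the Gaussian tail; your Gronwall-on-positive-excursions step is conceptually the same (with the small slip that $\Psi$ does not vanish at the left endpoint of an excursion, only $\phi$ does — but the intended conclusion survives). To repair your proposal without strengthening the hypotheses, replace the interpolation estimate by the paper's route to (\ref{K5}).
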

\begin{proof}   The inequality (\ref{G5}) is obtained by using the fact that $y_\ve(s) \ge Y_\ve(s), \; 0 \le s < T$, where $Y_\ve(0) = y$ and $Y_\ve(\cd)$ satisfies (\ref{E1}).  Then one compares solutions of (\ref{E1}) to solutions of the deterministic equation (\ref{C5}) with $\al=0$, using the Lipschitz property (\ref{A1}) of $b(\cd,\cd)$ and applying Gronwall's inequality.

To obtain the inequality (\ref{H5}) we need to use the convexity of the function $q_\ve(x,y,s)$ in $y$, which is established in the Appendix (Theorem A1).  Let $y_c(s),  \ 0 \le s < T$, be the optimal trajectory $y(\cdot)$ for the variational problem (\ref{J1}) with $y(0) = y$.  Then if $y > y_c(s)$ we have from Corollary 6.1 that
\be \label{I5}
0 \le -\frac{\pa q_\ve}{\pa y} (x,y,s) \le -\frac{\pa q_\ve}{\pa y} (x,y_c(s),s) \le (1 + AT)\left[2q_\ve(x,y_c(s),s) / (T-s)\right]^{1/2}.
\ee
From Lemma 3.3 we see that there is a constant $C_2(x,y,A,T)$ depending only on $x,y,A,T$ such that
\be \label{J5}
q_\ve(x,y_c(s),s)  \le q(x,y_c(s),s) + C_2(x, y, A, T) \sqrt{\ve}, \quad  0 \le s < T.
\ee
Putting (\ref{I5}), (\ref{J5}) together and using the fact that (\ref{E5}) holds for $y_c(\cd)$, we conclude that
\begin{multline} \label{K5}
0 \le - \frac{\pa q_\ve}{\pa y} (x,y,s) \le C_1(AT) \left[ q(x,y,0)/T\right]^{1/2} \\
+ C_3(x,y,A,T) \ve^{1/4}\big/ \sqrt{T-s}, 
\quad  0 \le s < T, \; y > y_c(s).
\end{multline}
Consider now the diffusion process $Z_\ve(\cd)$ defined as a solution to the stochastic equation
\be \label{L5}
dZ_\ve(s) = \mu_\ve( Z_\ve(s),s) ds + \sqrt{\ve} \ dW(s), \quad 0 \le s < T,
\ee
where $\mu_\ve(\cd, \cd)$ is given by the formula
\begin{eqnarray} \label{M5}
\mu_\ve(z,s) &=& b(z,s) - \frac{\pa q_\ve}{\pa y} (x,z,s), \quad z < y_c(s), \\
\mu_\ve(z,s) &=& b(z,s) + C_1(AT)\left[q(x,y,0)/T\right]^{1/2} \nn \\
    & \qquad & \qquad + C_3(x,y,A,T) \ \ve^{1/4}\big/\sqrt{T-s}, \quad  z > y_c(s). \nn
\end{eqnarray}
Then if $Z_\ve(0) \ge y_\ve(0)$, it follows from (\ref{K5}) that $Z_\ve(s) \ge y_\ve(s),  \ 0 \le s < T$, with probability 1.

For any $t, \; 0 \le t < T$, suppose that $z_0 > y_c(t)$ and consider the solution $z(s)$ to the initial value problem 
\be \label{N5}
dz(s) = \mu_\ve( z(s), s)ds, \quad t \le s < T, \ z(t) = z_0.
\ee
By letting $\ve \ra 0$ in (\ref{K5}) we see that $z(s) > y_c(s), \ t < s \le T$.  Hence on setting $\phi(s) = z(s) - y_c(s)$ we have from (\ref{N5}) and the Lipschitz property of $b(\cd, \cd)$ that
\be \label{O5}
-A\phi (s) \le \phi'(s) \le A\phi(s) +  C_1(AT)\left[q(x,y,0)/T\right]^{1/2} + C_3(x,y,A,T) \ \ve^{1/4}\big/\sqrt{T-s}, \quad  t\le s < T.
\ee
Integrating (\ref{O5}) we conclude that
\begin{multline} \label{P5}
[ z_0 - y_c(t)]e^{-AT} \le z(s) - y_c(s) \le \\
e^{AT} \left\{ \left[ z_0 - y_c(t)\right] +  C_1(AT)\left[Tq(x,y,0)\right]^{1/2} + 2\sqrt{T}\; C_3(x,y,A,T) \  \ve^{1/4} \right\}, \quad t\le s<T. 
\end{multline}

We can compare the solution of (\ref{N5}) to the solution of the stochastic equation (\ref{L5}) with initial condition $Z_\ve(t) = z_0 > y_c(t)$.  Arguing as in Lemma 3.1 we see that
\be \label{Q5}
P \left( \sup_{t \le s < T} |Z_\ve(s) - z(s) | > \del \right) \le \exp \Big[-\del^2 / \ve T \ C_2 (AT)\Big],
\ee
where the constant $C_2(AT)$ depends only on $AT$.  Also $\del$ must satisfy the inequalities
\be \label{R5}
\del < \left[z_0 - y_c(t)\right]e^{-AT}, \quad  \del^2 \ge M\ve T \  C_2(AT),
\ee
where $M$ is a universal constant.  The first inequality in (\ref{R5}) ensures by (\ref{P5}) that if $|Z_\ve(s) - z(s)| < \del$ then $Z_\ve(s) > y_c(s)$.  Hence to estimate the probability (\ref{Q5}) we can assume the drift $\mu_\ve(\cdot,\cdot)$ of (\ref{L5}) is given by the second formula in (\ref{M5}).

To prove (\ref{H5}) first observe that the probability in (\ref{H5}) is bounded above by the probability
\be \label{S5}
\sup_{0 \le t < T} P \left[ \sup_{t \le s < T} [y_\ve(s) - y_0(s)] > \frac{1+2e^{AT}}{2(1+e^{AT})} \; \eta
 \ \Big| \  y_\ve(t) - y_0(t) = \frac{\eta}{2(1 + e^{AT})} \right],
\ee
where $\eta$ is given by the formula
\be \label{T5}
\eta = \rho + C(AT) \sqrt{Tq(x,y,0)} + C_1(x,y,A,T) \  \ve^{1/4}.
\ee
The probability in (\ref{S5}) is in turn bounded above by the same probability with $y_\ve(s)$ replaced by $Z_\ve(s)$.  Observe next that
\be \label{U5}
Z_\ve(s) - y_0(s) \ > \ \frac{1+2e^{AT}}{2(1+e^{AT})}\ \eta \ \Longrightarrow \ Z_\ve(s)-z(s) > \eta/4 \ ,
\ee
where we have used the fact that $z_0 - y_0(t) = \eta/2(1+e^{AT})$ and the inequalities (\ref{D5}), (\ref{P5}).  The constants $C(AT)$ and $C_1(x,y,A,T)$ in (\ref{T5}) must also be chosen sufficiently large.  Hence the probability in (\ref{S5}) is bounded above by the probability
\be \label{V5}
P \left( \sup_{t\le s < T} | Z_\ve(s)-z(s)| > \frac{\eta e^{-AT}}{4(1+e^{AT})}  \ \Big| \; Z_\ve(t) - y_0(t) = \frac{\eta}{2(1+e^{AT})} \right).
\ee
It is clear from (\ref{D5}) that if the constant $C(AT)$ in (\ref{T5}) is chosen sufficiently large then we may apply (\ref{Q5}) to estimate (\ref{V5}), since for $C(AT)$ large enough the first inequality in (\ref{R5})  is satisfied. Now (\ref{H5}) follows from (\ref{Q5}) since the condition on $\rho$ implies the second inequality in (\ref{R5}). 
\end{proof}
\begin{lem}  Let $y_\ve(s), \; 0 \le s < T$, be as in Lemma 7.2 and $y_c(s), \; 0 \le s < T$, be the solution to the corresponding classical problem (\ref{J1}) which has optimal controller $\la_c(s),  \ 0 \le s < T$.  Then there is a constant $C(x,y,A,T)$ such that
\be \label{W5}
E \left\{ \int^T_0 \left[ \la_\ve( y_\ve(s),s) - b( y_\ve(s),s) - \la_c(s) + b(y_c(s), s) \right]^2\; ds \right\} \le
C(x,y,A,T)\ve^{1/4}.
\ee
\end{lem}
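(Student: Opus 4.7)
The plan is to apply It\^o's formula to $q_\ve(x,y_\ve(s),s)$ and to $q(x,y_\ve(s),s)$ along the optimally controlled path $y_\ve$, combine using the stochastic and classical Hamilton--Jacobi equations, and extract the $L^2$-difference $E\int_0^\tau(\pa_y q_\ve - \pa_y q)^2\,ds$ up to a stopping time $\tau$; a Lipschitz transfer from $y_\ve(s)$ to $y_c(s)$ in the second argument then produces the claimed bound on $E\int_0^T(V_\ve - V_c)^2\,ds$, where I write $V_\ve(s) = \la_\ve(y_\ve(s),s)-b(y_\ve(s),s) = -\pa_y q_\ve(x,y_\ve(s),s)$ and $V_c(s) = \la_c(s)-b(y_c(s),s) = -\pa_y q(x,y_c(s),s)$.

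First dispose of the trivial case $y\ge F(x,0)$: the classical minimizer coincides with the unforced solution of (\ref{K1}), so $V_c\equiv 0$ and the claim reduces to $E\int_0^T V_\ve^2\,ds = 2q_\ve(x,y,0)\le C\sqrt{\ve}$ via (\ref{FO4}), $q(x,y,0)=0$, and Theorem 1.1. For $y<F(x,0)$, Proposition 2.3 gives $q\in C^2$ on $U_T=\{y<F(x,t),t<T\}$ satisfying the classical HJ equation $\pa_t q + b\pa_y q - \tfrac12(\pa_y q)^2 = 0$. For parameters $\del,\rho>0$ to be chosen, introduce
$$\tau = \inf\{s\ge 0:\ y_\ve(s)\ge F(x,s)-\rho\}\wedge(T-\del),$$
so that $(x,y_\ve(s),s)$ stays in a compact subset of $U_T$ for $s\le\tau$. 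It\^o applied to $q_\ve$ via (\ref{H1}) gives $dq_\ve = -\tfrac12 V_\ve^2\,ds - V_\ve\sqrt{\ve}\,dW$, and It\^o applied to $q$ via the classical HJ equation gives $dq = \bigl[\tfrac12(\pa_y q)^2 - \pa_y q_\ve\cdot\pa_y q + \tfrac{\ve}{2}\pa_y^2 q\bigr]\,ds + \sqrt{\ve}\,\pa_y q\,dW$. Subtracting, completing the square $(\pa_y q_\ve)^2 + (\pa_y q)^2 - 2\pa_y q_\ve\pa_y q = (\pa_y q_\ve - \pa_y q)^2$, and taking expectations yields
\begin{multline*}
\tfrac12 E\!\int_0^\tau(\pa_y q_\ve - \pa_y q)^2(x,y_\ve(s),s)\,ds = [q_\ve - q](x,y,0) \\
- E[q_\ve - q](x,y_\ve(\tau),\tau) - \tfrac{\ve}{2}E\!\int_0^\tau \pa_y^2 q(x,y_\ve(s),s)\,ds.
\end{multline*}
By Theorem 1.1 the first two terms are $O(\sqrt{\ve})$, uniformly in $y_\ve(\tau)$ (which is confined to a compact set by Lemma 7.2 except on an exponentially small event), and by Proposition 2.3, $\pa_y^2 q = \psi(s)/\vp(s)$ is bounded on the traversed compact, with the degeneration $\psi/\vp\sim 1/(T-s)$ near $s=T$ only logarithmically integrable, so the third term is $O(\ve\log(1/\del))$. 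Hence $E\int_0^\tau(\pa_y q_\ve - \pa_y q)^2\,ds \le C(\del,\rho)\sqrt{\ve}$.

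To pass from this to $E\int_0^\tau(V_\ve - V_c)^2\,ds$ I use the splitting
$(V_\ve - V_c)^2 \le 2(\pa_y q_\ve - \pa_y q)^2(x,y_\ve,s) + 2L^2|y_\ve(s)-y_c(s)|^2$
with $L=L(\del,\rho)$ the Lipschitz constant of $\pa_y q(x,\cdot,s)$ on the relevant compact (Proposition 2.3), together with the SDE $d(y_\ve - y_c) = [b(y_\ve)-b(y_c)+V_\ve-V_c]\,ds + \sqrt{\ve}\,dW$; Gronwall gives $E|y_\ve(s)-y_c(s)|^2 \le C\int_0^s E(V_\ve-V_c)^2\,dr + C\ve$, and a second Gronwall closes the estimate to $E\int_0^\tau(V_\ve - V_c)^2\,ds \le C(\del,\rho)\sqrt{\ve}$. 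For the terminal slice $(\tau,T]$, the It\^o formula for $q_\ve$ together with Corollary 4.1 gives $E\int_\tau^T V_\ve^2\,ds = 2E\,q_\ve(x,y_\ve(\tau),\tau)$, which is small in $\del,\rho$ by Lemma 4.1, while $\int_\tau^T V_c^2\,ds \le C\del$ since $V_c$ is uniformly bounded by (\ref{X2}); Lemma 7.2 bounds $P(\tau<T-\del)$ due to boundary hits by an exponentially small probability. Balancing $C(\del,\rho)\sqrt{\ve}$ against the terminal-slice error by taking $\del,\rho$ as appropriate powers of $\ve$ produces the asserted $\ve^{1/4}$ rate. The main obstacle is exactly this terminal-boundary balancing: the second derivative $\pa_y^2 q$ and the Lipschitz constant of $\pa_y q$ both degenerate like $1/(T-s)$ as $s\to T$ from the linearized variational equation (\ref{AO2}), so $\del$ cannot be sent to zero freely and its tuning against $\ve$ is delicate. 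A subsidiary technical point is justifying that the It\^o stochastic integrals appearing above are true martingales on $[0,\tau]$, which follows from the uniform boundedness of $\pa_y q_\ve$ (Lemma 3.4) and $\pa_y q$ (Proposition 2.3) on the compact set traversed before $\tau$.
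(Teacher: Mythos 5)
Your route is genuinely different from the paper's. The paper never touches $\pa^2 q/\pa y^2$: it feeds the classical-path approximant $y_{\ve,c}$ together with $y_c$ into the convexity inequality (\ref{S2}) for $\cF$ and extracts the $L^2$ distance between the two controllers directly from the strong-convexity remainder (\ref{AD5}), handling the exceptional event by Lemma 7.2. Your It\^o/HJ calculation is a nice alternative for the first half: subtracting $dq(x,y_\ve(s),s)$ from $dq_\ve(x,y_\ve(s),s)$ along the optimal stochastic path and invoking (\ref{H1}) and (\ref{C2}) does give
\[
\tfrac12\,E\!\int_0^\tau\bigl[\pa_y q_\ve-\pa_y q\bigr]^2(x,y_\ve(s),s)\,ds
\;\le\; |q_\ve-q|(x,y,0)+E|q_\ve-q|(x,y_\ve(\tau),\tau)+\tfrac{\ve}{2}\,E\!\int_0^\tau \pa^2_y q\,ds,
\]
and the right side is $O(\sqrt\ve)+O(\ve\log(1/\del))$ as you say.

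The gap is the transfer from $\pa_y q(x,y_\ve(s),s)$ to $\la_c(s)-b(y_c(s),s)=-\pa_y q(x,y_c(s),s)$. You write
\[
(V_\ve-V_c)^2\le 2[\pa_y q_\ve-\pa_y q]^2(x,y_\ve,s)+2L(s)^2|y_\ve(s)-y_c(s)|^2,\quad
E|y_\ve-y_c|^2\le C\!\int_0^s E(V_\ve-V_c)^2+C\ve,
\]
and close by a ``second Gronwall''. But the Lipschitz bound on $\pa_y q(x,\cdot,s)$ over the interval between $y_\ve(s)$ and $y_c(s)$ is $L(s)\sim 1/(T-s)$ (from $\pa^2_y q=\psi/\vp$ with $\vp(s)\sim T-s$ near $T$; already $1/(T-t)$ in the case $b\equiv 0$). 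With $\tau\le T-\del$ one has $\int_0^\tau L(s)^2\,ds\sim 1/\del$, so the second Gronwall produces a factor $\exp(C/\del)$, not a power of $1/\del$. No choice of $\del$ as a power of $\ve$ can absorb this: to keep $\exp(C/\del)\sqrt{\ve}$ of order $\ve^{1/4}$ one needs $\del\gtrsim 1/\log(1/\ve)$, and then the terminal slice $\int_\tau^T(V_\ve-V_c)^2\gtrsim\int_\tau^T V_c^2\sim\del$ is only $O(1/\log(1/\ve))$, far short of $\ve^{1/4}$. So the argument as written neither gives the claimed rate nor even a polynomial one.

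Two remarks, in case you want to repair the scheme. First, the exponential loss is an artefact of routing the bound on $E|y_\ve-y_c|^2$ through $\int E(V_\ve-V_c)^2$: if instead you apply It\^o to $D^2=(y_\ve-y_c)^2$ and observe that the term coming from $\pa_y q(x,y_\ve,s)-\pa_y q(x,y_c,s)$ has a \emph{favourable sign} (by the convexity of $q$ in $y$ it equals $\kappa(s)D(s)$ with $\kappa\ge 0$, contributing $-2E[\kappa D^2]\le 0$), the Gronwall kernel reduces to the benign Lipschitz constant $A$ of $b$ and gives $E D(s)^2\le C(AT)\sqrt\ve$ uniformly. Then $\int_0^{T-\del}E[\kappa^2D^2]\,ds\le C\sqrt\ve/\del$ is $O(\ve^{1/4})$ at $\del=\ve^{1/4}$, which is consistent with the statement; this is closer in spirit to the paper's use of concavity of $b$. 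Second, your martingale justification is incomplete: the stopping time $\tau$ bounds $y_\ve$ only from above (away from $F(x,s)-\rho$), and Lemma 3.4 and Proposition 2.3 give boundedness of $\pa_y q_\ve$, $\pa_y q$ only on sets $\{y\ge y_0\}$; you also need a lower cutoff as in Lemma 3.5 (or a quadratic-growth plus Gaussian-tail argument via Corollary 6.1 and Lemma 3.1) to make the It\^o integrals genuine martingales.
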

\begin{proof} Following the argument of Lemma 3.5 we define a classical path $y_{\ve,c}(\cd)$ which corresponds to the stochastic path $y_\ve(\cd)$ by
\be \label{X5}
\frac{dy_{\ve,c}(s)}{ds}= \la_\ve( y_\ve(s),s) + k/T, \quad 0 \le s < T,
\ee
where $y_{\ve,c}(0) = y$ and $k$ is defined by
\be \label{Y5}
k = \max \left[ x - y - \int^T_0 \; \la_\ve( y_\ve(s),s)ds, \  0 \right].
\ee
Observe from Lemma 4.1 and Theorem 4.1 that the integral on the RHS of (\ref{Y5}) exists with probability 1.  Letting $\al$ be an arbitrary number, $0 < \al < 1$, and using the fact that $y_{\ve,c}(T) \ge x$, we have that
\begin{multline} \label{Z5}
q(x,y,0) \le {\mathcal F} \left[ \al \; y_{\ve,c}(\cd) + (1-\al)y_c(\cd) \right] = \frac 1 2 \; \int^T_0 \Big[ \al \{ \la_\ve( y_\ve(s),s) - \; b( y_\ve(s),s)\} \\
+ (1-\al)\{ \la_c(s) - b( y_c(s),s)\} + g_\ve(s) - h( y_\ve(s),s) \Big]^2 \; ds ,  
\end{multline} 
where the deterministic function $h(z,s)$ is given by the formula
\be \label{AA5}
h(z,s) = b( \al z + (1-\al) y_c(s), s) - \al b (z,s) - (1-\al) b(  y_c(s), s) ,
\ee
and the random function $g_\ve(s)$ by the formula,
\be \label{AB5}
g_\ve(s) = \al k/T +  b( \al y_\ve(s)  + (1-\al) y_c(s), s) -  b( \al y_{\ve,c}(s) + (1-\al) y_c(s), s).
\ee
We expand out $g_\ve(\cd)$ in the quadratic expression in (\ref{Z5}) to obtain the inequality
\begin{multline} \label{AC5}
q(x,y,0) \le \frac 1 2 \int^T_0 g_\ve(s)^2 \; ds +  \int^T_0 |g_\ve(s)| \; |h( y_\ve(s),s)|  \ ds  \ + \\
 \int^T_0 |g_\ve(s)| \; | \la_c(s) - b( y_c(s),s) | \ ds +  \int^T_0 |g_\ve(s)| \; | \la_\ve(y_\ve(s),s) - b( y_\ve(s),s )| \ ds \  + \\
\frac 1 2\; \int^T_0 \Big[ \al\{ \la_\ve( y_\ve(s),s) -  b( y_\ve(s),s)\} 
+ (1-\al)\{ \la_c(s) - b( y_c(s),s)\} - h( y_\ve(s),s) \Big]^2 \; ds  .
\end{multline}
Since $b(\cd,s)$ is concave for $0 \le s < T$, it follows that the function $h$ is non-negative.  Thus since 
$ [\la_\ve( y_\ve(s),s) -  b( y_\ve(s),s)]$ and $[\la_c(s) - b( y_c(s),s)]$ are both non-negative, one has the inequality
\begin{multline} \label{AD5}
 \frac 1 2\; \int^T_0 \Big[ \al\{ \la_\ve( y_\ve(s),s) -  b( y_\ve(s),s)\} + (1-\al)\{ \la_c(s) - b( y_c(s),s)\} 
 - h( y_\ve(s),s) \Big]^2 \; ds \\
  \le \frac \al 2  \int^T_0 \Big[ \la_\ve(y_\ve(s),s) - b( y_\ve(s),s)\Big]^2 \;ds 
+  \frac{1-\al} 2 \int^T_0 \Big[ \la_c(s) - b( y_c(s),s)\Big]^2 \ ds  \\
- \frac {\al(1-\al)} 2  \int^T_0 \Big[ \la_\ve(y_\ve(s),s) - b( y_\ve(s),s) -  \la_c(s) + b(y_c(s),s)\Big]^2 \ ds  ,
\end{multline}
provided that $h(y_\ve(s),s) \le 2 (1-\al)[  \la_c(s) - b(y_c(s),s)]$ for $0 \le s < T$.  Since we also have that $h(z,s) \le 2A\al \ (1-\al)|z - y_c(s)|$, we conclude that (\ref{AD5}) holds provided $y_\ve(\cd)$ satisfies the inequality
\be \label{AE5}
A\al \  | y_\ve(s)  -  y_c(s)| \le [\la_c(s) - b( y_c(s),s)], \quad  0 \le s < T.
\ee
If we now use (\ref{D5}) and the lower bound in (\ref{E5}) we see that (\ref{AE5}) is implied by the inequality
\be \label{AF5}
| y_\ve(s)  -  y_0(s)| \le \left[ \al^{-1} \; C_1(AT) - C_2(AT) \right] \sqrt{Tq(x,y,0)}, \quad  0 \le s < T,
\ee
for some positive universal constants $C_1(AT)$, $C_2(AT)$ depending only on $AT$.

Observe now that from Theorem 1.1 the inequality (\ref{W5}) holds if $Tq(x,y,0) \le \ve^{1/4}$, whence we may assume $Tq(x,y,0) > \ve^{1/4}$.  It follows then from Lemma 7.2 that, for $\al$ sufficiently small depending only on $AT$ and $\ve$ sufficiently small depending only on $x,y,A,T$ the inequality (\ref{AF5}) holds with probability close to 1.

We estimate the expectation of the terms in $g_\ve(\cd)$ on the RHS of (\ref{AC5}).  From Theorem 4.1 it follows that the quantity $k$ in (\ref{Y5}) satisfies the inequality
\be \label{AG5}
0 \le k \le \sqrt{\ve} \ \max[W(T), 0],
\ee
where $W(\cd)$ is Brownian motion.  We also have from (\ref{N1}),(\ref{X5}) that
\be \label{AH5}
\sup_{0 \le s < T} | y_\ve(s) - y_{\ve,c}(s) | \le  \sqrt{\ve} \sup_{0 \le s < T} |W(s)| + k.
\ee
We may bound the random function $g_\ve(\cd)$ of (\ref{AB5}) using (\ref{AG5}), (\ref{AH5}) to obtain
\be \label{AI5}
\sup_{0 \le s < T}  |g_\ve(s)| \le \frac{2\al\sqrt{\ve}} T \ [1 + AT] \sup_{0 \le s < T} |W(s)| .
\ee
Evidently (\ref{AI5}) implies that
\be \label{AJ5}
E \left[ \int^T_0 \; g_\ve(s)^2 ds \right] \ \le \ \al^2  \ve  \  C_3(AT)
\ee
for a constant $C_3(AT)$ depending only on $AT$.  The inequality (\ref{AJ5}) in turn implies by the Schwarz inequality that
\be \label{AK5}
E \left[ \int^T_0 \; |g_\ve(s)|  |\la_c(s) - b(y_c(s),s)|  \ ds \right] \ \le \ \ve^{1/2} \al \  C_4(AT) [1 + q(x,y,0)]
\ee
for a constant $C_4(AT)$ depending only on $AT$.  Similarly one has by Theorem 1.1 that
\begin{multline} \label{AL5}
E \left[ \int^T_0 |g_\ve(s)| \; | \la_\ve( y_\ve(s),s) - b( y_\ve(s),s) | \ ds \right] 
\\
\le \sqrt{\ve} \; \al \; C_5(AT) \Big[ 1 + q(x,y,0) + C_6(x,y,A,T)\sqrt{\ve} \big], 
\end{multline}
for constants $C_5(AT)$ depending only on $AT$ and $C_6(x,y,A,T)$ on $x,y,A,T$.  The final term involving $g_\ve(\cd)$ can be estimated by using Lemma 7.2.  Thus
\begin{multline} \label{AM5}
E \left[ \int^T_0 |g_\ve(s)| \ |h( y_\ve(s), s)| \ ds \right] \le \al \sqrt{\ve} \ C_3(AT)^{1/2}
\\
2A\al(1-\al) E  \left[ \int^T_0 | y_\ve(s) - y_c(s)|^2 \; ds \right]^{1/2},  
\end{multline}
and the expectation on the RHS of (\ref{AM5}) is bounded as
\be \label{AN5}
E  \left[ \int^T_0 | y_\ve(s) - y_c(s)|^2 \; ds \right] \le C_7(AT) \left\{ T^2q(x,y,0) + C_8(x,y,A,T)\ve^{1/2} \right\}.
\ee

Let us define now $p_\ve$ as the probability that the inequality (\ref{AF5}) is violated, and take the expectation of (\ref{AC5}) over the event (\ref{AF5}).  Thus from (\ref{AC5}), (\ref{AD5}) and (\ref{AJ5}) - (\ref{AN5}) we conclude that
\begin{multline}  \label{AO5}
\frac {\al(1-\al)} 2 E\left[ \int^T_0  \left[ \la_\ve( y_\ve(s),s) -  b( y_\ve(s),s) - \la_c(s) + b( y_c(s),s) \right]^2 \; ds \ {\rm; \  (\ref{AF5}) \ holds} \right]
\\
\le \left[ p_\ve + \ve^{1/2} \al \; C_9(AT) \right] q(x,y,0) + \al C_{10}(x,y,A,T)\ve^{1/4}.  
\end{multline}
Since we can estimate $p_\ve$ from Lemma 7.2, we can conclude (\ref{W5}) from (\ref{AO5}) provided we can estimate the expectation
\be \label{AP5}
E\left[ \int^T_0  \left[ \la_\ve( y_\ve(s),s) -  b( y_\ve(s),s) \right]^2 \; ds  \ {\rm; \  (\ref{AF5}) \ does \ not \  hold} \right]
\ee
appropriately.  We have now from Corollary 6.1 that
\begin{multline} \label{AQ5}
E\left[ \int^{T-\del}_0  \left[ \la_\ve( y_\ve(s),s) -  b( y_\ve(s),s) \right]^2 \; ds  \ {\rm; \  (\ref{AF5}) \ does \ not \  hold} \right]
\\
 \le 4p^{1/2}_\ve (1 + AT)^4 \ \int^{T-\del}_0 \frac{ds}{T-s} \ E\left[ q_\ve(x, y_\ve(s),s)^2 \right]^{1/2} .  
 \end{multline}
Let  $Y_\ve(s), \ s\ge 0$, be  the solution to (\ref{E1}) with $Y_\ve(0) = y$.  Recall that since $\pa q_\ve(x,z,s) / \pa z \le 0$ we have that $y_\ve(s) \ge Y_\ve(s), \ s\ge 0$ . Using Lemma 3.1 then, we conclude that
\be \label{AT5}
 E\left[ q_\ve(x, y_\ve(s),s)^2 \right] \le \frac 1{(T-s)^2} \big[ C_3(AT) E\left[ \{ x-Y_\ve(s)\}^4\right] + C_4(x,y,A,T) \big].
\ee

We are left now to estimate
\be \label{AU5}
E\left[ \int^T_{T-\del} \left[ \la_\ve( y_\ve(s),s) -  b( y_\ve(s),s) \right]^2 \; ds  \ {\rm; \  (\ref{AF5}) \ does \ not \  hold} \right]
\ee
for some $\del > 0$.  Instead of attempting to show that the expectation (\ref{AU5}) is small, we consider as in (\ref{AD5}) under what circumstances the inequality
\begin{multline} \label{AV5}
\Big[ \al \{ \la( y_\ve(s),s) -  b( y_\ve(s),s) \} + (1-\al) \{ \la_c(s) - b( y_c(s),s) \} - h( y_\ve(s),s) \Big]^2  \\
\le \frac \al 2  \big[ \la_\ve(y_\ve(s),s) - b( y_\ve(s),s)\big]^2  
+  \frac{(1-\al)} 2 \big[ \la_c(s) - b( y_c(s),s)\big]^2   \\
- \frac {\al(1-\al)} 2  \big[ \la_\ve(y_\ve(s),s) - b( y_\ve(s),s) -  \la_c(s) + b(y_c(s),s)\big]^2 
\end{multline}
holds if $s$ lies in the region $T-\del < s < T$.  From Theorem 4.1 we see that if $\del > 0$ is sufficiently small depending only on $A$, then (\ref{AV5}) holds if $y_\ve(s)$ satisfies the one sided inequality
\be \label{AW5}
y_\ve(s) - y_0(s) \le \big[ \al^{-1} C_1(AT) - C_2(AT) \big] \sqrt{Tq(x,y,0)}
\ee
similar to (\ref{AF5}).  Thus instead of estimating (\ref{AU5}) it will be sufficient to estimate
\be \label{AX5}
E \left[ \int^T_{T-\del} \big[ \la_\ve(y_\ve(s),s) - b( y_\ve(s),s) \big]^2 \chi(y_\ve(s),s)ds \right],
\ee
where
\begin{eqnarray} \label{AY5}
\chi(z,s) &=& 1 \ \ {\rm if} \ z > y_0(s) + \left[ \al^{-1}C_1(AT) - C_2(AT)\right] \sqrt{Tq(x,y,0)}, \\
\chi(z,s) &=& 0, \ \ {\rm otherwise}. \nn
\end{eqnarray}
Using (\ref{I5}), (\ref{J5}) we see that if $\chi( y_\ve(s),s) = 1$ then $\left[ \la_\ve( y_\ve(s),s) - b( y_\ve(s),s )\right]^2$ is bounded, whence we conclude that the expectation  (\ref{AX5}) is bounded by $C(x,y,A,T)$ for a constant $C(x,y,A,T)$.  The result follows from Lemma 7.2. 
\end{proof}
\begin{proof}[Proof of Theorem 1.3]  We use the representation for $\pa q(x,y,t)/\pa y$ given by (\ref{AC2}) and for $\pa q_\ve(x,y,t)/\pa y$ by (\ref{FP4}).  Thus we have that 
\begin{multline} \label{BA5}
 \frac{\pa q}{\pa y}(x,y,0) - \frac{\pa q_\ve}{\pa y}(x,y,0) = \\
  \frac{1}{T} E \left\{ \int^T_0 \left[ 1 + (T-s) \frac{\pa b}{\pa y}( y_\ve(s),s)\right] 
 \left[  \la_\ve( y_\ve(s),s) -  b( y_\ve(s),s) - \la_c(s) + b( y_c(s),s) \right] ds  \right\} \\
+ \frac 1 T \; E\left\{ \int^T_0 (T-s) 
\left[ \frac{\pa b}{\pa y}( y_\ve(s),s) - \frac{\pa b}{\pa y}( y_c(s),s) \right] \left[ \la_c(s) - b( y_c(s),s) \right] ds \right\} . 
\end{multline}
In view of Lemma 7.3 the second identity of (\ref{Y1}) follows if we can show that 
\be \label{BB5}
\lim_{\ve \ra 0} \ E \left\{ \int^T_0 \Big| \frac{\pa b}{\pa y}( y_\ve(s),s) - \frac{\pa b}{\pa y}( y_\ve(s),s) \Big| ds \right\} = 0.
\ee
We put $\phi_\ve(s) = y_\ve(s) - y_c(s), \ 0 \le s < T$, and observe that $\phi_\ve(s)$ satisfies the equation,
\be \label{BC5}
d\phi_\ve = \left[\la_\ve( y_\ve(s),s) -  b( y_\ve(s),s) - \la_c(s) + b( y_\ve(s),s) \right] ds + \sqrt{\ve} \; dW(s), \quad \phi(0) = 0.
\ee
It follows from (\ref{BC5}) that
\be \label{BD5}
\sup_{0\le s < T}|\phi_\ve(s)| \le \int^T_0 \big| \la_\ve( y_\ve(s),s) -  b( y_\ve(s),s) - \la_c(s) + b( y_\ve(s),s) \big| ds + \sqrt{\ve}  \  \sup_{0 \le s < T} |W(s)|.
\ee
One easily sees from (\ref{BD5}) and Lemma 7.3 that (\ref{BB5}) holds.  We have proved the second identity of (\ref{Y1}).  The first identity follows in a similar way. 
\end{proof}

\appendix
\section{Log Concavity of Solutions to Linear Diffusion Equations }
Our goal in this appendix is to establish convexity properties of the function $q_\ve(x,y,t)$ defined by (\ref{G1}).  We shall first show convexity in $y$ for fixed $x \in \R$, $t < T$, since showing joint convexity in $(x,y)$ is considerably more difficult.  We consider the terminal-boundary value problem
\be \label{A6}
\frac{\pa w}{\pa t} + b(y,t) \ \frac{\pa w}{\pa y} + \frac \ve 2 \ \frac{\pa^2 w}{\pa y^2} = 0, \quad y > 0, \ t < T,
\ee
\[   w(y, T) = w_0(y), \  \ y > 0; \qquad  \ w(0,t) = 0, \ \ t < T.	\]
\begin{proposition}  Assume $b(\cd,\cd)$ satisfies (\ref{A1}) and the terminal function $w_0(y)$ is $C^2$ for $y > 0$ and $C^1$ for $y \ge 0$ with $w_0(0) = 0$.  Assume further that
\be \label{B6}
\sup_{y>0} \left\{ |w_0(y)| + |dw_0(y)/dy| + |d^2w_0(y)/dy^2| \right\} < \infty.
\ee
Then there is a unique solution $w(y,t), \; y > 0, \; t < T$, to the terminal-boundary value problem (\ref{A6}) which has the property that $w(y,t)$ is $C^2$ in $y$, $C^1$ in $t$, and satisfies the inequality
\be \label{C6}
\sup_{y>0, T_0 < t < T} \left\{ |w(y,t)| + |\pa w(y,t)/\pa y| + |\pa^2 w(y,t)/\pa y^2|\right\} < \infty
\ee
for any $T_0 < T$.  In addition, the functions $w(y,t)$ and $\pa w(y,t)/\pa y$ are continuous for $y \ge 0, \; t \le T$.
\end{proposition}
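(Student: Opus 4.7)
The plan is to construct $w$ via a Dirichlet Green's function representation on the half-line, derive regularity and the bound (\ref{C6}) by transferring derivatives onto the data through integration by parts, and then handle the continuity up to the boundary as a separate (more delicate) step.

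First I would construct the Dirichlet Green's function $G_D(y,y',t,T)$ for the operator in (\ref{A6}) on $\{y > 0\}$ by the parametrix/reflection method already implemented in Lemma 3.4: start from the free Gaussian kernel together with its single odd reflection across $y=0$, and correct order by order for the drift $b$ via the same Volterra series (\ref{AA3}). This yields $G_D$ together with $\pa_y G_D$, $\pa_y^2 G_D$, $\pa_{y'} G_D$ and $\pa_y\pa_{y'} G_D$, all satisfying Gaussian upper bounds of the type (\ref{AC3}), (\ref{AI3}), (\ref{AJ3}), (\ref{AS3}), (\ref{AT3}). The candidate solution is
\begin{equation*}
w(y,t) = \int_0^\infty G_D(y,y',t,T)\,w_0(y')\,dy',
\end{equation*}
equivalently $w(y,t) = E\bigl[w_0(Y_\ve(T))\mathbf{1}_{\tau > T}\,\big|\,Y_\ve(t)=y\bigr]$ with $\tau$ the first hitting time of $0$ for the diffusion (\ref{E1}). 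That $w$ solves (\ref{A6}) with $w(0,t)=0$ is inherited from the construction of $G_D$. Uniqueness among bounded solutions follows from the maximum principle for backward parabolic equations on unbounded domains (\cite{fried}, Chapter 2, Theorem 9) applied to the difference of two candidates.

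Interior $C^{2,1}$ regularity and the bound (\ref{C6}) for $t \le T - T_0$ follow by differentiating under the integral sign. Since the pointwise bounds on $\pa_y G_D$ and $\pa_y^2 G_D$ contain factors that blow up as $t \to T$, the crux is to integrate by parts in $y'$ to move derivatives onto $w_0$: the boundary term at $y'=0$ vanishes thanks to $w_0(0)=0$ together with $G_D(y,0,t,T)=0$, and the term at $y'=\infty$ vanishes by Gaussian decay. After one or two integrations by parts the resulting integrals are controlled uniformly in $(y,t)$ with $t \le T - T_0$ by $\|w_0\|_\infty + \|w_0'\|_\infty + \|w_0''\|_\infty$, using the hypothesis (\ref{B6}).

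The main obstacle is the continuity of $w$ and $\pa w/\pa y$ up to the parabolic boundary $\{y=0,\,t\le T\}\cup\{y\ge 0,\,t=T\}$. Continuity of $w$ at $y = 0$ can be obtained from the estimate $|w(y,t)| \le Cy$ near $y=0$, which comes from $|w_0(y')| \le C y'$ (since $w_0(0)=0$ and $w_0 \in C^1$) combined with the bound $\int_0^\infty y'\,G_D(y,y',t,T)\,dy' \le C'y$ inherited from the odd-reflection structure of $G_D$. For continuity of $\pa w/\pa y$ up to $y=0$, the trick is to factor $w_0(y') = y'\varphi(y')$ with $\varphi \in C^1$ bounded and substitute into the representation for $\pa w/\pa y$; the extra factor $y'$ compensates for the near-boundary growth of $\pa_y G_D$ and, after one further integration by parts (whose boundary terms again vanish), produces an integrand that is uniformly $y'$-integrable all the way down to $y=0$. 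Finally, continuity at the terminal time $t=T$ uses the standard convergence of $G_D(y,\cd,t,T)$ to $\delta(\cd - y)$ as $t \to T$, together with the uniform continuity of $w_0$ and $w_0'$.
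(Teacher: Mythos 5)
Your plan follows essentially the same route as the paper: a Dirichlet parametrix built from the reflection principle plus the Volterra series of Lemma 3.4, derivatives transferred onto $w_0$ by integration by parts with $w_0(0)=0$ killing the boundary term, and a separate argument for continuity up to the parabolic boundary. The one technical point you should not elide is that the Volterra series converges only on a short time window $[T_1-\De,T_1]$ subject to (\ref{Y3}), and $\De$ there depends on $\sup|b|$ over the spatial interval; since (\ref{A1}) allows $b$ to grow linearly in $y$, the Gaussian bounds you cite do not hold uniformly in $y$ on the half-line as written. The paper resolves this by (i) observing that it suffices to treat $t\in[T-\De,T]$ (interior parabolic regularity covers $[T_0,T-\De]$) and (ii) for large $y_1$ conjugating via $v(z,t)=w(z+y(t),t)$, where $y(\cd)$ solves (\ref{K1}) with $y(T)=y_1$, yielding the shifted equation (\ref{NN6}) whose drift vanishes at $z=0$ and so admits $\eta,\De$ independent of $y_1$; some such device is needed to make your claimed uniform bounds correct. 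Your factoring $w_0(y')=y'\varphi(y')$ for the boundary continuity of $\pa w/\pa y$ is a harmless variant of the paper's treatment, which instead handles the zeroth correction term $v_0$ separately through the integration-by-parts bounds (\ref{K6}) and (\ref{MM6}).
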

\begin{proof} 
We first observe that the result holds when $b \equiv 0$.  In this case the solution is given by the method of images as
\be \label{D6}
w(y,t) = \int^\infty_0 \left[ G(y - y', \ve(T-t)) - G(y+y', \ve(T-t)) \right] w_0(y') \ dy',
\ee
where $G(\cdot,s)$ is the pdf of the Gaussian variable with mean $0$ and variance $s$.
Thus on using integration by parts we have
\be    \label{E6}
\frac{\pa w}{\pa y} (y,t) = \int^\infty_0 \left[ G(y - y', \ve(T-t)) + G(y+y', \ve(T-t)) \right] \frac{dw_0(y')}{dy'}\;dy',
\ee
where we have used the fact that $w_0(0) = 0$ in deriving (\ref{E6}).  On using a further integration by parts we have that
\be \label{F6}
\frac{\pa^2 w}{\pa y^2} (y,t) = \int^\infty_0 \left[ G(y - y', \ve(T-t)) - G(y+y', \ve(T-t)) \right] \frac{d^2w_0(y')}{dy'^2}\;dy'.
\ee
It follows easily from (\ref{D6}), (\ref{E6}), (\ref{F6}) that (\ref{C6}) holds.  In addition $w(y,t)$ and $\pa w(y,t)/\pa y$ are continuous for $y \ge 0, t \le T$.  We also have that $\pa^2 w(y,t)/\pa y^2$ is continuous for $y > 0, \; t \le T$, provided $d^2 w_0(y)/dy^2$ is continuous in $y > 0$.

To prove the result for general $b(\cd,\cd)$ satisfying (\ref{A1}) it will be sufficient to establish it for $t$ restricted to a small interval $[T - \De, \; T]$.  We proceed as in Lemma 3.4.  Taking $y_1 = \eta$ in (\ref{V3}) we see from (\ref{X3}) that $w(y,t)$ is given by the formula
\be \label{G6}
w(y,t) = \int^{2\eta}_0 G(y, y',t,T)w_0(y')dy' - \ve \; \int^T_t ds \  w_+(s) \; \frac{\pa G}{\pa y'} \; (y, 2\eta, t, s),
\ee
provided $0 < y < 2\eta$.  The Green's function $G(y,y',t,T)$ is defined by the perturbation expansion (\ref{AA3}).  Since $w_+(\cd)$ is bounded by virtue of (\ref{B6}), we see that if $\De$ satisfies (\ref{Y3}) then $\sup\{|w(y,t)|: 0 < y \le \eta$, $T-\De \le t < T\} < \infty$ and $w(y,t)$ is continuous for $0 \le y \le \eta, \ T-\De \le t \le T$, with $w(0,t) = 0$.

We consider next the first derivative $\pa w(y,t) / \pa y$, which from (\ref{G6}) is given by the formula
\be \label{H6}
\frac{\pa w}{\pa y}(y,t) = \int^{2\eta}_0 \frac{\pa G}{\pa y} (y, y',t,T)w_0(y')dy' - \ve \; \int^T_t ds \  w_+(s) \; \frac{\pa^2 G}{\pa y\pa y'} \; (y, 2\eta, t, s).
\ee
It is evident from (\ref{AT3}) that the second integral on the RHS of (\ref{H6}) is uniformly bounded in the set $\{(y,t):0 < y \le \eta,  \ T-\De \le t < T\}$ and that the integral converges to 0 as $t \ra T$, uniformly for $0 < y \le \eta$.  To estimate the first integral on the RHS of (\ref{H6}) we do an integration by parts for the first term in the perturbation expansion (\ref{AA3}) for $G(y,y',t,T)$.  Just as in (\ref{E6}) we see that this term is uniformly bounded in the set $\{(y,t):0 < y \le \eta,  \ T-\De \le t < T\}$, and converges uniformly to $dw_0(y)/dy$ as $t \ra T$.  We can estimate the higher order terms
\be \label{J6}
\int^{2\eta}_0 \ \frac{\pa v_n}{\pa y} (y, y',t,T) \ w_0(y') \ dy' ,
\ee
for $n \ge 0$ simply by using (\ref{AB3}).  Thus we see that the sum of the higher order terms is uniformly bounded in the set $\{(y,t):0 < y \le \eta,  \ T-\De \le t < T\}$.  To prove continuity of $\pa w(y,t)/\pa y$ as $t \ra T$ we need to show that the integral in (\ref{J6}) converges uniformly to 0 as $t \ra T$ in the interval $0 < y \le \eta$.  This follows from (\ref{AB3}) when $n \ge 1$.  To prove it for $n = 0$ we again need to make use of integration by parts.  Thus we see that 
\begin{multline} \label{K6}
\big| \int^{2\eta}_0 g_0(z,y',s,T)  \ w_0(y') \ dy' \  \big| \le  \\
C|b(z,s)| \left[ \sup_{0 < y \le 2\eta}|dw_0(y)/dy | +G(z-2\eta, 2\ve(T-s)) |w_0(2\eta)| \right] ,  
\end{multline}
for some universal constant $C$.  It follows from ({\ref{K6}) and the representation (\ref{AA3}) for $v_0$ that the integral (\ref{J6}) also converges to 0 as $t \ra T$ when $n=0$.  We have shown that $\sup \{ |\pa w(y,t)/\pa y| : 0 < y \le \eta,  \ T-\De \le t < T\} < \infty$ and $\pa w(y,t)/\pa y$ is continuous for $0 \le y \le \eta$, $T-\De \le t \le T$.

To estimate the second derivative $\pa^2w(y,t)/\pa y^2$ we proceed in a similar manner.  Thus we have that
\be \label{L6}
\frac{\pa^2 w}{\pa y^2}(y,t) = \int^{2\eta}_0 \frac{\pa^2 G}{\pa y^2} (y, y',t,T)w_0(y')dy' - \ve \; \int^T_t ds \  w_+(s) \; \frac{\pa^3G}{\pa y^2\pa y'} \; (y, 2\eta, t, s).
\ee
We wish to show that $\sup \{ |\pa^2 w(y,t)/\pa y^2| : 0 < y \le \eta, \  T-\De \le t < T\} < \infty$.  In view of (\ref{AY3}) it is sufficient to consider only the first integral on the RHS of (\ref{L6}).  We estimate the first term in the perturbation expansion (\ref{AA3}) for $G(y,y',t,T)$ using integration by parts as in (\ref{F6}).  The higher order terms, corresponding to $v_n(y,y',t,T)$ with $n \ge 1$, can be estimated using (\ref{AR3}), so we are only left to deal with the term corresponding to $v_0(y,y',t,T)$.  We can estimate this by using (\ref{K6}) and the corresponding inequality for the derivative of $g_0$, 
\begin{multline}  \label{MM6}
\left| \int^{2\eta}_0 \frac{\pa g_0}{\pa z} (z,y',s,T) w_0(y')dy'\right|  \le \\
C\left[ A + \left\{\frac \nu{\De(T-t)}\right\}^{1/2} \right] 
\left[ \sup_{0 < y \le 2\eta} |dw_0(y)/ dy| +G(z-2\eta, 2\ve(T-s)) |w_0(2\eta)| \right] ,  
\end{multline}
for some universal constant $C$.  We have shown that $\sup \{ |\pa^2 w(y,t)/\pa y^2| : 0 < y \le \eta,  \ T-\De \le t < T\} < \infty$.

We can easily extend the estimates we have made on $w(y,t)$ and its $y$ derivatives in the set $\{(y,t) : 0 < y \le \eta, \  T-\De \le t < T\}$ to all of $y > 0$ by observing that the function $v(z,t)$ defined by $v(z,t) = w(z + y(t), t)$, where $y(s), s \le T$, is a solution to (\ref{K1}) with $y(T) = y_1$, satisfies the PDE
\be \label{NN6}
\frac{\pa v}{\pa t} + \left[ b(z + y(t), t) - b(y(t), t)) \right] \frac{\pa v}{\pa z} + \frac \ve 2 \; \frac{\pa^2v}{\pa z^2} =0.
\ee
Then we represent $v(z,t)$ by a formula similar to (\ref{G6}) and use perturbation theory as before, observing that the perturbation series for the Green's function converges in a region $\{ |z|< \eta,  \ T-\De \le t< T\}$, where $\eta,\De$ can be taken independent of $y_1$. 
}
\end{proof}
\begin{theorem} Suppose $b(\cd,\cd)$ satisfies (\ref{A1}), and in addition the function $b(y,t)$ is concave in $y$ for $y \in \R,  \ t \le T$.  Then for any fixed $x \in \R,  \ 0 \le t < T$, the function $q_\ve(x,y,t)$  of (\ref{G1}) is a convex function of $y \in \R$.
\end{theorem}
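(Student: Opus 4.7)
The strategy is a Bernstein-type minimum principle applied to $\phi := \pa^2 q_\ve/\pa y^2$, after smoothing the singular terminal data (\ref{I1}) and slightly perturbing $b$ to be strictly concave.  Replace the step function in (\ref{I1}) by a smooth monotone $w_0^\del(y)\in C^2(\R)$ with $w_0^\del\to \mathbf 1_{y>x}$ and $-\log w_0^\del$ convex in $y$ (e.g., $w_0^\del(y)=\Phi((y-x)/\del)$ with $\Phi$ the standard normal CDF).  By Proposition A1, suitably reduced to the full line via a translation and truncation argument, the corresponding solution $u_\ve^\del$ of (\ref{B1}) is classical with two bounded continuous $y$-derivatives up to $t=T$, and so $q_\ve^\del := -\ve\log u_\ve^\del$ is a smooth classical solution of (\ref{H1}).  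In parallel, replace $b$ by $b^n(y,t):=b(y,t)-\eta_n(y)$ where $\eta_n$ is smooth, convex, with $\eta_n''\ge 1/n$ on compact sets and $b^n\to b$ locally uniformly in $C^1$; write $q_\ve^{\del,n}$ for the corresponding cost function.

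Differentiating (\ref{H1}) twice in $y$ with $p:=\pa q_\ve^{\del,n}/\pa y$ and $\phi:=\pa p/\pa y$ yields the quasi-linear parabolic equation
\[
\phi_t + (b^n-p)\phi_y + \tfrac{\ve}{2}\phi_{yy} + 2b^n_y\phi - \phi^2 + b^n_{yy}\,p = 0 .
\]
Reversing time $s:=T-t$ and writing $\tilde\phi(y,s):=\phi(y,T-s)$, the initial datum $\tilde\phi(\cdot,0)=\pa^2(-\ve\log w_0^\del)/\pa y^2\ge 0$ by choice of $w_0^\del$.  Suppose for contradiction $\tilde\phi<0$ somewhere; introduce the barrier $\psi_\kappa:=\tilde\phi+\kappa(1+y^2+s)$ for $\kappa>0$.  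An a priori one-sided lower bound $\tilde\phi\ge -C(\del,n)(1+|y|)$, obtained by combining Lemma 3.1 (which gives $q_\ve^{\del,n}\le C(x-y)^2/(T-t)$ for $y\ll x$ and exponentially small values for $y\gg x$) with the $C^2$ estimates of Proposition A1, ensures $\psi_\kappa\to +\infty$ as $|y|\to\infty$, so $\psi_\kappa$ attains a nonpositive first-touching value at some interior $(y_0,s_0)$ with $\psi_\kappa(y_0,s_0)=0$, $\psi_{\kappa,s}\le 0$, $\psi_{\kappa,y}=0$, $\psi_{\kappa,yy}\ge 0$.  Substituting $\tilde\phi=-\kappa(1+y_0^2+s_0)$ into the PDE at $(y_0,s_0)$ gives
\[
0 \ge b^n_{yy}(y_0,T-s_0)\,p(y_0,T-s_0) + O(\kappa) .
\]
The Hopf lemma applied to $\pa u_\ve^\del/\pa y$, which solves a linear parabolic equation with nonnegative nonzero initial data $dw_0^\del/dy$, yields $p<0$ strictly, while strict concavity gives $b^n_{yy}\le -1/n$, so the leading term is $\ge |p|/n>0$.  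Letting $\kappa\to 0$ and using that $(y_0,s_0)$ remains in a compact set (again thanks to the barrier with the one-sided bound on $\tilde\phi$) produces a contradiction.  Thus $q_\ve^{\del,n}$ is convex in $y$ for every $t<T$.

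Convexity in $y$ is preserved under pointwise limits, so letting $n\to\infty$ with $\del$ fixed, and using stability of linear parabolic problems under locally uniform convergence of the drift, gives convexity of $q_\ve^\del$.  Finally, the representation (\ref{F1}) and dominated convergence yield $u_\ve^\del\to u_\ve$ pointwise for $t<T$, hence $q_\ve^\del\to q_\ve$ pointwise, and convexity of $q_\ve$ in $y$ follows.

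The principal technical obstacle is the localization of the touching point in the Bernstein step: the quadratic barrier $\kappa y^2$ only defeats $\tilde\phi$ at spatial infinity if $\tilde\phi$ admits a uniform (in $\kappa$) one-sided lower bound of subquadratic growth.  Establishing this requires differentiating the Lemma 3.1 envelope for $q_\ve^\del$ twice in $y$ while retaining explicit bounds—precisely the role of the second-derivative estimate in Proposition A1.  A secondary delicacy is coordinating the two limits $n\to\infty$ and $\del\to 0$, since the $C^2$ bounds on $u_\ve^\del$ degenerate as $\del\to 0$; the argument must be ordered so that convexity, obtained robustly for each fixed $(\del,n)$, survives each passage to the limit.
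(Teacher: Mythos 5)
Your proposal attempts a Bernstein (pointwise minimum-principle) argument on the full line after differentiating (\ref{H1}) twice; the paper instead uses Korevaar's concavity maximum principle on a family of truncated, bounded domains obtained by conditioning on the first hitting time of a far-left boundary, then sends the truncation to infinity.  These are genuinely different routes, but your version has two concrete gaps that make it fail as written.

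First, the claimed a priori lower bound $\tilde\phi = \pa^2 q_\ve^{\del,n}/\pa y^2 \ge -C(\del,n)(1+|y|)$ does not follow from Lemma 3.1 together with Proposition A1.  Proposition A1 controls $u$, $u_y$, $u_{yy}$, not the second derivative of $q = -\ve\log u$.  Since
\[
\frac{\pa^2 q}{\pa y^2} \;=\; -\ve\,\frac{u_{yy}}{u} + \ve\Big(\frac{u_y}{u}\Big)^2 \;\ge\; -\ve\,\frac{|u_{yy}|}{u},
\]
the only lower bound these estimates give is $\ge -C/u$.  Lemma 3.1 gives $u \ge \exp[-C(x-y)^2/\ve(T-t)]$, so the resulting one-sided bound on $\tilde\phi$ is only of order $\exp[C(x-y)^2/\ve(T-t)]$ as $y\to -\infty$, which is far from subquadratic.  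Without a genuinely subquadratic lower bound, the barrier $\kappa(1+y^2+s)$ does not force the touching point into any controlled region.

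Second, even granting the stated linear lower bound, the touching point does not remain in a compact set as $\kappa\to 0$.  From $\tilde\phi(y_0,s_0) = -\kappa(1+y_0^2+s_0)$ and $\tilde\phi\ge -C(1+|y|)$ one gets only $\kappa y_0^2 \le C(1+|y_0|)$, hence $|y_0|$ may grow like $C/\kappa$.  The error terms you absorb into $O(\kappa)$ are then not small: $2b_y\tilde\phi = O(\kappa(1+y_0^2+s_0)) = O(1+|y_0|)$ and $-\tilde\phi^2 = -\kappa^2(1+y_0^2+s_0)^2 = O((1+|y_0|)^2)$, both of which grow with $|y_0|$ and can dominate $b^n_{yy}\,p = O(|y_0|/n)$.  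So the crucial inequality does not close and no contradiction results.  This is exactly the difficulty the paper sidesteps: by first passing to the bounded domain $(-\La,\infty)$ (via the stopping time $\tau_{\La,y,t}$ in (\ref{PP6})) and analyzing the boundary behavior through (\ref{TT6})--(\ref{YY6}), Korevaar's Theorem 4.1 of \cite{gk} can be applied on a compact region, after which the limit $\La\to\infty$ recovers the full-line statement without any growth hypothesis on the Hessian.  If you wish to salvage the Bernstein route, you would need either a rigorous subquadratic lower bound on $q_{yy}$ (not supplied here) or to localize to a bounded domain first, at which point you are essentially reconstructing the paper's truncation.
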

\begin{proof} We shall take wlog $x=0$.  For $\del$ satisfying $0 < \del < 1$ we define a function $g_\del(z)$ with domain $\{z\in \R : z > -1\}$ by
\begin{eqnarray} \label{OO6}
g''_\del(z) &=& 1/(1+z)^2,  \ \ \quad {\rm if \ }-1 < z < -(1-\del), \\
g''_\del(z) &=& -z/\del^2(1-\del)   \quad {\rm if \ }\ -(1-\del) < z < 0, \nn \\
g_\del(0) &=& g'_\del(0) = 0, \nn \\
g''_\del(z) &=& 0, \quad {\rm if \ }  z > 0. \nn 
\end{eqnarray}
Evidently $g_\del(z)$ is a $C^2$ convex decreasing function which has the property that $g_\del(z) = 0$ for $z>0$ and $g_\del(z) \sim  K_\del - \log(1+z)$ as $z \ra -1$, where $K_\del$ is a constant depending on $\del$.  For $\La > 0$ and $y > -\La$ let $\tau_{\La,y,t}$ be the first hitting time at $-\La$ for the diffusion $Y_\ve(s), \ s\ge t$, of (\ref{E1}) with $Y_\ve(t) = y$.  We define a function $u_{\ve,\La,\del}(y,t)$ by
\be \label{PP6}
u_{\ve,\La,\del}(y,t) = E\left\{ \exp \left[ -g_\del \left( {Y_\ve(T)}/ \La \right)\right] \ ;  \  \tau_{\La,y,t}> T \right\}.
\ee
Letting $\del \ra 0$ in (\ref{PP6}) we conclude from (\ref{OO6}) that
\be \label{QQ6}
P\left( Y_\ve(T) > 0 \; ; \; \tau_{\La,y,t} > T \  \big| \  Y_\ve(t) = y \right) = \lim_{\del \ra 0} u_{\ve,\La,\del}(y,t).
\ee
It is also clear from (\ref{F1}) that
\be \label{RR6}
u_\ve(0,y,t) = \lim_{ \La \ra \infty } \ P\left( Y_\ve(T) > 0 \; ; \; \tau_{\La,y,t} > T \  \big| \  Y_\ve(t) = y \right) .
\ee
We conclude from (\ref{QQ6}), (\ref{RR6}) that the convexity of $q_\ve(0,y,t)$ in $y$ follows from the log concavity of the function $u_{\ve,\La,\del}(y,t)$ in $y$.

To prove log concavity we first observe that $u_{\ve,\La,\del}(y,t)$ satisfies the PDE (\ref{B1}) for $y > -\La$, $t<T$, with Dirichlet boundary condition $u_{\ve,\La,\del}(y,t) = 0$ at $y = -\La$, and terminal data
\be \label{SS6}
u_{\ve,\La,\del}(y,T) = \exp \left[ -g_\del \left(  y/\La \right) \right], \quad  y > -\La.
\ee
Since the function (\ref{SS6}) is increasing in $y$, it follows from the maximum principle that for $t < T$ the function $u_{\ve,\La,\del}(y,t) $ is also an increasing function of $y$.  From (\ref{OO6}) we see that $u_{\ve,\La,\del}(y,T)$ is $C^2$ for $y \ge -\La$ and $u_{\ve,\La,\del}(-\La,T) = 0$, $\pa u_{\ve,\La,\del}(-\La,T) /\pa y > 0$.  We may therefore apply the regularity result of Proposition A.1.  It follows from this and the Hopf maximum principle \cite{pw}  that
\be \label{TT6}
\pa u_{\ve,\La,\del}(-\La,t)/\pa y > 0, \quad  t \le T.
\ee

Next as in (\ref{G1}) we put $u_{\ve,\La,\del}(y,t)  = \exp [-q_{\ve,\La,\del}(y,t)/\ve]$, and observe that $q_{\ve,\La,\del}(y,t) $ satisfies the PDE (\ref{H1}).  Since $u_{\ve,\La,\del}(y,t) $ is an increasing function of $y$, it follows that $q_{\ve,\La,\del}(y,t) $ is a decreasing function of $y$. Hence $q_{\ve,\La,\del}(y,t) $ is a solution to the PDE 
\be \label{UU6}
\frac{\pa q_{\ve,\La,\del}}{\pa t} + \frac \ve 2 \ \frac{\pa^2 q_{\ve,\La,\del}}{\pa y^2} - B\left(y,t, \frac{\pa q_{\ve,\La,\del}}{\pa y} \right) = 0,
\ee
where the function $B(y,t,p)$ is defined by
\be \label{VV6}
B(y,t,p) = b(y,t)|p| + p^2/2.
\ee
Observe that the function $B(y,t,p)$ is concave in $y$ for all $p \in \R,  \ t \le T$.  Applying Theorem 4.1 of \cite{gk}  to (\ref{UU6}) we see that $q_{\ve,\La,\del}(y,t) $ is convex in $y$ for $y > -\La, \  t < T$, provided we can show that the expression
\be \label{WW6}
q_{\ve,\La,\del}((y + y')/2, t) - \left[q_{\ve,\La,\del}(y,t) + q_{\ve,\La,\del}(y',t)\right]/2
\ee
is less than or equal to 0 as $(y, y', t)$ approaches $(y_\infty, y'_\infty, t_\infty)$ with $t_\infty \le T$ finite, and $(y_\infty, y'_\infty)$ on the boundary of $(-\La, \infty)^2 \subset \R^2$ if $t_\infty < T$, and an arbitrary point in the closure of $(-\La, \infty)^2$ if $t_\infty = T$.

Suppose now that $y_\infty = -\La$ and $-\La < y'_\infty \le \infty$.  From (\ref{PP6}) we see that $u_{\ve,\La,\del}(y,t) > 0$ for $y > -\La$, $t \le T$, whence the limits of the first and third terms in (\ref{WW6}) are finite as $(y,y',t) \ra (y_\infty, y'_\infty, t_\infty)$, whereas the second term converges to $-\infty$.  Thus we may assume $y'_\infty = y_\infty = -\La $.  In that case we observe that the exponential of $\ve^{-1}$ times the expression (\ref{WW6}) is the same as 
\be \label{XX6}
\left[ u_{\ve,\La,\del}(y,t) \  u_{\ve,\La,\del}(y',t)\right]^{1/2} \big/u_{\ve,\La,\del}(\{y + y'\}/2,t).
\ee
From (\ref{TT6}) we may write (\ref{XX6}) as
\be \label{YY6}
p(z,z',t) \ (zz')^{1/2} \big/  \ [(z+z')/2],
\ee
where $z = y + \La, \  z' = y' + \La$, and $\lim\{ p(z,z',t): z,z' \ra 0, \  t \ra t_\infty \} = 1$.  Thus since arithmetic mean exceeds geometric mean, it follows from (\ref{YY6}) that the limit of (\ref{WW6}) as $(y,y',t) \ra (-\La, -\La, t_\infty)$ is less than or equal to 0.

For $t_\infty = T$ we need to show non-positivity of (\ref{WW6}) for any $(y_\infty, y'_\infty)$ in the closure of $(-\La,\infty)^2$.  This follows from Proposition A1 and the convexity of $g_\del(\cd)$. 
\end{proof}
\begin{theorem}  Suppose $b(\cd, \cd)$ satisfies (\ref{A1}).  Then $\pa^2q_\ve(x,y,t)/\pa x \pa y \le 0$ for $x,y \in \R, \  0 \le t < T$.
\end{theorem}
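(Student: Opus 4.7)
The plan is to reduce the claim to a classical log-supermodularity (TP$_2$) property of the one-dimensional transition density $G_\ve$. Differentiating (\ref{D1}) in $x$ and using $q_\ve=-\ve\log u_\ve$ gives
\[
\frac{\pa q_\ve}{\pa x}(x,y,t)\;=\;\ve\,\frac{G_\ve(y,x,t,T)}{u_\ve(x,y,t)},
\]
and a further differentiation in $y$ yields
\[
\frac{\pa^2 q_\ve}{\pa x\,\pa y}(x,y,t)\;=\;\frac{\ve}{u_\ve(x,y,t)^2}\Bigl[u_\ve(x,y,t)\,\pa_y G_\ve(y,x,t,T)\;-\;G_\ve(y,x,t,T)\,\pa_y u_\ve(x,y,t)\Bigr].
\]
Since $u_\ve(x,y,t)=\int_x^\infty G_\ve(y,y',t,T)\,dy'$, introducing the probability measure $\mu(dy')=G_\ve(y,y',t,T)\,dy'/u_\ve(x,y,t)$ on $\{y'>x\}$, the desired sign $\pa^2 q_\ve/\pa x\,\pa y\le0$ is equivalent to
\[
\frac{\pa\log G_\ve}{\pa y}(y,x,t,T)\;\le\;\int_x^\infty\frac{\pa\log G_\ve}{\pa y}(y,y',t,T)\,\mu(dy').
\]
This holds provided $y'\mapsto \pa_y\log G_\ve(y,y',t,T)$ is non-decreasing on $\R$, i.e.\ provided $\pa^2\log G_\ve/\pa y\,\pa y'\ge0$.

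To establish this log-supermodularity I will use a path-swap coupling. Fix $y_1<y_2$ and let $Y_\ve^1, Y_\ve^2$ be independent strong solutions of (\ref{E1}) on $[t,T]$ with $Y_\ve^1(t)=y_1$, $Y_\ve^2(t)=y_2$, and set $\tau=\inf\{s\ge t:Y_\ve^1(s)=Y_\ve^2(s)\}$. Define an involution $\Phi$ on the underlying probability space which, on $\{\tau<T\}$, interchanges the two trajectories on $[\tau,T]$, and is the identity on $\{\tau\ge T\}$. Because the two paths obey the same SDE (\ref{E1}), the strong Markov property at $\tau$ implies that the conditional law of $(Y_\ve^1,Y_\ve^2)$ on $[\tau,T]$ given $Y_\ve^1(\tau)=Y_\ve^2(\tau)$ is symmetric under swapping; hence $\Phi$ is measure-preserving and $\Phi(Y_\ve^1,Y_\ve^2)$ is again a pair of independent copies of (\ref{E1}) with the same initial data $(y_1,y_2)$. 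Now for $y_1'<y_2'$ the event $\{Y_\ve^1(T)\in dy_2',\,Y_\ve^2(T)\in dy_1'\}$ reverses the initial ordering, and so by continuity of paths is contained in $\{\tau<T\}$; under $\Phi$ it maps bijectively into $\{\Phi Y_\ve^1(T)\in dy_1',\,\Phi Y_\ve^2(T)\in dy_2'\}\cap\{\tau<T\}$. Using that $\Phi$ preserves the law and cancelling $dy_1'\,dy_2'$ yields the TP$_2$ inequality
\[
G_\ve(y_1,y_2',t,T)\,G_\ve(y_2,y_1',t,T)\;\le\;G_\ve(y_1,y_1',t,T)\,G_\ve(y_2,y_2',t,T),
\]
and letting $y_2-y_1$ and $y_2'-y_1'$ tend to zero produces $\pa^2\log G_\ve/\pa y\,\pa y'\ge 0$, which completes the reduction from the first paragraph.

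The main obstacle is the justification that $\Phi$ is measure-preserving, i.e.\ the conditional exchangeability of the two independent paths past the first meeting time; this rests on a careful application of the strong Markov property of (\ref{E1}). The remaining point is regularity: to pass from the integral TP$_2$ inequality to its differential form one needs continuity of $\pa_y G_\ve(y,y',t,T)$ in $(y,y')$ for $t<T$, which is supplied by Lemma 3.4 (and in fact also by standard parabolic theory since $b(\cdot,\cdot)$ satisfies (\ref{A1})). No concavity hypothesis on $b$ enters, consistent with the hypotheses of the theorem.
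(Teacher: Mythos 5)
Your proof is correct, and it takes a genuinely different route from the paper's. The paper (Theorem A2) argues on the PDE side: it first regularizes the terminal data, then observes that $v_{\ve,\del}(y,t) = q_{\ve,\del}(x+h,y,t) - q_{\ve,\del}(x,y,t)$ satisfies a linear backward equation with drift $b_{\ve,\del}$ from (\ref{Q6}), that the terminal data (\ref{R6}) for $v_{\ve,\del}$ is decreasing in $y$, and that the stochastic representation (\ref{U6}) together with comparison for the one-dimensional SDE (\ref{S6}) propagates this monotonicity backward in time; letting $\del\to 0$ then gives the claim. You instead compute $\pa q_\ve/\pa x = \ve\, G_\ve/u_\ve$ from (\ref{D1}), write the mixed second derivative as the difference between a pointwise and an averaged value of $\pa_y \log G_\ve(y,\cdot,t,T)$, and reduce the sign to the total positivity of order two (log-supermodularity) of the transition density in $(y,y')$. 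The TP$_2$ inequality you need is the $n=2$ Karlin--McGregor determinant inequality for a one-dimensional diffusion, and your path-swap coupling is the standard probabilistic proof of it: two independent solutions of (\ref{E1}) started at $y_1<y_2$ can only reverse order at time $T$ by first meeting at some $\tau<T$, and the strong Markov property at $\tau$ makes the post-$\tau$ swap an involutive, measure-preserving map that sends the ``reversed'' terminal event into a subset of the ``ordered'' one. The regularity needed to pass from the integral TP$_2$ inequality to the monotonicity of $y'\mapsto \pa_y\log G_\ve(y,y',t,T)$ is available from standard parabolic theory (cited in the paper for smoothness of $u_\ve$ for $t<T$), or from Lemma 3.4. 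What the paper's approach buys is that it fits naturally alongside the other convexity arguments in the appendix, which are all of Korevaar/maximum-principle type; what your approach buys is that it avoids the $\del$-regularization and the associated limiting steps, is noticeably shorter, and makes transparent that the statement is really the classical non-crossing property of one-dimensional diffusions, requiring only (\ref{A1}) and nothing about concavity of $b$.
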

\begin{proof} It will be sufficient to show that for any $h > 0$ the function \ $q_\ve(x + h,y,t) - q_\ve(x,y,t)$ is a decreasing function of $y$.  Letting $g(z)$ be the function
\be \label{M6}
g(z) = z^2, \ \ z < 0; \quad g(z) = 0, \ \ z \ge 0,
\ee
we define $u_{\ve,\del}(x,y,t)$ similarly to (\ref{PP6}) by
\be \label{N6}
u_{\ve,\del}(x,y,t) = E \left\{ \exp \left[ - g \left( \frac{Y_\ve(T) - x}{\del}\right)\right]  \ \Big| \  Y_\ve(t) = y \right\}.
\ee
Evidently $\lim_{\del \ra 0} u_{\ve,\del}(x,y,t) = u_{\ve}(x,y,t)$ and hence the function $q_{\ve,\del}(x,y,t) = -\ve \log u_{\ve,\del}(x,y,t)$ satisfies $\lim_{\del\ra 0}q_{\ve,\del}(x,y,t) = q_{\ve}(x,y,t)$. Arguing as in Lemma 3.1, we also see that $q_{\ve,\del}(x,y,t)$ satisfies the inequality
\be \label{O6}
0 \le q_{\ve,\del}(x,y,t) \le C\left[ (x-y)^2 H(x-y) + 1 \right], \quad  y \in \R, \ 0 \le t < T,
\ee
where $H(\cd)$ is the Heaviside function and $C$ a constant.

In order to prove that $q_\ve(x + h, y, t) - q_\ve(x,y,t)$ is decreasing in $y$ it will be sufficient to show that the function $v_{\ve,\del}(y,t) = q_{\ve,\del}(x + h, y, t) - q_{\ve,\del}(x,y,t)$ is decreasing in $y$ for any $\del > 0$.  To see this we note that $v_{\ve,\del}$ satisfies a PDE
\be \label{P6}
\frac {\pa v_{\ve,\del}}{\pa t} + b_{\ve,\del}(y,t) \; \frac {\pa v_{\ve,\del}}{\pa y} + \frac \ve 2 \; \frac {\pa^2 v_{\ve,\del}}{\pa y^2} = 0, \quad  y \in \R, \  t < T,
\ee
where the drift $b_{\ve,\del}(\cd,\cd)$ is given by the formula 
\be \label{Q6}
b_{\ve,\del}(y,t) = b(y,t) - \frac 1 2 \ \frac{\pa q_{\ve,\del}(x + h, y, t)}{\pa y} - \frac 1 2 \ \frac{\pa q_{\ve,\del}(x,y,t)}{\pa y} \ .
\ee
The terminal data for $v_{\ve,\del}$ is given by
\begin{eqnarray} \label{R6}
v_{\ve,\del}(y,T) = &h[2(x-y)+h]/\del,& \quad  {\rm if \ }y < x, \\
&[x+h - y]^2/\del,& \quad   {\rm if \ }x < y < x+h, \nn \\
&0,& \quad   {\rm if \ } y > x + h. \nn
\end{eqnarray}
Consider now the diffusion process $Y_{\ve,\del}(s)$ defined by
\be \label{S6}
dY_{\ve,\del}(s) = b_{\ve,\del}(Y_{\ve,\del}(s),s)ds + \sqrt{\ve}  \ dW(s).
\ee
From Lemma 3.4 we see that the drift $b_{\ve,\del}(y,s)$ is uniformly Lipschitz in $y$ in any region $y \ge y_0,  \ 0\le t \le T-\eta$, where $y_0 \in \R$ and $\eta >0$ can be arbitrary.  Let $\tau_{y,t}$ be the first hitting time at $y_0$ for $Y_{\ve,\del}(\cd)$ with $Y_{\ve,\del}(t) = y > y_0$.  Then we have the representation,
\begin{multline} \label{T6}
v_{\ve,\del}(y,t) = E \left[v_{\ve,\del}(Y_{\ve,\del}(T-\eta), T-\eta) \; ; \; \tau_{y,t} > T-\eta \right]
\\
+ E \left[v_{\ve,\del}(Y_{\ve,\del}(\tau_{y,t}), \tau_{y,t}) \; ; \; \tau_{y,t} < T-\eta \right].  
\end{multline}
Observe that from (\ref{Q6}) we have that $b_{\ve,\del}(y,s) \ge b(y,s)$, $t \le s < T$.  Hence using (\ref{O6}) we may take the limit $y_0 \ra -\infty$ in (\ref{T6}) to conclude that
\be \label{U6}
v_{\ve,\del}(y,t) = E \left[v_{\ve,\del}(Y_{\ve,\del}(T-\eta), T-\eta) \  \big| \  Y_{\ve,\del}(t) = y \right].
\ee
If $v_{\ve,\del}(z,T-\eta)$ were known to be a decreasing function of $z$ then it would follow from (\ref{U6}) that $v_{\ve,\del}(y,t)$ is a decreasing function of $y$.  Since $u_{\ve,\del}(x,y,T-\eta)$ converges uniformly on any finite interval $a \le y \le b$ as $\eta \ra 0$ to the function $\exp[-g(y-x)/\del]$, we see that $v_{\ve,\del}(z,T-\eta)$ converges uniformly on any finite interval as $\eta \ra 0$ to the decreasing function (\ref{R6}).  Thus we can still conclude from (\ref{U6}) that $ v_{\ve,\del}(y,t)$ is a decreasing function of $y$.  The result follows. 
\end{proof}
It appears that one cannot prove the convexity of $q_\ve(x,y,t)$ as a function of $x$ for fixed $y$ directly, in analogy to Theorem A1, so we shall proceed to showing that $q_\ve(x,y,t)$ is convex jointly in $(x,y)$.  To do this we consider solutions $v(x,y,t)$ to the semi-linear equation
\be \label{V6}
\frac{\pa v}{\pa t} +   b(y,t) \left| \frac{\pa v}{\pa y}\right| + \frac \ve 2 \; \frac{\pa^2v}{\pa y^2} + \frac {\ve'} 2 \; \frac{\pa^2v}{\pa x^2}=0, \quad t<T,
\ee
in the disk $D_R = \{(x,y) : x^2 +y^2 < R^2\} $, with Dirichlet boundary condition and given terminal data.  Thus we wish to solve (\ref{V6}) subject to the conditions
\be \label{W6}
v(x,y,T) = v_0(x,y),  \ (x,y) \in D_R \; ; \quad v(x,y,t) = 0, \  (x,y) \in \pa D_R, \; t < T.
\ee
Using classical techniques \cite{fried, lieb} for proving regularity of solutions to semi-linear parabolic equations, we can establish the following result:
\begin{proposition}:  Assume $b(\cd,\cd)$ satisfies (\ref{A1}) and the terminal function $v_0(x,y)$ is $C^2$ for $(x,y)$ in the closure $\bar{D}_R$ of $D_R$, with $v_0(x,y) = 0$ for $(x,y) \in \pa D_R$.  Then there is a unique solution $v(x,y,t), \; (x,y) \in D_R, \; t<T$, to the terminal value problem (\ref{V6}), (\ref{W6}) which has the property $v(x,y,t)$ is $C^2$ in $(x,y)$, $C^1$ in $t$, and satisfies the inequality
\be \label{Y6}
\sup_{T_0 < t < T} \left\{ |v(x,y,t)| + | Dv(x,y,t)| +|D^2v(x,y,t)| \ : \ (x,y)\in D_R \right\} < \infty
\ee
for any $T_0 < T$. In (\ref{Y6})  $Dv(x,y,t)$ denotes the gradient of $v(x,y,t)$ with respect to $(x,y)$, and $D^2v(x,y,t)$ the Hessian with respect to $(x,y)$.  Additionally, the functions $v(x,y,t)$, $Dv(x,y,t)$ are continuous for $(x,y) \in \bar D_R, \; t \le T$.  The tangential second derivative $\left( y\; \frac \pa{\pa x} - x \; \frac \pa{\pa y} \right)Dv(x,y,t)$ is also continuous.
\end{proposition}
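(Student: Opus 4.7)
The plan is to prove Proposition A.2 by approximating the non-smooth nonlinearity $b(y,t)|\pa v/\pa y|$ in (\ref{V6}) by a smooth family $b(y,t)\vp_\del(\pa v/\pa y)$, with for instance $\vp_\del(p)=\sqrt{p^2+\del^2}-\del$, so that $\vp_\del$ is smooth, convex, satisfies $\vp_\del(0)=0$, $|\vp'_\del|\le 1$, and converges to $|p|$ uniformly on compacta as $\del\ra 0$. For each $\del>0$ the regularized equation is uniformly parabolic and quasi-linear with smooth coefficients, so the classical theory of \cite{fried, lieb} produces a unique classical solution $v_\del$ with the prescribed Dirichlet boundary and terminal data. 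Since $v_0\in C^2(\bar D_R)$ with $v_0=0$ on $\pa D_R$ (which gives first-order compatibility with the lateral data), standard parabolic Schauder theory up to the lateral boundary yields $v_\del\in C^{2+\al,1+\al/2}(\bar D_R\times[T_0,T])$ for every $T_0<T$ and some $\al\in(0,1)$.

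The core task is to derive a priori estimates on $v_\del$, $Dv_\del$ and $D^2v_\del$ that are uniform in $\del$, enabling passage to the limit. The sup-norm bound $\|v_\del\|_\infty\le\|v_0\|_\infty$ follows from the maximum principle, since $\vp_\del(0)=0$ and the nonlinear term vanishes at interior extrema. A uniform gradient bound is obtained by the Bernstein method applied to $w=|Dv_\del|^2$: differentiating the regularized PDE in $x$ and $y$ and using the crucial pointwise bound $|\vp'_\del(p)|\le 1$, one derives a linear parabolic differential inequality for $w$ whose coefficients are controlled solely by the Lipschitz constant $A$ of $b(\cd,\cd)$, hence independent of $\del$. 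The boundary values of $|Dv_\del|$ on $\pa D_R$ are controlled by standard upper and lower barrier functions built from $v_0$, exploiting $v_0\in C^2(\bar D_R)$ and $v_0\equiv 0$ on $\pa D_R$. With $\|Dv_\del\|_\infty$ under uniform control, the term $b(y,t)\vp_\del(\pa_y v_\del)$ becomes a uniformly bounded function, and Schauder estimates for the linear constant-coefficient operator $\pa_t+\tfrac{\ve}{2}\pa_y^2+\tfrac{\ve'}{2}\pa_x^2$ yield uniform $C^{2+\al,1+\al/2}$ bounds on $v_\del$ on $\bar D_R\times[T_0,T]$. Arzelà-Ascoli then extracts a subsequence $v_{\del_k}\to v$ in $C^{2,1}$ on compacta of $D_R\times(-\infty,T)$, and the limit satisfies (\ref{V6}) classically (at points where $\pa v/\pa y\neq 0$ by direct convergence, and at points where $\pa v/\pa y=0$ by continuity, since both sides vanish there).

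The main obstacle is carrying out the Bernstein-type gradient estimate in the presence of the non-smooth nonlinearity; the key point that makes it succeed is the uniform bound $|\vp'_\del|\le 1$, which ensures the differentiated equation has drift-like coefficient $b(y,t)\vp'_\del(\pa_y v_\del)$ controlled pointwise by $|b(y,t)|$ regardless of $\del$. Uniqueness for the limit equation (\ref{V6}) follows from a comparison argument: if $v_1,v_2$ are two classical solutions with the same data, then $w=v_1-v_2$ satisfies a linear parabolic inequality whose coefficient in $\pa_y w$ is bounded by $A$ (linearizing via $\big||p|-|q|\big|\le|p-q|$), and the maximum principle forces $w\equiv 0$.

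Finally, continuity of $v$ and $Dv$ up to $\bar D_R\times(-\infty,T]$ is a direct consequence of the uniform $C^{2+\al,1+\al/2}$ bound together with first-order compatibility at the terminal time. For the tangential second derivative $(y\pa_x-x\pa_y)Dv$, write $\tau=y\pa_x-x\pa_y$ and apply $\tau$ to the regularized equation: $\tau v_\del$ satisfies a linear parabolic equation whose coefficients involve $\tau b(y,t)$ and $\vp'_\del(\pa_y v_\del)$, both bounded and Hölder continuous by the estimates already obtained, and the boundary condition $\tau v_\del=0$ on $\pa D_R$ holds since $\tau$ is tangent to $\pa D_R$ and $v_\del$ vanishes there. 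Parabolic Schauder boundary regularity then yields uniform $C^{1+\al,(1+\al)/2}$ bounds on $\tau v_\del$ up to the lateral boundary, which survive the limit $\del\ra 0$ and give the continuity of $\tau(Dv)$ on $\bar D_R\times[T_0,T]$.
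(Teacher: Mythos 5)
Your regularization strategy (mollifying $|p|$ by $\vp_\del$, proving a priori estimates uniform in $\del$, and passing to the limit) is a reasonable realization of the ``classical techniques'' the paper cites without detail, and the sup-norm and Bernstein gradient estimates go through essentially as you describe (with a zeroth-order correction term $-2p\,(\pa_y b)\,\vp_\del(p)$ that is absorbed by an $e^{2A(T-t)}$ factor, and with somewhat more care than a parabola barrier on $\pa D_R$). There are, however, two concrete gaps.

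First, the claimed $C^{2+\al,1+\al/2}(\bar D_R\times[T_0,T])$ regularity is too strong at the corner $\pa D_R\times\{T\}$. The condition $v_0=0$ on $\pa D_R$ is \emph{zeroth}-order compatibility only; first-order compatibility would additionally require $\tfrac\ve 2\pa_y^2 v_0+\tfrac{\ve'}2\pa_x^2 v_0 + b(y,T)\vp_\del(\pa_y v_0)=0$ on $\pa D_R$, which is not assumed. Without it you can only expect $v,Dv$ continuous up to the corner and $D^2v$ bounded (but not Hölder, nor even continuous) there --- which is precisely the weaker assertion of (\ref{Y6}). This is not fatal, but it means the Schauder step must be replaced near the corner by a bounded-second-derivative estimate (e.g., via the Duhamel representation and two integrations by parts onto $v_0\in C^2$, as the paper does in the proof of Proposition A.1 through (\ref{F6}), or via $W^{2,p}$ theory up to the lateral boundary away from $t=T$).

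Second, and more seriously, the Schauder argument for the tangential regularity does not close. Under (\ref{A1}) the coefficient $\tau b=-x\,\pa_y b(y,t)$ is only bounded and continuous, \emph{not} H\"older, so parabolic Schauder theory is not available for the equation you write for $\tau v_\del$. Moreover $\vp_\del'(\pa_y v_\del)$ has H\"older seminorm that blows up like $\del^{-1}$, so even if Schauder applied, the estimate would not be uniform in $\del$. Finally, you have dropped the nontrivial commutator: since $[\tau,\pa_x^2]=2\pa_x\pa_y$ and $[\tau,\pa_y^2]=-2\pa_x\pa_y$, applying $\tau$ to the regularized equation produces the extra source term $(\ve-\ve')\pa_x\pa_y v_\del$ on the right-hand side (recall (\ref{V6}) has $\ve'\le\ve$, and Theorem A.2 needs $\ve'\to 0$, so $\ve'\ne\ve$ is the relevant case). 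That source involves precisely the second derivative you have not yet controlled up to the boundary, so the argument as stated is circular. A workable route is to replace Schauder by $L^p$ (Calder\'on--Zygmund) estimates, which tolerate bounded measurable lower-order coefficients: the uniform gradient bound and the uniform $W^{2,p}$ estimate for $v_\del$ put the whole source $f_\del=-(\tau b)\vp_\del(\pa_y v_\del)+b\,\vp_\del'(\pa_y v_\del)\pa_x v_\del+(\ve-\ve')\pa_x\pa_y v_\del$ in $L^p$ uniformly in $\del$, and combined with $\tau v_\del=0$ on $\pa D_R$ and $\tau v_0\in C^1(\bar D_R)$ this yields uniform $C^{1}$ control of $\tau v_\del$ up to the lateral boundary. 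Even here one must track carefully how much regularity the terminal datum $\tau v_0$ (which is only $C^1$, not $C^2$) allows near the corner. The Green's function / perturbation-expansion route of Proposition A.1, which is what the paper's appendix actually develops, avoids both the H\"older-coefficient issue and the commutator by integrating by parts in the representation formula, and is the cleaner way to prove the tangential-derivative continuity.
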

Next we need to establish a Hopf maximum principle (\ref{TT6}) for solutions to (\ref{V6}), (\ref{W6}).
\begin{lem}  Suppose $v_0(x,y), (x,y) \in \bar D_R$, satisfies the conditions of Proposition A2, and in addition $0 \le v_0(x,y) \le 1$, $(x,y) \in \bar D_R$.  Then if $v_0 \not\equiv 0$, the solution $v(x,y,t)$ of (\ref{V6}), (\ref{W6}) satisfies the inequalities
\be \label{Z6}
0 < v(x,y,t) < 1, \quad (x,y) \in D_R, \ \ t < T,
\ee
\be \label{AA6}
 x\; \frac {\pa v}{\pa x} (x,y,t) + y\;\frac {\pa v}{\pa y} (x,y,t) < 0, \quad  (x,y) \in \pa D_R, \ \ t < T.
\ee
\end{lem}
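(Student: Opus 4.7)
The plan is to reduce equation~(\ref{V6}) to a linear backward parabolic equation with bounded measurable coefficients and then invoke the classical strong maximum principle and Hopf boundary-point lemma. First I would define
$\sigma(x,y,t)=\mathrm{sgn}\bigl((\pa v/\pa y)(x,y,t)\bigr)$
pointwise on $D_R\times(-\infty,T]$ (the value of $\sigma$ on the exceptional set $\{\pa v/\pa y=0\}$ being irrelevant), and observe that the smooth solution $v$ of Proposition~A2 is then a classical solution of the linear equation
\[
L_\sigma v \;:=\; \frac{\pa v}{\pa t} + b(y,t)\,\sigma(x,y,t)\,\frac{\pa v}{\pa y} + \frac{\ve}{2}\,\frac{\pa^2 v}{\pa y^2} + \frac{\ve'}{2}\,\frac{\pa^2 v}{\pa x^2} = 0.
\]
The coefficient $b(y,t)\sigma$ is bounded on $D_R$ (using $|b(y,t)|\le|b(0,t)|+AR$ from~(\ref{A1})), the second-order part is uniformly elliptic in $(x,y)$, and $L_\sigma$ annihilates constants; so both $0$ and $1$ are solutions of $L_\sigma w=0$, and by linearity so is $1-v$.

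For the first assertion~(\ref{Z6}), I would apply the weak parabolic maximum principle to $L_\sigma$: since $v$ equals $v_0\in[0,1]$ on $D_R\times\{T\}$ and vanishes on $\pa D_R\times(-\infty,T)$, we obtain the non-strict bounds $0\le v\le 1$ on $\bar D_R\times(-\infty,T]$. For strictness, if $v(x_0,y_0,t_0)=0$ at some $(x_0,y_0)\in D_R$ with $t_0<T$, the strong maximum principle for $L_\sigma v=0$ (running backward in time) forces $v\equiv 0$ on $\bar D_R\times[t_0,T]$, contradicting $v(\cdot,\cdot,T)=v_0\not\equiv 0$. Applying the same argument to $w=1-v$, which solves $L_\sigma w=0$ with terminal data in $[0,1]$ and lateral data $1$, rules out $v=1$ at any interior point.

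For the boundary derivative inequality~(\ref{AA6}), fix $(x_0,y_0)\in\pa D_R$ and $t_0<T$, so that $v(x_0,y_0,t_0)=0$ and, by the first step, $v>0$ on $D_R\times(-\infty,T)$. The disk $D_R$ satisfies the interior sphere condition at every boundary point, and $L_\sigma$ is uniformly parabolic with bounded coefficients, so Hopf's boundary-point lemma yields a strictly negative outward normal derivative of $v$ at $(x_0,y_0,t_0)$. Since the outward unit normal to $\pa D_R$ at $(x_0,y_0)$ is $(x_0,y_0)/R$, this reads
\[
0 \;>\; R^{-1}\left[x_0\,\frac{\pa v}{\pa x}(x_0,y_0,t_0) + y_0\,\frac{\pa v}{\pa y}(x_0,y_0,t_0)\right],
\]
which is exactly~(\ref{AA6}).

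The main obstacle is ensuring that the nonlinearity $b(y,t)\,|\pa v/\pa y|$ is harmless under the linearization. The drift $b(y,t)\sigma$ is in general only measurable, since $\sigma$ can jump across the nodal set $\{\pa v/\pa y=0\}$; nevertheless, the classical strong maximum principle and Hopf boundary-point lemma for $C^{2,1}$ solutions of uniformly parabolic linear equations remain valid under mere boundedness (indeed measurability) of the lower-order coefficients, as the standard barrier proofs only exploit uniform ellipticity together with $L^\infty$ bounds on the drift. The $C^2$ regularity of $v$ in $(x,y)$ and the continuity of $Dv$ up to $\pa D_R$ needed to make sense of the normal derivative in~(\ref{AA6}) are supplied by Proposition~A2.
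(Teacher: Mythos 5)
Your proof is correct and matches the paper's approach: both invoke the Protter--Weinberger weak maximum principle, strong maximum principle, and Hopf boundary-point lemma (Theorems 1--3 of Chapter 3 of \cite{pw}). The only difference is that you make the linearization $|\pa v/\pa y| = \sigma\,\pa v/\pa y$ explicit so as to reduce to a linear uniformly parabolic operator with bounded (possibly discontinuous) drift, whereas the paper simply asserts that the Protter--Weinberger arguments carry over directly to the quasilinear equation~(\ref{V6}); since those barrier arguments only exploit uniform ellipticity and $L^\infty$ control of lower-order terms, the two formulations are equivalent.
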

\begin{proof}  The fact that $0\le v(x,y,t) \le 1, \; (x,y) \in D_R, \ t < T$, follows by applying the argument for the weak maximum principle, Theorem 1 of Chapter 3 of \cite{pw}, to the quasilinear equation (\ref{V6}).  Similarly one sees that the argument for the strong maximum principle, Theorem 2 of Chapter 3 in  \cite{pw} applies to (\ref{V6}).  We conclude that (\ref{Z6}) holds.  Finally (\ref{AA6}) follows by applying the argument of Theorem 3 of Chapter 3 in \cite{pw} to (\ref{V6}). 
\end{proof}
The final result we need in order to apply Korevaar's method \cite{kor} to prove convexity in $(x,y)$ of $q_\ve(x,y,t)$ is in effect a comparison principle for solutions of the quasilinear equation (\ref{V6}) to solutions of the linear equation
\be \label{AB6}
\frac{\pa v}{\pa t} +  b(y,t) \frac{\pa v}{\pa y} + \frac \ve 2 \; \frac{\pa^2v}{\pa y^2} + \frac {\ve'} 2 \; \frac{\pa^2v}{\pa x^2}=0.
\ee
\begin{lem}  Assume $b(\cd,\cd)$ satisfies (\ref{A1}) and let $v(x,y,t)$, $(x,y) \in D_R, \ \ t < T,$ be a solution of (\ref{V6}) which is $C^2$ in $(x,y)$ and $C^1$ in $t$.  Assume further that $v(x,y,t)$ extends to a continuous function on $\bar D_R \times \{ t \le T\}$.  Let $w(x,y,t)$ be a second solution to (\ref{V6}) with similar properties to those of $v(x,y,t)$.  Then if for some constant $M$ the inequality
\be \label{AC6}
\int^T_t |b(0,s)|ds + A(T-t) + \sqrt{\ve(T-t)} \le M
\ee
holds, there is a constant $C$ depending only on $M$ such that
\begin{multline} \label{AD6}
 |v(0,0,t) - w(0,0,t)| \le  \\
 \exp\left[-R^2 / C\ve(T-t) \right]  \sup\big\{ |v(x,y,s) - w(x,y,s)| : t \le s < T, \ (x,y) \in \pa D_R \big\} + \\
 \sum_{k \ge 0} \exp \left[-k^2 / C\ve(T-t) \right] \sup  \big\{ |v(x,y,T) -  w(x,y,T) | : (x,y) \in D_{k+M} \cap D_R \big\}, 
\end{multline}
provided $0 < \ve ' \le \ve$.
\end{lem}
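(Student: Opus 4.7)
Set $u=v-w$. The elementary identity $|a|-|b|=\gamma(a,b)(a-b)$ with $|\gamma(a,b)|\le 1$ (take $\gamma=(|a|-|b|)/(a-b)$ for $a\ne b$, and $0$ otherwise) linearizes the quasilinear term in (\ref{V6}): $u$ satisfies pointwise on $D_R\times(-\infty,T)$ the linear equation
\[ \frac{\pa u}{\pa t}+\tilde b(x,y,t)\frac{\pa u}{\pa y}+\frac{\ve}{2}\frac{\pa^2 u}{\pa y^2}+\frac{\ve'}{2}\frac{\pa^2 u}{\pa x^2}=0, \]
with $|\tilde b(x,y,t)|\le|b(y,t)|$. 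The coefficient $\tilde b$ is only bounded measurable, but $u$ itself is $C^{2,1}$ in the cylinder and continuous on $\bar D_R\times\{t\le T\}$ by hypothesis, and that is all we need.

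Next I would introduce a weak solution $(X(s),Y(s))_{s\ge t}$ of
\[ dX=\sqrt{\ve'}\,dW_1,\qquad dY=\tilde b(X,Y,s)\,ds+\sqrt{\ve}\,dW_2,\qquad X(t)=Y(t)=0, \]
whose existence follows from Girsanov's theorem since $\tilde b$ is bounded and the noise is nondegenerate in $y$. Let $\tau$ be the first exit time of $(X,Y)$ from $D_R$ after $t$ and put $\tau_T=\tau\wedge T$. Applying It\^o's formula to $u(X(s),Y(s),s)$ yields the stochastic representation $u(0,0,t)=E[u(X(\tau_T),Y(\tau_T),\tau_T)]$, whence
\[ |u(0,0,t)|\le P(\tau<T)\,\sup_{\pa D_R\times[t,T)}|v-w|\;+\;E\big[|u(X(T),Y(T),T)|\,;\,\tau=T\big]. \]

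The remaining work is Gaussian tail analysis. From (\ref{A1}), $|\tilde b(x,y,s)|\le|b(0,s)|+A|y|$, and pathwise Gronwall gives
\[ \sup_{t\le s\le T}|Y(s)|\le e^{A(T-t)}\left[\int_t^T|b(0,s')|\,ds'+\sqrt{\ve}\sup_{t\le s\le T}|W_2(s)-W_2(t)|\right], \]
while $|X(s)|\le\sqrt{\ve'}|W_1(s)-W_1(t)|\le\sqrt{\ve}|W_1(s)-W_1(t)|$. By (\ref{AC6}) the deterministic pieces are bounded by a constant depending only on $M$, so exiting $D_R$ forces the Brownian suprema to be of order $R/\sqrt{\ve}$ up to $M$-factors; the reflection bound $P(\sup|W|>a)\le 2e^{-a^2/2(T-t)}$ then gives $P(\tau<T)\le C\exp[-R^2/C\ve(T-t)]$ with $C=C(M)$. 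For the terminal term I would partition $\{\tau=T\}$ using the annuli $A_k=(D_{k+M}\setminus D_{k+M-1})\cap D_R$ for $k\ge 1$ and $A_0=D_M\cap D_R$; on $A_k$ the factor $|u|$ is controlled by the sup of $|v-w|(\cdot,\cdot,T)$ over $D_{k+M}\cap D_R$, and the same Gaussian argument bounds $P((X(T),Y(T))\in A_k)$ by $C\exp[-k^2/C\ve(T-t)]$, yielding the stated series. The main technical point is legitimizing the stochastic representation when $\tilde b$ is only bounded measurable: I would either invoke the weak-solution-plus-It\^o argument above, or, more pedestrianly, mollify $\tilde b$ to a Lipschitz $\tilde b_\delta$, run the entire argument with strong solutions (every constant depending only on $M$), and pass to the limit $\delta\to 0$ using the $C^{2,1}$ regularity of $u$ on compact subcylinders.
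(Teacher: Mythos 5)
Your proposal is correct and reaches the paper's estimate by a closely related but genuinely distinct route. The paper argues one--sidedly: from $\bigl||a|-|b|\bigr|\le|a-b|$ it derives the differential inequality (\ref{AE6}), $\pa_t u - |b|\,|\pa_y u| + \tfrac{\ve}{2}\pa_y^2 u + \tfrac{\ve'}{2}\pa_x^2 u \le 0$, introduces the value function $C$ of the stochastic control problem (\ref{AH6}) with the same lateral and terminal data, compares $u\ge C$ by the maximum principle, bounds $C$ from below via Gronwall and Gaussian tails as in Lemma 3.1, and then exchanges $v$ and $w$ for the other inequality. You instead use the sharper linearization $|a|-|b|=\gamma(a,b)(a-b)$, $|\gamma|\le 1$, so that $u=v-w$ solves an \emph{exact} linear equation with a bounded measurable drift $\tilde b$ satisfying $|\tilde b|\le|b|$, and read off $u(0,0,t)=E\bigl[u(X(\tau_T),Y(\tau_T),\tau_T)\bigr]$ from It\^o's formula along a weak solution of the linearized SDE; the exit--probability and annulus bounds then give (\ref{AD6}) directly. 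This buys a two--sided Feynman--Kac representation in one stroke and isolates the only structural fact being used, namely $|\tilde b|\le|b|$. The price is the well--posedness of the auxiliary process with only measurable drift, which you correctly flag and handle either by Girsanov (bounded drift, nondegenerate $y$-noise) or by mollifying $\tilde b$ and passing to the limit using the assumed $C^{2,1}$ regularity of $u$. Note that the paper faces an isomorphic technical point in disguise: the admissibility of the extremal Markov controller $\lambda^*=-\mathrm{sgn}(\pa_y u)\,|b|$ and the comparison of a classical supersolution with the (merely viscosity) value function $C$; the paper leaves this implicit, so your explicit remedies are if anything a more complete account. One caution shared by both treatments: the tail estimates $P(\tau<T)\le\exp[-R^2/C\ve(T-t)]$ and $P\bigl(|(X(T),Y(T))|\ge k+M-1\bigr)\le\exp[-k^2/C\ve(T-t)]$, with $C=C(M)$ alone, only hold once $R$ (resp.\ $k$) dominates the deterministic reach $O(Me^{M})$ of the controlled process under (\ref{AC6}); since the lemma is invoked in the paper only as $R\to\infty$ this is harmless in context, but it is worth being aware that the displayed bounds silently assume that range.
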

\begin{proof} We set $u(x,y,t) = v(x,y,t) - w(x,y,t)$, and observe from (\ref{V6}) that $u(x,y,t)$  satisfies the differential inequality
\be \label{AE6}
\frac{\pa u}{\pa t} -  |b(y,t)| \left| \frac{\pa u}{\pa y}\right| + \frac \ve 2 \; \frac{\pa^2u}{\pa y^2} + \frac {\ve'} 2 \; \frac{\pa^2u}{\pa x^2} \le 0 \ .
\ee
Suppose now that $C(x,y,t)$ satisfies
\be \label{AF6}
\frac{\pa C}{\pa t} -  |b(y,t)| \left| \frac{\pa C}{\pa y}\right| + \frac \ve 2 \; \frac{\pa^2C}{\pa y^2}  + \frac {\ve'} 2  \frac{\pa^2C}{\pa x^2} = 0,  \quad (x,y) \in D_R, \; t < T,
\ee
with boundary and terminal data given by
\be \label{AG6}
C(x,y,T) = u(x,y,T),  \ (x,y) \in D_R \ ; \quad  C(x,y,t) = u(x,y,t),  \ (x,y) \in \pa D_R , \  t < T.
\ee
Then by the maximum principle we have that
$u(x,y,t) \ge C(x,y,t)$ for $(x,y) \in D_R , \   t < T$.  Observe next that $C(x,y,t)$ is the cost function for an optimal control problem.  Thus
\be \label{AH6}
C(x,y,t) = \inf_{\la(\cd,\cd)} \Big\{ E\big[ u(X(T), Y(T), T ) \; ; \; \tau_{x,y,t} > T \big]
+ E \big[ u(X(\tau_{x,y,t}), Y(\tau_{x,y,t}), t) \; ; \; \tau_{x,y,t} < T \big] \Big\},  
\ee
where the stochastic process $[X(s), Y(s)]$ satisfies the SDE
\be \label{AI6}
dY(s) = \la( Y(s), s) ds + \sqrt{\ve} \ dW(s), \quad  dX(s)  = \sqrt{\ve'} \ dW'(s), 
\ee
and  $W(\cd),  \ W'(\cd)$ are independent copies of Brownian motion.  The controller $\la(y,s)$ satisfies the constraints $|\la(y,s)| \le |b(y,s)|, \; y \in \R, \; s \le T$.  The stopping time $\tau_{x,y,t}$ is the first hitting time on $\pa D_R$ for the process (\ref{AI6}) with $X(t) = x, Y(t) = y$.

If we argue now as we did in Lemma 3.1 we can see that $C(0,0,t)$ is bounded below by the negative of the RHS of (\ref{AD6}).  Thus we obtain a lower bound on $v(0,0,t) - w(0,0,t)$.  Since we can repeat the previous argument with $v$ and $w$ interchanged, we also get an upper bound on $v(0,0,t) - w(0,0,t)$, whence (\ref{AD6}) follows. 
\end{proof}
\begin{proposition}   Assume $b(\cd,\cd)$ satisfies (\ref{A1}), and the terminal function $v_0(x,y)$ in Proposition A2 is log concave and satisfies the boundary condition $|Dv_0(x,y)| \not= 0$ for $(x,y) \in \pa D_R$.  If in addition the function $b(y,t)$ is concave in $y$ for $y \in \R, \; t \le T$, then the solution $v(x,y,t)$ of (\ref{V6}), (\ref{W6}) is also log concave.
\end{proposition}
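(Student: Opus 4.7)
The plan is to apply Korevaar's concavity-function method \cite{kor} (cf.~\cite{gk}) to the quasi-linear PDE (\ref{V6}). I would first set $\phi(x,y,t) := -\log v(x,y,t)$ on the open set where $v > 0$, which by Lemma A1 is $D_R \times \{t < T\}$; substituting $v = e^{-\phi}$ in (\ref{V6}) yields the pointwise identity
\begin{equation*}
\pa_t\phi - b(y,t)|\pa_y\phi| - \tfrac{\ve}{2}\big[(\pa_y\phi)^2 - \pa_y^2\phi\big] - \tfrac{\ve'}{2}\big[(\pa_x\phi)^2 - \pa_x^2\phi\big] = 0,
\end{equation*}
and Proposition A2 supplies the regularity ($\phi$ is $C^2$ in $(x,y)$ and $C^1$ in $t$). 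Log concavity of $v$ is equivalent to nonpositivity of the concavity function
\begin{equation*}
\mathcal{C}(p, p', t) := \phi\big(\tfrac{p+p'}{2}, t\big) - \tfrac{1}{2}\phi(p, t) - \tfrac{1}{2}\phi(p', t) \quad \text{on } D_R \times D_R \times \{t<T\},
\end{equation*}
and the hypothesis on $v_0$ gives $\lim_{t \ra T} \mathcal{C} \le 0$.

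Arguing by contradiction, I will suppose $\sup\mathcal{C} > 0$ and, for small $\eta > 0$, consider the perturbed function $\tilde{\mathcal{C}} := \mathcal{C} - \eta(T - t)$, which still has positive supremum while satisfying $\tilde{\mathcal{C}}|_{t = T} \le 0$. Provided $\sup\tilde{\mathcal{C}}$ is attained at an interior point $(p^*, {p'}^*, t^*)$ with $m^* := (p^* + {p'}^*)/2 \in D_R$ and $t^* < T$, first-order optimality forces
\begin{equation*}
\nabla\phi(m^*, t^*) = \nabla\phi(p^*, t^*) = \nabla\phi({p'}^*, t^*) =: q = (q_x, q_y),
\end{equation*}
and the symmetric-variation $(p, p') \mapsto (p^* + s, {p'}^* + s)$ yields the spatial-Hessian inequalities $\pa_y^2\phi(m^*) \le \tfrac{1}{2}[\pa_y^2\phi(p^*) + \pa_y^2\phi({p'}^*)]$ and $\pa_x^2\phi(m^*) \le \tfrac{1}{2}[\pa_x^2\phi(p^*) + \pa_x^2\phi({p'}^*)]$. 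Substituting the $\phi$-PDE at $p^*, {p'}^*, m^*$ and exploiting the coincidence of gradients (so the quadratic terms $\tfrac{\ve}{2}q_y^2 + \tfrac{\ve'}{2}q_x^2$ cancel since $1 - \tfrac{1}{2} - \tfrac{1}{2} = 0$), I will compute
\begin{equation*}
\pa_t\mathcal{C}\big|_* = \big[b(y_m^*, t^*) - \tfrac{1}{2}b(y^*, t^*) - \tfrac{1}{2}b({y'}^*, t^*)\big]|q_y| + \tfrac{\ve}{2}\Big[\tfrac{\pa_y^2\phi(p^*) + \pa_y^2\phi({p'}^*)}{2} - \pa_y^2\phi(m^*)\Big] + \tfrac{\ve'}{2}\Big[\tfrac{\pa_x^2\phi(p^*) + \pa_x^2\phi({p'}^*)}{2} - \pa_x^2\phi(m^*)\Big],
\end{equation*}
which is nonnegative by concavity of $b(\cdot, t^*)$ and the Hessian inequalities. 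But $\pa_t\tilde{\mathcal{C}}|_* = 0$ at the interior maximum, i.e.\ $\pa_t\mathcal{C}|_* = -\eta < 0$---contradiction. Sending $\eta \ra 0^+$ then yields $\mathcal{C} \le 0$ throughout.

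The hard part will be verifying that $\sup\tilde{\mathcal{C}}$ is attained at an interior point. Since $\phi \equiv +\infty$ on $\pa D_R$, $\mathcal{C}$ diverges to $-\infty$ whenever one of $p, p'$ alone approaches $\pa D_R$ while the other stays interior; and convexity of the disk $D_R$ forces $m \in D_R$ whenever $p, p' \in D_R$, so $\mathcal{C}$ cannot blow up to $+\infty$ in the interior of $D_R \times D_R$. The only genuinely delicate regime is the diagonal approach $(p, p') \ra (p_0, p_0) \in \pa D_R \times \pa D_R$, along which $m \ra p_0$ and all three of $\phi(m), \phi(p), \phi(p')$ diverge together. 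I would resolve this by combining Lemma A1 with the continuity of $Dv$ on $\bar D_R$ from Proposition A2 to obtain the boundary asymptotics $v(q, t) = |Dv(p_0, t)|\,d(q)\,[1 + o(1)]$ as $q \ra p_0$, where $d(q) := \mathrm{dist}(q, \pa D_R)$ and $|Dv(p_0, t)| > 0$ by (\ref{AA6}); hence $\phi(q, t) = -\log d(q) + O(1)$, and convexity of $D_R$ yields $d(m) \ge \tfrac{1}{2}[d(p) + d(p')] \ge \sqrt{d(p) d(p')}$, so $\mathcal{C}(p, p', t) \le o(1)$ along the diagonal approach. A compactness argument on $\bar D_R \times \bar D_R \times [T_0, T]$ for fixed $T_0 < T$ then produces an interior maximizer. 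The non-differentiability of $|\pa_y\phi|$ at $\pa_y\phi = 0$ should be harmless, since the PDE for $\phi$ is invoked only pointwise at the critical points, where $|q_y|$ takes a single fixed value.
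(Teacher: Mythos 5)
Your proof is correct, and it follows a genuinely different path from the paper's. The paper discharges the interior step by citing Theorem 4.1 of Greco--Kawohl \cite{gk} (which encapsulates Korevaar's concavity-function argument for semilinear parabolic equations), and then devotes its effort entirely to the boundary, arguing as in Lemma 2.4 of \cite{kor}: it constructs a local change of variables straightening the boundary and uses the Hopf inequality (\ref{AA6}) together with the tangential second-derivative continuity from Proposition A2 to show that the Hessian of $w = -\log v$ is \emph{strictly positive definite} in a neighborhood of $\pa D_R$. You instead re-derive the interior Korevaar argument from scratch (perturbed concavity function $\tilde{\mathcal C} = \mathcal C - \eta(T-t)$, gradient coincidence, diagonal Hessian comparison, cancellation of the squared-gradient terms, nonnegativity of $\pa_t\mathcal C$ from concavity of $b$), and you handle the boundary by extracting the asymptotics $\phi(q,t) = -\log d(q) + O(1)$ from (\ref{AA6}) and the $C^1$-up-to-boundary regularity of Proposition A2, then exploit the convexity of $D_R$ via $d(m)\ge\sqrt{d(p)d(p')}$. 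What your route buys is self-containment (no reliance on \cite{gk}) and the elegant observation that the $O(1)$ constants $-\log|Dv(p_0,t)|$ cancel exactly in the concavity function, so the boundary estimate needs only the first derivative of $v$ on $\pa D_R$. What the paper's route buys is a stronger local conclusion (strict convexity of $w$ near $\pa D_R$, rather than only $\mathcal C\le o(1)$), at the cost of the more delicate boundary-straightening computation in (\ref{AL6}).

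Two points that you should nail down for rigor. First, the estimate $\mathcal C \le o(1)$ along the diagonal approach $(p,p')\ra(p_0,p_0)$ must hold \emph{uniformly} in $p_0\in\pa D_R$ and $t\in[T_0,T]$ for the compactness argument to produce an interior maximizer; this does follow from Proposition A2 (the bound (\ref{Y6}) and continuity of $Dv$ on $\bar D_R$), together with $\inf_{\pa D_R\times[T_0,T]}|Dv|>0$ which comes from (\ref{AA6}) and compactness, but you should say so explicitly, since otherwise a maximizing sequence could escape to the boundary without ever violating your pointwise bound. Second, in case the temporal maximum falls at $t^*=T_0$ rather than strictly interior, you get $\pa_t\tilde{\mathcal C}\le 0$ rather than $=0$; this still gives $\pa_t\mathcal C\le-\eta<0$ and the same contradiction, so the argument survives, but it is worth a line.
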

\begin{proof} We again follow the method of Korevaar \cite{kor} as given in \cite{gk} (see also \cite{gp}).  Thus on setting $w(x,y,t)= - \log v(x,y,t)$ we see from (\ref{V6}) that $w(x,y,t)$ satisfies the PDE
\be \label{AJ6}
\frac{\pa w}{\pa t}  + \frac \ve 2 \; \frac{\pa^2w}{\pa y^2} + \frac {\ve'} 2 \; \frac{\pa^2w}{\pa x^2} - B(y,t,Dw) = 0,
\ee
where the function $B(y,t,p)$ is given by the formula
\be \label{AK6}
B(y,t,p) = b(y,t) |p_y| + {\ve p^2_y}/{2} + {\ve' p^2_x}/{2}.
\ee
Since $B(y,t,p)$ satisfies the conditions of Theorem 4.1 of \cite{gk}, the result follows provided we can show that $w(x,y,t)$ is convex for $(x,y,t)$ close to the boundary of $D_R \times \{ t < T\}$.  To see this we argue as in Lemma 2.4 of \cite{kor}.  Observe that it is sufficient to assume $D^2v(x,y,t)$ is bounded as in (\ref{Y6}), and not necessarily continuous as $(x,y,t)$ approaches a boundary point, provided the tangential derivative of $Dv(x,y,t)$ remains continuous.  To see why this is the case consider a 
non-negative $C^2$ function $f$ on the half plane $H = \{(x,z) \in \R^2 : z > 0\}$.  We assume that $f$ extends to a $C^1$ function on the closure $\bar H$ of $H$ and that $f\equiv 0$ on $\pa H$.  In addition we assume the boundary behavior at (0,0) of the second derivatives of $f$ is given by 
\be \label{AL6}
\limsup_{(x,z)\ra (0,0)} \left| \frac{\pa^2 f}{\pa x \pa z} (x,z) \right| < \infty, \ 
\lim_{(x,z)\ra (0,0)} \frac{\pa^2 f}{\pa x^2} (x,z) =0,
\limsup_{(x,z)\ra (0,0)} \left| \frac{\pa^2 f}{ \pa z^2} (x,z) \right| < \infty.
\ee
Now define a function $w(x,y)$ on the domain $U = \{ (x,y) \in \R^2 : y > x^2/2\}$ by $\exp[-w(x,y)]$ $= f(x,y-x^2/2)$.  Then we can see that if $\pa f(0,0)/\pa z > 0$, there exists $\del > 0$ such that the Hessian of $w$ is strictly positive definite for $(x,y) \in U \cap D_\del$.  The convexity of $w(x,y,t)$ close to the boundary of $D_R\times \{ t<T\}$ follows from the regularity result Proposition A2 and Lemma A1 by analogous argument. 
\end{proof}
\begin{theorem}   Assume $b(\cd,\cd)$ satisfies (\ref{A1}) and in addition the function $b(y,t)$ is concave in $y$ for $y \in \R, \  t \le T$.  Then for $t < T$ the function $q_\ve(x,y,t)$ is convex in $(x,y)$ for $(x,y) \in \R^2$.
\end{theorem}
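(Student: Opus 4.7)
The plan is to apply Korevaar's method via Proposition A3, which produces log-concave solutions of the quasi-linear equation (\ref{V6}) on the disk $D_R$, and to pass successively to the limits $R\ra\infty$, $\ve'\ra 0$, and $\del\ra 0$ in order to transfer log-concavity to the function $u_\ve(x,y,t)$ on $\R^2$. Since $q_\ve=-\ve\log u_\ve$, joint convexity of $q_\ve$ in $(x,y)$ is equivalent to joint log-concavity of $u_\ve$, and the latter is preserved under pointwise limits.

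The two outer regularizations are routine. For the $\del$-regularization, replace the singular terminal condition (\ref{C1}) by $\Phi((y-x)/\del)$, where $\Phi$ denotes the standard normal c.d.f., and let $u_{\ve,\del}$ solve (\ref{B1}) with this terminal data; bounded convergence applied to (\ref{F1}) gives $u_{\ve,\del}(x,y,t)\ra u_\ve(x,y,t)$ as $\del\ra 0$. For the $\ve'$-regularization, let $\tilde u_{\ve,\ve',\del}$ solve on $\R^2$ the linear PDE
\[\frac{\pa\tilde u}{\pa t}+b(y,t)\frac{\pa\tilde u}{\pa y}+\frac\ve 2\frac{\pa^2\tilde u}{\pa y^2}+\frac{\ve'}{2}\frac{\pa^2\tilde u}{\pa x^2}=0\]
with terminal data $\Phi((y-x)/\del)$. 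Differentiating this equation in $y$ and applying the maximum principle to the resulting linear equation for $\pa\tilde u/\pa y$ (whose terminal data $(1/\del)\phi((y-x)/\del)$ is non-negative) gives $\pa\tilde u/\pa y\ge 0$ throughout, so $\tilde u$ is simultaneously a classical solution of the quasi-linear equation (\ref{V6}). A Feynman--Kac representation then yields $\tilde u_{\ve,\ve',\del}\ra u_{\ve,\del}$ as $\ve'\ra 0$.

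The core step is to approximate $\tilde u_{\ve,\ve',\del}$ on $D_R$ by a family of log-concave solutions of the Dirichlet problem for (\ref{V6}). For large $R>0$, let $v_R(x,y,t)$ be the solution of (\ref{V6}), (\ref{W6}) on $D_R$ with terminal data
\[v_{0,\del,R}(x,y)=\Phi((y-x)/\del)\bigl(1-(x^2+y^2)/R^2\bigr).\]
Both factors are jointly log-concave on $D_R$ (the first is a log-concave function of one variable composed with a linear map, the second is a positive concave function), so $v_{0,\del,R}$ is log-concave; it is $C^\infty$ on $\bar D_R$, vanishes on $\pa D_R$, and has nonvanishing gradient on $\pa D_R$, since there the gradient of $1-r^2/R^2$ is $-2(x,y)/R^2\ne 0$ and $\Phi>0$. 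Proposition A3 therefore gives joint log-concavity of $v_R(\cd,\cd,t)$ for every $t<T$.

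The main obstacle is the convergence $v_R\ra\tilde u_{\ve,\ve',\del}$ as $R\ra\infty$, carried out by invoking the comparison inequality (\ref{AD6}) of Lemma A2 with $v=v_R$ and $w=\tilde u_{\ve,\ve',\del}$; both functions are classical solutions of (\ref{V6}) on $D_R$ with the requisite continuity on $\bar D_R\times\{t\le T\}$. On $\pa D_R$ one has $v_R\equiv 0$ and $0\le\tilde u\le 1$, making the first term on the right of (\ref{AD6}) exponentially small in $R$. At $t=T$, the terminal-data difference $|v_{0,\del,R}-\Phi((y-x)/\del)|=\Phi((y-x)/\del)\,r^2/R^2$ is bounded by $(k+M)^2/R^2$ on $D_{k+M}\cap D_R$, and summing against $\exp[-k^2/C\ve(T-t)]$ yields a total bound of order $R^{-2}$. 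Hence $v_R(x,y,t)\ra\tilde u_{\ve,\ve',\del}(x,y,t)$, joint log-concavity is inherited by the pointwise limit, and successively sending $\ve'\ra 0$ and $\del\ra 0$ concludes the proof.
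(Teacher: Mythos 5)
Your proof follows essentially the same approach as the paper: apply Proposition A3 on the disk $D_R$ with a $C^2$, log-concave terminal function vanishing on $\partial D_R$ with nondegenerate gradient, establish that the full-plane linear solution is increasing in $y$ so that it simultaneously solves the quasi-linear equation (\ref{V6}), invoke the comparison Lemma A2 to transfer log-concavity to the $\R^2$ solution as $R\to\infty$, and then remove the remaining regularizations. The only differences are cosmetic: you use the cutoff $\Phi((y-x)/\del)\,(1-(x^2+y^2)/R^2)$ (relying on the standard fact that the normal c.d.f. is log-concave) where the paper builds the more elaborate $\exp[-f(\sqrt{x^2+y^2}/R)-g((y-x)/\del)]$, and you take the $\ve'\to 0$ and $\del\to 0$ limits in the opposite order, neither of which changes the argument.
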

\begin{proof}  Similarly to the proof of Theorem A1, we  approximate $q_\ve(x,y,t)$ by functions defined on finite domains $D_R$ which are convex by virtue of Proposition A3.  To specify the terminal function $v_0(x,y)$, we define a function $f(z)$ for $z< 1$ by
\begin{eqnarray} \label{AM6}
f(z) &=& 0 \ \ {\rm for} \ \ z < 1/2, \qquad  f( 1/2) = f'(1/2) = 0, \\
f''(z) &=& \frac{\exp\left[ -(1-z)^2/(2z-1)\right]}{(1-z)^2}, \quad {\rm for \ }1/2 < z < 1. \nn
\end{eqnarray}
Evidently $f(\cd)$ is a non-negative increasing $C^\infty$ convex function which has the property that $f(z) + \log(1-z)$ has a converging Taylor expansion about $z=1$.  Next let $g : \R \ra \R$ be defined by
\be \label{AN6}
g(z) = z^4, \ \ z < 0\ ; \quad g(z) =0, \ \ z \ge 0,
\ee
whence $g$ is a non-negative decreasing $C^3$ convex function.  It follows from (\ref{AM6}), (\ref{AN6}) that the function $v_0$ with domain $D_R$ defined by
\be \label{AO6}
v_0(x,y) = \exp \left[ -f\big( \sqrt{x^2+y^2} /R \big) - g([y-x]/\del) \right],
\ee
is $C^2$ for $(x,y) \in \bar D_R$ with $v_0(x,y) = 0$ if $(x,y) \in \pa D_R$.  In addition $v_0(x,y)$ is log concave for $(x,y) \in D_R$ and satisfies the non-degenerate boundary condition $|Dv_0(x,y)| \not= 0$ if $(x,y) \in \pa D_R$.  Hence by Proposition A3 the corresponding solution $v_{\del,R}(x,y,t)$ of (\ref{V6}), (\ref{W6}) is log concave in $(x,y)$.

Next we compare the function $v_{\del,R}(x,y,t)$ to a solution of the linear equation (\ref{AB6}).  Thus let $v_{\del}(x,y,t)$ be the unique bounded solution to (\ref{AB6}) in the domain $\{(x,y,t) : (x,y) \in \R^2, t < T\}$ with terminal condition
\be \label{AP6}
v_\del(x,y,t) = \exp \big[ -g([y-x]/\del) \big], \quad  (x,y) \in \R^2.
\ee
From (\ref{AN6}) one sees that $v_\del(x,y,T)$ is an increasing function of $y$ for every $x \in \R$.  The maximum principle implies then that $v_\del(x,y,t) $ is also an increasing function of $y$ for every $x\in \R, \  t < T$.  Thus $v_\del(x,y,t) $ is also a solution to (\ref{V6}).  We may therefore use Lemma A2 to compare the functions $v_\del$ and $v_{\del,R}$.  In view of the fact that $0 \le v_\del \le 1$ and the properties of the function $f$ of (\ref{AM6}), we conclude from (\ref{AO6}), (\ref{AP6}) that
\be \label{AQ6}
\limsup_{R\ra \infty}\big\{ | v_\del(x,y,t) - v_{\del,R}(x,y,t) | : (x,y) \in D_{R_0},  \ T_0 \le t < T \big\} = 0,
\ee
for any $R_0 > 0, \ T_0 < T$.

We conclude from (\ref{AQ6}) and the log concavity of $v_{\del,R}$ that the function $v_\del(x,y,t) $ is also log concave in $(x,y)$ for $(x,y) \in \R^2, \ t < T$.  Observe here that we are using the strong maximum principle to conclude that $v_\del(x,y,t) > 0$, $(x,y) \in \R^2, \  t <T$.  Next we see that the function $v(x,y,t) = \lim_{\del \ra 0} v_\del(x,y,t)$ is the unique bounded solution of (\ref{AB6}) which has terminal data $v(x,y,T) = 0$ if $y < x$, $ v(x,y,T) = 1$ if $y > x$.  Thus $v(x,y,t) = v_{\ve,\ve'}(x,y,t)$ is log concave for $(x,y) \in \R^2$ and $t < T$.  Finally we conclude the convexity of $q_\ve(x,y,t) $ in $(x,y)$  by noting that the function $u_\ve(x,y,t) $ of (\ref{B1}), (\ref{C1}) satisfies $u_\ve(x,y,t) = \lim_{\ve' \ra 0}  v_{\ve,\ve'}(x,y,t) $. 
\end{proof}

\thanks{ {\bf Acknowledgement:}  This research was partially supported by NSF
under grants DMS-0500608 and DMS-0553487.

\end{document}